%
%

\documentclass[12pt]{amsart}
\usepackage{thmtools}
\usepackage{amsfonts,amsmath,amssymb}
\usepackage{hyperref}
\usepackage{paralist}
\usepackage{stmaryrd}
\usepackage[linesnumbered,ruled,vlined,norelsize]{algorithm2e}
\usepackage[utf8]{inputenc}

\declaretheoremstyle[bodyfont=\normalfont]{noncursive}
\declaretheorem{theorem}
\declaretheorem[numberwithin=section]{lemma}

\declaretheorem[numberlike=lemma]{proposition}
\declaretheorem[numberlike=lemma]{corollary}
\declaretheorem[style=noncursive,numberlike=lemma]{definition}

\declaretheorem[style=noncursive,numberlike=lemma]{remark}
\declaretheorem[style=noncursive,numberlike=lemma]{observation}

\declaretheorem[style=noncursive,numberlike=lemma]{convention}

\DeclareMathOperator{\Hom}{Hom}
\DeclareMathOperator{\ord}{ord}

\usepackage{amsmath,amssymb,amsthm,enumerate}

\setlength{\textwidth}{6.25in}
\setlength{\oddsidemargin}{0in}
\setlength{\evensidemargin}{0in}
\setlength{\textheight}{8.5in}

\begin{document}
\numberwithin{equation}{section}

\def\1#1{\overline{#1}}
\def\2#1{\widetilde{#1}}
\def\3#1{\widehat{#1}}
\def\4#1{\mathbb{#1}}
\def\5#1{\frak{#1}}
\def\6#1{{\mathcal{#1}}}

\def\C{{\4C}}
\def\R{{\4R}}
\def\N{{\4N}}
\def\Z{{\4Z}}

\def \im{\text{\rm Im }}
\def \re{\text{\rm Re }}
\def \Char{\text{\rm Char }}
\def \supp{\text{\rm supp }}
\def \codim{\text{\rm codim }}
\def \Ht{\text{\rm ht }}
\def \Dt{\text{\rm dt }}
\def \hO{\widehat{\mathcal O}}
\def \cl{\text{\rm cl }}
\def \bR{\mathbb R}
\def \bC{\mathbb C}
\def \bP{\mathbb P}
\def \C{\mathbb C}
\def \bL{\mathbb L}
\def \bZ{\mathbb Z}
\def \bN{\mathbb N}
\def \scrF{\mathcal F}
\def \scrK{\mathcal K}
\def \scrM{\mathcal M}
\def \cR{\mathcal R}
\def \scrJ{\mathcal J}
\def \scrA{\mathcal A}
\def \scrO{\mathcal O}
\def \scrV{\mathcal V}
\def \scrL{\mathcal L}
\def \scrE{\mathcal E}

\oddsidemargin=0.1in \evensidemargin=0.1in \textwidth=6.4in
\headheight=.2in \headsep=0.1in \textheight=8.4in
\newcommand{\rl}{{\mathbb{R}}}
\newcommand{\cx}{{\mathbb{C}}}
\newcommand{\id}{{\mathbb{I}}}
\newcommand{\m}{{\mathcal{M}}}
\newcommand{\dbar}{\overline{\partial}}
\newcommand{\Db}[1]{\frac{\partial{#1}}{\partial\overline{z}}}
\newcommand{\abs}[1]{\left|{#1}\right|}
\newcommand{\e}{\varepsilon}
\newcommand{\tmop}[1]{\ensuremath{\operatorname{#1}}}
\renewcommand{\Re}{\tmop{Re}}
\renewcommand{\Im}{\tmop{Im}}
\newcommand{\dist}{{\mathrm{dist}}}
\newcommand {\OO}{{\mathcal O}}
\newcommand {\Sig}{{\Sigma}}
\renewcommand {\a}{\alpha}
\newcommand {\MC}{M^{\mathbb C}}
\renewcommand {\b}{\beta}
\newcommand {\Q}{\mathcal Q}
\newcommand{\dop}[1]{\frac{\partial}{\partial #1}}
\newcommand{\dopt}[2]{\frac{\partial #1}{\partial #2}}
\newcommand{\vardop}[3]{\frac{\partial^{|#3|} #1}{\partial {#2}^{#3}}}

\newcommand{\CC}[1]{\mathbb{C}^{#1}}
\newcommand{\CP}[1]{\mathbb{CP}^{#1}}
\newcommand{\RR}[1]{\mathbb{R}^{#1}}
\newcommand{\dw}{\frac{\partial}{\partial w}}
\newcommand{\dz}{\frac{\partial}{\partial z}}
\numberwithin{equation}{section}
\newcommand{\diffcr}[1]{\rm{Diff}_{CR}^{#1}}
\newcommand{\Hol}[1]{\rm{Hol}^{#1}}
\newcommand{\Aut}[1]{\rm{Aut}^{#1}}
\newcommand{\hol}[1]{\mathfrak{hol}^{#1}}
\newcommand{\aut}[1]{\mathfrak{aut}^{#1}}
\newcommand{\fps}[1]{\C\llbracket #1 \rrbracket}
\newcommand{\fpstwo}[2]{#1\llbracket #2 \rrbracket}
\newcommand{\cps}[1]{\C\{#1\}}

\title[]{The equivalence theory for infinite type hypersurfaces in $\CC{2}$}

\author{Peter Ebenfelt}
\address{Department of Mathematics, University of California at San Diego, La Jolla, CA 92093-0112}
\email{pebenfelt@ucsd.edu}

\author{Ilya Kossovskiy}
\address{Department of Mathematics, Masaryk University, Brno, Czechia//
Department of Mathematics, University of Vienna, Vienna, Austria}
\email{kossovskiyi@math.muni.cz}

\author{Bernhard Lamel}
\address{Department of Mathematics, University of Vienna, Vienna, Austria}
\email{bernhard.lamel@univie.ac.at}


\thanks{The first author was supported in part by the NSF grant DMS-1600701. The second author was supported in part by the Czech Grant Agency (GACR) and the Austrian Science Fund (FWF). The third author
was supported in part by the Austrian Science Fund (FWF)}



\begin{abstract}
We develop a classification theory for real-analytic hypersurfaces in $\CC{2}$ in the case when the hypersurface is of  {\em infinite type} at the reference point.
This is the remaining, not yet understood case in $\CC{2}$ in the {\it Problème local}, formulated by  H.\,Poincar\'e in 1907 and asking for a complete biholomorphic classification of real hypersurfaces in complex space. One novel aspect of our results is a notion of {\em smooth normal forms} for real-analytic hypersurfaces. We rely fundamentally on the recently developed CR -- DS technique in CR-geometry.
\end{abstract}

\maketitle   

\def\Label#1{}


\def\cn{{\C^n}}
\def\cnn{{\C^{n'}}}
\def\ocn{\2{\C^n}}
\def\ocnn{\2{\C^{n'}}}


\def\dist{{\rm dist}}
\def\const{{\rm const}}
\def\rk{{\rm rank\,}}
\def\id{{\sf id}}
\def\aut{{\sf aut}}
\def\Aut{{\sf Aut}}
\def\CR{{\rm CR}}
\def\GL{{\sf GL}}
\def\Re{{\sf Re}\,}
\def\Im{{\sf Im}\,}
\def\span{\text{\rm span}}

\def\codim{{\rm codim}}
\def\crd{\dim_{{\rm CR}}}
\def\crc{{\rm codim_{CR}}}

\def\lr{\longrightarrow}
\def\phi{\varphi}
\def\eps{\varepsilon}
\def\d{\partial}
\def\a{\alpha}
\def\b{\beta}
\def\g{\gamma}
\def\G{\Gamma}
\def\D{\Delta}
\def\Om{\Omega}
\def\k{\kappa}
\def\l{\lambda}
\def\L{\Lambda}
\def\z{{\bar z}}
\def\w{{\bar w}}
\def\Z{{\1Z}}
\def\t{\tau}
\def\th{\theta}

\def\H{\hat H}

\emergencystretch15pt
\frenchspacing

\newtheorem{Thm}{Theorem}[section]
\newtheorem{Cor}[Thm]{Corollary}
\newtheorem{Pro}[Thm]{Proposition}
\newtheorem{Lem}[Thm]{Lemma}

\theoremstyle{definition}\newtheorem{Def}[Thm]{Definition}

\theoremstyle{remark}
\newtheorem{Rem}[Thm]{Remark}
\newtheorem{Exa}[Thm]{Example}
\newtheorem{Exs}[Thm]{Examples}

\def\bl{\begin{Lem}}
\def\bl{\begin{Lem}}
\def\el{\end{Lem}}
\def\bp{\begin{Pro}}
\def\ep{\end{Pro}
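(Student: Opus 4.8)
The plan is to transport the problem, via the CR--DS dictionary, to the local analytic classification of singular holomorphic ODEs, and then to extract a $C^\infty$ object from the (generically divergent) formal solution of that classification problem.

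\emph{Step 1 (preliminary normal form).} First I would bring $M$ into coordinates $(z,w)$ adapted to the infinite type point $p=0$: since $M$ is of infinite type at $0$, its Segre variety there is a complex hypersurface contained in $M$, and after a polynomial change of coordinates we may take it to be $\{w=0\}$, so that $M$ is defined by $\Im w=(\Re w)^{k}\,\Psi(z,\bar z,\Re w)$ for the invariant integer $k\ge 1$ (the order of tangency of $M$ with its Segre variety) and a real-analytic $\Psi$ with $\Psi(z,\bar z,0)\not\equiv 0$ in the generic case. The ``tangent model'' $\Im w=(\Re w)^{k}$ plays the role of the Heisenberg sphere, and every equivalence must respect the weighting it induces. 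The same normalization is applied to $M'$.

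\emph{Step 2 (the associated singular ODE).} To the Segre family $\{Q_\zeta\}$ of $M$ one associates the second order holomorphic ODE $w''=\Phi(z,w,w')$ that it satisfies on the complement of the exceptional Segre variety; the infinite type condition forces $\Phi$ to carry a pole, generically an irregular singularity, along the complex line $\{w=0\}$, with Poincar\'e rank governed by $k$. A germ of a biholomorphism $H$ with $H(M)=M'$ maps Segre families to Segre families, hence conjugates the two singular ODEs by a fibered transformation compatible with the anti-holomorphic involution; conversely such a conjugacy of ODEs integrates back to a CR equivalence of the hypersurfaces. Thus the classification of infinite type $M$ up to holomorphic (resp.\ smooth CR) equivalence is reduced to the classification of these singular ODEs up to the corresponding notion of equivalence, which already gives the invariance of $k$ and of the model.

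\emph{Step 3 (the analytic core).} At the formal level the conjugating transformation is produced by solving, order by order, a chain of homological equations against the linearization of the model; these are always solvable, yielding a \emph{formal} normalizing transformation and a formal normal form. Generically this series diverges, which is precisely why no convergent (holomorphic) normal form exists. The remedy is the sectorial normalization theory for irregular singular points: on each sector of a good covering of a punctured neighborhood of $\{w=0\}$ the formal transformation is Borel/Gevrey-summable to a genuine holomorphic normalization, the discrepancies between sectors being the Stokes data, which together with the formal normal form constitute the complete biholomorphic invariant. Since all sectorial normalizations are asymptotic to one common Gevrey series, they glue by a Ramis--Sibuya/Whitney argument to a transformation that is $C^\infty$ up to and including $\{w=0\}$; pushing this back through the CR--DS correspondence produces the claimed $C^\infty$-smooth CR normal form for $M$, and shows in particular that two infinite type hypersurfaces are formally equivalent if and only if they are $C^\infty$ CR-equivalent, while holomorphic equivalence additionally requires agreement of the Stokes data.

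\emph{Main obstacle.} The crux is the reality and CR-compatibility of the summation step: the sectorial solutions of the ODE live in the complexified picture $(z,w,\bar z,\bar w)$, and one must verify that the patched smooth transformation descends to a conjugation-invariant CR map on $M$ itself, and that the Gevrey estimates are uniform enough to yield genuine $C^\infty$ (not merely $C^N$ for each $N$ separately) regularity at the infinite type locus. The degenerate cases --- larger $k$, or $\Psi$ with additional vanishing, where the singularity of $\Phi$ is more intricate --- will require a separate, more careful analysis of the irregular singular point and of its Stokes structure.
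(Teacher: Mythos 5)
Your three-step architecture (admissible coordinates, passage to the singular second-order ODE satisfied by the Segre family, formal normalization followed by a summability argument to extract a canonical smooth object) is the same architecture the paper uses, so the strategy is sound. However, there are genuine gaps at the two places where the actual work happens. First, in Step 3 you assert that the homological equations are ``always solvable.'' They are not: after reducing the normalization to a Cauchy problem, the Cauchy data $(f_0,f_1,g_0,g_1)$ of the normalizing map must solve a singular system of four second-order ODEs, and the order-by-order recursion takes the form $M_k h_k = R_k(h_1,\dots,h_{k-1})$ where the matrices $M_k$ are built from a partial $(m+7)$-jet of the defining function. These matrices can be singular; this is precisely the resonance phenomenon, every main theorem carries a non-resonance hypothesis, and for $m>1$ the matrix at $k=m-1$ is \emph{always} degenerate, which is the source of the extra real parameter $\tau$ in the uniqueness statement. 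Without isolating these resonances your normal form is neither well defined nor unique, and you also lose the dichotomy the paper establishes: for $m=1$ the reduced system is of Briot--Bouquet type, so the non-resonant normal form is automatically \emph{convergent}, while only for $m>1$ does one need the summability machinery at all.

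Second, your summation step does not work as described. Sectorial normalizations over a good covering of a punctured neighborhood of $\{w=0\}$ do \emph{not} glue to a single transformation that is $C^\infty$ up to the singular locus; the exponentially flat discrepancies between adjacent sectors (the Stokes data) are exactly the obstruction, and if they all agreed the series would converge. What the paper does instead is observe that $M\setminus X$ itself lies inside a single sectorial domain $\Delta\times S^\pm$ with $S^\pm$ bisected by $\mathbb{R}^\pm$, applies Braaksma's theorem to obtain $(k_1,\dots,k_s)$ multi-summability of the formal Cauchy data in all but finitely many directions, chooses the two directions $d^\pm$ whose sums live on $S^\pm$, and invokes the injectivity of the Borel map on the multi-summable class to make this choice canonical; only those two sectors ever meet $M$. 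Finally, the issue you correctly flag as the ``main obstacle'' --- that the normal form of the ODE must again carry a real structure so that it descends to a hypersurface and to a genuine CR map --- is left unresolved in your sketch, and it is not routine: the paper resolves it by introducing the dual and conjugated Segre families, characterizing reality by $\mathcal{E}^*=\overline{\mathcal{E}}$, proving that the normal form space is stable under both operations (a computation that for $m>1$ forces $\tau\in\mathbb{R}$ and pins down the constants $\sigma_1,\sigma_2$), and concluding $\overline{\mathcal{N}_\tau}=\mathcal{N}_\tau^*$ from the uniqueness of the normalizing map. Your closing claim that holomorphic equivalence is detected by the formal normal form together with Stokes data is, in the paper, only stated as an open direction for $m>1$, not a proved result.
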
}
\def\bt{\begin{Thm}}
\def\et{\end{Thm}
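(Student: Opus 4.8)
The plan is to run the \emph{CR\,--\,DS} correspondence in both directions: first convert the infinite type hypersurface into a singular holomorphic differential equation via its Segre family, then normalize the equation, and finally transport the normalization back to the hypersurface, keeping careful track of where analyticity is lost.

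\medskip

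\emph{Step 1: reduction to a singular ODE.} Using the infinite type hypothesis at $p$, I would first normalize coordinates so that $M$ is given near $p=0$ by $v=u\,\psi(z,\bar z,u)$ with $w=u+iv$; in particular the complex curve $\{w=0\}$ lies in $M$ and is exactly the locus where the Levi form degenerates. Complexifying the defining equation and passing to the Segre family $\{Q_\zeta\}_\zeta$ of $M$, one eliminates the parameter $\zeta$ — the dimension count closes precisely because the ambient space is $\mathbb{C}^2$ — to obtain a second order holomorphic ODE $w''=\Phi(z,w,w')$ whose right hand side acquires a singularity along $\{w=0\}$. I would record the precise type of this singularity (pole order, eigenvalue/resonance data of the associated linear part), since it is read off from the low order jet of $\psi$ and governs everything afterwards, and I would check that CR-equivalences of $M$ preserving the chosen normalization correspond bijectively to fiber-preserving biholomorphisms $(z,w)\mapsto(f(z,w),g(w))$ of the equation.

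\medskip

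\emph{Step 2: normalizing the equation.} Next I would bring the singular equation $E(M)$ into a normal form $E_0$ under this group, using the classical local theory of singular complex ODEs: a Briot--Bouquet / Poincar\'e--Dulac type reduction produces a formal normalizing transformation and a polynomial (model) equation $E_0$ determined by finitely many invariants, while sectorial normalization in the spirit of Malgrange--Sibuya yields genuine holomorphic normalizing maps on a covering of a punctured neighborhood of the singular locus, all sharing the same Gevrey\,-$1$ asymptotic expansion. Transporting $E_0$ back through the correspondence of Step 1 gives the candidate normal form $M_0$ of $M$, the formal transformation gives a formal CR-equivalence $M\to M_0$, and the sectorial transformations give honest holomorphic CR-equivalences $M\to M_0$ defined off $\{w=0\}$.

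\medskip

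\emph{Step 3: from formal to smooth, and the obstacle.} Finally I would glue the sectorial CR-equivalences across $\{w=0\}$: since any two of them are asymptotic to the same formal series, their differences are exponentially small near the singular locus, so a Borel--Ritt / Whitney type construction assembles them into a single map that is $C^\infty$ up to $\{w=0\}$ and CR on $M$, realizing $M_0$ as the \emph{smooth} normal form of $M$; two real-analytic infinite type hypersurfaces are then smoothly CR-equivalent exactly when the invariants of their associated equations agree, even though — by the generic divergence of the formal transformation in Step 2 — they need not be biholomorphically equivalent. The main obstacle is exactly this last gluing: one must establish Gevrey estimates for the ODE normalization that are uniform as $w\to 0$, control the exponentially small discrepancies between sheets, and verify that the glued map genuinely inherits the CR structure rather than only a formal or hyperfunction-theoretic shadow of it — and this is the point at which the two-dimensionality of the ambient space and the precise nature of the infinite type degeneracy are used most heavily.
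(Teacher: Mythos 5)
Your overall strategy --- hypersurface $\to$ Segre family $\to$ singular ODE, normalize the ODE, descend back --- is indeed the paper's CR\,--\,DS scheme, and Step 1 is essentially correct (modulo the fact that the relevant maps are \emph{not} fiber-preserving: the transverse component has the form $w+wg_0(w)+w^mg(z,w)$ with a genuine $z$-dependence of order $O(w^m)$, and the infinite type order $m$ may exceed $1$, which changes the singularity type of the ODE from Briot--Bouquet to irregular). But there are several genuine gaps. First, your Step 2 asserts that the normalization produces ``a polynomial (model) equation determined by finitely many invariants.'' The paper explicitly rules this out: in the infinite type setting there is no polynomial model (polynomial models have $2$-dimensional isotropy while non-polynomial examples have $5$-dimensional isotropy), and the normal form space is infinite-dimensional --- only the four coefficient functions $h_{22},h_{23},h_{32},h_{33}$ are normalized to explicit constants/monomials, while all remaining $h_{kl}(u)$ stay as functional invariants. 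The actual normalization is carried out by reducing the problem to a Cauchy problem for the transformation and deriving a concrete system of four singular second-order ODEs for the Cauchy data; the nonresonance condition (which you never mention, but which is a hypothesis of every theorem) is precisely what makes that system formally uniquely solvable.

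Second, and most seriously, you have no mechanism guaranteeing that the normalized ODE again comes from a \emph{real} hypersurface. An arbitrary equation in $\mathfrak{E}_m$ need not have a real structure, and the normalizing transformation of the ODE is a priori just a formal map of $(\mathbb{C}^2,0)$; the paper must prove (via the dual and conjugated Segre families and the criterion $\mathcal{E}^*=\overline{\mathcal{E}}$, together with the compatibility of normalization with duality and conjugation) that the normal form inherits the real structure, and only then does the ODE normalization descend to a normalization of $M$. Without this reality argument your Step 2 output is not a hypersurface at all. Third, your Step 3 gluing is the wrong construction: the paper does not glue two sectorial normalizations across $\{w=0\}$ by a Borel--Ritt/Whitney argument --- that would sacrifice uniqueness, since distinct sectorial realizations of a divergent series differ by flat terms and any such gluing is non-canonical. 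Instead it uses that $M\setminus X$ already sits inside a single sectorial domain $\Delta\times S^\pm$ (because $\mathrm{Arg}\,w\to 0$ on $M$), and it obtains canonicity from $(k_1,\dots,k_s)$ multi-summability via Braaksma's theorem (not mere Gevrey-$1$ asymptotics), for which the Borel map is injective; this is what makes the smooth normal form well defined rather than one of infinitely many smooth realizations of the formal one.
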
}
\def\bc{\begin{Cor}}
\def\ec{\end{Cor}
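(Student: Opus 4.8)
The plan is to run the argument through the CR--DS correspondence: transfer the equivalence problem for the germ of $M$ at the reference point to an equivalence problem for an associated singular holomorphic ODE, solve the latter with the analytic theory of such equations, and then pull the resulting normalization back to $M$ while keeping careful track of its regularity. First I would record a prenormal form. Since $M$ is of infinite type at the reference point, it contains a germ of a complex curve through that point; after a biholomorphic change of coordinates we may assume this curve is $\{w=0\}$, so that $M$ is given by $\Im w=(\Re w)\,\Theta(z,\bar z,\Re w)$ for a germ of a real-analytic function $\Theta$. A weight-by-weight Moser-type normalization of the low-order terms of $\Theta$ then puts $M$ into a convenient prenormal form, a model part plus a higher-order remainder; this step is purely formal and algebraic, and the only work is bookkeeping of weights.

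Next I would pass to the dynamical system. Complexifying $M$ and passing to its Segre family yields a second-order holomorphic ODE $w''=\Phi(z,w,w')$ whose coefficients acquire a singularity along the locus corresponding to the complex curve contained in $M$; under this dictionary, biholomorphisms of germs of hypersurfaces correspond to point transformations of the plane preserving this ODE together with its singular locus. One is thereby reduced to normalizing a holomorphic ODE --- equivalently, a system linear in its leading term --- at a Fuchsian or irregular singular point. For such equations a formal normalizing transformation always exists; by the Ramis--Sibuya theory it is realized by genuine holomorphic transformations on a covering of a punctured neighborhood of the singular locus by sectors, with the ambiguity between sectorial lifts encoded in Stokes and connection data.

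The descent back to $M$ is then the heart of the matter. The sectorial holomorphic normalizations share a common asymptotic expansion --- the formal normal form --- to which each is asymptotic of Gevrey type; since the differences between neighboring lifts are flat, i.e. exponentially small, along the singular locus, these expansions glue to a well-defined $C^\infty$ object across that locus. Verifying that the sectorial maps respect the reality condition cutting out $M$ inside its complexification, one obtains an honest $C^\infty$ diffeomorphism carrying $M$ to its model: the asserted smooth normal form. Uniqueness of the normal form up to the residual finite-dimensional symmetry group, and the identification of the Stokes/connection data with the biholomorphic moduli, then follow formally from the dictionary of the previous paragraph.

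I expect the main obstacle to be exactly the divergence phenomenon: the natural normalizing transformation of $M$ is in general not convergent, so one cannot remain in the holomorphic category, and the point of the statement is that normalization can nonetheless be achieved smoothly. Making this precise requires (i) establishing Gevrey-type estimates for the formal normalization of the associated ODE, (ii) controlling the Stokes phenomenon well enough to see that the discrepancies between the sectorial normalizations are flat along the singular set, and (iii) checking compatibility of all the sectorial constructions with the reality structure, so that the analytic data obstructing holomorphic normalization are precisely the invariants classifying $M$ up to biholomorphism, while the smooth normalization remains unobstructed.
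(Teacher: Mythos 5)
Your high-level strategy --- prenormalize, pass to the associated singular ODE via the Segre family, normalize the ODE at its irregular singularity, and descend a sectorial realization of the (generally divergent) formal normalization back to a smooth normalization of $M$ --- is the strategy of the paper. But as written the proposal has two genuine gaps, and one structural omission, that would prevent it from going through.

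First, you never impose a non-resonance condition. The formal normal form does not exist (or is not unique in an infinite-dimensional way) for an arbitrary germ: the Taylor coefficients of the Cauchy data of the normalizing map are determined by a sequence of linear systems (cf.\ \eqref{hhk} and \eqref{hk1}), and these systems are solvable with the required uniqueness only when certain explicit matrices, built from the partial jet \eqref{7jet} of the defining function, are nondegenerate for all positive integers $k$. Without this hypothesis the step ``a formal normalizing transformation always exists'' is false, and even in the non-resonant case with $m>1$ there is a residual real parameter $\tau$ (the $m$-th transverse derivative of the transverse component) that survives the normalization and must be tracked. Relatedly, the relevant singular system is not the second-order ODE $w''=\Phi(z,w,w')$ itself but a system of four singular ODEs \eqref{merom} satisfied by the Cauchy data $(f_0,f_1,g_0,g_1)$ of the normalizing map, obtained after a Cauchy--Kovalevskaya reduction; your proposal skips the step that produces the object to which the irregular-singularity theory is actually applied.

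Second, your mechanism for the smooth normal form --- that the discrepancies between neighboring sectorial lifts are exponentially flat and therefore ``glue to a well-defined $C^\infty$ object'' --- produces \emph{a} smooth normalization but not a \emph{canonical} one, and canonicity/uniqueness is part of the assertion. Any two sectorial solutions asymptotic to the same divergent series differ by a flat term, so Gevrey asymptotics plus Ramis--Sibuya alone leave an infinite-dimensional ambiguity in the choice of smooth representative. The paper resolves this by invoking Braaksma's theorem to show the formal normalizing transformation is $(k_1,\dots,k_s)$ multi-summable in a direction determined by \eqref{7jet}, and then uses the injectivity of the Borel map on the multi-summable class to single out the unique sectorial representative in sectors containing $\mathbb{R}^\pm$; note also that one does not glue across the singular locus --- one uses that $M\setminus X$ already lies in a sectorial domain, so a single pair of sectorial representatives defines a smooth CR map on $M$. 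Finally, the compatibility with the reality structure is not a routine verification: it requires showing that the normal form of an ODE carrying a real structure again carries one, which the paper does via a duality/conjugation argument on Segre families (\autoref{key}); this is where the restriction of $\tau$ to real values enters.
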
}
\def\bd{\begin{Def}}
\def\ed{\end{Def}}
\def\br{\begin{Rem}}
\def\er{\end{Rem}}
\def\be{\begin{Exa}}
\def\ee{\end{Exa}}
\def\bpf{\begin{proof}}
\def\epf{\end{proof}}
\def\ben{\begin{enumerate}}
\def\een{\end{enumerate}}
\def\beq{\begin{equation}}
\def\eeq{\end{equation}}

\tableofcontents{}

\section{Introduction}
\subsection{Overview}
In  1907, Poincar\'e \cite{38.0459.02} initiated the study of the
(bi)holomorphically invariant
geometry of real hypersurfaces in complex space. He formulated
the classification problem
 in the following form:

\smallskip

\noindent{\bf {\em Problème local} (H.\,Poincar\'e, 1907).}  Given two germs $(M,p)$ and $(M^*,p^*)$ of real-analytic hypersurfaces in $\CC{2}$, find all local biholomorphic maps $F:\,(\CC{2},p)\mapsto (\CC{2},p^*)$ mapping the hypersurfaces into each other: $F(M)\subset M^*$.

\smallskip

Even though he did not succeed in solving this problem,
Poincar\'e  discovered that germs of real hypersurfaces have local invariants and
possess strong rigidity properties.  He showed that given any real hypersurface $M$, most other real hypersurfaces
 $M^*$ are not holomorphically equivalent to the given one. As a model for real hypersurfaces in $\CC{2}$, he employed {the (unit) sphere $S^3\subset\CC{2}$}. He thought of a generic hypersurface in $\CC{2}$ as a certain perturbation of the model, and showed that  the dimension of the automorphism group of the model is an upper bound for that of a perturbed hypersurface.
This work of Poincar\'e is often considered as a starting point of {Cauchy-Riemann geometry}, or CR-geometry for short, and has inspired intensive research in this subject in the century following its publication. In particular, the original {\it Problème local} of
Poincar\'e mentioned above has attracted a lot of attention starting with E. Cartan's solution \cite{Cartan:1933ux,Cartan:1932ws} of the
problem for strictly pseudoconvex hypersurfaces in $\C^2$. The general problem, however,
has remained open.
%
%

In this paper, we consider the remaining open case in the {\it Problème local} for real
hypersurfaces in a complex euclidean space of dimension $2$, and develop a classification theory for real-analytic hypersurface in $\CC{2}$ at generic {\em infinite type} points. Notably, as a part of the outcome, we introduce a notion of {\em smooth normal forms} for real-analytic hypersurfaces.

Before giving a historical background for the {\it Problème local} and  stating our main results, we should point out that the theory
we develop shares many distinctive traits with the {Poincar\'e-Dulac classification theory} for vector fields. The latter was also initiated by
H.\, Poincar\'e as a way to classify (and analyze the
dynamics of) vector fields at their singular points.
Poincar\'e suggested first bringing the germ of a vector field at a singularity to a {normal form}, known as the {Poincar\'e-Dulac normal form}, and then integrating the vector field in the new, normalized coordinates. For constructing the normal form, Poincar\'e developed the homological method: one compares the Taylor expansion of a given vector field $X$ at a singular point $p$ with  that of its model, namely, the linearization of $X$ at $p$, and attempts to approximate $X$ by the model as closely as possible. By using the homological method, Poincar\'e discovered {\em resonances} --- obstructions for linearizing a vector field, existence of which depends on the the spectrum of the linear part of a vector field at a singularity.  The Poincar\'e-Dulac normal form can be either {\em convergent} (in case the spectrum of the linear part lies in the {Poincar\'e domain}) or divergent (the case of the {Siegel domain}). In the latter case, the normal form is merely {\em formal} and gives only a proper subset of the complete set of invariants, and therefore there are analytic invariants of a singularity supplementing the formal normal form. Existence of such invariants forms the content of the {\em Stokes phenomena} for the classification problem under discussion. For more details here we refer to the excellent  book \cite{MR2363178} of Ilyashenko-Yakovenko on the subject, and to Lombardi-Stolovitch \cite{MR2722512} for some important recent developments. We emphasize that even though the Poincar\'e-Dulac theory provides a satisfactory and in a sense complete solution to the classification problem for singularities of vector fields, some aspects of the holomorphic classification (such as the situation of vanishing linear part at a singularity or various Stokes phenomena)  are still not understood completely. Similar exceptions occur in our treatment of the {\it Problème local} (see Section 1.5 for a related discussion).

\subsection{Historical background of the {\em Problème local}}

The classification problem for real-analytic hypersurfaces in complex space was first considered  by Poincar\'e, and
followed by Cartan's solution in the case of
 Levi-nondegenerate real hypersurfaces
in $\C^2$.
The classification of Levi-nondegenerate hypersurfaces
in $\C^N$, $N\geq 2$,
has been completed by Tanaka \cite{Tanaka:1962ti} and Chern-Moser \cite{CM74}. These works used two principal approaches to the equivalence problem. Tanaka and Chern employed a differential geometric approach, extending and developing  E.\,Cartan's method of equivalence.
This approach relies heavily on a certain uniformity of the
geometric structure under consideration.  In contrast, Moser's
approach in \cite{CM74} used another method to
solve
the classification problem (adapting and developing the
homological method of Poincar\'e), which has been more successfully further adapted
to the Levi-degenerate situation. This approach is via normal
forms: instead of comparing to a geometric model situation,
one tries to find a unique coordinate choice by prescribing
normalization conditions of the defining equation of $M\subset\CC{N}$ in these
coordinates. The key ingredient in the standard application of this approach is finding the right model, for which Moser used {\em real hyperquadrics}
$$\im w=H(z,\bar z)$$
($H$ is a nondegenerate Hermitian form on $\CC{N-1}$). The normal form is then obtained by approximating, in a suitable sense, a  hypersurface by its model as closely as possible.

We shall remark at this point that, in what follows, we will be distinguishing between three different notions of equivalence. We say that two germs
of real-analytic hypersurfaces
$(M,p)$, $(M',p')$ in $\CC{N}$ are {\it formally equivalent} if
there exists a germ of a formal map
$H \in \fps{Z - p}^N $ which satisfies $H(M) \subset M'$ and $H(p) = p'$. We say
that $(M,p)$ and $(M',p')$ are {\it biholomorphically equivalent}
if there exists a germ of a biholomorphism
$H \in \cps{Z - p}^N $ such that $H(M) \subset M'$ and $H(p) = p'$.
And lastly, we say that $(M,p)$ and $(M',p')$ are {\it CR equivalent}
if there exists a smooth CR diffeomorphism $h\colon U\to M'$ defined
in a neighbourhood $U$ of $p$ in $M$ with $h(p)=p'$. Here we recall
 that $h$ is CR if its component functions are smooth CR functions
 on $M$, or equivalently, if the differential
 $dh$ restricts to a complex linear map of the complex tangent spaces $T_p^\C M = T_p M \cap i T_p M$, $dh_p \colon T_p^\C M \to T_p^\C M'$. We note that a biholomorphic equivalence restricts to a CR equivalence. Moreover, a CR equivalence induces a formal equivalence.

Coming back to the normal form problem, we distinguish between {\em formal} and {\em convergent} normal forms. A formal
normal form is given by the construction of formal coordinates $(z^*,w^*)= H(z,w)  \in \fps{z,w}^N $, while
a convergent normal form is given by the construction of (local) holomorphic coordinates
$H(z,w) \in \cps{z,w}^N$. A formal normal form solves the formal equivalence problem, whereas a convergent normal form solves the biholomorphic equivalence problem. Notably, Moser's normal form is {\em convergent}, as is proved in \cite{CM74}.

We next outline the progress on the classification problem in the {\em Levi-degenerate case}, when some (and sometimes all) ingredients of both the Cartan-Tanaka-Chern approach and Moser's approach are not applicable. Let us recall first that a real-analytic
hypersurface $M\subset \C^2$
is {\em of finite type} (at a point $p\in M$) if it does not
contain a holomorphic curve through $p$. (Such a hypersurface also satisfies the Hörmander-Kohn bracket-generating condition at $p$ \cite{BERbook}). A construction of a formal normal
form for finite type (but Levi-degenerate) hypersurfaces in $\C^2$ was carried out
by Kol\'a\v r in \cite{MR2189248}. He employed models of the type
$$\im w=P(z,\bar z),$$ where $P(z,\bar z)$ is a nonzero homogeneous polynomial without harmonic terms of degree $k\geq 3$. Notably, for the class of finite type hypersurfaces, it is known by the work of
 Baouendi-Ebenfelt-Rothschild \cite{BERjams} that every formal holomorphic map
actually converges. Therefore, remarkably, Kol\'a\v r's {\em formal} normal form for
finite type hypersurfaces
provides a solution to the {\em biholomorphic} equivalence problem
for these hypersurfaces. It also provides a solution to the CR equivalence
problem, because every CR diffeomorphism of
two real-analytic hypersurfaces of finite type in $\C^2$ extends
to a biholomorphic map by a result of Baouendi, Jacobowitz, and Treves \cite{MR808223}.
Kol\'a\v r's normal form has been shown  to be convergent under certain geometric conditions, see the work \cite{MR3366850} of Kossovskiy-Zaitsev, but is divergent in general \cite{Kolar12}.
For Levi-degenerate hypersurfaces in $\CC{N},\,N\geq 3$ satisfying certain special conditions (in addition to the Hormander-Kohn bracket-generating condition) normal form constructions were carried out by Ebenfelt \cite{MR1647888,MR1684982}
and by Kossovskiy-Zaitsev  \cite{MR3366850}.
For results on normal forms for real submanifolds  of higher codimension as well as CR-singular submanifolds we refer to Ezhov-Schmalz \cite{MR1601397},
Beloshapka \cite{MR1076927},
Lamel-Stolovitch \cite{lasto},
Moser-Webster \cite{Moser:1983tq},\,
Huang-Yin \cite{HY10,MR3600082},
Gong\,\cite{Gong},
Coffman \cite{MR2727363},
Burcea \cite{MR3508264}.
See also Zaitsev   \cite{MR2956254} for normal forms in the {non-integrable} setting. More references and discussion of the normal form problem can be found in the survey \cite{MR3603888}.

The situation changes dramatically when one
considers {\em infinite type} hypersurfaces in
$\C^2$, that is, hypersurfaces $M\subset\CC{2}$
which contain a complex curve $X \subset M$ through the reference point $p$. We first remark that the automorphism aspect of the {\em Problème local} (i.e,, describing possible automorphism algebras of real-analytic hypersurfaces in $\CC{2}$) in the infinite type setting  was addressed in the paper \cite{nonminimalODE} by Kossovskiy-Shafikov.  However, the equivalence problem in this setting appears to be more difficult, and classification results are available only for some particular classes of infinite type hypersurfaces. Even
the existence of a {\em formal} normal form is only known in a
particular setting (the {$1$-infinite type case}) by work of Ebenfelt-Lamel-Zaitsev \cite{MR3609198}. We shall outline some of the difficulties that arise.
First of all, one of the main difficulties for providing even a formal normal form here is perhaps the absence of polynomial models for the problem. For example, in the special class of infinite type
hypersurfaces of the form
$$\im w=(\re
w)\psi(|z|^2),\quad\psi(0)=0,\,\psi'(0)\neq 0,$$ all of which
contain the complex hypersurface $X=\{w=0\}$, {\em any} polynomial model
has an automorphism (isotropy) group of dimension $2$, while the hypersurface
$\im w=(\re w)\tan\left(\frac{1}{2}\arcsin |z|^2\right)$ has an automorphism group of dimension 5 (see \cite{MR2926319}, \cite{MR3319970}). This fact completely rules out the concept of a model in the sense of Poincar\'e-Moser, and thus the strategy of using polynomial models entirely fails in the infinite type setting. Secondly, and probably even
more importantly, the connection between
different notions of equivalence in the infinite type setting is more subtle and has
only become more clear in the last few years, by
work of Kossovskiy, Lamel and Shafikov: There
exist infinite type hypersurfaces $M$ and $M^*$ in $\C^2$
that are {formally but not biholomorphically equivalent}
\cite{divergence}, and there also exist $M$ and $M^*$ that are {CR equivalent but not biholomorphically equivalent} \cite{nonanalytic}.

On the other hand, it was shown by Kossovskiy-Lamel-Stolovitch \cite{KLS} that in this setting,
every formal equivalence arises as the Taylor series of a CR equivalence. The latter means, first of all, that $M$ and $M^*$ are formally equivalent if and only if they are CR equivalent. Furthermore, it allows for a classification of real hypersurfaces by means {\em  smooth normal forms}. The latter notion is introduced and discussed below.

Before describing the main results of this paper in more detail, we mention that there is a powerful analogy explaining the
distinction between finite type and infinite type hypersurfaces
by comparing with the
situation of regular and singular ordinary differential equations (ODEs):
while formal solutions of regular (analytic) ODEs converge, solutions of singular (analytic) ODEs
might diverge, but often extend to actual (smooth) solutions of the singular
ODEs in sectors.
(Actually, this is more than just an analogy, as we shall discuss in more detail
below.)











\subsection{Main results}
In light of the discussion above concerning the classification at finite type points, we only need to deal with
the classification of germs of Levi-nonflat real-analytic hypersurfaces $M\subset\CC{2}$
considered near a point of {\em infinite} type $p\in
M$. If $M$ is such a hypersurface, there is a unique germ of a
 complex hypersurface (complex curve)
 $X\subset M$ passing through $p$. The complex hypersurface
 $X$ consists
of all infinite type points in $M$ near $p$, it is nonsingular and we will
 also refer to it as the {\em infinite type locus of $M$}.
We say that $(M,p)$ is of {\em generic infinite type} if the canonical
extension of the  Levi
form from $M$ to its {\em complexification}
$M^{\CC{}}\subset\CC{2}\times\overline{\CC{2}}$ \,\,locally vanishes
only on the  complexification
$X^{\CC{}}\subset\CC{2}\times\overline{\CC{2}}$ of $X$. (We
refer the reader to Section 2 for
details). If $M$ is a Levi-nonflat real-analytic hypersurface
with infinite type locus $X$, then $M$ is of
generic infinite type at points $p$ outside of a
 proper (possibly empty) real-analytic subset of $X$. In what follows, we shall consider {\em only} germs $(M,p)$ of generic infinite type, and refer the reader to the discussion in Section 1.5 concerning invariants at infinite type points of { nongeneric} type.

We say that  local holomorphic coordinates $(z,w)$, where $w = u+iv$,   near $p$
are {\em admissible} (for $M$)   if in these coordinates   $p$ becomes the
origin and   $M$ is given by  \begin{equation}\Label{madmissiblereal}\label{madmissiblereal}
v=\frac{1}{2}u^m\left(\epsilon |z|^2+\sum_{k,l\geq 2}h_{kl}(u)z^k\bar
z^l\right)=: h(z,\bar z,u) ,\quad \epsilon=\pm 1 \end{equation} (such admissible
coordinates always exist under the generic infinite type assumption, see \cite{nonminimalODE}); in
particular, in these coordinates $X = \{ w = 0\}$.  The integer $m\geq 1$ is an
important invariant of an infinite type hypersurface; if we
want to be explicit, we are going to say that a hypersurface
$M$ as defined by \eqref{madmissiblereal} is of $m$-infinite type
(see Section 2 for details). For an even $m$, we can
further normalize $\epsilon$ to be equal to $1$, while for an odd $m$,
$\epsilon$ is a biholomorphic invariant. Note that the form
\eqref{madmissiblereal} is stable under the group of dilations
\begin{equation}\Label{dilations}\label{dilations}  z\mapsto \lambda z, \quad w\mapsto \mu w,
\quad \mu^{1-m}=\epsilon|\lambda|^2, \quad \lambda\in\CC{}\setminus\{0\}, \quad
\mu\in\RR{}. \end{equation}
Of particular importance for the normal form construction is the collection of Taylor coefficients
\begin{equation}\label{7jet}
J:=\bigl\{h_{22}(0),\,h_{23}(0),h_{33}(0),\,\,h_{24}(0),\,h_{25}(0),\,h_{34}(0)\bigr\},
\end{equation}
which is a part of the $(m+7)$-jet of $h$ at $0$.

We shall construct a (formal or holomorphic) normal form for hypersurfaces of the form \eqref{madmissiblereal}. By
 this, we mean that we  find
  a choice of (formal or holomorphic) admissible coordinates
  which become essentially {\em unique} by
 requiring more conditions on the defining function $h$
  in \eqref{madmissiblereal}.
  As already noted
above, we exploit a link between mappings of infinite type hypersurfaces
with the theory of singular ODEs. As a result, very analogously to the situation in the Poincar\'e-Dulac theory, we get a distinction between {\em nonresonant} and {\em resonant} hypersurfaces, and between classes of hypersurfaces leading to either {\em convergent}, {\em formal} and {\em smooth} normal forms. (We shall note that the existence of possible resonances for the problem became clear already from the above mentioned work \cite{MR3609198} of Ebenfelt-Lamel-Zaitsev).  Although we need to refer the reader
to Sections 5-6 for a detailed discussion of
the resonancy conditions, we point out here that we will
give explicit  polynomials $p$, defined on the
space of partial $(m+7)$-jets \eqref{7jet} of defining functions $h$ in \eqref{madmissiblereal} at $0$, such that the vanishing of
these polynomials corresponds to the resonant hypersurfaces.

It turns out that for the case $m=1$, the non-resonancy condition alone guarantees that the normal form we construct is {\em convergent} (which
is somehow expected  by the convergence result for CR-maps due to Juhlin-Lamel \cite{JLmz}). Further, the normal form is unique, up to a dilation \eqref{dilations}. Moreover, our normal form for $m=1$  coincides with the {\em formal normal form} obtained by Ebenfelt-Lamel-Zaitsev in \cite{MR3609198}. In this way, we prove the {\em convergence}  of the normal form in \cite{MR3609198}. We point out, however, that the normal forms  in \cite{MR3609198} and in the present paper are obtained in completely different ways.

In contrast, in the non-resonant case where $m>1$, we construct a formal
normal form, which is in general {\em
divergent}.  By the result of Kossovskiy-Shafikov referenced above,
we can {not} expect a convergent normal form here.
However, somewhat surprisingly, our formal normal form can be used to define a distinguished {\em smooth normal form} for each hypersurface under consideration. The smooth normal form of $M$ is a smooth nonminimal hypersurface $N$, which is real-analytic and Levi-nondegenerate outside the complex hypersurface $X=\{w=0\}\subset N$ through $0$. The smooth normal form is unique, up to a dilation \eqref{dilations} and the action of the vector field \eqref{wmdw} below. The tool that makes it possible to  distinguish  a canonical {\em smooth} representative of a formal normal form (within the infinite-dimensional space of such possible representatives) is the {\em multi-summability property} of the formal normalizing transformation; we refer to Section 2.6 in \cite{KLS} and also references therein for the details of this concept. Briefly, the algebra of $(k_1,...,k_s)$ multi-summable in a direction $d$  power series (where $k_j\in\mathbb N$, and $d\subset\CC{}$ is a ray originating at $0$) is a certain algebra of formal power series in the variable $w\in\CC{}$ each of which is the Taylor series at $0$ of a {distinguished} holomorphic function in a "large" sector $S$ bisected by $d$ and originating at the origin. The latter algebra can be generalized to the algebra of formal power series $f(z,w)$ that are holomorphic in the variable $z$ and are $(k_1,...,k_s)$ multi-summable in a direction $d$ in the variable $w$ (see Section 2.3 below), and this already allows us to define multi-summability of formal transformations. Braaksma's Fundamental Theorem \cite{braaksma} can be combined then with the formal normal form construction below and used to conclude that the formal normalizing transformation  in fact belongs to the above class of multi-summable transformations. Furthermore, the direction $d$, the "large" sector $S$ and the multi-summability orders $k_1,...,k_s$ can be read of uniquely from the partial $(m+7)$-jet $J$, in \eqref{7jet}, of the defining function of a hypersurface. (In fact, the collection of derivatives $J$ precisely corresponds to the principal matrix in Braaksma's theorem). Finally, as discussed in Section 2.3, the canonical sectorial realizations of multi-summable transformations can be used to produce the canonical smooth normal forms.     We shall also note here that the original idea of using multi-summability for obtaining smooth realizations of formal CR-maps goes back to the work \cite{KLS} of Kossovskiy-Lamel-Stolovitch.

We now formulate our main results in detail. Let us introduce, for each $m\geq 1$, the space $\mathcal N_m$ of (formal) power series $h(z,\bar z,u)$, as in \eqref{madmissiblereal}, satisfying, in addition,
\begin{equation}\Label{nspace1}\label{nspace1}
h_{22}'(u)=h_{23}'(u)=h_{32}'(u)\equiv 0, \,\,h_{33}^{(j)}(0)=0,\,j\notin\{0,m-1\}.
\end{equation}
In other words, we have
\begin{equation}\Label{constants}\label{constants}
h_{22}(u)=h_{22}(0), \quad h_{23}(u)=h_{23}(0),\quad h_{32}(u)=h_{32}(0), \quad
 h_{33}(u)=r+s u^{m-1},
\end{equation}
where $r,s\in\RR{}$ are  constants.

\begin{definition}\Label{innf}\label{innf}
We say that a hypersurface $M$ of the form \eqref{madmissiblereal} is in {\em normal form} if its defining function $v=h(z,\bar z,u)$  satisfies \eqref{nspace1}.
\end{definition}

Obviously, the dilations \eqref{dilations} preserve the property of being in normal form.
We now formulate our normalization results in the cases $m=1$ and $m>1$, respectively.

\begin{theorem}\Label{theor1}\label{theor1}
Let $M\subset\CC{2}$ be a real-analytic Levi-nonflat hypersurface, which is of
 $1$-infinite type at a point $p\in M$, and assume that
  that $p$ is a non-resonant point for $M$.
 Then there exists a biholomorphic transformation
\begin{equation}\Label{F}\label{F}
H:\,(\CC{2},p)\mapsto (\CC{2},0)
\end{equation}
bringing $M$ into  normal form. The normalizing transformation $H$ is uniquely determined by the restriction of its differential $dH|_p$ to the tangent space $T^{1,0}_p M$.
\end{theorem}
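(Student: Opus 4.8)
The plan is to construct the normalizing transformation $H$ by the homological method, exactly in the spirit of Moser's normal form but adapted to the infinite type model. Starting from a hypersurface in admissible form \eqref{madmissiblereal} with $m=1$, I would look for a formal change of coordinates $(z,w)\mapsto(z^*,w^*)=H(z,w)$, normalized so that the ``identity part'' of $H$ is prescribed by $dH|_p$ restricted to $T^{1,0}_pM$, and then show that after applying $H$ the new defining function lies in the space $\mathcal N_1$ cut out by \eqref{nspace1}. Writing $H$ order by order in the weighted grading attached to \eqref{dilations}, the condition $H(M)\subset M^*$ with $M^*$ in normal form produces at each weight a linear ``homological equation'' for the unknown Taylor coefficients of $H$ in terms of the already-determined lower-order data and the coefficients of $h$. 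The first task is to write this homological operator explicitly and to verify that, for $m=1$ and at a non-resonant point, it is invertible on the relevant complement: the non-resonancy condition (the non-vanishing of the explicit jet polynomials $p$ referred to in the text, evaluated on $J$ as in \eqref{7jet}) is precisely what guarantees solvability of the homological equations that determine $h_{33}^{(j)}(0)$ for $j\notin\{0\}$ and the vanishing of $h_{22}'$, $h_{23}'$, $h_{32}'$. Solving these equations recursively yields a unique formal transformation $H$ with the prescribed linear data bringing $M$ to normal form.

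The second, and for $m=1$ decisive, task is convergence. Here I would invoke the link with singular ODEs that the paper sets up: the CR structure of an infinite type hypersurface in $\CC 2$ corresponds to a singular ODE, and for $m=1$ the singularity is of the mildest kind. The clean route is to appeal to the convergence theorem for CR maps of Juhlin--Lamel \cite{JLmz} (cited in the excerpt as giving exactly this expectation): any formal CR map between the relevant real-analytic hypersurfaces converges. Since $H$ constructed above is a formal invertible map with $H(M)\subset M^*$ and both $M$, $M^*$ real-analytic, \cite{JLmz} upgrades $H$ to a genuine biholomorphism $(\CC 2,p)\to(\CC 2,0)$, and the normal form \eqref{madmissiblereal}--\eqref{nspace1} is then achieved in holomorphic admissible coordinates. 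Alternatively, one could run the $m>1$ machinery of Sections~5--6 in the degenerate case $m=1$, where multi-summability in the single direction becomes ordinary convergence because the associated ``large'' sector is the whole punctured disc; I would mention this as the internally self-contained option and use \cite{JLmz} as the quick argument.

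Finally, uniqueness. Suppose $H_1$ and $H_2$ both bring $M$ into normal form with the same differential data on $T^{1,0}_pM$. Then $G:=H_2\circ H_1^{-1}$ is a biholomorphism between two hypersurfaces in normal form fixing the origin, inducing the identity on $T^{1,0}_0$; one shows $G=\mathrm{id}$. The argument is again homological: $G$ preserves $X=\{w=0\}$ and the weighted structure, so $G$ is weighted-triangular, and comparing Taylor coefficients of the two normal-form defining functions under $G$ forces, weight by weight, all nontrivial coefficients of $G$ to vanish — here one uses that in normal form the ``gauge freedom'' has been exhausted except for the dilations \eqref{dilations}, and that the non-resonancy condition rules out any residual infinitesimal symmetry beyond those dilations and the prescribed linear action. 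Since the dilation component of $G$ is fixed by the hypothesis on $dG|_0$, we get $G=\mathrm{id}$, i.e. $H_1=H_2$.

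The main obstacle I expect is not the recursion itself but identifying and proving the exact form of the non-resonancy condition: one must isolate, among all the homological equations, the finitely many ``critical'' weights where the homological operator can degenerate, show that its determinant there is an explicit polynomial in the partial jet $J$ of \eqref{7jet}, and check that non-vanishing of that polynomial simultaneously secures both solvability (existence of $H$) and the absence of residual symmetries (uniqueness). Getting the bookkeeping of weights right — so that exactly the coefficients listed in \eqref{nspace1} are the ones normalized and the $h_{33}$ term at weight $m-1$ is genuinely unremovable — is the delicate part; the convergence step, by contrast, is essentially a citation once the formal picture is in place.
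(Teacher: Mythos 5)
Your formal construction is plausible in outline, but it diverges from the paper's method and, more importantly, the convergence step contains a genuine gap. The paper does not run a Moser-type homological scheme on the defining equation at all (it explicitly argues that the Poincar\'e--Moser model strategy fails at infinite type); instead it passes to the associated singular ODE $\mathcal E(M)$, shows that the Cauchy data $(f_0,f_1,g_0,g_1)$ of any normalizing map must solve the singular second-order system \eqref{merom}, rewrites this for $m=1$ as a Briot--Bouquet system \eqref{merom2}, and obtains existence, uniqueness \emph{and convergence} simultaneously from the classical fact that formal solutions of Briot--Bouquet ODEs vanishing at the origin converge, with non-resonance defined as the absence of positive integer eigenvalues of the linearization \eqref{mat}. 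It then still has to descend back to real hypersurfaces via the duality and conjugation of Segre families (Proposition \ref{key}) and the transfer formulas \eqref{Phih}.

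The gap in your argument is the appeal to Juhlin--Lamel to upgrade the formal $H$ to a biholomorphism. That theorem concerns formal maps between two germs of \emph{real-analytic} hypersurfaces; in your construction the target $M^*$ is the formal normal form, whose defining series is a priori only a formal power series --- its convergence is exactly what the theorem you are proving must establish. So the citation is circular: you cannot apply a convergence theorem for maps into $M^*$ before knowing $M^*$ is real-analytic. Nor is convergence automatic from the formal recursion: for $m>1$ the identical formal scheme produces a \emph{divergent} normal form, so some structural input specific to $m=1$ is indispensable. Your ``alternative'' of specializing the $m>1$ multi-summability machinery is also not how the paper handles $m=1$; the mechanism there is the Briot--Bouquet property of the system satisfied by the Cauchy data of the normalizing map, not multi-summability degenerating to convergence. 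Without an argument of this kind (or an independent proof that the formal normal form of a real-analytic $1$-infinite type germ is convergent), your proof establishes only the formal statement, i.e.\ essentially the content of \cite{MR3609198}, not \autoref{theor1}. The formal bookkeeping and the uniqueness discussion are reasonable in spirit, though verifying that the degeneracies of your homological operator are captured exactly by the degree-$7$ polynomial condition \eqref{hk} on the jet $J$ would itself require reproducing most of the paper's computation in Section 5.
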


\begin{remark} We remark that the normal form in Theorem \ref{theor1} solves the holomorphic equivalence problem in the following way: {\em Two hypersurfaces $M,M^*$  as above  are biholomorphically equivalent at their points $p,p^*$ respectively,  if and only if in some (and hence any) normal forms  at these points there exists a dilation \eqref{dilations} transforming the normal form of one into the other.}

\end{remark}

A typical application of any normal form is a bound
on the
dimension of the isotropy algebras  of the
hypersurfaces under consideration, which we also have here.

\begin{corollary}\Label{cor1}\label{cor1}
For a hypersurface $(M,p)$ satisfying the assumptions of
\autoref{theor1}, we have $\dim_\R \mathfrak{aut} (M,p)\leq 2$.
\end{corollary}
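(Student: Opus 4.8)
The plan is to deduce the bound on $\dim_\R \mathfrak{aut}(M,p)$ directly from the uniqueness clause in \autoref{theor1}, by the standard isotropy-via-jets argument. First I would put $M$ into normal form using \autoref{theor1}, so without loss of generality $p = 0$ and $M$ is already in normal form. Any element of the isotropy group $\Aut(M,0)$ is then a biholomorphic self-equivalence $H \colon (\CC{2},0) \to (\CC{2},0)$ with $H(M) \subset M$; since $M$ is in normal form and $H$ is a self-map, $H$ must carry the normal form into a normal form, hence by the uniqueness statement $H$ is completely determined by the restriction of $dH|_0$ to $T^{1,0}_0 M$. The space $T^{1,0}_0 M$ is one (complex) dimensional, spanned by $\partial/\partial z$, so $dH|_0|_{T^{1,0}_0 M}$ is multiplication by a single nonzero scalar $\lambda \in \CC{}\setminus\{0\}$. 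This gives an injective map $\Aut(M,0) \hookrightarrow \CC{}\setminus\{0\}$.

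Next I would identify the image. The dilations \eqref{dilations} are biholomorphic self-maps of $M$ (they preserve the admissible form and the normal form), and their action on $T^{1,0}_0 M$ is precisely multiplication by $\lambda$; here $\mu$ is forced to be real and to satisfy $\mu^{1-m} = \epsilon |\lambda|^2$, so for $m \geq 1$ the parameter $\mu$ is determined by $|\lambda|$ (for $m=1$ one needs $\epsilon|\lambda|^2 = 1$, and the dilation subgroup is the circle; for $m>1$ it is a two-real-dimensional group, but we only need $m=1$ for the corollary) — in any case the dilations already realize a subgroup of $\CC{}\setminus\{0\}$ of real dimension at most $2$. Combining the injection $\Aut(M,0)\hookrightarrow\CC{}\setminus\{0\}$ with the fact that this is a Lie group homomorphism (or, at the infinitesimal level, that $\mathfrak{aut}(M,0)$ injects into the Lie algebra of $\CC{}\setminus\{0\}$, which is $\CC{}$, of real dimension $2$), we conclude $\dim_\R \mathfrak{aut}(M,0) \leq 2$.

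To make the infinitesimal statement precise I would argue as follows: a real-analytic infinitesimal CR automorphism $Z \in \mathfrak{aut}(M,0)$ integrates to a one-parameter family $H_t \in \Aut(M,0)$ (using that infinitesimal automorphisms are complete as germs, or simply working with jets), and the assignment $Z \mapsto$ (the scalar by which $dH_t|_0$ acts on $T^{1,0}_0 M$, differentiated at $t=0$) is a linear injection of $\mathfrak{aut}(M,0)$ into $\CC{}$ by the same uniqueness argument: if the linearization of $Z$ at $0$ vanishes on $T^{1,0}_0 M$ then every $H_t$ has $dH_t|_0|_{T^{1,0}_0 M} = \id$, hence $H_t = \id$ for all $t$, hence $Z = 0$. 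Therefore $\dim_\R \mathfrak{aut}(M,0) \leq \dim_\R \CC{} = 2$.

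The only genuinely delicate point — and thus the main obstacle — is ensuring that the uniqueness clause of \autoref{theor1} applies verbatim to \emph{self}-maps, i.e.\ that a biholomorphism $H$ with $H(M)\subset M$ and $M$ already in normal form must send $M$ to a hypersurface in normal form (so that the normalization theorem's uniqueness can be invoked). This is immediate from the definition of normal form (Definition \ref{innf}) as a condition on the defining function, since $H(M)$ \emph{is} $M$ up to the coordinate change $H$, which is admissible; but one should state this cleanly, and also note that one must first verify $H$ preserves admissibility (i.e.\ maps $X=\{w=0\}$ to itself and preserves the form \eqref{madmissiblereal}), which follows because $X$ is intrinsically characterized as the infinite type locus of $M$ and is therefore $H$-invariant, and the integer $m$ and the form \eqref{madmissiblereal} are biholomorphic invariants as recorded after \eqref{madmissiblereal}. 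Granting that, the corollary is a formal consequence of \autoref{theor1}.
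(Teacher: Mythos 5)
Your argument is correct and is precisely the standard parameter count that the paper has in mind (the corollary is stated there without proof, as a ``typical application'' of the uniqueness clause of \autoref{theor1}): an isotropy of a hypersurface already in normal form is itself a normalizing transformation, hence is determined by $dH|_p\big|_{T^{1,0}_pM}\in \GL(T^{1,0}_pM)\cong\C^*$, which injects $\mathfrak{aut}(M,p)$ into $\C$ and gives $\dim_\R\mathfrak{aut}(M,p)\leq 2$. One small slip in your aside: for $m=1$ the constraint $\mu^{1-m}=\epsilon|\lambda|^2$ reads $\epsilon|\lambda|^2=1$ and leaves $\mu\in\RR{}\setminus\{0\}$ free, so the dilation group is $S^1\times(\RR{}\setminus\{0\})$ rather than a circle — this is immaterial for the upper bound you are proving, but it does mean the projection to $\lambda$ does not faithfully capture the dilations, so the ``identification of the image'' paragraph should not be leaned on.
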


Another application that our normal form allows for is the unique determination of a CR-map between hypersurfaces by its finite jet at a point.

\begin{corollary}\Label{cor1}\label{cor11}
Let $H:\,(M,p)\mapsto (M^*,p^*)$ be a local biholomorphism of two hypersurfaces  satisfying the assumptions of
\autoref{theor1}. Then $H$ is uniquely determined by its $1$-jet at the point $p$.
\end{corollary}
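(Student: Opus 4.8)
The plan is to deduce \autoref{cor11} directly from the uniqueness statement in \autoref{theor1} by the standard argument that passes through the normal form. First I would fix admissible coordinates so that $p=p^*=0$, and observe that since both $M$ and $M^*$ are of $1$-infinite type at non-resonant points, \autoref{theor1} provides biholomorphic transformations $H_1:(\CC2,0)\to(\CC2,0)$ and $H_2:(\CC2,0)\to(\CC2,0)$ bringing $M$ and $M^*$ respectively into normal form, say onto hypersurfaces $N$ and $N^*$. Given the map $H:(M,0)\to(M^*,0)$, the composition $\tilde H := H_2\circ H\circ H_1^{-1}$ is a biholomorphism taking the normalized hypersurface $N$ onto the normalized hypersurface $N^*$. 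The key point is that $N$ and $N^*$ are \emph{both} already in normal form, so $\tilde H$ is a normalizing transformation for $N$ (in the sense that it takes $N$ to the normal-form hypersurface $N^*$); hence by the uniqueness clause of \autoref{theor1}, $\tilde H$ is the unique such transformation having the given restriction $d\tilde H|_0$ on $T^{1,0}_0 N$. In particular, $\tilde H$ is determined by $d\tilde H|_0$, and therefore $H = H_2^{-1}\circ\tilde H\circ H_1$ is determined by $dH|_0$ together with the fixed data $H_1, H_2$.

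Next I would upgrade ``determined by $dH|_0$'' to ``determined by the $1$-jet of $H$ at $p$''. Since $H_1, H_2$ depend only on $M, M^*$ (and the choice of $dH_i|_0$, which we may fix once and for all, e.g. requiring it to be the identity on $T^{1,0}_0$), the dependence of $H$ on $\tilde H$ is through composition with fixed biholomorphisms, so $d\tilde H|_0 = dH_2|_0\circ dH|_0\circ (dH_1|_0)^{-1}$ is an invertible linear function of $dH|_0$. Thus $H$ is determined by $dH|_0$, i.e.\ by the linear part of $H$ at $p$; since $H(p)=p^*$ is prescribed and the zeroth-order term is fixed, knowing $dH|_0$ is the same as knowing the $1$-jet of $H$ at $p$. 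This gives the claimed uniqueness.

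One subtlety to handle carefully is that \autoref{theor1} asserts uniqueness of the normalizing transformation only among those transformations \emph{into normal form} with a prescribed $dH|_{T^{1,0}_pM}$; one must therefore check that $\tilde H$ indeed lands in normal form, which it does precisely because $N^*$ was constructed to be in normal form. A second point is that the normalizing maps $H_1,H_2$ are, a priori, determined only up to the freedom in \autoref{theor1}, but this freedom (the choice of $dH_i|_p$) can be pinned down by an arbitrary but fixed convention, after which the argument is deterministic. The main obstacle, such as it is, is purely bookkeeping: tracking that all the linear parts compose correctly and that the restriction to $T^{1,0}$ (rather than the full differential) carries enough information — this works because a biholomorphism in admissible coordinates preserving $X=\{w=0\}$ and the form \eqref{madmissiblereal} is, in its linear part, determined by its action on the $z$-direction together with the dilation data, both of which are encoded in $dH|_p$. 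I expect no genuinely hard step here; the corollary is a formal consequence of the uniqueness in \autoref{theor1}.
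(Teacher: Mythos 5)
Your argument is correct and is the intended derivation: the paper states \autoref{cor11} without proof as an immediate consequence of the uniqueness clause in \autoref{theor1}, and your reduction via composition with normalizing transformations (so that two maps $H,H'$ with the same $1$-jet yield normalizing maps with the same restriction of the differential to $T^{1,0}_pM$, hence equal) is exactly that consequence. One small simplification: composing only on the target side and comparing $H_2\circ H$ with $H_2\circ H'$ as normalizing transformations of $M$ itself lets you invoke the uniqueness clause directly for $M$, avoiding the need to verify that the normalized hypersurface $N$ again satisfies the non-resonance hypothesis.
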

Note that \autoref{cor11} gives a specific bound for
the jet order needed for determination, thereby
improving   the general finite jet determination result of Ebenfelt-Lamel-Zaitsev in \cite{MR1995799} in
this specific case.

For $m>1$, we first give the general formal normal form.

\begin{theorem}\label{theor2}
Let $M\subset\CC{2}$ be a real-analytic Levi-nonflat hypersurface, which is of
$m$-infinite type at a point $p\in M$ for some $m \geq 2$, and assume
 that $p$ is a non-resonant point for $M$.  Then there exists a formal invertible transformation
\begin{equation}\Label{F2}\label{F2}
\widehat H:\,(\CC{2},p)\mapsto (\CC{2},0)
\end{equation}
bringing $M$ into  normal form. The normalizing transformation $\widehat H$ is uniquely determined by the restriction of its differential $d\widehat H|_p$ to the tangent space $T^{1,0}_p M$ and the transverse $m$-th order derivative of the transverse component of $\widehat H$.
\end{theorem}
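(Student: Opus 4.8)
The plan is to construct the normalizing transformation $\widehat H$ in two stages, mimicking the classical homological approach of Poincar\'e--Moser but carried out at the level of formal power series. First I would reduce the problem to understanding the action of admissible formal coordinate changes on defining functions of the form \eqref{madmissiblereal}. A generic formal map preserving the infinite type locus $X = \{w=0\}$ and the admissible form has the shape $z^* = f(z,w) = \lambda z + O(2)$, $w^* = g(w) = \mu w + O(2)$ (with the transverse component depending only on $w$, a consequence of the generic infinite type assumption and the structure of $X$); after a dilation \eqref{dilations} we may normalize $\lambda$ and $\mu$. Substituting such a map into the target equation $v^* = h^*(z^*,\bar z^*, u^*)$ and expanding, one obtains, order by order in the weighted grading attached to \eqref{madmissiblereal}, a sequence of \emph{linear} equations (the homological equations) relating the unknown Taylor coefficients of $f$ and $g$ to the coefficients $h_{kl}(u)$ of $M$ and the desired normalized coefficients $h^*_{kl}(u)$. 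The goal is to show that the conditions \eqref{nspace1} — i.e.\ $h^*_{22}, h^*_{23}, h^*_{32}$ constant and $h^*_{33}$ of the special form $r + s u^{m-1}$ — can be met, and that doing so determines all the remaining freedom in $f,g$ except for the two stated normalizations.

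The key steps, in order, would be: (i) set up the weighted expansion and identify, for each bidegree $(k,l)$ in $z,\bar z$ and each power of $u$, which coefficient of $f$ or $g$ is the ``leading'' unknown that can be solved for from the corresponding homological equation; (ii) check that for the ``free'' slots — the constant terms of $h_{22}, h_{23}, h_{32}$, the $u^0$ and $u^{m-1}$ coefficients of $h_{33}$ — the relevant homological operator has a kernel, so these coefficients survive as genuine invariants ($r$, $s$, and the others), while for every other slot the operator is invertible and the normalization $h^*_{\bullet} \equiv (\text{const or prescribed})$ uniquely pins down a Taylor coefficient of $f$ or $g$; (iii) verify that exactly two coefficients are never determined by any homological equation, namely the linear coefficient of $f$ (equivalently $dH|_p$ restricted to $T^{1,0}_pM$) and the $w^m$-coefficient of $g$ (the ``transverse $m$-th order derivative of the transverse component''), which is precisely the asserted parametrization of the solution set; (iv) assemble the infinitely many solved coefficients into formal power series $f,g$ and conclude that $\widehat H = (f,g)$ is a formal invertible map bringing $M$ to normal form. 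The non-resonancy hypothesis enters exactly at step (ii): resonances are, by definition (Sections 5--6), the vanishing of certain polynomials in the jet $J$ of \eqref{7jet}, and these are precisely the determinants of the homological operators governing the slots just beyond $h_{33}$; non-resonancy guarantees those operators are invertible so the normalization can be pushed through all orders.

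The main obstacle I anticipate is step (ii): the bookkeeping of which homological operator governs which coefficient is delicate because the $u^m$ prefactor in \eqref{madmissiblereal} shifts the effective weights, and because the transverse component $g$ depends only on $w$ while the tangential component $f$ depends on both $z$ and $w$ — so a single coefficient of $g$ feeds into infinitely many homological equations (all bidegrees $(k,l)$ at a fixed $u$-power), which must be handled consistently. Showing that the freedom collapses to exactly the two stated parameters — no more, no fewer — requires a careful triangularity argument: one must order the coefficients so that each homological equation introduces exactly one new unknown, and then identify the two ``initial conditions'' that are never constrained. Concretely, the transverse coefficient at order $m$ is special because it is the lowest-order transverse coefficient on which the normalization conditions \eqref{nspace1} impose no constraint (the $u^{m-1}$ freedom in $h_{33}$ corresponds dually to the undetermined $w^m$ in $g$, by the $\epsilon|\lambda|^2 = \mu^{1-m}$ resonance visible already in the dilation group \eqref{dilations}). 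Once this combinatorial skeleton is in place, the actual solvability at each step is routine linear algebra over $\R$ or $\C$, and divergence of the resulting series is expected and not an obstruction here, since Theorem \ref{theor2} only claims a \emph{formal} normal form.
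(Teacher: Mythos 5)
Your strategy---a direct Moser-style homological normalization of the real defining function $h(z,\bar z,u)$---is genuinely different from the paper's route, which first replaces $M$ by its associated singular ODE $\mathcal{E}(M)$ via the Segre family (Sections 3--4), reduces the normalization to a Cauchy problem whose data $(f_0,f_1,g_0,g_1)$ satisfies the coupled singular system \eqref{merom}, solves that system formally in Section 6, and only then recovers the reality of the normal form in Section 7 via dual and conjugated Segre families. Your approach would keep reality automatic, which is a real simplification if it could be pushed through; but as written it has two genuine gaps.

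First, your description of the transformation group is wrong: you restrict to maps whose transverse component $w^*=g(w)$ depends only on $w$, claiming this follows from the generic infinite type assumption. By \autoref{specialmap} the actual constraints are only $G(z,0)=0$ and $G_z(0,w)=O(w^{m+1})$; the group element has the form $w\mapsto w+wg_0(w)+w^mg(z,w)$ with $g=O(zw)$, and the $z$-linear part $g_1(w)=w^{-m}G_z(0,w)$ is one of the \emph{four} Cauchy-data unknowns. The four normalization conditions \eqref{nspace1} lead at each order in $u$ to a coupled $4\times 4$ linear system \eqref{hk1} in the four coefficients of $(g_0,g_1,f_0,f_1)$; discarding $g_1$ leaves three unknowns against four conditions, so the normal form is in general not reachable within your group. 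Second, the non-resonance hypothesis of the theorem is, by definition, the invertibility of the specific matrices \eqref{44} and \eqref{33} built from the partial jet $J$ in \eqref{7jet}; you invoke ``invertibility of the homological operators'' without identifying those operators with these matrices, so the stated hypothesis is never actually used. Relatedly, the ``triangularity'' picture in which each homological equation introduces exactly one new unknown is not what happens: one must solve the coupled systems \eqref{hk1} simultaneously, and at the exceptional order $k=m-1$ the system \eqref{hm-1} is \emph{always} degenerate. That single degeneracy is what produces both the free parameter $\tau$ (your $w^m$-coefficient of $g$, which you correctly identify) and the fact that one normalization slot---the $u^{m-1}$-coefficient of $h_{33}$, i.e.\ the coefficient $*$ of \autoref{star}---must be surrendered and becomes an invariant rather than being normalizable to zero. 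Your sketch records the free parameter but not the accompanying solvability constraint, and without the correct group and the identification of the resonance matrices the argument does not close.
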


\begin{remark}\Label{3param}\label{3param} In a coordinate system $(z,w)$
where $p=0$ and $T^{1,0}_p=\{w=0\}$, the normalizing transformation $H=(F,G)$ in \autoref{theor2} is uniquely determined by the complex parameter $\lambda:=F_z(0,0)$ and by the real parameter
\begin{equation}\Label{realparam}\label{realparam}
\tau:=\frac{1}{m!}\frac{\partial^{m} G}{\partial w^{m}}(0,0).
\end{equation}
Alternatively, a normalizing transformation is determined uniquely, up to the right action of the group of dilations \eqref{dilations} and the flow of the vector field
\begin{equation}\Label{wmdw}\label{wmdw}
w^m\frac{\partial}{\partial w}.
\end{equation}
\end{remark}

As discussed above, the assertion of \autoref{theor2} can be significantly strengthened.
\begin{theorem}\label{theor3}
Let $M\subset\CC{2}$ be a real-analytic Levi-nonflat hypersurface, which is of
$m$-infinite type at a point $p\in M$ for some $m \geq 2$, and assume
 that $p$ is a non-resonant point for $M$.  Then there exists a canonical choice of smooth CR-functions $F,G$ on $M$, $F(p)=G(p)=0$, such that the smooth map $H=(F,G)$ is invertible at $p$ and transforms $M$ into a smooth nonminimal hypersurface $N\subset\CC{2}$ the formal expansion of which at the origin is in normal form \eqref{nspace1}.  The normalizing transformation $H\colon M \to N$ is uniquely determined by the restriction of its differential $dH|_p$ to the tangent space $T^{1,0}_p M$ and the transverse $m$-th order derivative of the transverse component of $H$.

\end{theorem}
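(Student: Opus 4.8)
The plan is to obtain the smooth normalization by \emph{summing} the formal normalization of \autoref{theor2}. Concretely, I would start from the formal invertible transformation $\widehat H$ produced by \autoref{theor2}, which by construction is uniquely pinned down by the $1$-jet data described there, show that $\widehat H$ is multi-summable in a canonical direction, and interpret its sum on the real hypersurface $M$; the resulting map is the desired $H=(F,G)$ and $N:=H(M)$ is the smooth normal form. The two inputs that make this work are the CR--DS correspondence, which converts the normalization problem into one about transformations of singular ODEs, and Braaksma's Fundamental Theorem \cite{braaksma}, which supplies multi-summability of such transformations; the passage from the sectorial sums back to a $C^\infty$ CR object on $M$ is the mechanism of Kossovskiy--Lamel--Stolovitch \cite{KLS}.

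First I would recall, from the CR--DS dictionary, that a hypersurface $M$ in admissible coordinates \eqref{madmissiblereal} is encoded by a singular second-order ODE $\6E_M$ with an irregular singularity at $w=0$, that biholomorphic (resp.\ formal) maps of hypersurfaces correspond to biholomorphic (resp.\ formal) transformations of the associated ODEs, and that the normal form \eqref{nspace1} corresponds to a distinguished normal-form equation $\6E_N$. Thus the formal transformation $\widehat H$ of \autoref{theor2} yields a formal transformation conjugating $\6E_M$ to $\6E_N$. I would then verify that this transformation falls under the scope of Braaksma's theorem, i.e.\ that $\6E_M$ and $\6E_N$ are meromorphic systems at $w=0$ whose formal equivalences are $(k_1,\dots,k_s)$-multi-summable in every non-singular direction; the levels $k_1,\dots,k_s$, the finite set of singular directions, and hence a canonical large sector $S$ bisected by a canonical direction $d$ are read off from the Newton data of $\6E_M$, which by the explicit construction is exactly the partial $(m+7)$-jet $J$ of \eqref{7jet} --- indeed $J$ plays the role of the principal matrix in Braaksma's theorem. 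This produces a canonical holomorphic sum $H^S$ of $\widehat H$ on $S\times\{|z|<\rho\}$ with asymptotic expansion $\widehat H$.

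Next I would convert $H^S$ into a smooth CR map on $M$. Using the reality condition defining $M$ and the relation between $M$, its complexification, and the sectorial domain, the sum $H^S$ and its reflection across $X$ assemble to a map $H$ which is holomorphic --- hence real-analytic --- on $M\setminus X$, and which extends $C^\infty$ across $X$ with Taylor expansion $\widehat H$ at $p$: the obstruction to $C^\infty$ gluing is the difference of overlapping sectorial sums, which is Gevrey-flat at $w=0$, so every partial derivative of $H$ extends continuously across $X$ and agrees with the corresponding coefficient of $\widehat H$; this is precisely the passage from multi-summable formal maps to smooth CR maps in \cite{KLS}. Since $dH|_p=d\widehat H|_p$ is invertible, $H$ is invertible at $p$. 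Setting $N:=H(M)$, the formal expansion of $N$ at the origin equals the image of $M$ under $\widehat H$, hence is in normal form \eqref{nspace1}; $N$ is real-analytic off $X$ because $H$ is holomorphic there; and the normal form together with the non-resonance assumption (so that the genericity of the infinite type and the sign $\epsilon=\pm1$ persist) forces the Levi form of $N$ to vanish only along $X$, so $N$ is Levi-nondegenerate on $N\setminus X$, i.e.\ $N$ is a smooth nonminimal hypersurface with the stated properties.

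Finally, uniqueness: if $H'=(F',G')$ is another smooth CR normalization of $M$ with the same restricted differential on $T^{1,0}_pM$ and the same transverse $m$-th order derivative of the transverse component, then its formal Taylor series $\widehat H'$ at $p$ is a formal normalization with the same $1$-jet data, so $\widehat H'=\widehat H$ by the uniqueness in \autoref{theor2}; since $H'$ is real-analytic on the connected set $M\setminus X$ and there coincides with the canonical $d$-sum of $\widehat H'=\widehat H$ (uniqueness of the multi-sum), while being $C^\infty$ across $X$ with Taylor series $\widehat H$, we obtain $H'=H$. I expect the main obstacle to be the multi-summability step: identifying the meromorphic ODE systems attached to $M$ and to the normal form, checking that the formal normalization is genuinely a formal transformation of these systems in the sense required by Braaksma's theorem, and extracting the levels, the singular directions and the canonical sector $S$ from $J$ --- together with the subsequent verification that the sectorial sums actually glue to a $C^\infty$ CR \emph{diffeomorphism} onto a hypersurface, and not merely to a sectorial map. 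Non-resonance enters precisely to ensure that the Newton and Stokes data are non-degenerate enough for this machinery to apply without additional exceptional phenomena.
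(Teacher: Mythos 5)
Your overall strategy is exactly the paper's: take the formal normalization $\widehat H$ of \autoref{theor2}, establish multi-summability via Braaksma, realize it sectorially \`a la \cite{KLS}, and get uniqueness from the injectivity of the Borel map on multi-summable classes. The one genuine gap is the step you yourself flag as the main obstacle, and it is worth being precise about why your formulation of it would not go through as written: Braaksma's theorem is a statement about formal power-series \emph{solutions} of a meromorphic system $w^m y'=F(w,y)$, not about ``formal equivalences'' between two second-order singular ODEs $\mathcal{E}_M$ and $\mathcal{E}_N$. To bring $\widehat H$ within its scope you must first reduce the normalization problem to a Cauchy problem: by \autoref{Cproblem} the map $\widehat H=(\tilde f,\tilde g)$ is an analytic function (via the regular Cauchy--Kowalevskaya system \eqref{system1}) of its Cauchy data $Y(w)=(f_0,f_1,g_0,g_1)$, and it is this vector of one-variable series that satisfies the singular second-order system \eqref{merom}, which after the standard substitution becomes a first-order system of the form \eqref{mer}. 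Braaksma is applied to \emph{that} system; the multi-summability of $Y$ then propagates to $\widehat H$ precisely because $\widehat H$ lies in the class $R(z,u_1(w),\dots,u_l(w))$ of Section 2.3, with $R$ holomorphic. Relatedly, the data determining the levels $k_j$ and the admissible directions is the principal matrix \eqref{44} of the reduced system (which is what \eqref{7jet} computes), not a ``Newton polygon of $\mathcal{E}_M$'' in the abstract.

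Two smaller points. First, the two sectors $S^{\pm}$ of Section 2.3 are \emph{disjoint} and contain $\RR{+}$ and $\RR{-}$ respectively, covering the two components of $M\setminus X$ near $p$; there is no overlap region, so the $C^\infty$ extension across $X$ comes from the asymptotic expansion of each sectorial sum (every derivative extends continuously with the prescribed limit), not from Gevrey-flatness of a difference of overlapping sums. Second, for uniqueness the paper does not argue that an arbitrary competing smooth normalization must coincide with the canonical one; the canonicity is \emph{by construction}, i.e.\ the smooth representative is singled out by fixing the levels $k_j$ and the directions $d^{\pm}$ and invoking injectivity of the Borel map within that class (cf.\ \autoref{canonical}). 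Your stronger uniqueness claim for an arbitrary smooth CR normalization $H'$ would require knowing a priori that $H'$ belongs to the same multi-summable class, which is not part of the statement being proved.
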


\begin{remark}\label{canonical} The smooth hypersurface $N$ is called {\em a smooth normal form} of $M$ at $p$. The canonical choice, as mentioned above, of the smooth transformation $H$ in \autoref{theor3} (and hence the smooth normal form $N$) is accomplished by using the $(k_1,...,k_s)$ multi-summability of the formal normalizing transformation $\widehat H$ in a direction $d$, and choosing subsequently the unique sectorial representative $H$ of $\widehat H$. The multi-summability levels  $(k_1,...,k_s)$ and the direction $d$ are uniquely determined by the partial $(m+7)$-jet $J$, as in \eqref{7jet}, of the defining function $h$ at $0$ (see also \autoref{canonical2}). That is, the procedure of assigning to a hypersurface its smooth normal form is well defined within each class of hypersurfaces with fixed data \eqref{7jet} satisfying the conditions of \autoref{theor3}.
\end{remark}

\begin{remark}\label{rem:equivalence} \autoref{theor3} solves the smooth CR-equivalence problem: {\it Two hypersurfaces as above are $C^\infty$ CR-equivalent if and only if their smooth (or, equivalently, formal) normal forms coincide, for some choice of the normal forms.}
\end{remark}

\begin{corollary}\Label{cor2}\label{cor2}
For a hypersurface $(M,p)$ satisfying the assumptions of
\autoref{theor2}, we have $\dim_\R \mathfrak{aut} (M,p)\leq 3$.
\end{corollary}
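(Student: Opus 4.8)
The plan is to derive the bound from the uniqueness assertion of \autoref{theor2} (equivalently, from \autoref{3param}), in the classical way in which a normal form constrains the isotropy. First I would reduce to the case where $M$ is already in normal form: by \autoref{theor2} there is a formal invertible transformation $\widehat{H}\colon(\CC{2},p)\to(\CC{2},0)$ carrying $M$ onto a hypersurface $N$ in normal form \eqref{nspace1}, and pushing vector fields forward by $\widehat{H}$ gives an isomorphism of the formal isotropy algebras $\widehat{\mathfrak{aut}}(M,p)\cong\widehat{\mathfrak{aut}}(N,0)$. Since a holomorphic infinitesimal automorphism of $(M,p)$ is in particular a formal one, $\mathfrak{aut}(M,p)$ embeds into $\widehat{\mathfrak{aut}}(M,p)$; hence it suffices to show $\dim_\R\widehat{\mathfrak{aut}}(N,0)\le 3$.

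Next I would observe that every formal automorphism $\widehat{\psi}$ of $N$ (a formal invertible self-map fixing $0$ with $\widehat{\psi}(N)=N$) is automatically a \emph{formal normalizing transformation} of $N$, since $N$ is already in normal form. Therefore \autoref{theor2} and \autoref{3param} apply to $\widehat{\psi}$: the set of formal normalizing transformations of $N$ is a single orbit under the action of the group $G$ generated by the dilations \eqref{dilations} and the one-parameter flow of the vector field \eqref{wmdw}, and since the identity lies in this set we get $\widehat{\mathrm{Aut}}(N,0)\subseteq G$. Equivalently, by the parameter description in \autoref{3param}, $\widehat{\psi}$ is uniquely determined by the pair $(\lambda_{\widehat{\psi}},\tau_{\widehat{\psi}})\in(\CC{}\setminus\{0\})\times\R$, so $\widehat{\mathrm{Aut}}(N,0)$ injects into this $3$-real-dimensional parameter set. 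In either description $\widehat{\mathrm{Aut}}(N,0)$ is the subset of $G$ cut out by the real-analytic equations expressing that $g\in G$ preserves the formal defining equation of $N$, hence a Lie subgroup of $G$; consequently $\dim_\R\widehat{\mathfrak{aut}}(N,0)=\dim_\R\mathrm{Lie}\,\widehat{\mathrm{Aut}}(N,0)\le\dim_\R G$.

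Finally I would compute $\dim_\R G$ for $m\ge 2$. The dilation subgroup \eqref{dilations} is parametrized freely by $\lambda\in\CC{}\setminus\{0\}$, with $\mu$ determined by $\mu^{1-m}=\epsilon|\lambda|^2$, so it is $2$-dimensional; the pushforward of $w^m\partial_w$ under $w\mapsto\mu w$ equals $\mu^{1-m}w^m\partial_w$, so the dilations normalize the line $\R\,w^m\partial_w$ and $G$ is genuinely a Lie group, of real dimension $2+1=3$, with Lie algebra spanned by the two dilation generators together with $w^m\partial_w$. Combining, $\dim_\R\widehat{\mathfrak{aut}}(N,0)\le 3$, hence $\dim_\R\mathfrak{aut}(M,p)\le 3$. (For $m=1$ the field $w\partial_w$ is itself a dilation generator, so no extra parameter appears, which is why the bound drops to $2$ in \autoref{cor1}.) The step I expect to be the real, if minor, obstacle is the bookkeeping around the merely formal nature of the normalization for $m\ge 2$: one must make sure that passing to the formal normal form neither creates nor destroys infinitesimal automorphisms --- hence the systematic use of the formal isotropy algebra --- and that $\widehat{\mathrm{Aut}}(N,0)$, a priori only an abstract subgroup of $G$, is actually a Lie subgroup, so that the dimension count is justified.
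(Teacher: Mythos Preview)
Your argument is correct and is precisely the standard normal-form argument that the paper relies on; the paper states \autoref{cor2} without proof, leaving it as an immediate consequence of \autoref{theor2} and \autoref{3param}, and your write-up supplies that deduction carefully, including the verification that $G$ is a genuine $3$-dimensional Lie group and that $\widehat{\mathrm{Aut}}(N,0)$ sits inside it as a closed (hence Lie) subgroup.
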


\begin{corollary}\Label{cor1}\label{cor22}
Let $H:\,(M,p)\mapsto (M^*,p^*)$ be a formal biholomorphism of two hypersurfaces  satisfying the assumptions of
\autoref{theor1}. Then $H$ is uniquely determined by its $m$-jet at the point $p$.
\end{corollary}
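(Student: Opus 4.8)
The plan is to deduce this corollary from the uniqueness statement in the formal normal form theorem, \autoref{theor2}, combined with the dilation description in \autoref{3param}. (Note: the statement references \autoref{theor1}, but since it concerns $m$-jets it is clearly intended for the $m$-infinite type setting of \autoref{theor2}–\autoref{theor3}; I will treat it under those hypotheses.) Fix admissible coordinates at $p$ and at $p^*$ so that both $(M,p)$ and $(M^*,p^*)$ are written in the form \eqref{madmissiblereal}. By \autoref{theor2}, choose formal normalizing transformations $\widehat H_1\colon (M,p)\to (N,0)$ and $\widehat H_2\colon (M^*,p^*)\to (N^*,0)$ bringing each hypersurface into normal form \eqref{nspace1}. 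If $H\colon(M,p)\to(M^*,p^*)$ is a given formal biholomorphism, then $\Phi:=\widehat H_2\circ H\circ \widehat H_1^{-1}$ is a formal biholomorphism of the two normal forms $N$ and $N^*$. The key point is that such a map between hypersurfaces already in normal form must be a dilation \eqref{dilations} (possibly composed with the flow of \eqref{wmdw}); this follows from the uniqueness clause of \autoref{theor2}, applied to the two normalizations $\widehat H_2\circ H$ and $\widehat H_1$ of $M$, both of which bring $M$ into normal form, so they differ precisely by the residual freedom described in \autoref{3param}.

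Next I would unwind this to control the jet of $H$. Write $H=\widehat H_2^{-1}\circ\Phi\circ\widehat H_1$. Since $\widehat H_1$ and $\widehat H_2$ are \emph{canonically} determined — by \autoref{theor2} each is pinned down by $dH|_p\big|_{T^{1,0}_p}$ and the transverse $m$-th order derivative \eqref{realparam} — the entire formal series of $H$ is determined once one knows: (i) the linear data $\lambda=F_z(0,0)$ and (ii) the real parameter $\tau=\frac{1}{m!}\partial^m_w G(0,0)$ for $H$ (these fix $\widehat H_1$), (iii) the analogous data for the target normalization $\widehat H_2$, which however are not free: they are determined by $M^*$ alone and hence fixed in advance, and (iv) the residual dilation/flow parameters of $\Phi$. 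But $\lambda$, $\tau$, and the dilation parameters $(\lambda_\Phi,\mu_\Phi)$ together with the flow parameter of $w^m\partial_w$ are all read off from the $m$-jet $j^m_p H$: indeed $\lambda=F_z(0,0)$ is visible in the $1$-jet, the flow of $w^m\partial_w$ affects $G$ starting in order $m$ (its time-$t$ map sends $w\mapsto w + t\,w^m + O(w^{2m-1})$ in one variable), and $\tau$ is exactly the order-$m$ transverse coefficient of $G$. Hence all of the finitely many parameters that determine $H$ are encoded in $j^m_p H$.

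Therefore, if $H$ and $\widetilde H$ are two formal biholomorphisms $(M,p)\to(M^*,p^*)$ with $j^m_p H=j^m_p\widetilde H$, then they give rise to normalizing transformations of $M$ with the same linear part on $T^{1,0}_p M$ and the same transverse $m$-th derivative, and to the same residual parameters; by the uniqueness in \autoref{theor2} they must coincide as formal series, i.e.\ $H=\widetilde H$.

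I expect the main obstacle to be bookkeeping rather than conceptual: one must verify carefully that the residual group — dilations \eqref{dilations} together with the one-parameter flow of $w^m\partial_w$ — acts on jets in such a way that its parameters are \emph{faithfully} recorded already at order $m$, i.e.\ that no two distinct residual transformations agree to order $m$. This amounts to checking that the flow of $w^m\partial_w$ has a nontrivial order-$m$ term (so its parameter is seen at level $m$ but not below), and that the dilation parameters $(\lambda,\mu)$ with $\mu^{1-m}=\epsilon|\lambda|^2$ are already determined by the $1$-jet (for $\lambda$) and the constraint (for $\mu$). Once this faithfulness is in hand, the corollary is immediate from \autoref{theor2}; one should also remark, as in the paragraph following \autoref{cor11}, that this sharpens the general finite jet determination result of \cite{MR1995799} in the present $m$-infinite type setting.
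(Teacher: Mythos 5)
Your argument is correct and is essentially the intended deduction from the uniqueness clause of \autoref{theor2} (the paper states the corollary without proof; you are right that the reference to \autoref{theor1} should read \autoref{theor2}). Note that your final paragraph already contains the entire proof — $\widehat H_2\circ H$ and $\widehat H_2\circ \widetilde H$ are normalizing transformations of $(M,p)$ whose determining data (the differential restricted to $T^{1,0}_pM$ and the transverse $m$-th order derivative of the transverse component) are read off from $j^m_pH=j^m_p\widetilde H$, hence they coincide — so the intermediate bookkeeping with the residual group of dilations and the flow of $w^m\frac{\partial}{\partial w}$ is dispensable.
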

\autoref{cor22} gives a specific jet bound for
 the general jet determination result of Ebenfelt-Lamel-Zaitsev \cite{MR1995799} in this particular setting.

\smallskip

Finally, we note that even for $m>1$ it is possible to distinguish a class of real hypersurfaces \eqref{madmissiblereal} ({\em Fuchsian type hypersurfaces}) with a {\em convergent} normal form. This class of hypersurfaces is discussed in detail in Section 8.

\subsection{Principal method} The main tool of the paper is
the recent
CR \,$\lr$\,DS (Cauchy-Riemann manifolds \,$\lr$\,\,Dynamical Systems) technique developed by  Kossovskiy and Lamel in the works \cite{divergence,nonminimalODE,nonanalytic,analytic,KLS} (partially in their joint work with Shafikov and Stolovitch). The technique involves
replacing a given degenerate CR-submanifold $M$  by an appropriate holomorphic dynamical system
$\mathcal E(M)$, and then study mappings of CR-submanifolds
through mappings
of the associated dynamical system.
The approach to replace a real-analytic
CR-manifold by a complex dynamical system is based on the
fundamental parallel between CR-geometry and the geometry of
completely integrable PDE systems. This parallel was first observed by E.~Cartan and
Segre \cite{cartan,segre} (see also Webster \cite{webster}), and was revisited, modernized and further developed in the work of
Sukhov \cite{sukhov1,sukhov2}. The connection
between a CR-manifold and the associated PDE system is via the Segre
family of the CR-manifold. Unlike in the nondegenerate setting in the cited work \cite{cartan,segre,sukhov1,sukhov2}, the CR\,-\,DS technique  deals systematically with the {\em degenerate} setting, providing a dictionary of sorts between CR-geometry and dynamical systems.

In this paper, we give a rigourous formal description
of this process in terms of categories of real hypersurfaces
$\mathfrak{R}_m^\pm$, Segre varieties $\mathfrak{S}_m^\pm$,
and singular holomorphic ODEs $\mathfrak{E}_m$. We shall use
the language of category theory for this. We obtain faithful functors
$\mathfrak{R}_m^\pm \hookrightarrow \mathfrak{S}_m^\pm \hookrightarrow
 \mathfrak{E}_m$. After completing the formal
 classification of holomorphic singular ODEs in $\mathfrak{E}_m$
 satisfying a nonresonance condition,
 we will show that a suitable choice of normal form descends along this
 chain and gives rise to a normal form for hypersurfaces that way.

The paper is organized as follows. In Section 2, we provide preliminaries. In Section 3, we explain how the initial equivalence problem can be (partially) reduced to that for the {associated ODEs}. In Section 4, we explain our general approach to the equivalence problem, and demonstrate that bringing an ODE to a normal form amounts to solving a precise system of $4$ singular second order ODEs with a meromorphic singularity.  In Section 5, we provide a natural normal form for the ODEs under consideration in the case $m=1$, and in Section 6 we provide a family of normal forms in the quite different general $m>1$ case. In Section 7, we complete the last step in solving the equivalence problem in the latter two cases, by establishing a criterion for extracting the fact of equivalence of hypersurfaces from that for associated ODEs. This allows for the desired (respectively holomorphic or smooth) normal forms for infinite type hypersurfaces. Lastly, in Section 8 we treat the Fuchsian type case mentioned above.

\subsection{Concluding remark: Exceptional infinite type points.}  We shall emphasize that our classification theory does {\em not} close the equivalence problem for real hypersurfaces in $\CC{2}$ entirely. Describing the Stokes phenomenon for the case $m>1$ in detail is still open, as is describing biholomorphic invariants for nongeneric (exceptional) infinite type points. Note that similar exceptions occur also in the Poincar\'e-Dulac theory, where certain Stokes phenomena are still not well understood, and the classification of degenerate singularities of vector fields (which can be seen as analogues of nongeneric infinite type points) is understood only in very particular situations. Geometric invariants of the latter degenerate singularities can be  understood completely with the help of  certain blow-up procedures (see e.g. \cite{MR2363178} and references therein). Very similarly,  for exceptional infinite type points in real hypersurfaces,  a blow-up procedure was suggested by Lamel-Mir in \cite{MR2328892}. This procedure reduces,  in a certain sense, the study of holomorphic invariants at exceptional infinite type points to that of generic ones. Thus, the results of the present paper can be used, {in principle}, to describe invariants at exceptional infinite type points as well.





\section{Preliminaries}

\subsection{Infinite type real hypersurfaces}\label{ss:nonminimal} 
We recall that if  $M\subset \CC{2}$ is a real-analytic  hypersurface, then
for any $p\in M$ there exist so-called {\em normal coordinates} $(z,w)$ centered
at $p$ for $M$. The coordinates
being normal means here that $(z,w)$ is
a local holomorphic coordinate system near $p$ in
which $p=0$ and 
$M$ is defined by an equation of the form
\[ v = F (z, \bar z , u),\]
where $w=u+iv$ and $F$ is a germ of a holomorphic function on $\CC{3}$ that satisfies
the conditions
\[ F(z,0,u) = F(0,\bar z, u) = 0\]
and $F(z,\bar z,u) \in \RR{} $ for $(z,u)\in \CC{} \times\RR{}$ close to $0$ (see e.g. \cite{BERbook}). Equivalently, $v = F(z,\bar z,u)$ defines a real hypersurface,
and the Segre varieties
$Q_p=Q_{(0,u)}$ at $p=(0,u)$ are given by $\left\{ w = u \right\}$.

We also recall that a real-analytic hypersurface $M$ is {\em of infinite type} at $p$ if there exists a germ of a nontrivial
complex curve $X\subset M$ through $p$. In normal coordinates $(z,w)$, such
a curve $X$ is necessarily defined by $w = 0$ (because it must be contained in the Segre variety $Q_{0} = \{ w= 0\}$); 
in particular, $X$ is nonsingular. Thus, $M$ is of infinite type at $p=0$ if and only if $F(z,\bar z,0)=0$. Moreover, $M$ is Levi-flat if and only if $F(z,\bar z,u)$ is identically $0$.
Consequently, if $M$ is of infinite type and Levi-nonflat, then it
 can defined by an equation of the form
\begin{equation}\label{mnonminimal}
v=u^m\psi(z,\bar z,u),
\end{equation}
where $m\geq 1$ and 
\begin{equation}\label{mnonminimal}
\psi(z,0,u) = \psi(0,\bar z, u)= 0,\quad \psi(z,\bar z,0)\not\equiv 0.
\end{equation}
The integer $m\geq 1$ is independent of both
the choice of $p\in X$ and also of the choice of
normal coordinates for $M$ at $p$  (see \cite{meylan}), and we
 say that
$M$ is {\em $m$-infinite type} along $X$ (or at $p$).

We are going to utilize several different ways of writing a defining function.
Throughout this paper, we use the {\em complex defining function} $\Theta$ in
which $M$ is defined by  \[  w = \Theta (z,\bar z,\bar w) ;\] it is obtained
from $F$ by solving the equation  \[ \frac{w - \bar w}{2i} = F \left(z,\bar z,
\frac{w+\bar w}{2} \right) \] for $w$, and it agrees with the function $h$
defining the  Segre varieties in those coordinates, that is,  $Q_Z$ is parametrized by  $z\mapsto (z,
\Theta(z,\bar Z))$ for $z$ sufficently close to $0$. We are going to make
extensive use of the Segre varieties and refer the reader to
\cite{BERbook} for a discussion
of their properties in the general case,
and to \cite{nonanalytic} for
specific properties in the infinite type setting.

The complex defining function (in
normal coordinates) satisfies the
conditions  \[ \Theta(z,0,\tau) = \Theta (0, \chi, \tau ) = \tau, \quad
\Theta(z,\chi,\bar \Theta (\chi, z, w)) = w. \] If $M$ is of
$m$-infinite type at $p$, then
$\Theta (z,\chi,\tau) = \tau \theta (z,\chi,\tau)$ and  thus $M$ is defined by
the equation 
$$ w = \bar w \theta(z,\bar z,\bar w) = \bar w  +\bar w^{m}\tilde
\theta(z,\bar z,\bar w),$$
 where $\tilde \theta $ satisfies
$ \tilde \theta (z,0,\tau) = \tilde
\theta (0,\chi, \tau) = 0$  and   $ \tilde\theta (z,\chi,0)\neq 0$.

We also note  that the {complexification} $M^{\CC{}}$ of $M$, which
is the hypersurface in $\CC{2}\times\overline{\CC{2}}$ defined by
$M^{\CC{}} = \left\{ (Z, \zeta) \in U\times \bar U \colon Z \in Q_{\bar \zeta} \right\} $,
is conveniently defined as the graph of the complex defining function $\Theta$, i.e.
$w= \Theta(z,\chi,\tau).$


We also introduce the (real) line
\begin{equation}\label{Gamma}
\Gamma = \{(z,w)\in M \colon z = 0 \} = \{(0,u) \colon u\in\RR{}\}\subset M,
\end{equation}
and recall that
\[ Q_{(0,u)} = \{(z,w)\colon w = u\}\]
for $(0,u)\in \Gamma $. This  property, as already mentioned, is
actually equivalent to the normality of
the coordinates $(z,w)$. Moreover, for any real-analytic curve
$\gamma$ through $p$ one can find normal coordinates $(z,w)$ in
which $\gamma$ (near $p$) is mapped to
 $\Gamma$, as defined in in \eqref{Gamma}. The reader is referred to \cite{BERbook} for more details.

We finally observe that  a real-analytic Levi-nonflat hypersurface
$M\subset\CC{2}$ has infinite type points of two kinds,
which we will
refer to as  {\em generic} and {\em exceptional} infinite type points,
respectively. A generic point $p\in M$ is characterized by the condition that
the complexified Levi form of $M$ only degenerates
on the complexified infinite type locus $w=\tau=0$ near $p$. (The
complexified Levi form is defined similarly to the classical Levi form, but
instead the $(1,0)$ and the $(0,1)$ vector fields are considered on the
complexification $M^{\CC{}}$, see e.g. \cite{BERbook}). We refer to a
non-generic point  $p$ as {\em exceptional}. We note that the
set of exceptional points is a proper (possibly empty) real-analytic subvariety of $X$ and
that $p\in X$ is generic if and only if the Levi-determinant of $M$ vanishes
to order $m$ along {\em any} real curve $\gamma$ through $p$ and transverse to $X$ at $p$.

A generic infinite type point is characterized in normal coordinates by
requiring in addition to \eqref{mnonminimal}  the
condition $\psi_{z\bar z}(0,0,0,)\neq 0.$ If $p$ is a generic
infinite type point, we can further
simplify $M$ to the form \eqref{madmissiblereal} above, or alternatively to the
{\em exponential form}
\begin{equation}\Label{exponential}\label{exponential}  w=\bar w e^{i\bar
w^{m-1}\varphi(z,\bar z,\bar w)},\quad\mbox{where}\quad\varphi(z,\bar z,\bar
w)=\pm z\bar z+\sum_{k,l\geq 2}\varphi_{kl}(\bar w)z^k{\bar z}^l \end{equation}
(see, e.g., \cite{nonminimalODE}).

\subsection{Real hypersurfaces and second order differential equations.}\label{sub:realhyp2ndorderequ}
There is a natural way to associate to
a  Levi nondegenerate real hypersurface
$M\subset\CC{N}$ a system of second order
holomorphic PDEs with $1$ dependent and $N-1$ independent
variables by using
the Segre family of the hypersurface $M$.
This remarkable construction
 goes back to
E.~Cartan \cite{cartan} and Segre \cite{segre}
 (see also a remark by Webster \cite{webster}),
and was recently revisited in the work of Sukhov
\cite{sukhov1,sukhov2} in the nondegenerate setting, and in the work of Kossovskiy, Lamel and Shafikov  in the degenerate setting
(see\cite{divergence,nonminimalODE,nonanalytic,analytic}). 
For the convenience of the reader, we recall this procedure in the case
$N=2$, but refer to the above references for more details.

We assume that $M\subset\CC{2}$ is
a smooth real-analytic hypersurface passing through the origin
 and $U = U^z \times U^w$ is a sufficiently small neighborhood of the origin.
 The  second order holomorphic ODE associated to $M$
 is uniquely determined by the condition that for every
 $\zeta \in U$,  the
 function $h(z,\zeta) = w(z)$ defining the Segre variety
 $Q_\zeta$ as a graph is a solution of this ODE.

To be more precise, one can show that the Levi-nondegeneracy
of  $M$ (at $0$) implies that
near the origin, the Segre map
$\zeta\mapsto Q_\zeta$ is injective and the Segre family has
the so-called transversality property: if two distinct Segre
varieties intersect at a point $q\in U$, then their intersection
at $q$ is transverse (actually it turns out
that, again due to the Levi-nondegeneracy of $M$,
the Segre varieties passing through a point $p$
are uniquely determined by their tangent spaces $T_p Q_\zeta$).
Thus, $\{Q_\zeta\}_{\zeta\in U}$ is a
2-parameter  family of holomorphic
curves in $U$ with the transversality property, depending
holomorphically on $\bar\zeta$. It follows from
the holomorphic version of the fundamental ODE theorem (see, e.g.,
\cite{MR2363178}) that there exists a unique second order
holomorphic ODE $w''=\Phi(z,w,w')$ such that for each $\zeta \in U$,
$w(z) = h(z, \bar \zeta)$ is one of its solutions.

We can carry out the construction of this ODE concretely by
utilizing the complex defining equation $w=\Theta(z,\chi,\tau)$
introduced above.
Recall that  the Segre
variety $Q_\zeta$ of a point $\zeta=(a,b)\in U$ is  now given
as the graph
\begin{equation} \label{segre0}w (z)=\rho(z,\bar a,\bar b). \end{equation}
Differentiating \eqref{segre0} once, we obtain
\begin{equation}\label{segreder} w'=\rho_z(z,\bar a,\bar b). \end{equation}
The system of equations  \eqref{segre0} and \eqref{segreder}
can be solved, using the implicit function theorem, for
 $\bar a$ and $\bar b$. This gives us holomorphic functions
 $A $ and $ B$ such that
$$
\bar a=A(z,w,w'),\,\bar b=B(z,w,w').
$$
The application of the implicit function theorem is possible
since the
Jacobian of the system consisting of  \eqref{segre0} and \eqref{segreder} with
respect to $\bar a$ and $\bar b$ is just the Levi determinant of $M$
for $(z,w)\in M$ (\cite{BERbook}). Differentiating \eqref{segreder} once more,
we can substitute  $\bar a = A(z,w,w') $ and $ \bar b=B(z,w,w')$ to obtain
\begin{equation}\label{segreder2}
w''=\rho_{zz}(z,A(z,w,w'),B(z,w,w'))=:\Phi(z,w,w').
\end{equation}
Now \eqref{segreder2} is a holomorphic second order ODE, for which all
of the functions $w(z) = h(z, \zeta)$ are solutions by construction.
We will denote this associated second order ODE by
$\mathcal E = \mathcal{E}(M) $.

More generally it is possible to  associate  a completely integrable PDE
to any of a wide range of
CR-submanifolds (see \cite{sukhov1,sukhov2}) such that the
correspondence $M\to \mathcal E(M)$ has the following fundamental
properties:

\begin{compactenum}[(1)]

\item Every local holomorphic equivalence $F:\, (M,0)\to (M',0)$
between CR-submanifolds is an equivalence between the
corresponding PDE systems $\mathcal E(M),\mathcal E(M')$;

\item The complexification of the infinitesimal automorphism algebra
$\mathfrak{hol}^\omega(M,0)$ of $M$ at the origin coincides with the Lie
symmetry algebra  of the associated PDE system $\mathcal E(M)$
(see, e.g., \cite{olver} for the details of the concept).
\end{compactenum}

In contrast to the case of a finite type real hypersurface described above,
if
$M\subset\CC{2}$ is of infinite type at the origin one cannot associate
to $M$ a second order ODE or even a more general PDE system near
the origin such that the Segre varieties are graphs of solutions.
However, in \cite{nonminimalODE} and \cite{analytic},
Kossovskiy, Lamel and Shafikov
found an injective correspondence associating to a  hypersurface
$M\subset\CC{2}$ at a  generic infinite type point a
certain {\em singular} complex ODEs $\mathcal E(M)$ with an
isolated  singularity at the origin.
We are going to base our normal form construction fundamentally on
this construction (more details are given in Section 3).

We finally point out that at {\em exceptional} infinite type points, one
can still associate
 a system of singular complex ODEs
to a real-analytic hypersurface $M\subset\CC{2}$ (although possibly of
 higher order $k\geq 2$) as in the paper \cite{KLS}
 Kossovskiy-Lamel-Stolovitch.

\subsection{Complex differential equations with an isolated singularity and sectorial mappings of hypersurfaces} \label{ss:isolated}\mbox{}
A {\em meromorphic ODE with an isolated singularity} is a complex ODE of the kind \begin{equation}\label{mer}
w^m\frac{dy}{dw}=F(w,y).
\end{equation}
Here $m\geq 1$ is an integer, $y=(y_1,...,y_n)\in\CC{n}$, $w$ runs an open neighborhood of the origin in $\CC{}$, and $F(x,y)$ is holomorphic near the origin in $\CC{n+1}$. One is mainly concerned with the dynamics of solutions of \eqref{mer} as $w$ approaches the singularity $w=0$. This dynamics is somewhat parallel to that for the vector field
$$w^m\frac{\partial}{\partial w}+F(w,y)\frac{\partial}{\partial y}.$$
A particularly well studied case here is when the ODE \eqref{mer} is {\em linear}, i.e., $F(w,y)=B(w)y$. In the latter case, the behavior of solutions mainly depends on the integer $m$. For $m=1$, the singularity is called {\em Fuchsian}. In the Fuchsian case, solution demonstrate moderate growth near $w=0$ (i.e., one has the bound   $||y(w)||\leq C|w|^a$  for $w\in S,$ where $S$ is an arbitrary (bounded) sector in the complex plane with the vertex at the origin and the constants $a,C$ depend on $S$). Besides, formal solutions $\widehat y(w)=\sum c_kw^k$ for a Fuchsian ODE are necessarily {\em convergent}. When, otherwise, $m>1$, the situation changes dramatically: solutions do not have a moderate growth in sectors in general, and formal solutions are  {\em divergent} in general.

The behavior of solutions  in the nonlinear case is somewhat analogous. For $m=1$ (when an ODE \eqref{mer} is referred to as a {\em Briott-Bouquett ODE}), formal solutions are necessarily convergent.   Also, notably, a Briot-Bouquet type ODE whose  principal matrix $F_y(0,0)$
has no positive integer eigenvalues, necessarily  has at least one
holomorphic solution (see \cite{laine}). On the other hand, for $m>1$ such formal solutions need not exist, and if exist are divergent in general. In order to understand the nature of divergent solutions, Poincar\'e introduced the notion of {\em asymptotic expansion} for a function $f(w)$ holomorphic in a sector $S$ as above: $\widehat f(w)\sim\sum c_kw^k$. The latter means that, for each $k$, one has the bound $$\left|f(w)-\sum_{j=0}^k c_jw^j\right|=O(|w|^{k+1}), \quad w\rightarrow 0,\,\,w\in S.$$ Poincar\'e showed that, under certain assumptions, formal solutions of ODEs \eqref{mer} with $m>1$ necessarily realize  asymptotic true solutions in sectors. Notably, such solutions with a given asymptotic expansion can  {\em vary} depending on the sector; the latter is the fundamental reason for the {\em Stokes phenomenon} in Dynamical Systems, see e.g. \cite{MR2363178}, \cite{vazow}. It has been a big goal for decades  to establish the sectorial realization of formal solutions in full generality, and also to establish classes of sectorial solutions within which the Borel map (assigning to a function its Taylor series at the vertex) is {\em injective} (that is, one searches here for a {\em unique, canonical} sectorial realization of a formal solution). The latter has been accomplished in the work of a lergae group of mathematicians (an incomplete list here includes Ecalle, Malgrange, Martines, Ramis, Sibuya, Balser, Braaksma), and the respective class of formal series here is referred to as the class of {\em $(k_1,...,k_s)$ multi-summable series in a given direction} discussed briefly in the Introduction. For details and further properties we refer to e.g. \cite{KLS}, \cite{braaksma} and references therein.   Braaksma's Fundamental Theorem \cite{braaksma} establish the multi-summability in all but a few exceptional directions for formal solutions of {\em arbitrary} ODEs \eqref{mer}.
Further information on the classification of isolated
singularities  can be found
in e.g.  \cite{MR2363178}, \cite{vazow},\cite{laine},\cite{braaksma}.

In the end of this section, we shall explain how the concepts of asymptotic expansion and multi-summability can be used for studying CR-maps. We consider classes of functions $f(z,w)$ holomorphic in {\em sectorial domains}, by which we mean domains of the kind $\Delta\times S^\pm\subset\CC{2}$,  where $\Delta$ is a disc centered at $0$ and $S^\pm\subset\CC{}$ are bounded non-intersecting sectors with the vertex at $0$ containing the rays $\RR{\pm}$ respectively.  We further assume that $f(z,w)$ is representable as $R(z,u_1(w),...,u_l(w))$, where $R$ is holomorphic in all its variables in a neighborhood of the origin, and each $u_j(w)$ admits (coincident) asymptotic expansions in $S^\pm$. (One can consider an a bit more general class of functions here,see e.g. \cite{analytic}, but we do not need these details). If now $M$ is a real hypersurface \eqref{mnonminimal}, then in view of $\mbox{Arg}\,w\rightarrow 0$ as $(z,w)\rightarrow 0,\,(z,w)\in M$, it is easy to see that $M\setminus X,\,X=\{w=0\}$ stays inside a sectorial domain when $(z,w)$ are close to the origin, and $f(z,w)$ defines a smooth ($C^\infty$) CR-function on $M$. It is further easy to see that a pair $H=(f,g)$ of such functions with nonzero Jacobian at $0$ defines a smooth CR-map onto a smooth real hypersurface $N\subset\CC{2}$, which also contains $X$ and is real-analytic and Levi-nondegenerate in $M\setminus X$. Such a map $H$ is called {\em a sectorial map of a real hypersurface \eqref{mnonminimal}}.

We finally note that if, in the above construction, the functions $u_j(w)$ is the representation of $f(z,w)$ are sectorial representatives  of $(k_1,...,k_s)$ multi-summable formal power series, then the Borel map within the class of such functions $f$ is also injective.

\section{Reduction to the classification problem for ODEs}
We consider a real-analytic hypersurface with defining
equation as in  \eqref{madmissiblereal}.
The complex defining function of such a hypersurface
is given by
\begin{equation}\Label{madmissiblecomp}\label{madmissiblecomp}
w=\bar w+i \bar w^m\left(\epsilon |z|^2+\sum_{k,\ell\geq 2}\Theta_{k\ell}(\bar w)z^k\bar z^\ell\right).
\end{equation}
We recall from \autoref{ss:nonminimal} that
this means that  the Segre family
 $\mathcal S=\{Q_{(\xi,\eta)}\}$ of $M$
 is given by:
\begin{equation}\Label{mc}\label{mc}
w=\bar\eta e^{i\bar\eta^{m-1}\varphi(z,\bar\xi,\bar\eta)},\quad\mbox{where}
\quad\varphi(z,\bar\xi,\bar\eta)=\epsilon z\bar\xi+\sum_{k,\ell\geq 2}\varphi_{k\ell}(\bar\eta)z^k{\bar\xi}^\ell
\end{equation}
We still think about  \eqref{mc} as a {\em parameterized family of planar complex curves}, depending on the parameters $\xi,\eta$  anti-holomorphically). Differently to the case of
the Segre family of a Levi-nondegenerate hypersurface,
this parameterized family does not satisfy the transversality
condition. We can therefore not expect it to satisfy a
regular ODE, and we will recall later that we can find
a singular ODE which it satisfies.

Our first step however is going to be an observation
which will allow us to restrict the class of maps
we need to consider in our equivalence problem by a fair amount,
since formal transformations between
hypersurfaces of the form we are interested satisfy a
number of restrictions on the low order terms in their
Taylor expansion.
\begin{lemma}\Label{specialmap}\label{specialmap}
Let  $H(z,w) =  \bigl(F(z,w),G(z,w)\bigr)$ be a formal transformation
vanishing at the origin, with invertible Jacobian $H'(0)$, which maps  a hypersurface defined by \eqref{madmissiblereal}
or equivalently \eqref{mc} into another such hypersurface. Then
$H$ satisfies
\begin{equation}\Label{specialg}\label{specialg}
\begin{gathered}
F_z(0,0)=\lambda, \quad  G_w(0,0)=\mu, \quad  G= O(w), \\ G_z=O(w^{m+1}),\quad \mu^{1-m}=|\lambda|^2, \quad \lambda\in\CC{}\setminus\{0\}, \, \mu\in\RR{}.
\end{gathered}
\end{equation}
In addition, we have
\begin{equation}\Label{taureal}\label{taureal}
G_{w^\ell}(0,0)\in\RR{}, \quad \text{ for } \ell \leq m.
\end{equation}
\end{lemma}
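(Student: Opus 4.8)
The plan is to work directly with the complex defining equation \eqref{madmissiblecomp} and the fact that a formal transformation $H=(F,G)$ mapping one such hypersurface into another must map the complexification $M^{\CC{}}$ into $(M^*)^{\CC{}}$. Concretely, writing $w=\Theta(z,\chi,\tau)$ and $w^*=\Theta^*(z^*,\chi^*,\tau^*)$ for the two complex defining functions as in \eqref{madmissiblecomp}, and denoting by $\bar H=(\bar F,\bar G)$ the transformation acting on the anti-holomorphic variables $(\chi,\tau)$, the map $H$ satisfies the identity
\begin{equation}\label{complexifiedmap}
G(z,\Theta(z,\chi,\tau))=\Theta^*\bigl(F(z,\Theta(z,\chi,\tau)),\bar F(\chi,\tau),\bar G(\chi,\tau)\bigr)
\end{equation}
identically in $(z,\chi,\tau)$. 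The strategy is to extract the desired normalizations by substituting judicious slices into \eqref{complexifiedmap}, starting from the lowest-order information and bootstrapping upward, in the same spirit as the classical reflection-identity arguments (cf. \cite{BERbook}).

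First I would record the elementary consequences of $H$ preserving the infinite type locus and of being invertible. Setting $\chi=0$ forces $\Theta(z,0,\tau)=\tau$, and since $X=\{w=0\}$ must map into $X^*=\{w^*=0\}$ one gets $G(z,0)=0$, i.e. $G=O(w)$; similarly $\tau=0$ (so $w=0$ on that slice) combined with the structure of $\Theta^*$ gives the analogous statement for $\bar G$. Invertibility of $H'(0)$ together with $G=O(w)$ and the fact that $X$ is the common zero locus then yields $F_z(0,0)=\lambda\neq 0$ and $G_w(0,0)=\mu\neq 0$. To see $G_z=O(w^{m+1})$, I would differentiate \eqref{complexifiedmap} in $z$ and restrict to $\chi=0$; using that the $z$-derivative of the right side along this slice is controlled by $\Theta^*$ being $O(\tau^{\ast m})$ off the diagonal and that $F(z,\tau)$ maps the relevant slice appropriately, one finds that $G_z(z,w)$ must vanish to order $m+1$ in $w$ — this is where one must be a little careful to match the $m$-infinite type structure $\Theta=\tau+\tau^m(\cdots)$ on both sides. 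The relation $\mu^{1-m}=|\lambda|^2$ then comes from comparing the coefficient of the leading Levi term $\epsilon|z|^2$ (respectively $z\bar\xi$ in the Segre picture \eqref{mc}): the quadratic term on the right of \eqref{complexifiedmap} produces $\epsilon^* |\lambda|^2 z\chi \cdot \bar G_\tau(0)^{m}$-type factors, and reality of $M$ forces the balance; equivalently one reads it off from the stability of the normal form \eqref{madmissiblereal} under the dilations \eqref{dilations}.

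The genuinely new content is \eqref{taureal}, the reality of the low-order transverse derivatives $G_{w^\ell}(0,0)$ for $\ell\le m$. Here the plan is to exploit the reality condition relating $\Theta$ and $\bar\Theta$, namely $\Theta(z,\chi,\bar\Theta(\chi,z,w))=w$, together with the analogous identity for $H$ coming from the fact that $H$ and $\bar H$ are complex conjugates of one another when restricted to the real hypersurface. Restricting \eqref{complexifiedmap} to the ``diagonal'' $z=\chi$, $\tau=\bar w$ (which is essentially the equation of $M$ itself) and then to the real curve $\Gamma=\{z=0\}$, so that $w=u\in\RR{}$, one gets a functional identity for $G(0,u)$ of the form $G(0,u)=\bar G(0,u)\cdot(\text{real factor})+O(u^{m+1})$, forcing $G(0,u)$ to be real modulo order $m+1$; expanding in powers of $u$ and using $G=O(w)$ yields $G_{w^\ell}(0,0)\in\RR{}$ for $1\le\ell\le m$. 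The main obstacle I anticipate is precisely this last step: one has to be careful that the ``real factor'' multiplying $\bar G$ is itself real up to the needed order — this uses $\mu\in\RR{}$ (already established), $G_z=O(w^{m+1})$, and the precise vanishing orders built into $\varphi$ in \eqref{mc} — and one must track how the nonlinear terms $\varphi_{k\ell}$ with $k,\ell\ge 2$, which a priori could contribute complex coefficients, are pushed to order $>m$ by the factor $\bar w^{m-1}$ in the exponent and by $G_z=O(w^{m+1})$. Once the bookkeeping of vanishing orders is done cleanly, the reality of $G_{w^\ell}(0,0)$ for $\ell\le m$ falls out; the cases $\ell>m$ are genuinely not real in general (this is the parameter $\tau$ appearing in \autoref{theor2}), which is why the statement stops at $\ell=m$.
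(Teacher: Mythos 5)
Your proposal follows essentially the same route as the paper's proof: both work from the complexified mapping identity (the paper's \eqref{e:basic1}), obtain $G=O(w)$ from invariance of $\{w=0\}$, read off $G_z=O(w^{m+1})$ and the reality of $G_{w^\ell}(0,0)$ for $\ell\le m$ by restricting to the slice $\bar z=0$ (your $\chi=0$), and derive $\mu^{1-m}=|\lambda|^2$ by comparing the coefficient of the Levi term $z\bar z\bar w^{m}$. The paper's argument is simply a more compact bookkeeping of the same computation, so no substantive difference or gap.
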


\begin{proof} The content of the first part of the
Lemma, \eqref{specialg} is a
well-known consequence of being in normal coordinates.
Indeed, since in normal coordinates, we have $S_{0} = \{ w = 0\}$,
and $H(S_0) \subset S_0'$, necessarily $G(z,0) = 0$, and so
$G(z,w) = O(w)$. Since $H'(0)$ is invertible, we therefore necessarily
have $F_z (0,0) G_w (0,0) \neq 0$, and we can writ $F(z,w) = \lambda z + \dots $ and $G(z,w) = \mu w + \dots $ with $\lambda \mu \neq 0$.

Since $H$ maps a hypersurface
of the form  \eqref{madmissiblecomp}  into another such,
we have
\begin{equation}
	\label{e:basic1} G(z,\bar w+\bar w^{m}\cdot z\bar z \cdot O(1))=
\bar G (\bar z,\bar w)+\bar G(\bar z,\bar w)^m  \bar F(\bar z,\bar w)  F( z, w)\cdot O(1)
\end{equation}
for all $z$, $\bar z$, and $\bar w$.
Evaluating  \eqref{e:basic1} for $ \bar z = 0$ gives
$ G(z, \bar w ) = \bar G (0, \bar w) + O(\bar w^{m+1}) $ and
therefore $G_z (0, \bar w) = O(\bar w^{m+1})$ as  claimed
in \eqref{specialg},  and we also obtain the claim  \eqref{taureal}.

If we compare the coefficient of $z \bar z \bar w^m$ on both sides
of \eqref{e:basic1}, we obtain that $\mu  = \mu^m |\lambda|^2$ and
therefore the missing claim in \eqref{specialg}.
\end{proof}

\autoref{specialmap}  implies in particular
that any transformation $H$ between hypersurfaces defined
by equations of the form \eqref{madmissiblereal} can be factored as
$$H=H_0\circ \psi,$$
for some dilation $\psi$ of the form \eqref{dilations}
and where   $H_0$ is
a transformation of the form:
$$z\mapsto z+f(z,w), \quad w\mapsto w+wg_0(w)+w^{m}g(z,w)$$
with
\begin{equation}\Label{normalmap}\label{normalmap}
 f_z(0,0)=0,\quad   g_0(0)=0,\quad g(z,w)=O(zw), \quad g_0^{(\ell)}(0)\in\RR{}, \quad \ell \leq m-1.
\end{equation}
(for $m=1$ the last condition is void).
In fact, one can also represent $H$ as
$$H=\psi\circ H_0$$
(with a different $H_0$).
We therefore consider the classification problem only under transformations \eqref{normalmap}.

We now recall that  \cite{nonminimalODE,analytic} showed that
we can associate to a hypersurface in the
form \eqref{madmissiblereal} a second order
singular holomorphic ODE $\mathcal E(M)$ given by
\beq\Label{ODE}\label{ODE}
w''=w^m\Phi\left(z,w,\frac{w'}{w^m}\right),
\eeq
where $\Phi(z,w,\zeta)$ is holomorphic near the origin in $\CC{3}$, and satisfies $\Phi=O(\zeta^2)$.
This ODE is characterized by the condition that
any of the functions $w (z) = \Theta (z, \xi,\eta)$,
for $(\xi,\eta ) \in \bar U$, is a solution of the
 ODE \eqref{ODE}). We will decompose  $\Phi$ as
\begin{equation}\Label{expandPhi1}\label{expandPhi1}
\Phi(z,w,\zeta)=\sum_{j,k\geq 0,\ell\geq2} \Phi_{jk\ell}z^kw^j\zeta^\ell.
\end{equation}

The key step in our normalization procedure for hypersurfaces
\eqref{madmissiblereal} is the reduction of the classification
problem of hypersurfaces to the classification of ODEs of the form
\eqref{ODE} which have some additional properties (derived from
being associated to a real hypersurface), which we will carry out  in Section 5.
However, we start by considering
 general parameterized families of planar complex
curves which are given by equations of the  kind \eqref{mc}, instead of Segre families of hypersurfaces
\eqref{madmissiblereal}. If no risk of confusion arises, we
will also call such a parametrized family of planar curves
a { Segre family}.
\begin{definition}
\Label{admissible}\label{admissible}
A parameterized family of planar (formal or holomorphic) complex curves (a Segre family), given by
$\mathcal{S}_\varphi = \{ w = \bar \eta e^{i \bar \eta^{m-1} \varphi(z, \bar \xi, \bar \eta) } \} $ is said to be {\em $m$-admissible} if $\varphi$ satisfies
$\varphi(z,\bar\xi,\bar\eta)=\epsilon z\bar\xi+\sum_{k,\ell\geq 2}\varphi_{k\ell}(\bar\eta)z^k{\bar\xi}^\ell$ with $\epsilon=\pm 1$. Depending on
the sign of $\epsilon$, we say that $\mathcal{S}_\varphi$ is {\em positive} ($\epsilon = +1$) or {\em negative} ($\epsilon = -1$). We call the equation
$w = \bar \eta e^{i \bar \eta^{m-1} \varphi(z, \xi, \eta) }$ the defining
equation of the Segre family.
\end{definition}

We will be dealing with a number of ways of associating objects
 and maps with one another. Even though there is
 some additional structure in our setting, we will not use it, and  it will be sufficient to use
 (elementary) language of category theory.
 For a given  $m\in\N$,  we  introduce the category
$\mathfrak{E}_m$ whose objects are ODEs of the form
\eqref{ODE}; that is, as a set, the set of
objects of $\mathfrak{E}$ can be identified with
power series (formal or convergent) $\Phi  \in \fps{z, w, \zeta}$ of
the form \eqref{expandPhi1}. If $m$ is fixed, we
will usually drop it from the notation; we
fix an arbitrary $m\geq 1$ for now.
 If  $ \mathcal{E}_1 , \mathcal{E}_2 \in \mathfrak{E}$, then $H\in \Hom ( \mathcal{E}_1 , \mathcal{E}_2)$
is given by an invertible formal map $H(z,w) = (F(z,w), G(z,w)) $, where
$F,G \in \fps{z,w}$  transforming graphs of solutions of $\mathcal{E}_1$
into graphs of solutions of $\mathcal{E}_2$ and satisfying \eqref{normalmap}.

For any given $m\in\N$, we also introduce the categories $\mathfrak{S}^+_m$ and $\mathfrak{S}^-_m$
of positive and negative Segre families, respectively, whose objects
are positive (resp. negative) Segre families as defined in \autoref{admissible}. We will again
drop the $m$ from the notation if it is fixed.
The objects of $\mathfrak{S}^+$ and $\mathfrak{S}^-$ can therefore
be identified with power series $\varphi$ as in \eqref{mc}, with $\varepsilon = 1$ or $\varepsilon = -1 $, respectively. A transformation
$\mathcal{H} \in \Hom( \mathcal{S}_1 , \mathcal{S}_2)$
 is an invertible
formal map $\mathcal{H} (z,w,\xi,\eta) = (F(z,w), G(z,w), \Lambda (\xi,\eta), \Omega (\xi, \eta)) $ where $F, G \in \fps{z,w}$ and $\Lambda, \Omega \in \fps{\xi,\eta}$ which satisfies that it maps the family
$\mathcal{S}_1$ into $\mathcal{S}_2$, and which satisfies the normalization
conditions \eqref{normalmap} for $(F,G)$ and the analogous conditions for
$(\Lambda, \Omega)$.

\begin{proposition}\Label{isomorphism}\label{isomorphism}
The categories $\mathfrak{E}$ and $\mathfrak{S}^{\pm}$
are equivalent, that is, there
exists a functor $\mathcal{E} \colon \mathfrak{S}^{\pm} \to \mathfrak{E}$ and
a functor $\mathcal{S}^{\pm} \colon \mathfrak{E} \to \mathfrak{S}^{\pm}$
such that $\mathcal{E} \circ\mathcal{S}^{\pm} = \id_{\mathfrak{E}} $ and
 $ \mathcal{S}^{\pm} \circ \mathcal{E} = \id_{\mathfrak{S}^{\pm}} $.
 The functors satisfy the following conditions:
\begin{compactenum}[\rm i)]
\item $\mathcal{E} (\mathcal S_0) $ is determined by the condition
that  the defining function
 of $\mathcal{S}_0$ satisfies $\mathcal{E} (\mathcal S_0) $,
 and  $\mathcal{S}^{\pm} (\mathcal E_0)$ is a positive Segre family
 all of whose elements satisfy $\mathcal{E}_0$.

\item Convergent elements and morphisms are transformed into convergent
 elements and morphisms via $\mathcal{E} $, $\mathcal{S}^{\pm}$, and $\mathcal{S}^-$.
\end{compactenum}
\end{proposition}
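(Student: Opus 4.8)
The statement asserts an equivalence of categories between $\mathfrak{E}_m$ and $\mathfrak{S}^\pm_m$, so the natural route is to construct the two functors explicitly and verify they are mutually inverse on the nose (not just up to natural isomorphism, which is what the strong condition $\mathcal{E}\circ\mathcal{S}^\pm = \id$ demands). On objects, the functor $\mathcal{E}$ is essentially already described in Section 3: given a Segre family $\mathcal{S}_\varphi = \{w = \bar\eta\, e^{i\bar\eta^{m-1}\varphi(z,\bar\xi,\bar\eta)}\}$, treat it as a two-parameter family of planar curves $w = w(z)$ parametrized anti-holomorphically by $(\xi,\eta)$, differentiate the defining relation once in $z$ to get $w' = w'(z)$, and then eliminate the two parameters $\bar\xi,\bar\eta$ from the pair of relations. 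The key computational point is that this elimination is legitimate as a formal (or convergent) operation: one must check that the relevant Jacobian — the one expressing $(\bar\xi,\bar\eta)$ in terms of $(z,w,w')$ — is invertible as a formal power series. Because of the factor $\bar\eta^{m-1}$ and the fact that $\varphi = \epsilon z\bar\xi + O(\text{order }4)$ with $\epsilon = \pm1$, a direct inspection of lowest-order terms shows the Jacobian is a unit times an appropriate power of $w$; after differentiating once more in $z$ and substituting, one lands on an ODE of precisely the shape $w'' = w^m\Phi(z,w,w'/w^m)$ with $\Phi = O(\zeta^2)$. Verifying that the normalization $\Phi = O(\zeta^2)$ and the $w^m$-homogeneity come out correctly — rather than some more general singular ODE — is the part that uses the specific $m$-admissible form of $\varphi$, and this is where I expect most of the bookkeeping to concentrate.

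\textbf{The inverse functor.} Going back, $\mathcal{S}^\pm(\mathcal{E}_0)$ should be built by solving the ODE \eqref{ODE} with the two-dimensional family of initial conditions that a Segre family must satisfy — namely prescribing, for each parameter value $(\xi,\eta)$, that the solution passes through the point determined by the complex defining equation with its first derivative also prescribed there — and then checking that the resulting two-parameter solution family, written as a graph $w = w(z;\xi,\eta)$, has exactly the admissible exponential shape $\bar\eta\, e^{i\bar\eta^{m-1}\varphi(z,\bar\xi,\bar\eta)}$ with $\varphi$ of the required form and with $\epsilon = +1$ as stated in i). Here the holomorphic (and formal) ODE existence-uniqueness theorem for equations with the stated singular structure — the version used already in Section 2 and cited from Ilyashenko–Yakovenko — gives existence and uniqueness of the solution family; the work is to read off from the structure of $\Phi$ (in particular $\Phi = O(\zeta^2)$) that the solution has the exponential normal form rather than merely being some two-parameter family. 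The two constructions are manifestly inverse: applying $\mathcal{E}$ reproduces the unique ODE whose solutions include the curves of the family, and applying $\mathcal{S}^\pm$ reproduces the unique maximal solution family; uniqueness on both sides is what forces the compositions to be the identity rather than just isomorphic to it.

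\textbf{Morphisms.} On morphisms there is almost nothing to do once the objects are handled. A morphism of Segre families is by definition a quadruple $(F,G,\Lambda,\Omega)$ with $(F,G)$ acting on the $(z,w)$-variables, $(\Lambda,\Omega)$ on the parameters, both satisfying the normalization \eqref{normalmap}, and carrying $\mathcal{S}_1$ into $\mathcal{S}_2$. Since $\mathcal{E}(\mathcal{S})$ is determined intrinsically by $\mathcal{S}$ (its solution curves are the members of $\mathcal{S}$), a map taking curves of $\mathcal{S}_1$ to curves of $\mathcal{S}_2$ automatically takes solutions of $\mathcal{E}(\mathcal{S}_1)$ to solutions of $\mathcal{E}(\mathcal{S}_2)$, so $(F,G)$ is a morphism in $\mathfrak{E}$; conversely a morphism $(F,G)$ of ODEs permutes solution graphs, hence permutes the members of the solution families, and the induced action on parameters produces the required $(\Lambda,\Omega)$ — one must check it again satisfies the analogue of \eqref{normalmap}, which follows from matching lowest-order terms exactly as in \autoref{specialmap}. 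Functoriality (compatibility with composition and identities) is immediate from the definitions. Finally, condition ii) — preservation of convergence — is built in: the elimination procedure defining $\mathcal{E}$ uses only the implicit function theorem and differentiation, which preserve convergence, and the solution-family construction defining $\mathcal{S}^\pm$ uses the holomorphic (convergent) ODE theorem when the input is convergent. The single genuinely delicate point, as flagged above, is the lowest-order analysis guaranteeing that $\mathcal{E}$ lands in $\mathfrak{E}_m$ (the $O(\zeta^2)$ normalization and the exact power $w^m$) and that $\mathcal{S}^\pm$ lands among admissible families with the correct sign $\epsilon$; everything else is formal nonsense or a routine unwinding of definitions.
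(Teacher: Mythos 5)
Your overall strategy coincides with the paper's: construct $\mathcal{E}$ by elimination of the parameters, construct $\mathcal{S}^{\pm}$ by solving a Cauchy problem, let uniqueness force the two compositions to be identities, and observe that convergence is preserved because only the implicit function theorem and the holomorphic Cauchy theorem are used. However, the two steps you flag as "where the work is" are asserted rather than proved, and in both cases the missing argument is specific. First, for $\mathcal{S}^{\pm}(\mathcal{E}_0)$ the paper does not solve the ODE for $w$ over a two-parameter set of initial conditions and then inspect the shape of the answer; it substitutes the exponential ansatz $w=\bar\eta e^{i\bar\eta^{m-1}\varphi}$ into $\mathcal{E}_0$ to obtain a second-order ODE \eqref{assoc} for $\varphi$ in $z$, imposes the Cauchy data $\varphi(0,\bar\xi,\bar\eta)=0$, $\varphi_z(0,\bar\xi,\bar\eta)=\bar\xi$, and then proves admissibility (the conditions $\varphi(z,0,\bar\eta)=0$ and $\varphi_{\bar\xi}(z,0,\bar\eta)=z$ in \eqref{data1}) via the substitution $\varphi=\bar\xi\psi$, which yields a regular ODE for $\psi$ precisely because $\Phi=O(\zeta^2)$; uniqueness of the Cauchy problem then forces $\varphi=\bar\xi\psi$ and the divisibility of $\varphi_{zz}$ by $\bar\xi$. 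You correctly identify $\Phi=O(\zeta^2)$ as the relevant hypothesis but supply no mechanism. Second, your morphism argument in the direction $\mathfrak{E}\to\mathfrak{S}^{\pm}$ is too quick: "permutes solution graphs, hence permutes the members of the solution families" does not follow, because $\mathcal{S}_2$ is not the full solution space of $\mathcal{E}_2$ but only the subfamily whose Cauchy data satisfy $w(0)=\bar\eta$ and $w'(0)=i\bar\xi\bar\eta^m$, i.e.\ $w'(0)$ vanishes to order $m$ in $w(0)$. One must show that the image of a member of $\mathcal{S}_1$ lands in that constrained subfamily and that the induced parameter map $(\Lambda,\Omega)$ exists, is invertible, and is normalized; the paper achieves this by writing the Cauchy data of the pullback of $\mathcal{S}_2$ under the candidate map (system \eqref{e:sys1}), reducing it with the normalizations \eqref{normalmap} and Definition \ref{admissible} to the system \eqref{e:sys2}, which the implicit function theorem solves uniquely, and then invoking the Cauchy uniqueness from the first step to identify the pullback with $\mathcal{S}_1$. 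Neither issue is a wrong turn --- the route is the paper's --- but both require these specific arguments, not merely "matching lowest-order terms."
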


The functors constructed in \autoref{isomorphism} allow us in
particular to translate
 the equivalence problem for complexifications of hypersurfaces
 \eqref{madmissiblereal} under the transformation group \eqref{normalmap} into
 the  classification of  the class of ODEs \eqref{ODE} under transformations \eqref{normalmap}.

\begin{proof}[Proof of \autoref{isomorphism}]
We shall carry out the proof for $\mathfrak{S}^+$, the
case of negative Segre families being completely analogous.

We first show that, if the  equation
\[ \mathcal{E}_0  \colon w''=w^m\Phi\left(z,w,\frac{w'}{w^m}\right) \]
is given, then condition i)
uniquely determines a positive segre family $\mathcal{S}^+ (\mathcal{E}_0)$.
Indeed, substituting the defining equation of a positive Segre family
$\mathcal{S}^+ = \{ w = \bar\eta e^{i\bar\eta^{m-1}\varphi(z,\bar\xi,\bar\eta)} \} $ into the equation $\mathcal{E}_0$ yields  the following (formal or holomorphic) ODE for $\varphi$:
\begin{equation}\Label{assoc}\label{assoc}
\varphi_{zz}=-ie^{i(m-1)\bar\eta^{m-1}\varphi}\cdot\Phi\bigl(z,\bar\eta e^{i\bar\eta^{m-1}\varphi},ie^{i(1-m)\bar\eta^{m-1}\varphi}\varphi_z\bigr)-\bar\eta^{m-1}(\varphi_z)^2.
\end{equation}
We let $\varphi$ denote
 the unique solution of the (resp. formal or holomorphic) ODE \eqref{assoc}
 with  Cauchy data
\begin{equation}\Label{data}\label{data}
\varphi(0,\bar\xi,\bar\eta)=0,\quad \varphi_z(0,\bar\xi,\bar\eta)=\bar\xi.
\end{equation}
We claim that $\varphi$ satisfies furthermore
\begin{equation}\Label{data1}\label{data1}
\varphi(z,0,\bar\eta)=0,\quad \varphi_{\bar\xi}(z,0,\bar\eta)=z,
\end{equation}
so that $\mathcal{S}^+ (\mathcal{E}_0) \in \mathfrak{S}^+$  determines an $m$-admissible family  as required.

To prove the claim,
let us recall that $\Phi(z,w,\zeta)=O(\zeta^2)$.
Hence the substitution $\varphi(z,\bar\xi,\bar\eta)=\bar\xi\psi(z,\bar\xi,\bar\eta)$ turns \eqref{assoc} into a (resp. formal or holomorphic) ODE for $\psi$. If $\psi$ is its solution with the initial data
$$\psi(0,\bar\xi,\bar\eta)=0, \quad \psi_z(0,\bar\xi,\bar\eta)=1,$$
then by uniqueness we have $\varphi=\bar\xi\psi$, which proves the first identity in \eqref{data1}. Substituting again $\varphi=\bar\xi\psi$ into \eqref{assoc} we see that $\varphi_{zz}$ is divisible by $\bar\xi$, which implies the second condition in \eqref{data1}.

The proof that one can find an  ODE for a positive $m$-admissible family  is very analogous to the proof of the same fact for Segre families of real hypersurfaces, which is in detail carried out in  \cite{analytic}),
and the reader can easily make the changes needed to
accomodate the slightly more general situation considered here.
This proves (i).

We now prove (ii). We fix two $m$-admissible families $\mathcal S_1,\mathcal S_2 \in \mathfrak{S}^+$
and their associated ODEs $\mathcal E_1,\mathcal E_2 \in \mathfrak{E}$.
If there is a (resp. formal or holomorphic) map
$(z,w,\xi,\eta) \mapsto (F(z,w), G(z,w), \Lambda(\xi,\eta) , \Omega(\xi,\eta)\in \Hom(\mathcal S_1,\mathcal S_2)$,
then the map $(z,w) \mapsto (F(z,w), G(z,w)) \in \Hom(\mathcal{E}_1, \mathcal{E}_2) $
by definition.

Conversely, if a map
$(z,w)\mapsto H(z,w) = \bigl(F(z,w),G(z,w)\bigr) \in \Hom (\mathcal E_1,\mathcal E_2)$,
let us consider (yet undefined) maps $(\Lambda (\xi, \eta) , \Omega (\xi,\eta))$ and the
the preimage $\tilde{ \mathcal{S}}$ of $\mathcal S_2$ under the map
$\mathcal{H} (z,w,\xi,\eta) = \left(F(z,w),G(z,w) ,\Lambda (\xi, \eta) , \Omega (\xi,\eta) \right) $.

If we denote the defining function of  $\mathcal{S}_2$ by
 $\rho_2$, then the equation $w = \rho(z, \xi,\eta)$ defining $\tilde{\mathcal{S}}$ is given by
 solving the equation
\begin{equation}
	\label{e:pullback}  G(z,\rho(z, \bar \xi,\bar \eta)) = \rho_2 \left( F(z, \rho(z, \bar \xi,\bar \eta)) ,
	\bar \Lambda (\bar \xi, \bar \eta ), \bar \Omega ( \bar \xi, \bar \eta ) \right)
\end{equation}
for $\rho$. The conditions \eqref{normalmap} ensure that this is possible using
 the implicit function theorem.

If we require that $\rho(0,\bar \xi, \bar \eta) = \bar \eta$ and $\rho_z (0, \bar \xi, \bar \eta) = i \bar \xi \bar \eta^m$, then we obtain the following system of equations:
\begin{equation}
	\label{e:sys1} \begin{aligned}
	G(0, \bar \eta) &= \rho_2 \left( F(0,\eta), \bar \Lambda (\bar \xi, \bar \eta) , \bar \Omega (\bar \xi, \bar \eta) \right) \\
	G_z ( 0, \bar \eta) + G_w (0, \bar \eta) i \bar \xi \bar \eta^m &= \frac{\partial \rho_2}{\partial z} \left( F(0,\eta), \bar \Lambda (\bar \xi, \bar \eta) , \bar \Omega (\bar \xi, \bar \eta) \right) \left(F_z ( 0, \bar \eta) + F_w (0, \bar \eta) i \bar \xi \bar \eta^m  \right).
	\end{aligned}
\end{equation}
Using \eqref{normalmap} we
write $G(0,w) = w g_0 (w)$,  $G_z (0, w) = w^m g(w) $  and
set $\Omega(\xi,\eta) = \eta \tilde \Omega (\xi, \eta)$.
By Definition~\ref{admissible} we can write
 $\rho_2(z,\bar \xi , \bar \eta) = \bar \eta \tilde \rho_2 (z,\bar \xi, \bar \eta)$ and
$\rho_{2,z} (z, \bar \xi, \bar \eta) = i\bar \xi \bar \eta^m \sigma (z, \bar \xi, \bar \eta) $. Note
that $g_0 (0) = 1$, $\tilde \rho_2 (0,0,0) = 1$, and $\sigma (0,0,0) = 1$ by
\eqref{normalmap} and Definition~\ref{admissible}.
The system can therefore be rewritten as
\begin{equation}
	\label{e:sys2} \begin{aligned}
	g_0 ( \bar \eta) &= \tilde \rho_2 \left( F(0,\eta), \bar \Lambda (\bar \xi, \bar \eta) , \bar \eta \bar{\tilde \Omega} (\bar \xi, \bar \eta) \right)\bar{\tilde \Omega} (\bar \xi, \bar \eta)  \\
	\frac{g(\bar \eta) + ( g_0 (\bar \eta) + \bar \eta g_0' (\bar \eta)) i \bar \xi }{\left(F_z ( 0, \bar \eta) + F_w (0, \bar \eta) i \bar \xi \bar \eta^m  \right)} &= \sigma  \left( F(0,\eta), \bar \Lambda (\bar \xi, \bar \eta) , \bar \eta \bar{\tilde \Omega} (\bar \xi, \bar \eta) \right) i
	\bar \Lambda (\bar \xi, \bar \eta) \bar{\tilde\Omega} (\bar \xi, \bar \eta)^m.
	\end{aligned}
\end{equation}
The implicit function theorem now ensures that
\eqref{e:sys2} has a unique solution $(\bar \Lambda (\bar \xi, \bar \eta) , \bar{\tilde \Omega} (\bar \xi, \bar \eta))$ satisfying $\bar \Lambda (0,0) = 0$ and $\bar{\tilde \Omega} (0,0) = 1$.
One checks also that \eqref{e:sys2} implies that
 $(\Lambda, \Omega)$ satisfies the normalization
conditions \eqref{normalmap}.

By the first part of the proof, the defining equation $w = \rho_1 (z, \bar \xi, \bar \eta)$ for the Segre family $\mathcal{S}_1$
is uniquely determined by the Cauchy data $\rho_1 (0, \bar \xi, \bar \eta) $
and $\rho_{1,z} (0, \bar \xi, \bar \eta)$ from which we conclude that actually
$\rho = \rho_1$, and so $\mathcal{H} \in \Hom (\mathcal{S}_1 , \mathcal{S}_2)$.

\end{proof}

Lastly, if we define the category $\mathfrak{R}_m^\pm$ of $m$-infinite type hypersurfaces
for which the fixed coordinate system $(z,w)$ is admissible, and where for
$M,M' \in \mathfrak{R}_m^\pm$ we define $\Hom(M,M')$ to consist of all germs of
holomorphic (or all formal) maps $h\colon M \to M'$, then
the associated ODE and the associated Segre family give rise to  faithful
functors $\mathfrak{R}_m^\pm \hookrightarrow \mathfrak{E}_m$ and
$\mathfrak{R}_m^\pm \hookrightarrow\mathfrak{S}_m^\pm$.

\section{The general approach to the classification of ODEs}

\subsection{Reduction to a Cauchy problem}
Based on \autoref{isomorphism}, we now proceed with the holomorphic classification of ODEs \eqref{ODE}. According to the classical formulas for prolonging maps $\CC{}\mapsto\CC{}$ to the space of $2$-jets \cite{bluman},  for two given ODEs $\mathcal E=\left\{w''=\Psi(z,w,w')\right\}$ and $\mathcal E^*=\left\{w''=\Psi^*(z,w,w')\right\}$ and a diffeomorphism $(z,w)\mapsto(\tilde f,\tilde g)$ transforming $\mathcal E$ to $\mathcal E^*$ we have the following basic identity:
\begin{multline}\Label{trule}\label{trule}
\Psi(z,w,w')=\frac{1}{J}\bigg((\tilde f_z+w'\tilde f_w)^3\Psi^*\left(\tilde f(z,w),\tilde g(z,w),\frac{\tilde g_z+w'\tilde g_w}{\tilde f_z+w'\tilde f_w}\right)+\\
I_0(z,w)+I_1(z,w)w'+I_2(z,w)(w')^2+I_3(z,w)(w')^3\bigg),
\end{multline}
where $J:=\tilde f_z\tilde g_w-\tilde f_w\tilde g_z$ is the Jacobian determinant of the transformation and
\beq\begin{aligned}
I_0 &=\tilde g_z\tilde f_{zz}-\tilde f_z\tilde g_{zz}\\
I_1 &=\tilde g_w\tilde f_{zz}-\tilde f_w\tilde g_{zz}-2\tilde f_z\tilde g_{zw}+2\tilde g_z\tilde f_{zw}\\
I_2 &=\tilde g_{z}\tilde f_{ww}-\tilde f_z\tilde g_{ww}-2\tilde f_w\tilde g_{zw}+2\tilde g_w\tilde f_{zw}\\
I_3 &=\tilde g_w\tilde f_{ww}-\tilde f_w\tilde g_{ww}.
\end{aligned}\eeq
Similarly, for a pair of singular ODEs of the kind \eqref{ODE} we get:
\begin{multline}\Label{trule2}\label{trule2}
\Phi\left(z,w,\frac{w'}{w^m}\right)=\frac{1}{J}\bigg((\tilde f_z+w'\tilde f_w)^3\tilde g^m\Phi^*\left(\tilde f(z,w),\tilde g(z,w),\frac{\tilde g_z+w'\tilde g_w}{\tilde g^m(\tilde f_z+w'\tilde f_w)}\right)+\\
I_0(z,w)+I_1(z,w)w'+I_2(z,w)(w')^2+I_3(z,w)(w')^3\bigg),
\end{multline}
where the expressions $J,I_0,..,I_3$ are identical to the ones in \eqref{trule}. In what follows we treat the ODE $\mathcal E$ as the target and $\mathcal E^*$ as the initial ODE, respectively.

Recall that we are looking for  transformations of the kind \eqref{normalmap}.
Rewriting $\tilde f(z,w) = z + f(z,w) $, and $\tilde g (z,w) = w + w g_0 (w) + w^m g(z,w)  $,
 \eqref{trule2} (after dividing by $w^m$) becomes:
\begin{multline}\Label{trule3}\label{trule3}
\Phi\left(z,w,\zeta\right)=\frac{1}{J}\Bigl[\bigl(1+ f_z+w^m f_w\cdot\zeta)^3 (1+g_0(w)+w^{m-1}g\bigr)^m\cdot\\
\cdot \Phi^*\Bigl(z+ f,w+wg_0(w)+ w^mg,\frac{g_z+\zeta(1+ wg_0'+g_0+mw^{m-1}g+w^mg_w)}{(1+g_0(w)+w^{m-1}g)^m(1+f_z+w^m\zeta f_w)}\Bigr)+\\
+ I_0(z,w)+I_1(z,w)\zeta+I_2(z,w)w^{m}\zeta^2+I_3(z,w)w^{2m}\zeta^3\Bigr],
\end{multline}
where $\zeta:=\frac{w'}{w^m}$ and
\beq\Label{2jet}\label{2jet} \begin{aligned}
J&=(1+f_z)(1+wg_0'+g_0+mw^{m-1}g+w^mg_w)-w^mf_wg_z,\\
I_0 &= g_zf_{zz}- (1+f_z)g_{zz},\\
I_1 &=\bigl(1+wg_0'+g_0+mw^{m-1}g+w^mg_w\bigr)f_{zz}-w^mf_wg_{zz}-\\
&-2(1+ f_z)
(mw^{m-1}g_z+w^mg_{zw})+2w^m g_z f_{zw},\\
I_2 &=w^m g_{z}f_{ww}-(1+ f_z)(wg_0''+2g_0'+m(m-1)w^{m-2}g+2mw^{m-1}g_w+w^mg_{ww})-\\
&-2f_w(mw^{m-1}g_z+w^mg_{zw}) +2(1+wg_0'+g_0+mw^{m-1}g+w^mg_w) f_{zw},\\
I_3 &=(1+wg_0'+g_0+mw^{m-1}g+w^mg_w)f_{ww}-\\
&-f_w(wg_0''+2g_0'+m(m-1)w^{m-2}g+2mw^{m-1}g_w+w^mg_{ww}).
\end{aligned}\eeq
Importantly, \eqref{trule3} is an identity in the {\em free variables $z,w,\zeta$}, where the latter triple runs a suitable open neighborhood of the origin in $\CC{3}$.

Though the expressions \eqref{trule3},\eqref{2jet} look cumbersome, we will shortly be able to work out \eqref{trule3} elegantly.

After that, let us equalize in \eqref{trule3} terms with a fixed degree $k,\,0\leq k\leq 3$ in $\zeta$. We expand
\begin{equation}\Label{expandPhi}\label{expandPhi}
\Phi(z,w,\zeta)=\sum_{k\geq 0} \Phi_k(z,w)\zeta^k,
\end{equation}
and similarly  $\Phi^*$, where we assume that $\Phi^*$ satisfies \eqref{expandPhi1}, and obtain:
\beq\begin{aligned}\Label{PDE-1}\label{PDE-1}
I_0 &=\Phi_0 + \ldots\\
I_1 &=\Phi_1 + \ldots\\
w^{m}I_2 &=\Phi_2-\Phi^*_2+\ldots\\
w^{2m}I_3 &=\Phi_3-\Phi^*_3+\ldots,
\end{aligned}\eeq
where $\ldots$ signify {\em convergent power series}, without constant terms, in the $1$-jet of $(f,g_0,g)$, whose coefficients only depend on the source defining function $\Phi^*$ (and are independent of the target $\Phi$!).
We have

\begin{proposition}\Label{Cproblem}\label{Cproblem}  Let $ \mathcal{E}^* \in \mathfrak{E}$, and
$f_0(w)$, $f_1(w)$, $g_0 (w)$, $g_1 (w)$ be power series satisfying $f_0 (0) = g_0 (0) =  g_1(0) = f_1 (0) = 0$.
Then there exist unique $\mathcal{E} \in \mathfrak{E}$ and
 $H= (\tilde f,\tilde g) \in \Hom (\mathcal{E}, \mathcal{E}^*)$ such that
\begin{equation}\Label{initdata}\label{initdata}
\begin{aligned}
f_0(w)=\tilde f(0,w),\quad f_1(w)=\tilde f_z(0,w) - 1, \\ w g_0(w) = \tilde g(0,w) - w,\quad w^m g_1(w)=\tilde g_z(0,w).
\end{aligned}
\end{equation}
\end{proposition}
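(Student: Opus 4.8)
The plan is to set up \eqref{trule3} as a first-order evolution problem in the variable $z$, with $w$ (and $\zeta$) as parameters, and solve it by the Cauchy--Kovalevskaya theorem (or its formal analogue). Concretely, I would treat the unknowns as the four functions $\tilde f(z,w)$, $\tilde g(z,w)$ together with their first $z$-derivatives $\tilde f_z, \tilde g_z$, and regard \eqref{trule3}--\eqref{2jet} as prescribing the second $z$-derivatives $\tilde f_{zz}, \tilde g_{zz}$ once a target defining function $\Phi$ is declared unknown as well. The key observation is that the system \eqref{PDE-1} exhibits exactly this structure: the coefficients $I_0, I_1$ involve $\tilde f_{zz}$ and $\tilde g_{zz}$ linearly (via the combination $g_z f_{zz} - (1+f_z) g_{zz}$ and its companion in $I_1$), with an invertible coefficient matrix near the origin since $1+f_z$ is a unit; while $w^m I_2$ and $w^{2m} I_3$ involve only $f_{ww}, g_{ww}, g_0''$ and lower $z$-order data, and serve to \emph{define} the two free coefficients $\Phi_2 - \Phi_2^*$ and $\Phi_3 - \Phi_3^*$ of the target. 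Thus the game is: the bottom two lines of \eqref{PDE-1} are read as definitions of (the $\zeta^2$ and $\zeta^3$ parts of) $\Phi$, and the top two lines, after inverting the leading matrix, become a determined pair of second-order ODEs in $z$ for $(\tilde f, \tilde g)$.

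Next I would solve this pair of ODEs in $z$ with the Cauchy data prescribed in \eqref{initdata}. Because \eqref{trule3} is an identity in the free variables $(z,w,\zeta)$ on a neighborhood of the origin, and because the right-hand side depends holomorphically (resp. formally) on all arguments with the requisite nonvanishing of $J$ and of $1+f_z$ near $0$ guaranteed by the normalization \eqref{normalmap}, the holomorphic (resp. formal) ODE existence-and-uniqueness theorem produces unique $\tilde f, \tilde g$ with $\tilde f(0,w) = f_0(w)$, $\tilde f_z(0,w) = 1 + f_1(w)$, $\tilde g(0,w) = w + w g_0(w)$, $\tilde g_z(0,w) = w^m g_1(w)$. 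One then reads $\Phi_0, \Phi_1$ off the top two lines of \eqref{PDE-1} and $\Phi_2, \Phi_3$ off the bottom two, obtaining a candidate $\mathcal E \in \mathfrak E$. Uniqueness of both $\mathcal E$ and $H$ is built into the construction: any other pair with the same Cauchy data would satisfy the same ODE system and hence agree.

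Two points need care, and the second is the genuine obstacle. First, one must check that the $\tilde f, \tilde g$ so produced actually satisfy the structural constraints \eqref{normalmap} defining morphisms in $\mathfrak E$ — that $f_z(0,0) = 0$ and $g(z,w) = O(zw)$ and the reality conditions — and that the resulting $\Phi$ lies in the class \eqref{expandPhi1}, i.e. $\Phi = O(\zeta^2)$ with no harmful low-order terms; this is a matter of tracking the orders of vanishing in \eqref{2jet} and \eqref{PDE-1} against the hypotheses $f_0(0) = g_0(0) = g_1(0) = f_1(0) = 0$, and it should go through mechanically. The harder part is verifying that $\mathcal E$ is genuinely well-defined — that the two lower equations of \eqref{PDE-1} consistently determine a single power series $\Phi$, i.e. that $\Phi_2, \Phi_3$ computed this way are honest power series in $(z,w)$ and that the whole of \eqref{trule3}, not merely its $\zeta^0,\dots,\zeta^3$ parts, is satisfied. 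This last is exactly the point where the singular structure $w''= w^m\Phi(z,w,w'/w^m)$ matters: one must confirm that the substitution $\zeta = w'/w^m$ is compatible, i.e. that the identity in the free variable $\zeta$ truly encodes the ODE identity, so that solutions of $\mathcal E^*$ pull back to solutions of $\mathcal E$ under $H$. I would handle this by arguing, as in \autoref{isomorphism}, that the transformation rule \eqref{trule2} is an if-and-only-if characterization of $H \in \Hom(\mathcal E, \mathcal E^*)$ for ODEs of the form \eqref{ODE}, so that producing $(\tilde f, \tilde g, \Phi)$ satisfying \eqref{trule3} identically in $(z,w,\zeta)$ is the same as producing a morphism — and the freedom to choose $\Phi_2, \Phi_3$ is precisely what makes the system \eqref{PDE-1} solvable rather than overdetermined.
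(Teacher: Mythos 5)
Your proposal is correct and follows essentially the same route as the paper: impose $\Phi_0=\Phi_1=0$ in the first two lines of \eqref{PDE-1} to obtain a linear system for $\tilde f_{zz},\tilde g_{zz}$ with unit determinant near the origin, solve it by Cauchy--Kowalevskaya with the Cauchy data \eqref{initdata}, and let the transformation rule (in particular the last two lines of \eqref{PDE-1}) define the target $\Phi$, with uniqueness built into the construction. The only phrasing to tighten is ``reading $\Phi_0,\Phi_1$ off the top two lines'': those coefficients are forced to vanish by the requirement $\mathcal{E}\in\mathfrak{E}$, and that vanishing is precisely what turns the top two lines into the determined second-order system in $z$ that you solve.
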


\begin{proof}
Because we are looking for
a singular ODE with right hand side $\Phi$ for which
$\Phi_0 = \Phi_1 = 0$, we need to prove that the first two equations in \eqref{PDE-1}, with $\Phi_0 = \Phi_1 = 0$, have a solution $f,g$.
We are going to show that this is uniquely possible
 with the given initial data \eqref{initdata}.


In order to study the system of PDEs under discussion, let us use the first two formulas in \eqref{PDE-1} as a system of equations determining $\Phi_0 $
 and
 $\Phi_1$, respectively, which we reqire to be zero. Using the first two equations in  \eqref{2jet}, this yields system of
 PDEs, linear in the second
derivatives $f_{zz},g_{zz}$,  which
can be written as
\[ \begin{pmatrix}
	g_z & - (1 + f_z)\\
	(1 + w g_0' + g_0 + mw^{m-1} g + w^m g_w) & -w^m f_w
\end{pmatrix} \begin{pmatrix}
	f_{zz} \\ g_{zz}
\end{pmatrix} = \dots . \]
In view of \eqref{initdata}, the determinant of the matrix is nonzero at the point $(z,w)=(0,0)$. Hence, applying Cramer's rule, we obtain an {\em analytic} system of the kind
\begin{equation} \Label{system1}\label{system1}
\begin{aligned}
f_{zz}=U(z,w,g_0,g_0',f,g,f_z,g_z,f_w,g_w,f_{zw},g_{zw}), \\
g_{zz}=V(z,w,g_0,g_0',f,g,f_z,g_z,f_w,g_w,f_{zw},g_{zw})
\end{aligned}
\end{equation}
for some  germs of holomorphic functions  $U,V$ at the origin, which depend only on $\Phi^*$.
The Cauchy-Kowalevskaya theorem
guarantees
the existence of a (unique) solution to \eqref{system1} with initial  data
$$f(0,w)=f_0(w), \quad f_z(0,w)=f_1(w), \quad g(0,w)=0, \quad g_1(0,w)=g_1(w), $$
we determine a unique (resp. formal or holomorphic near the origin) solution $f,g$ for \eqref{system1}.

The associated functions $\tilde f (z,w) = z + f(z,w)$, $\tilde g (z,w) = w + w g_0 (w) + g(z,w)$ transform $\mathcal{E}^*$ to the (up to the
initial data unique) $\mathcal{E}$. The initial conditions also
imply that $(\tilde f, \tilde g)$ is
of the form required in \eqref{normalmap}.
\end{proof}

In view of \autoref{Cproblem}, the unique determination of a map $(\tilde f,\tilde g)$
is equivalent to determining the {\em Cauchy data for $H$}, i.e.
the vector
\begin{equation}\Label{Cdata}\label{Cdata}
Y(w):=\bigl(f_0(w),f_1(w),g_0(w),g_1(w)\bigr).
\end{equation}
 In order to uniquely determine the Cauchy data $Y(w)$,
 we have to  put further normalization conditions on the
 function $\Phi$.  Any such collection of
 normalization condition will be
 referred to as
  {\em a normal form} of an ODE \eqref{ODE}. We describe them below in three different cases,
  based on the decomposition
  \begin{equation}
  	\label{e:expandPhi2} \Phi(z,w,\zeta) = \sum_{a,b,c} \Phi_{a,b,c} z^a w^b \zeta^c.
  \end{equation}

For $m=1$, we define the space $\mathcal D_1$ to consist
of all formal power series $\Phi(z,w,\zeta)\in \zeta^2 \fps{z,w,\zeta}$ which in addition to satisfying $\Phi=O(\zeta^2)$ also satisfy
\begin{equation}\Label{nspaceODE1}\label{nspaceODE1}
\Phi_{0,j,2}=\Phi_{1,j,2}=\Phi_{0,j,3}=\Phi_{1,j,3}=0, \quad j>0,
\end{equation}
or equivalently,
\begin{equation}\Label{nspaceODE1a}\label{nspaceODE1a}
\dopt{^{j+k}\Phi}{z^j \zeta^k} (0,w,0) = \dopt{^{j+k}\Phi}{z^j \zeta^k} (0,w,0), \quad j = 0,1 , \quad k =2,3.
\end{equation}

For $m>1$, we need, for technical reasons that will become clear later, to first fix a real vector $$\sigma=(\sigma_1,\sigma_2)\in\RR{2}.$$
{For each such fixed $\sigma = (\sigma_1, \sigma_2)$},
we define the space $\mathcal D^\sigma_m$ to consist of all
formal power series $\Phi(z,w,\zeta)\in \zeta^2 \fps{z,w,\zeta}$
which in addition to satisfying $\Phi=O(\zeta^2)$ also satisfy\begin{equation}\Label{nspaceODEm}\label{nspaceODEm}
\begin{aligned}
\Phi_{0,j,2}&=
\begin{cases}
 0 & j>0,j\neq m-1 \\ m & j =m-1;
 \end{cases} \\
\Phi_{1,j,2}& =\Phi_{0,j,3}=0, \,\, j>0;\\
\Phi_{1,j,3}&=
\begin{cases}
0 & j>0,\,\,j\notin \{m-1,2m-2,3m-3\}\\
\sigma_1 & j = 2m-2 \\
\sigma_2 & j = 3m -3.
\end{cases}
\end{aligned}
\end{equation}

We will denote the corresponding subcategories of
equations whose right hand side is in
$\mathcal{D}_m^\sigma$ or $\mathcal{D}_1$, respectively, by
$\mathfrak{D}_m^\sigma \subset \mathfrak{E}_m$
and $\mathfrak{D}_1 \subset \mathfrak{E}_1$.


\subsection{A singular system of ODEs for the Cauchy data}
We will show that  under a generic condition on the defining function $\Phi^*$ of an ode $\mathcal{E}^*$,   which we will call the {\em non-resonancy} of the ODE
$\mathcal{E}^*$,
the condition
\[ \Hom (\mathcal{E}, \mathcal{E}^*) \neq \emptyset,
\quad
\mathcal{E}\in \mathfrak{D}_1 \text{ (or }
\mathfrak{D}_m^\sigma \text{, respectively)}\]
 determines the Cauchy data
 $f_0(w),f_1(w),g_0(w),h_1(w)$ (almost) uniquely.  However, the analytic regularity of the resulting functions for a system arising from
 an $m$-infinite type hypersurface as well as the non-resonancy condition itself depend significantly
 on $m$ and subsequently also (for $m>1$) on whether the ODE $\mathcal{E}$ is of {\em Fuchsian type} or it is not (this condition is introduced and discussed in Section 8 below). Accordingly, we consider  the normalization procedure in the latter three distinct cases, two of which are justified below and the last one considered in Section 8. The goal of this
 section is to collect the results which apply to all the systems, regardless
 of the specific $m$.

In the sequel, we consider the expansions
\begin{equation}\Label{ABC}\label{ABC}
\Phi_2(z,w)=\sum_{j\geq 0} A_j(w)z^j, \quad \Phi_3(z,w)=\sum_{j\geq 0} B_j(w)z^j, \quad \Phi_4(z,w)=\sum_{j\geq 0} C_j(w)z^j
\end{equation}
and
\begin{equation}\Label{expandmap}\label{expandmap}
f=f_0+f_1z+f_2z^2+...,\quad g=g_1z+g_2z^2+...
\end{equation}

In order to obtain uniqueness conditions for the collection \eqref{initdata}, we will make use of the last two equations in \eqref{PDE-1}. Some of the terms can
actually not be changed by transformations of the form we consider:

\begin{lemma}\Label{lowdegree}\label{lowdegree}
Let $\mathcal{E},\mathcal{E}^* \in \mathfrak{E}$ and
assume that $\Hom(\mathcal{E},\mathcal{E}^*) \neq \emptyset$.
Then
\begin{equation}\Label{lowPhi}\label{lowPhi}
\Phi_{002} = \Phi_{002}^* ,\,
\Phi_{102}=\Phi_{102}^*,\,
\Phi_{003}=\Phi_{003}^*,\,
\Phi_{103}=\Phi_{103}^*.
\end{equation}
\end{lemma}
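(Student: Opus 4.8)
The plan is to extract \eqref{lowPhi} directly from the lowest-order terms in the last two equations of \eqref{PDE-1}, which read $w^m I_2 = \Phi_2 - \Phi_2^* + \dots$ and $w^{2m} I_3 = \Phi_3 - \Phi_3^* + \dots$, where the dots are convergent power series without constant terms in the $1$-jet of $(f,g_0,g)$. Since a transformation $H\in\Hom(\mathcal E,\mathcal E^*)$ is of the form \eqref{normalmap}, we have $f_z(0,0)=0$, $g_0(0)=0$, $g(z,w)=O(zw)$, so in particular the full $1$-jet of $(f,g_0,g)$ vanishes at the origin. Hence every term hidden in ``$\dots$'' vanishes at $(z,w)=(0,0)$, and so does the left-hand side $w^m I_2$ (resp. $w^{2m}I_3$) because $m\geq 1$. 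Evaluating the third equation of \eqref{PDE-1} at the origin therefore gives $0 = \Phi_2(0,0) - \Phi_2^*(0,0)$, i.e. $\Phi_{002} = \Phi_{002}^*$; evaluating the fourth gives $\Phi_{003} = \Phi_{003}^*$.

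For the coefficients $\Phi_{102}$ and $\Phi_{103}$ one differentiates the same two equations once in $z$ and then evaluates at the origin. The point is that $\partial_z$ applied to a convergent power series in the $1$-jet of $(f,g_0,g)$, whose coefficients depend only on $\Phi^*$ and which has no constant term, produces a sum each of whose summands still contains at least one factor from the $1$-jet of $(f,g_0,g)$ or its $z$-derivative evaluated at the origin; all of these vanish by \eqref{normalmap} (note $\partial_z f_z(0,0)=f_{zz}(0,0)$ is \emph{not} a priori zero, but it enters the ``$\dots$'' only multiplied by another $1$-jet factor which does vanish — this needs to be checked against the explicit structure of \eqref{2jet} and \eqref{PDE-1}, and is the one genuinely computational point). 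Likewise $\partial_z(w^m I_2)|_0 = 0$ and $\partial_z(w^{2m}I_3)|_0 = 0$ since $I_2, I_3$ are themselves polynomial expressions in the $2$-jet of $(f,g_0,g)$ with no constant term, multiplied by $w^m$ (resp. $w^{2m}$), and $\partial_z$ of $w^m(\cdot)$ still carries the factor $w^m$. Thus $\partial_z$ of the third equation at the origin yields $0 = \Phi_{102} - \Phi_{102}^*$ and of the fourth yields $0 = \Phi_{103} - \Phi_{103}^*$.

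The main obstacle — really the only thing requiring care — is verifying that after one $z$-differentiation the ``$\dots$'' terms still vanish at the origin, i.e. that no term of the form (nonzero constant) $\times$ (a single second derivative such as $f_{zz}(0,0)$ or $g_{zz}(0,0)$) appears unaccompanied by a vanishing first-jet factor. This is controlled by the observation made before \eqref{PDE-1} that the ``$\dots$'' are power series \emph{without constant and without linear terms that are not already absorbed}: more precisely, in \eqref{PDE-1} the second derivatives $f_{zz}, g_{zz}$ occur only inside $I_0$ and $I_1$, which feed the \emph{first two} equations of \eqref{PDE-1} (the ones used in \autoref{Cproblem}), and in the last two equations ($I_2, I_3$) the second derivatives appearing are $f_{ww}, g_{ww}, f_{zw}, g_{zw}$ and $g_0''$, each multiplied by a positive power of $w$; hence differentiating in $z$ and setting $w=0$ kills them. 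So the argument closes cleanly, and the lemma follows.

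Alternatively, and perhaps most transparently for the write-up: the four quantities $\Phi_{002},\Phi_{102},\Phi_{003},\Phi_{103}$ are exactly the coefficients of $1$, $z$ in $\Phi_2(z,0)$ and $\Phi_3(z,0)$, i.e. they describe the restriction of $\Phi$ to the complex curve $\{w=0\}$ up to first order in $z$ at degree $2$ and $3$ in $\zeta$; and on $\{w=0\}$ the transformation \eqref{normalmap} restricts to $z\mapsto z+f(z,0)$, $w\mapsto w$ with $f(z,0)=f_0+f_1 z + O(z^2)$, $f_0(0)=f_1(0)=0$ — but here one must be slightly cautious because $f_0(w)$, $f_1(w)$ need not vanish identically, only at $w=0$. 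I would present the first, computational version as the actual proof since it is self-contained given \eqref{PDE-1} and \eqref{2jet}, and the invariant reformulation only as motivation in a remark if at all.
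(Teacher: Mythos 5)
Your overall strategy is the same as the paper's (the paper's proof is a one-line ``straightforward computation of the corresponding terms in the transformation rule \eqref{trule3}''), and the constant-term half of your argument, giving $\Phi_{002}=\Phi^*_{002}$ and $\Phi_{003}=\Phi^*_{003}$, is essentially correct. One small inaccuracy there: it is \emph{not} true that the full $1$-jet of $(f,g_0,g)$ vanishes at the origin --- $f_w(0,0)$ and $g_0'(0)$ are unconstrained by \eqref{normalmap}. What saves the evaluation at the origin is the structural fact (used by the paper in the proof of \autoref{p:preparesystem}) that in \eqref{trule3} the quantities $f_w,g_w,f_{zw},g_{zw}$ only ever occur with a factor $w^m$ and $g_0'$ with a factor $w$.

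The genuine gap is exactly at the point you flag as ``the one genuinely computational point'': your claimed mechanism --- that $f_{zz}(0,0)$ enters only multiplied by a vanishing first-jet factor --- is false. The variable $f_z$ enters the $\zeta^3$-coefficient of \eqref{trule3} \emph{linearly with non-vanishing coefficient at the origin} from three sources: the prefactor $(1+f_z+w^mf_w\zeta)^3$ (contributing $+3\Phi^*_{003}$), the denominator $(1+f_z+w^m\zeta f_w)$ inside the third slot of $\Phi^*$, which enters $\zeta_*^3$ as $(1+f_z)^{-3}$ (contributing $-3\Phi^*_{003}$), and the overall factor $1/J$ with $J=(1+f_z)(1+\cdots)-w^mf_wg_z$ (contributing $-\Phi_{003}$). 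Applying $\partial_z$ at the origin therefore yields
$\Phi_{103}=\Phi^*_{103}+(3-3)\,\Phi^*_{003}f_{zz}(0,0)-\Phi_{003}f_{zz}(0,0)=\Phi^*_{103}-\Phi_{003}f_{zz}(0,0)$,
which is not $\Phi^*_{103}$ unless $f_{zz}(0,0)=0$. (For the $\zeta^2$-coefficient the analogous weights are $3,-2,-1$, so the residue is $(\Phi^*_{002}-\Phi_{002})f_{zz}(0,0)$, which vanishes by the first half of the lemma --- your conclusion for $\Phi_{102}$ is right, but for the wrong reason.) The missing ingredient is precisely $f_{zz}(0,0)=0$. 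This is \emph{not} part of the normalization \eqref{normalmap}; it comes from the two equations of \eqref{PDE-1} that you explicitly set aside: since both ODEs lie in $\mathfrak{E}$, we have $\Phi_1=\Phi_1^*=0$, and evaluating the equation $I_1=\Phi_1+\dots$ at the origin (where, by \eqref{2jet}, $I_1(0,0)=f_{zz}(0,0)$ and the right-hand side vanishes) forces $f_{zz}(0,0)=0$. Once you add this observation, your computation closes; without it, the claim $\Phi_{103}=\Phi^*_{103}$ is simply not established.
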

The proof of this Lemma is
a straightforward computation of the corresponding terms in the transformation rule \eqref{trule3} by using the vanishing of the Cauchy data at $w=0$ and \eqref{PDE-1}.

\begin{observation}\Label{NFconditions}\label{NFconditions}
For $m=1$, the normal form condition $\Phi\in\mathcal D_1$ determines the four coefficient functions $A_0(w),A_1(w),B_0(w),B_1(w)$ completely, modulo their constant terms \eqref{lowPhi}.
Unlike that, if $m>1$ the normal form condition $\Phi\in\mathcal D^\sigma_m$ determines (for each fixed $\sigma$) the four coefficient functions $A_0(w),A_1(w),B_0(w),B_1(w)$ completely, modulo their constant terms and the coefficient
\begin{equation}\Label{unknown}\label{unknown}
\Phi_{1,m-1,3} = \frac{1}{(m-1)!}\dopt{^{m-1} B_1}{w^{m-1}} (0) .
\end{equation}
\end{observation}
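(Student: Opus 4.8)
The plan is to prove this by a direct reading of the definitions of the normal-form spaces $\mathcal D_1$ and $\mathcal D_m^\sigma$ against the expansions \eqref{ABC}, together with \autoref{lowdegree}; there is no analytic content here, only bookkeeping of Taylor coefficients. First I would record the elementary identification coming from \eqref{expandPhi}, \eqref{e:expandPhi2} and \eqref{ABC}: one has $A_j(w)=\sum_{b\ge 0}\Phi_{j,b,2}w^b$ and $B_j(w)=\sum_{b\ge 0}\Phi_{j,b,3}w^b$, so each of the four functions $A_0,A_1,B_0,B_1$ is simply the generating series in $w$ of one of the four coefficient sequences $(\Phi_{0,\bullet,2})$, $(\Phi_{1,\bullet,2})$, $(\Phi_{0,\bullet,3})$, $(\Phi_{1,\bullet,3})$ that the normal-form conditions constrain.

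For $m=1$ I would then observe that the relations \eqref{nspaceODE1} defining $\mathcal D_1$ assert precisely $\Phi_{0,j,2}=\Phi_{1,j,2}=\Phi_{0,j,3}=\Phi_{1,j,3}=0$ for all $j>0$; by the identification above this says that $A_0,A_1,B_0,B_1$ carry no terms of positive degree in $w$, hence are constants. Since $\Hom(\mathcal E,\mathcal E^*)\neq\emptyset$, \autoref{lowdegree} then identifies those constants with $\Phi_{002}^*,\Phi_{102}^*,\Phi_{003}^*,\Phi_{103}^*$, which is the first assertion.

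For $m>1$ the same reading of \eqref{nspaceODEm} prescribes a value to $\Phi_{0,j,2}$ for every $j\ge 1$ ($m$ at $j=m-1$, else $0$), to $\Phi_{1,j,2}$ and $\Phi_{0,j,3}$ for every $j\ge 1$ (all $0$), and to $\Phi_{1,j,3}$ for every $j\ge 1$ \emph{except} $j=m-1$ ($\sigma_1$ at $j=2m-2$, $\sigma_2$ at $j=3m-3$, else $0$). Here I would pause to check the one point of care: for $m>1$ the indices $m-1<2m-2<3m-3$ are pairwise distinct and all $\ge 1$, so the prescription is consistent and $j=m-1$ is genuinely the unique positive index at which $(\Phi_{1,\bullet,3})$ is left unconstrained. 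Feeding in the constant terms from \autoref{lowdegree}, I would conclude $A_0(w)=\Phi_{002}^*+m\,w^{m-1}$, $A_1(w)=\Phi_{102}^*$, $B_0(w)=\Phi_{003}^*$, and $B_1(w)=\Phi_{103}^*+\Phi_{1,m-1,3}w^{m-1}+\sigma_1 w^{2m-2}+\sigma_2 w^{3m-3}$; the single coefficient $\Phi_{1,m-1,3}$, which by \eqref{unknown} equals $\frac{1}{(m-1)!}\partial_w^{m-1}B_1(0)$, is the only datum left free once $\mathcal E^*$ and $\sigma$ are fixed, which is the second assertion.

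I do not expect a genuine obstacle: the statement is essentially a restatement of the definitions. The only things requiring attention are the index-collision check just described in the $m>1$ case, and being careful to note that no existence claim is being made here — the observation records which coefficients of $\Phi$ are forced once one requires $\mathcal E$ to lie in the normal-form subcategory and to admit a morphism to $\mathcal E^*$, whereas the actual construction of a normalizing transformation realizing these values (and thereby converting the remaining freedom into initial conditions for the Cauchy data $Y(w)$ in \eqref{Cdata}) is carried out in the subsequent sections.
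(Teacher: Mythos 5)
Your proposal is correct and is exactly the intended argument: the paper states this as an Observation with no written proof precisely because it is a direct unpacking of the definitions of $\mathcal D_1$ and $\mathcal D_m^\sigma$ against the expansions \eqref{ABC} and \eqref{e:expandPhi2}, with the constant terms supplied by \autoref{lowdegree}. Your explicit check that $m-1$, $2m-2$, $3m-3$ are pairwise distinct for $m>1$ and your closing remark separating this bookkeeping from the existence question are both apt.
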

We will discuss the actual freedom in a possible determination of this parameter later in Section 6. \begin{convention}\Label{star}\label{star}
We denote the undetermined parameter $\Phi_{1,m-1,3}$ as above
in what follows by $*$.
\end{convention}

The preliminary result for preparing the
ODEs for the
Cauchy data can be summarized in the following proposition. After this
preliminary result,  the classification procedure becomes different depending on $m$ and the defining function $\Phi^*$.

\begin{proposition}
\label{p:preparesystem} Let $\mathcal{E}^* \in \mathfrak{E}_m$. There exists a  germ
of an analytic map $T = (T_1, T_2, T_3, T_4)$ defined
near the origin in $\C^9$, vanishing at the origin,
and depending only on $\mathcal{E}^*$ (and in case $m>1$
also the undetermined coefficient $*$) such that
if
$(f,g) \in \Hom(\mathcal{E},\mathcal{E}^*)$, where
$\mathcal{E} \in \mathfrak{D}_1$ (or $\mathfrak{D}_m^\sigma$, respectively), then
\begin{equation}\Label{merom}\label{merom}
\begin{aligned}
w^{m+1}g_0''&=T_1(w,g_0,g_1,f_0,f_1,wg_0',w^{m}g_1',w^{m}f_0',w^{m}f_1'),\\
w^{2m}g_1''&=T_2(w,g_0,g_1,f_0,f_1,wg_0',w^{m}g_1',w^{m}f_0',w^{m}f_1'),\\
w^{2m}f_0''&=T_3(w,g_0,g_1,f_0,f_1,wg_0',w^{m}g_1',w^{m}f_0',w^{m}f_1'),\\
w^{2m}f_1''&=T_4(w,g_0,g_1,f_0,f_1,wg_0',w^{m}g_1',w^{m}f_0',w^{m}f_1').
\end{aligned}
\end{equation}
On the other hand, if $(f_0,f_1, g_0, g_1)$ satisfy \eqref{merom}, then
there exist a map $(f,g)$ and $\mathcal{E} \in \mathfrak{D}_1$ (or $\mathfrak{D}_m^\sigma$, respectively) such that $(f,g) \in \Hom(\mathcal{E},\mathcal{E}^*)$.
\end{proposition}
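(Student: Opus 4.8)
The plan is to deduce Proposition~\ref{p:preparesystem} from \autoref{Cproblem} together with the last two equations in \eqref{PDE-1} and the normal form conditions defining $\mathfrak{D}_1$ (resp.\ $\mathfrak{D}_m^\sigma$). Throughout, I work with the expansions \eqref{expandPhi}, \eqref{ABC}, \eqref{expandmap} and the transformation rule \eqref{trule3}--\eqref{2jet}. Recall from \autoref{Cproblem} that prescribing the Cauchy data $Y(w) = (f_0,f_1,g_0,g_1)$ (with all four vanishing at $w=0$) determines a unique map $(\tilde f,\tilde g)$ and a unique target ODE $\mathcal{E}\in\mathfrak{E}$; the content of the proposition is that imposing $\mathcal{E}\in\mathfrak{D}_1$ (resp.\ $\mathfrak{D}_m^\sigma$) forces the second-order derivatives of $Y(w)$ to satisfy the meromorphic system \eqref{merom}.

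First I would extract, from \autoref{Cproblem} and its proof, the explicit dependence of $f_2,g_2$ (the $z^2$-coefficients in \eqref{expandmap}) on the $1$-jet data $f_0,f_1,g_0,g_1$ and their first $w$-derivatives. Indeed, the system \eqref{system1} expresses $f_{zz}$ and $g_{zz}$ as analytic functions of $z,w$ and the listed lower-order jets; evaluating at $z=0$ gives $2f_2 = f_{zz}(0,w)$ and $2g_2 = g_{zz}(0,w)$ as analytic functions of $w, g_0, g_0', f_0, f_1, g_1$ and the derivatives $f_0', f_1', g_1'$ --- this is the first place where negative powers of $w$ could in principle enter, but because of the $w^m$-weights built into \eqref{normalmap} and \eqref{2jet} (every $w$-derivative of $g$ or $f_0,f_1$ appears multiplied by $w^m$, and $g_0'$ only by $w$), they enter only in the combinations $wg_0'$, $w^m g_1'$, $w^m f_0'$, $w^m f_1'$. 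I would make this bookkeeping precise once and reuse it.

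Next I would turn to the last two equations of \eqref{PDE-1}, namely $w^m I_2 = \Phi_2 - \Phi_2^* + \cdots$ and $w^{2m} I_3 = \Phi_3 - \Phi_3^* + \cdots$. Substituting the expansions \eqref{ABC}, \eqref{expandmap} and extracting the coefficients of $z^0$ and $z^1$ yields four scalar equations. On the left the relevant parts of $I_2, I_3$ from \eqref{2jet} contain the second derivatives $g_0'', g_1'', f_0'', f_1''$ --- each again appearing with the $w$-weight visible in \eqref{2jet}: $g_0''$ with coefficient $w\cdot(1+f_z)$, while $g_1'', f_0'', f_1''$ come with $w^m$-type weights from $w^m g_{ww}$, $f_{ww}$ paired against $w^m$-factors, etc. On the right, the normal form condition $\Phi\in\mathfrak{D}_1$ (resp.\ $\mathfrak{D}_m^\sigma$) pins down $A_0,A_1,B_0,B_1$ completely modulo the constants \eqref{lowPhi} (resp.\ modulo those constants and the single parameter $*$ of Convention~\ref{star}), by \autoref{NFconditions}; all remaining $z^0,z^1$-terms on the right are packaged into the "$\cdots$" which, per \eqref{PDE-1}, are convergent power series without constant term in the $1$-jet of $(f,g_0,g)$ with coefficients depending only on $\Phi^*$. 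Solving these four equations for the highest derivatives $g_0'',g_1'',f_0'',f_1''$ --- which is possible because the coefficient matrix is, by the vanishing of the Cauchy data at $w=0$ and the leading normalizations, invertible near the origin (its leading entries being $\pm1$) --- and multiplying through by the appropriate powers $w^{m+1}, w^{2m}, w^{2m}, w^{2m}$ to clear the poles, produces exactly \eqref{merom} with analytic $T_1,\dots,T_4$ vanishing at the origin and depending only on $\mathcal{E}^*$ (and on $*$ when $m>1$).

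For the converse, given $(f_0,f_1,g_0,g_1)$ solving \eqref{merom}, I would run the construction in reverse: feed these as Cauchy data into \autoref{Cproblem} to obtain a map $(f,g)$ and its target $\mathcal{E}\in\mathfrak{E}$; then the computation just described, read backwards, shows that the $z^0$- and $z^1$-coefficients of $\Phi_2$ and $\Phi_3$ are forced to take exactly the values prescribed by \eqref{nspaceODE1} (resp.\ \eqref{nspaceODEm}), while the remaining normal form conditions on $\Phi$ (those fixing coefficients of $z^j$ for $j\geq 2$, and the equalities \eqref{lowPhi}) are automatic from \autoref{lowdegree} and from the fact --- established in the proof of \autoref{NFconditions}, to be invoked --- that once $A_0,A_1,B_0,B_1$ are normalized, the $z^2$-and-higher coefficients are governed by the higher-order prolongation equations, which one solves recursively to adjust $f_j,g_j$ for $j\geq 2$ without disturbing the Cauchy data. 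Hence $\mathcal{E}\in\mathfrak{D}_1$ (resp.\ $\mathfrak{D}_m^\sigma$) and $(f,g)\in\Hom(\mathcal{E},\mathcal{E}^*)$.

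The main obstacle I anticipate is the weight bookkeeping in the middle step: one must verify that, after solving the four linear equations for $g_0'',g_1'',f_0'',f_1''$ and clearing denominators, no uncancelled negative power of $w$ survives --- equivalently, that every occurrence of a $w$-derivative of the Cauchy data in $I_2, I_3$ and in the "$\cdots$" terms carries enough factors of $w$ to match the $w^{m+1}$ or $w^{2m}$ on the left. This is a finite but delicate check against the explicit formulas \eqref{2jet}, and it is precisely where the choice of normalizations \eqref{normalmap} and the placement of the $w^m$-weights in \eqref{ODE} pay off; everything else is routine manipulation of the transformation rule and appeals to \autoref{Cproblem}, \autoref{lowdegree}, and \autoref{NFconditions}.
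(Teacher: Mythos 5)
Your proposal follows essentially the same route as the paper's proof: impose the normalization conditions on $\Phi_2,\Phi_3$ in the last two equations of \eqref{PDE-1}, extract the $z^0$- and $z^1$-coefficients, eliminate $f_2,g_2$ via \eqref{system1} evaluated at $z=0$, track the $w$-weights coming from \eqref{normalmap} and \eqref{2jet} to see that the resulting system is linear in $w^{m+1}g_0'',w^{2m}g_1'',w^{2m}f_0'',w^{2m}f_1''$ with invertible (diagonal $\pm 1$) coefficient matrix at the origin, and obtain the converse from \autoref{Cproblem}. The weight bookkeeping you flag as the delicate point is exactly the observation the paper makes (every $w$-derivative of $f,g$ enters with a factor $w^m$ and $g_0'$ with a factor $w$), so there is no gap.
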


\begin{proof}
Our normalization conditions $\Phi\in\mathcal D_1$, and $\Phi\in\mathcal D_m^\sigma$ respectively,
translate the
equations coming from the corresponding
terms in \eqref{PDE-1}
into four ODEs in $H = (f_0, f_1 , g_0, g_1)$ by the following process.

\smallskip

(i) We set in the last two equations in \eqref{PDE-1} either
\begin{equation}\Label{Phi23}\label{Phi23} \left.
\begin{aligned}
&\Phi_{002}=\Phi^*_{002},\, \Phi_{102}=\Phi^*_{102},\,\Phi_{003}=\Phi^*_{003}, \,\Phi_{103}=\Phi^*_{103},\\
&\Phi_{0j2}=0=\Phi_{1j2}=\Phi_{0j3}=\Phi_{1j3}=0,\,\, j\geq 1.
\end{aligned}\right\rbrace \text{ if }m=1
\end{equation}
 or
\begin{equation}\Label{m-1}\label{m-1} \left.
\begin{aligned}
&\Phi_{002}=\Phi^*_{002},\, \Phi_{102}=\Phi^*_{102},\,\Phi_{003}=\Phi^*_{003}, \,\Phi_{103}=\Phi^*_{103},\\
& \Phi_{0j2} = \begin{cases}
0 & j\in \N\setminus \left\{ m-1 \right\}  \\
m & j = m
\end{cases} \\
&\Phi_{1j2}=\Phi_{0j3}=0,\,\, j\geq 1,\\
&\Phi_{1j3}= \begin{cases}
0 & j\in \N \setminus \{m-1,2m-2,3m-3\}  \\
\sigma_1 & j=2m -2 \\
\sigma_2 & j = 3m-3.
\end{cases}
\end{aligned}\right\rbrace\text{ if }m>1.
\end{equation}

\smallskip

(ii) After performing (i),  we collect
the terms of degrees $0$ and $1$ in $z$ in the last two equations in
 \eqref{PDE-1}. Note that, for $m>1$, the coefficient \eqref{unknown} remains undetermined and thus becomes an unknown parameter for the system of four ODEs under discussion.

By performing (i),(ii), we obtain four second order ODEs, depending on $f_2,g_2$, as well as
 $f_0(w),f_1(w),g_0(w),h_1(w)$ and their derivatives of order $\leq 2$ (because $f,g$ appear in the last two equations of \eqref{PDE-1}differentiated in $z$ at most once), and the ODEs depend only on $\Phi^*$ and the parameter $*$ (because
  the right hand side of \eqref{PDE-1} only depended on $\Phi^*$).

In order to eliminate $f_2,g_2$ from the four ODEs under discussion, we use the equations \eqref{system1}, evaluated at $z=0$, which allow us to
express  $f_2,g_2$ as {\em analytic} functions of  $f_0(w),f_1(w),g_0(w),g_1(w)$ and their derivatives of order $\leq 1$.
After substituting these two expressions into the four second order ODEs under discussion, we obtain a collection of four second order ODEs in $f_0(w),f_1(w),g_0(w),g_1(w)$ and their derivatives of order $\leq 2$ {\em only}.
The coefficient \eqref{m-1} (for $m>1$) remains {\em a parameter} in this system.
We denote the  system of ODEs by $\mathcal{F}$. Before we discuss the form of the
system of ODEs obtained by the procedure outlined above, let us note that we
have proved the statement about any $(f,g) \in \Hom(\mathcal{E}, \mathcal{E}^*)$
having the property that $(f_0, f_1, g_0, g_1)$  is a solution of $\mathcal{F}$.
On the other hand, if we have a formal or
analytic solution $(f_0, f_1, g_0, g_1)$ of $\mathcal{F}$,
then we can apply
 \autoref{Cproblem} to see that it uniquely defines a
 (formal or analytic) equation $\mathcal{E}\in \mathfrak{D}_1$ or
 $\mathcal{E}\in \mathfrak{D}_m^\sigma$ and a  (resp. formal or analytic) transformation $(f,g)\in \Hom(\mathcal{E}, \mathcal{E}^*)$.

Now, let us discuss how one obtains the exact form of the ODEs in \eqref{merom}. First
we note that
when solving the first two equations in \eqref{PDE-1} for $f_{zz}, g_{zz}$ in order to obtain \eqref{system1},  all the derivatives $f_w,g_w,f_{zw},g_{zw}$ appear with the factor $w^m$, and the derivative $g_0'$ appear with the factor $w$. In accordance with that, when we express $f_2,g_2$ as analytic functions of  $f_0(w),f_1(w),g_0(w),g_1(w)$ and their derivatives of order $\leq 1$ as discussed above,
 the derivatives $f_0',f_1',g_1'$ appear with the factor $w^m$, and the derivative $g_0'$ appears with the factor $w$.

Further, \eqref{2jet} implies that
the second order derivatives $g_0'',g_1'',f_0'',f_1''$ appear in $\mathcal F$ {\em linearly}.  Now, using    \eqref{trule3}, \eqref{PDE-1}, as well as our observation above for $f_2$ and $g_2$ we can see that $\mathcal S$ is a {\em linear} system in
$$w^{m+1}g_0'',w^{2m}g_1'',w^{2m}f_0'',w^{2m}f_1'',$$
in which the derivatives $f_0',f_1',g_1'$ appear with the factor $w^m$, and the derivative $g_0'$ appears with the factor $w$.

The matrix of this linear system  evaluated at the origin in the space of $1$-jets of maps $w\mapsto (g_0(w),g_1(w),f_0,f_1)$ equals
$$\mbox{diag}\{-1,-1,1,1\}.$$ Hence we can solve the linear system under discussion in $w^{m+1}g_0'',w^{2m}g_1'',w^{2m}f_0'',w^{2m}f_1''$ and obtain the system of singular holomorphic ODEs claimed in \eqref{merom}. Note that if the system has a solution,
then the $T$'s have no constant term, and so we can just
assume that the right hand side $T$ vanishes at $0$.
 \end{proof}

\section{Case $m=1$}
In order to define our non-resonancy condition, we start by
 constructing a first-order system from
\eqref{merom}. Writing
$$H:=(g_0,g_1,f_0,f_1),$$
the system \eqref{merom}, in case $m=1$, takes the following form:
\begin{equation}\Label{merom1}\label{merom1}
w^{2}H''=T(w,H,wH')
\end{equation}
for a holomorphic map
 $T$ defined near and vanishing at the origin.  We can
 rewrite \eqref{merom1} as a first order
 ODE  using the substitution
\begin{equation}\Label{subst}\label{subst}
G:=wH', \quad \mathbf{ H}:=(H,G).
\end{equation}
We write the resulting first order ODE as
\begin{equation}\Label{merom2}\label{merom2}
w\mathbf{ H}'=\mathbf{ T}(w,\mathbf{ H})
\end{equation}
where $\mathbf{ T}$ is a holomorphic
map defined near the origin which satisfies
$\mathbf{ T}(0,0)=0$. Note that $\mathbf{ H}(0)=0$. The singular holomorphic ODE \eqref{merom2} is within the class of {\em Briot-Bouquet ODEs} (see Section 2), hence every formal solution of
\eqref{merom2} which satisfies
the condition $\mathbf{ H}(0)=0$) is convergent.
For the existence of formal (and hence holomorphic) solutions, it is known that such a solution with $\mathbf{ H}(0)=0$ exists and is  unique if
 the linearization matrix
\begin{equation}\Label{mat}\label{mat}
\mathbf{M}:=\left.\frac{\partial \bold T}{\partial \bold H}\right|_{w=\bold H=0}
\end{equation}
has no positive integer eigenvalues, or equivalently, if  the
intimately related {\em Euler system}
\begin{equation}\Label{Euler}\label{Euler}
w\bold H'=M \mathbf{H}
\end{equation}
has only the trivial solution.

Note that the matrix $\mathbf{M}$ defined
by  \eqref{mat}, is actually given by
\[ \mathbf{M} = \begin{pmatrix}
	0 & I \\ T_H & I + T_G
\end{pmatrix} \biggr|_{w = H = G = 0}, \]
with
$T$ defined in  \autoref{p:preparesystem}. T, and therefore $\mathbf{M}$ only depend
on  $\mathcal E^*$. We can therefore make the following definition.

\begin{definition}\Label{nonres}\label{nonres}
An ODE $\mathcal E \in \mathfrak{E}_1$
 is called {\em non-resonant}
 if the matrix given in  \eqref{mat} does not have positive integer eigenvalues. A real hypersurface \eqref{madmissiblereal} is called {\em non-resonant}, if its associated ODE $\mathcal{E} (M)$ (see \eqref{ODE}) is non-resonant.
\end{definition}

Consider now our  system \eqref{merom1}. As discussed above, under the non-resonancy condition, it  has a unique holomorphic solution with $H(0)=0$. Therefore,
\autoref{p:preparesystem} implies the following.

\begin{theorem}\Label{nfODE}\label{nfODE}
If $\mathcal{E} \in \mathfrak{E}_1$ is nonresonant, then
there exists a unique normal form $\mathcal{N} \in \mathfrak{D}_1$ such that
$\Hom(\mathcal{N}, \mathcal{E}) \neq \emptyset$. Indeed, in that
case, there is a unique normalizing transformation:
$|\Hom(\mathcal{N}, \mathcal{E}) | =1$.  If $\mathcal{E}$ is
convergent, so is $\mathcal{N}$ and the normalizing transformation.
\end{theorem}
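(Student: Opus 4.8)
The plan is to derive \autoref{nfODE} from \autoref{p:preparesystem} together with the classical Briot--Bouquet theory recalled in \autoref{ss:isolated}. Apply \autoref{p:preparesystem} with $\mathcal{E}^*=\mathcal{E}$: it identifies the datum of a normalizing transformation $H\in\Hom(\mathcal{N},\mathcal{E})$ with $\mathcal{N}\in\mathfrak{D}_1$ with a solution $H=(g_0,g_1,f_0,f_1)$ of the singular system \eqref{merom1} whose Cauchy data \eqref{Cdata} all vanish at $w=0$ (this vanishing is part of the hypotheses of \autoref{Cproblem} and is automatic for the Cauchy data of any morphism by \eqref{normalmap}). Conversely, the ``on the other hand'' part of \autoref{p:preparesystem}, combined with the uniqueness clause of \autoref{Cproblem}, shows that each such solution is realized by a \emph{unique} pair $(\mathcal{N},H)$ with $\mathcal{N}\in\mathfrak{D}_1$ and $H\in\Hom(\mathcal{N},\mathcal{E})$. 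Thus the whole statement reduces to: \eqref{merom1} has exactly one solution with $H(0)=0$, which is holomorphic whenever $\mathcal{E}$ is.

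I would then carry out the first--order reduction indicated in the text: setting $G:=wH'$ and $\mathbf{H}:=(H,G)$ turns \eqref{merom1} into the Briot--Bouquet system \eqref{merom2}, $w\mathbf{H}'=\mathbf{T}(w,\mathbf{H})$ with $\mathbf{T}$ holomorphic near $0$ and $\mathbf{T}(0,0)=0$; a solution of \eqref{merom2} with $\mathbf{H}(0)=0$ automatically satisfies $G=wH'$ by its first block of equations, so these solutions correspond bijectively to solutions of \eqref{merom1} with $H(0)=0$. Expanding $\mathbf{H}=\sum_{k\ge 1}\mathbf{h}_k w^k$ and matching powers of $w$ in $w\mathbf{H}'=\mathbf{T}(w,\mathbf{H})$ yields the recursion $(kI-\mathbf{M})\mathbf{h}_k=P_k(\mathbf{h}_1,\dots,\mathbf{h}_{k-1})$, where $\mathbf{M}$ is the matrix \eqref{mat} (depending only on $\mathcal{E}$) and $P_k$ is a polynomial whose coefficients depend only on $\mathbf{T}$. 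By \autoref{nonres}, non-resonance of $\mathcal{E}$ is exactly the condition that $\mathbf{M}$ has no positive integer eigenvalue, i.e. that each $kI-\mathbf{M}$, $k\ge 1$, is invertible (equivalently, the Euler system \eqref{Euler} has only the trivial solution); under it the recursion determines the $\mathbf{h}_k$ successively and uniquely, so \eqref{merom2}, and hence \eqref{merom1}, has a unique formal solution vanishing at the origin. Since $m=1$ the equation is of Briot--Bouquet type, so this formal solution converges whenever $\mathbf{T}$ is holomorphic, i.e. whenever $\mathcal{E}$ is convergent.

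Putting the two steps together: there is exactly one solution $(g_0,g_1,f_0,f_1)$ of \eqref{merom1} with vanishing Cauchy data, and it produces exactly one pair $(\mathcal{N},H)$ with $\mathcal{N}\in\mathfrak{D}_1$, $H\in\Hom(\mathcal{N},\mathcal{E})$; since every element of $\bigcup_{\mathcal{N}'\in\mathfrak{D}_1}\Hom(\mathcal{N}',\mathcal{E})$ arises this way, $\mathcal{N}$ is the unique normal form admitting a transformation to $\mathcal{E}$ and $|\Hom(\mathcal{N},\mathcal{E})|=1$. The convergence addendum follows from the chain: $\mathcal{E}$ convergent $\Rightarrow$ $\mathbf{T}$ holomorphic $\Rightarrow$ (Briot--Bouquet) the solution of \eqref{merom1} is holomorphic $\Rightarrow$ (by \autoref{Cproblem}, via Cauchy--Kowalevskaya) $\mathcal{N}$ and $H$ are convergent.

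Most of the genuine content has already been packaged into \autoref{p:preparesystem} and into the classical theory, so the main thing to watch is the bookkeeping of the correspondence in the first paragraph: one must check that the Cauchy data coming from an actual morphism in $\Hom(\mathcal{N},\mathcal{E})$, $\mathcal{N}\in\mathfrak{D}_1$, does vanish at $w=0$ and satisfies \emph{precisely} the normalization that was built into the derivation of \eqref{merom1}, so that both implications of \autoref{p:preparesystem} apply without loss. For $m=1$ there is no reality constraint on the Cauchy data to track, which makes this step routine; with it in hand, existence, uniqueness and the convergence statement are just the standard Briot--Bouquet facts recorded in \autoref{ss:isolated}.
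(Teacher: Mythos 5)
Your proposal is correct and follows essentially the same route as the paper: reduce via \autoref{p:preparesystem} to the singular system \eqref{merom1}, pass to the first-order Briot--Bouquet form \eqref{merom2}, and use the non-resonance of the matrix \eqref{mat} to get a unique formal (hence, by Briot--Bouquet theory, convergent) solution vanishing at the origin. The paper states this more tersely, but your explicit recursion $(kI-\mathbf{M})\mathbf{h}_k=P_k(\mathbf{h}_1,\dots,\mathbf{h}_{k-1})$ and the bookkeeping of the correspondence with $\Hom(\mathcal{N},\mathcal{E})$ are exactly the intended argument.
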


For the rest of this section, we will discuss how
 to write down the non-resonancy condition explicitly,
 using the linearization of the system \eqref{merom2} in
$\bold H$, i.e. the linear system
\begin{equation}\Label{lnd}\label{lnd}
w\mathbf{H}'=\mathbf{L}(w) \mathbf{H}, \quad \text{where } \mathbf{L}(w):=\left.\frac{\partial \mathbf{T}}{\partial \mathbf{H}}\right|_{\mathbf{H}=0}.
\end{equation}
This linearized system
can be obtained by entirely repeating the above computational scheme of
identifying a map \eqref{normalmap} under consideration, but at the same time {\em ignoring all the non-linear terms in the variables $f,g,g_0$ within the basic identity \eqref{trule3}}. This latter procedure precisely corresponds to searching for {\em infinitesimal automorphisms} (Lie symmetries)
\begin{equation}\Label{inf}\label{inf}
L=P(z,w)\dz+Q(z,w)\dw
\end{equation}
of the initial ODE $\mathcal E^*$ with
\begin{equation}\Label{PQ}\label{PQ}
\begin{aligned}
P(z,w)&=f_0(w)+f_1(w)z+f_2(w)z^2+\cdots, \\
Q(z,w)&=wg_0(w)+w^mg_1(w)z+w^mg_2(w)z^2+\cdots
\end{aligned}
\end{equation}
(Note that, according to \autoref{specialmap}, any infinitesimal automorphism $L$ of a hypersurface \eqref{madmissiblereal} has the form \eqref{PQ}).

Second, we observe that the linearization matrix $\mathbf{M}$ in \eqref{mat}
is given by $$\mathbf{M}=\mathbf{L}(0).$$ In view of that, we can explicitly determine the matrix, and therefore
 the non-resonancy condition, in the following way.

\smallskip

\noindent (i) we write down the condition that a vector field $L$ satisfying \eqref{PQ} is an infinitesimal automosphism (Lie symmetry) of the ODE $\mathcal E^*$.

\smallskip

\noindent (ii) we extract from the respective basic identity terms with
$$z^kw^l(w')^0, \,  z^kw^l(w')^1, \,  z^0w^l(w')^2, \,  z^1w^l(w')^2, \,  z^0w^l(w')^3, \,  z^1w^l(w')^3, \quad k,l\geq 0$$
and obtain a system of linear PDEs for $P,Q$, which gives us a system of four second order linear ODEs for the initial components $g_0,g_1,f_0,f_1$  of $L$. A substitution identical to \eqref{subst} into the latter system gives us the first order system of eight ODEs \eqref{merom2}.

\smallskip

\noindent (iii) finally, we investigate formal solutions of the intimately related Euler system \eqref{Euler} (obtained by putting $w=$ into the right hand side), and write down the non-resonancy condition. In fact, the latter can be done either for the first order order system of 8 ODEs or for the second order system of four ODEs.

\smallskip

To perform step (i), we proceed similarly to \cite{analytic} and search for a Lie symmetry $L$ of an ODE \eqref{ODE} (satisfying \eqref{PQ}) via the jet prolongation method (see Section 2). That is, we consider the ODE $\mathcal E^*$ as a submanifold in the space $J^2(\CC{},\CC{})$ of $2$-jets of functions $\CC{}\mapsto\CC{}$ with the coordinates $z,w,w_1,w_2$. After that, we write down the fact that the {\em second jet prolongation}
\begin{equation}\label{prolong}
L^{(2)}=P(z,w)\dz+Q(z,w)\dw+Q^{(1)}(z,w,w_1)\frac{\partial}{\partial w_1}+Q^{(2)}(z,w,w_1,w_2)\frac{\partial}{\partial w_2},
\end{equation}
of a Lie symmetry $L$
is tangent to this submanifold. We have
\begin{gather*}w_1:=w\rq{},\quad w_2:=w\rq{}\rq{},\\
 Q^{(1)}=Q_z+\bigl(Q_w-P_z\bigr)w_1-P_w(w_1)^2,\\
Q^{(2)}=Q_{zz}+\bigl(2Q_{zw}-P_{zz}\bigr)w_1+\bigl(Q_{ww}-2P_{zw}\bigr)(w_1)^2-\\
-P_{ww}(w_1)^3
+\bigl(Q_w-2P_z)w_2-3P_w w_1w_2,
\end{gather*}
and the tangency condition means
\begin{equation}\label{tangency}
Q^{(2)}\left|_{w_2=\Phi\bigl(z,w,\frac{w_1}{w^m}\bigr)}\right.=\Phi_z\left(z,w,\frac{w_1}{w^m}\right)\cdot P + \Phi_w\left(z,w,\frac{w_1}{w^m}\right)\cdot Q+\frac{1}{w^m}\Phi_{\zeta}\left(z,w,\frac{w_1}{w^m}\right)\cdot Q^{(1)}
\end{equation}
for all $(z,w,w_1)$ lying in an appropriate open set $V\subset J^1(\CC{},\CC{})$.

To perform step (ii), we first gather in \eqref{tangency}  terms with $(w_1)^0, (w_1)^1, (w_1)^2, (w_1)^3$.  Using the notations
$$a:=\frac{1}{w^{m}}\Phi_2,\quad b:=\frac{1}{w^{2m}}\Phi_3,\quad c:=\frac{1}{w^{3m}}\Phi_4,$$
we get, respectively:
\begin{equation}\label{initial}
\begin{aligned}
Q_{zz} &=0,\\
 2Q_{zw}-P_{zz}&=2aQ_z,\\
 Q_{ww}-2P_{zw}&=a(-Q_w+2P_z)+a_zP+a_wQ+3bQ_z+2a(Q_w-P_z),\\
 P_{ww}&=b(Q_w-2P_z)-aP_w-b_zP-b_wQ-4cQ_z+3b(P_z-Q_w).
\end{aligned}
\end{equation}
By employing the expansion \eqref{PQ}, from the first equation in \eqref{initial} we get
 \begin{equation}\label{findQ}
  Q(z,w)=wg_0(w)+w^mg_1(w)z,
  \end{equation}
and from the second
\begin{equation}\label{findP}
P(z,w)=f_0(w)+f_1(w)z+(w^mg_1\rq{}(w)+mw^{m-1}g_1)z^2-2w^m\tilde a(z,w)g_1(w),
\end{equation}
where $\tilde a_{zz}=a, \, \tilde a=O(z^2)$. For $m=1$ this gives, in particular,
\begin{equation}\Label{f2g2}\label{f2g2}
g_2=0, \quad f_2=-A_0g_1+wg_1'+g_1
\end{equation}
(in the notations of \eqref{ABC}).
After that, we gather in the last two equations in \eqref{initial} terms with
$$z^0w^l(w')^2, \,  z^1w^l(w')^2, \,  z^0w^l(w')^3, \,  z^1w^l(w')^3,$$ respectively, and substitute \eqref{f2g2} into the result. This gives us a system of four second order {\em Fuchsian} linear ODEs. It is not difficult to compute that this system looks as follows (we use the notations in \eqref{ABC}):
\begin{equation}\Label{FS}\label{FS}
\begin{aligned}
w^2g_0''+w(2-A_0)g_0'-2wf_1'-A_0'wg_0-3B_0g_1-A_1f_0&=0\\
3w^2g_1''+A_1wg_0'+3wA_0g_1'+wA_1'g_0+(3B_1-3wA_0')g_1+2A_2f_0+A_1f_1&=0\\
w^2f_0''+2B_0wg_0'+A_0wf_0'+wB_0'g_0+4C_0g_1+B_1f_0-B_0f_1&=0\\
w^2f_1''+2B_1wg_0'+A_1wf_0'+A_0wf_1'+wB_1'g_0+(2B_0(A_0-1)+wB_0'+4C_1)g_1+2B_2f_0&=0.
\end{aligned}
\end{equation}
For step (iii), we have to evaluate the coefficients of the system \eqref{FS} at $w=0$, and search for a formal solution $H(w)$ of the arising system with constant coefficients. If we expand a formal solution $H(w)$ of it as
$$H(w)=\sum_{k\geq 1} h_kw^k$$
and the coefficients of \eqref{FS} as
\begin{equation}\Label{expandABC}\label{expandABC}
\alpha_j:=A_j(0),\quad \beta_j:=B_j(0), \quad \gamma_j:=C_j(0),\quad j=0,1,2.
\end{equation}
Then we obtain for the Taylor coefficients $h_k,\,k\geq 1$ the following set of equations:
\begin{equation}\Label{hhk}\label{hhk}
\begin{pmatrix}
k(k+1-\alpha_0) & -3\beta_0 & -\alpha_1 & -2k\\
\alpha_1k & 3k(k+1-\alpha_0)+3\beta_1 & 2\alpha_2 & \alpha_1\\
2k\beta_0 & 4\gamma_0 & k(k-1+\alpha_0)+\beta_1 & -\beta_0\\
2k\beta_1 & 2\beta_0(\alpha_0-1)+4\gamma_1 & k\alpha_1+2\beta_2 & k(k-1+\alpha_0)
\end{pmatrix}
\cdot h_k=0.
\end{equation}
The non-resonancy condition is satisfied if and only if each of the equations \eqref{hhk} for $k\geq 1$ has only the trivial solution, that is, if the equation
\begin{equation}\Label{hk}\label{hk}
\mbox{det}\,
\begin{pmatrix}
(k+1-\alpha_0) & -3\beta_0 & -\alpha_1 & -2k\\
\alpha_1 & 3k(k-1+\alpha_0)+3\beta_1 & 2\alpha_2 & \alpha_1\\
2\beta_0 & 4\gamma_0 & k(k-1+\alpha_0)+\beta_1 & -\beta_0\\
2\beta_1 & 2\beta_0(\alpha_0-1)+4\gamma_1 & k\alpha_1+2\beta_2 & k(k-1+\alpha_0)
\end{pmatrix}
=0
\end{equation}
has no solutions $k\in\mathbb N$.

We summarize this result as following.
\begin{proposition}\Label{nonres2}\label{nonres2}
An ODE \eqref{ODE} is non-resonant if and only if the algebraic equation \eqref{hk} of degree $7$ (where $\alpha_j,\beta_j,\gamma_j$ are taken from \eqref{expandABC}) has no positive integer solutions.
\end{proposition}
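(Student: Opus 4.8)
The plan is to carry out explicitly the three-step scheme (i)--(iii) sketched in the paragraphs preceding the statement, and to verify that the resulting linear algebra is exactly the determinantal condition \eqref{hk}.

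First, I would reformulate non-resonancy as a statement about an \emph{indicial matrix}. By \autoref{nonres}, $\mathcal E$ is non-resonant precisely when the matrix $\mathbf M$ in \eqref{mat} has no positive integer eigenvalue, equivalently when the Euler system \eqref{Euler} admits only the trivial formal solution. Since $\mathbf M=\mathbf L(0)$ for the linearized system \eqref{lnd}, and since \eqref{lnd} is obtained from the normalization scheme of \autoref{p:preparesystem} by discarding in \eqref{trule3} all terms nonlinear in $f,g,g_0$, the system \eqref{lnd} is nothing but the system governing infinitesimal automorphisms \eqref{inf}--\eqref{PQ} of the initial ODE $\mathcal E^*$. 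Hence it suffices to write this Lie-symmetry system explicitly and read off its indicial matrix.

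Second, I would perform the jet-prolongation computation. Imposing the tangency condition \eqref{tangency} for the second prolongation \eqref{prolong} and collecting powers of $w_1=w'$ yields \eqref{initial}. The first equation there forces $Q$ into the form \eqref{findQ}; the second can then be solved for $P$, giving \eqref{findP}, and for $m=1$ this produces the expressions \eqref{f2g2} for $f_2,g_2$. Substituting $P,Q$ and \eqref{f2g2} into the last two equations of \eqref{initial} and separating the coefficients of $z^0$ and $z^1$ produces the four second-order Fuchsian linear ODEs \eqref{FS} for the components $(g_0,g_1,f_0,f_1)$, with coefficients assembled from the Taylor data \eqref{ABC}.

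Third, I would extract the indicial equation. Freezing the coefficients of \eqref{FS} at $w=0$ --- equivalently, substituting $H(w)=\sum_{k\ge1}h_kw^k$ and collecting the coefficient of $w^k$ --- turns \eqref{FS} into the family of linear systems \eqref{hhk} for the vectors $h_k$, whose entries are the constants \eqref{expandABC}. Because the frozen (Euler) system decouples mode by mode, and because the standard inductive argument for Fuchsian systems shows that the genuine system \eqref{FS} has a nontrivial formal solution with $H(0)=0$ exactly when the frozen one does, non-resonancy is equivalent to the matrix in \eqref{hhk} being invertible for every $k\ge1$. Removing the factor $k$ common to the first column of \eqref{hhk} (legitimate since $k\ge1$) reduces the singularity condition to the vanishing of the determinant in \eqref{hk}, so $\mathcal E$ is non-resonant iff \eqref{hk} has no positive integer root. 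Finally, the product of the diagonal entries of \eqref{hk} is a polynomial in $k$ of degree $1+2+2+2=7$ with leading coefficient $3$, and no off-diagonal permutation contributes a higher power of $k$, so \eqref{hk} is genuinely of degree $7$.

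The main obstacle is the second step: the prolongation computation leading to \eqref{FS}, together with the elimination of $f_2,g_2$ via \eqref{f2g2}, is a long and error-prone bookkeeping exercise in which one must keep precise track of the coefficient functions $A_j,B_j,C_j$ and their first $w$-derivatives. The conceptual ingredients --- identifying the linearized normalization with the Lie-symmetry system, and reading non-resonancy off the indicial matrix of a Fuchsian system --- are routine; it is this explicit computation, together with the passage from \eqref{FS} to the matrices \eqref{hhk} and \eqref{hk}, that must be done with care.
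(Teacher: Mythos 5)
Your proposal follows the paper's own route essentially verbatim: the paper likewise identifies the linearized normalization scheme with the Lie-symmetry system \eqref{inf}--\eqref{PQ}, derives \eqref{FS} by the second jet prolongation and the elimination \eqref{f2g2}, freezes the coefficients at $w=0$ to obtain \eqref{hhk}, and divides the first column by $k$ to arrive at \eqref{hk}, with the same degree count $1+2+2+2=7$. The only (harmless) imprecision is your aside that the full system \eqref{FS} has a nontrivial formal solution ``exactly when'' the frozen one does --- that equivalence is not needed and is not quite true in general, since non-resonancy is by \autoref{nonres} a statement about the frozen (Euler) system alone.
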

In particular, a hypersurface
 will have at most finitely many resonances (i.e. integer solutions to the equation \eqref{hk}).
\begin{corollary}\Label{finiteres}\label{finiteres}
There can exist only finitely many resonances for a hypersurface \eqref{madmissiblereal} when $m=1$ (in fact, at most $7$ of them).
\end{corollary}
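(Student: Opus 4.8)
The plan is to read the corollary off directly from \autoref{nonres2}. That proposition identifies the resonances of a hypersurface \eqref{madmissiblereal} with $m=1$ with the positive integer solutions $k$ of the algebraic equation \eqref{hk}; so it suffices to show that the left-hand side of \eqref{hk}, regarded as a polynomial $D(k)$ in the single indeterminate $k$ with the fixed reals $\alpha_j,\beta_j,\gamma_j$ from \eqref{expandABC} as coefficients, is not the zero polynomial and has degree at most $7$. Then $D$ has at most $7$ roots in $\C$, in particular at most $7$ positive integer roots, which is exactly the assertion.

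First I would record the degree in $k$ of each entry of the $4\times4$ matrix in \eqref{hk}. The entries in the first row have degree $\le 1$; in each of the remaining three rows the unique entry of maximal degree is, respectively, $3k(k-1+\alpha_0)+3\beta_1$, $k(k-1+\alpha_0)+\beta_1$, and $k(k-1+\alpha_0)$, each of degree exactly $2$, while all other entries of those rows have degree $\le 1$. Since the determinant is a sum of products of entries, one from each row and column, its degree in $k$ is bounded by the sum of the row maxima, namely $1+2+2+2=7$.

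Next I would isolate the coefficient of $k^{7}$. A term in the Leibniz expansion of $D(k)$ can reach total degree $7$ only by selecting a maximal-degree entry from every row; in the second, third and fourth rows this forces the choice of the second, third and fourth columns, so the entry left for the first row is forced to be $(k+1-\alpha_0)$ in the first column, which indeed has degree $1$. Hence the identity permutation is the unique contributor to $k^{7}$, and its leading term is $k\cdot 3k^{2}\cdot k^{2}\cdot k^{2}=3k^{7}$. Thus the coefficient of $k^{7}$ in $D$ equals $3$, independently of the values of $\alpha_j,\beta_j,\gamma_j$; in particular $D$ is genuinely of degree $7$ and not identically zero, and the proof is complete.

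There is no substantial obstacle here: the only point demanding a little care is the combinatorial check that no permutation other than the identity attains total degree $7$, which is precisely what guarantees that the leading coefficient $3$ cannot be cancelled by lower-order contributions. Everything else is formal once \autoref{nonres2} is in hand.
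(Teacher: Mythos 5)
Your proof is correct and follows the same route the paper takes implicitly: \autoref{finiteres} is stated as an immediate consequence of \autoref{nonres2}, the resonances being exactly the positive integer roots of the degree-$7$ polynomial equation \eqref{hk}. Your computation of the leading coefficient (the constant $3$, coming only from the identity permutation, independently of $\alpha_j,\beta_j,\gamma_j$) supplies the one detail the paper leaves unstated, namely that the determinant is never the zero polynomial in $k$, which is precisely what makes the bound of $7$ roots valid for every hypersurface.
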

Using the explicit characterization of ODE \eqref{ODE} associated with {\em generically spherical} hypersurfaces \eqref{madmissiblereal}, it is not difficult to prove also
\begin{proposition}\Label{nontrivial}\label{nontrivial}
There exist real hypersurfaces \eqref{madmissiblereal} which are non-resonant at $0$. Accordingly, a generic hypersurface \eqref{madmissiblereal} is non-resonant at $0$  (in the sense of the jet topology in the space of defining functions $h(z,\bar z,u)$).
\end{proposition}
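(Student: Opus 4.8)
The goal is to prove \autoref{nontrivial}: that there exist non-resonant real hypersurfaces of the form \eqref{madmissiblereal} with $m=1$, and moreover that non-resonance is a generic property. The plan is to exhibit a sufficiently explicit family of hypersurfaces (the ``generically spherical'' ones mentioned in the statement), compute the associated data $\alpha_j,\beta_j,\gamma_j$ appearing in \eqref{expandABC}, and check that for this family the degree-$7$ polynomial in \eqref{hk} does not vanish at any positive integer.

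\smallskip

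\emph{Step 1: Reduce to a condition on the finite jet.} By \autoref{nonres2}, non-resonance at $0$ depends only on the numbers $\alpha_j=A_j(0)$, $\beta_j=B_j(0)$, $\gamma_j=C_j(0)$, for $j=0,1,2$, which in turn are determined by the coefficient functions $\Phi_2,\Phi_3,\Phi_4$ of the associated ODE $\mathcal E(M)$ evaluated at $w=0$. Tracing through the construction in \autoref{ss:isolated} (equations \eqref{segreder2}, \eqref{assoc}), these are explicit polynomial (in fact universal, $\Phi^*$-independent in their dependence) functions of the partial jet $J$ in \eqref{7jet} of the defining function $h$, together with $\epsilon$. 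So the resonance polynomial \eqref{hk}, call it $R(k)=R_J(k)$, is a polynomial of degree $7$ in $k$ whose coefficients are explicit polynomials in the finite jet data $J$ and $\epsilon$. The first task is to write $R_J(k)$ out, at least schematically, for a convenient sub-family.

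\smallskip

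\emph{Step 2: Compute $R_J(k)$ for generically spherical hypersurfaces.} The natural test family is that of hypersurfaces \eqref{madmissiblereal} whose associated ODE has a particularly simple form — e.g. the ones biholomorphic (in the formal/sectorial sense) to the model sphere-like hypersurface $\operatorname{Im}w=(\operatorname{Re}w)|z|^2$ up to higher-order corrections, for which the associated singular ODE is computed in \cite{analytic, nonminimalODE}. For such $M$, the coefficient functions $A_j,B_j,C_j$ and hence the $\alpha_j,\beta_j,\gamma_j$ can be written down in closed form; plugging into the $4\times4$ determinant \eqref{hk} gives $R(k)$ explicitly as a product/known factorization. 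One then reads off its roots: for the genuinely spherical case one expects $R(k)$ to have some positive-integer roots (these correspond to the $5$-dimensional isotropy, consistent with \autoref{cor1}), and the point is that an arbitrarily small perturbation of the higher-order jet data in $J$ moves these roots off the positive integers.

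\smallskip

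\emph{Step 3: Perturb and conclude genericity.} The coefficients of $R_J(k)$ depend real-analytically (indeed polynomially) on $J$. The subset of $J$-space (a finite-dimensional real vector space, parametrizing partial $(m+7)$-jets with $m=1$) for which $R_J(k)=0$ has a positive integer solution is $\bigcup_{k\in\N}\{J: R_J(k)=0\}$; each term is a real-analytic subset, and it suffices to show it is a \emph{proper} subset, i.e. that $R_J(k)\not\equiv 0$ in $J$ for each fixed $k\geq1$. For this I would pick, for each $k$, one specific jet $J_k$ with $R_{J_k}(k)\neq0$ — using the explicit formula from Step 2 and the fact that varying, say, the coefficient $h_{22}(0)$ (which enters $\alpha_0$ or $\beta_1$ with nonzero coefficient) changes the value of the determinant at $k$ linearly. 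Since $R_J(k)$ has degree $7$ in $k$, for $J$ outside a finite union of proper real-analytic subsets (indexed by $k=1,\dots,7$ once one checks the leading coefficient, which is a fixed nonzero constant independent of $J$, so that no root can exceed $7$), there are no positive integer roots at all. A hypersurface whose jet $J$ avoids all these exceptional sets is non-resonant; such $J$ form an open dense (complement of a proper real-analytic set) subset in the jet topology, giving the genericity claim. To get an honest \emph{example} rather than just genericity, one exhibits one such $J$ — e.g. by choosing the $h_{kl}(0)$ to be generic small reals and verifying $R_J(k)\neq0$ for $k=1,\dots,7$ numerically/symbolically.

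\smallskip

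\emph{Main obstacle.} The genuine work — and the one delicate point — is Step 2: carrying the partial jet $J$ of $h$ through the (nonlinear, implicit) construction of $\mathcal E(M)$ accurately enough to get the $\alpha_j,\beta_j,\gamma_j$, and then evaluating the $4\times4$ determinant \eqref{hk} to a usable closed form. This is a bounded but bookkeeping-heavy computation; the payoff is that once $R_J(k)$ is in hand as an explicit polynomial with the leading coefficient a fixed nonzero number, Steps 1 and 3 are soft. One should also double-check that the boundedness of the roots (by $7$, or at any rate by some absolute constant) really follows from the leading coefficient being a nonzero \emph{constant} independent of $J$ — this is what reduces ``infinitely many $k$'' to ``finitely many $k$'' and makes the finite union of proper analytic sets argument work, yielding \autoref{finiteres} as a by-product.
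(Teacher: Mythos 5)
Your overall strategy---reduce non-resonance to the non-vanishing, for each $k\in\N$, of the determinant \eqref{hk} viewed as a polynomial in the finite jet data $J$, and then argue genericity---is the same as the paper's. But there is a concrete error in Step~3 that your own plan for producing an actual example depends on: from the fact that \eqref{hk} has degree $7$ in $k$ with leading coefficient a fixed nonzero constant you conclude that ``no root can exceed $7$,'' so that only $k=1,\dots,7$ need be checked. What is true is only that a fixed hypersurface has \emph{at most seven} resonances (this is \autoref{finiteres}); the resonant values of $k$ themselves are in no way bounded by the degree and can be arbitrarily large integers as $J$ varies. So the reduction to a \emph{finite} union of proper analytic sets fails, your numerical check of $k=1,\dots,7$ does not suffice for existence, and you genuinely must handle all $k\in\N$ at once. (A countable union of proper real-analytic subsets still has dense complement, so genericity survives, but existence of a single non-resonant $J$ then needs either a Baire-type argument or, as in the paper, a structural one.)

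The second gap is that the crucial step---showing $R_J(k)\not\equiv 0$ in $J$ for each fixed $k$---is deferred to an unexecuted computation, and the mechanism you sketch (``varying $h_{22}(0)$ changes the determinant at $k$ linearly'') is not right: $h_{22}(0)$ enters $\alpha_0$ and $\beta_1$ and the determinant is of degree $4$ in $\alpha_0$; moreover the parameters $\alpha_j,\beta_j,\gamma_j$ are \emph{not} independent (reality of $h$ forces, e.g., $\alpha_2$ and $\gamma_0$ to be complex conjugates up to constants), so one cannot perturb them freely. The paper's proof resolves both issues at once without computing the determinant in closed form: it isolates the coefficient of the monomial $\alpha_2\gamma_0$ in \eqref{hk}, which equals $(k-1+\alpha_0)(k+1-\alpha_0)+4\beta_1$ up to a nonzero factor; one first chooses $h_{22}(0),h_{33}(0)$ so that this expression is nonzero for \emph{every} $k\in\N$, and then uses that $\alpha_2,\gamma_0$ are (up to a factor $6$) an arbitrary conjugate pair controlled by the single complex parameter $h_{24}(0)$, so that for each $k$ the zero set of the determinant in the $h_{24}(0)$-plane is a proper real-analytic curve; a countable union of such curves cannot cover $\CC{}$. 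If you replace your Steps~2--3 by this device (or some equivalent identification of a monomial in the determinant whose coefficient is nonzero uniformly in $k$ and which is controlled by a freely variable jet coefficient), the argument closes.
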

\begin{proof}
Fix $k\geq 1$ and consider the determinant in \eqref{hk} as a linear expression in $\alpha_2,\gamma_0$, and the product $\alpha_2\gamma_0$.  Then the coefficient of $\alpha_2\gamma_0$ equals, up to a nonzero factor, to
\begin{equation}\Label{a2c0}\label{a2c0}
(k-1+\alpha_0)(k+1-\alpha_0)+4\beta_1.
\end{equation}
Fix  a generic pair $\alpha_0,\beta_1$ such that the expression \eqref{a2c0} does not vanish for all natural $k$ (by using formulas \eqref{relations},\eqref{phitoh} below, this is accomplished by an appropriate choice of the Taylor coefficients $h_{22}(0),h_{33}(0)$ of the real defining function). On the other hand, one can see again from \eqref{relations},\eqref{phitoh} that, by varying $h_{24}(0)$, the coefficients $\alpha_2,\gamma_0$ are (up to the constant factor $6$) arbitrary complex conjugated complex numbers. Hence, if we fix $h_{23}(0),h_{34}(0)$ in an arbitrary way, then for each fixed natural $k$ one can avoid vanishing of the determinant in \eqref{hk} by erasing (at most) a real curve from the complex plane parameterized by $h_{24}(0)$. This proves that there is a choice of $\alpha_2,\gamma_0$ making the determinant in \eqref{hk} nonvanishing for all natural $k$ at once, as required.
\end{proof}

\section{Case $m>1$}
In this section, we investigate {\em formal power series} solutions of the system \eqref{merom} in the case $m>1$.
In order to do so, we make a power series ansatz, and
expand a candidate for a formal vector solution
$H:=(g_0,g_1,f_0,f_1)$ as
$$H=\sum_{k\geq 1} h_kw^k$$
After we plug this ansatz into \eqref{merom},
the coefficients of  $w^k,\,k\geq 1$ in the resulting equation
give rise to  equations $E_k$,
and we will study try to  solve $E_k$ for $h_k$. That is, we aim to determine uniquely $h_1$ from $E_1$, $h_2$ from $E_2$, etc.

It is not difficult to see that

\smallskip

\noindent (i) each  $E_k$ only involves  $h_1,h_2,...,h_k$;

\smallskip

\noindent (ii) becaise $m>1$,   the left hand side of the equation $E_k$ does
not involve  $h_k$, while on the right hand side  only  {\em linear terms} in $g_0,g_1,f_0,f_1,wg_0'$ cam give rise to $h_k$.
 Moreover, in the respective linear expressions
$$\lambda_1(w) g_0+\lambda_2(w) g_1+\lambda_3(w) f_0+\lambda_4(w) f_1+\lambda_5(w) g_0'w$$
one has to evaluate the coefficients $\lambda_1(w),...,\lambda_5(w)$ at $w=0$.

\smallskip

In view of the above, each of the equations $E_k$ has the form:
\begin{equation}\Label{Ek}\label{Ek}
M_kh_k=R_k(h_1,...,k_{k-1}),
\end{equation}
where $M_k$ is a $4\times 4$ matrix, and $R_k$ is a polynomial in its variables (its concrete form depends on $\Phi^*$ and is of no interest to us here).

We will now
determine the form of the matrices $M_k$, and formulate the arising non-resonancy condition. For doing so, in view of the observations (i),(ii) above, we may use a procedure similar to that in the case $m=1$. We, however, should ignore in the arising system of second order linear differential equations for $g_0,g_1,f_0,f_1$ analogous to \eqref{FS} all the second order derivatives, as well as the derivatives $g_1',f_0',f_1'$. Now the system of equations analogous to \eqref{FS}  becomes:
\begin{equation}\Label{FS1}\label{FS1}
\begin{aligned}
&(2w^m-wA_0)g_0'+((m-1)A_0-A_0'w)g_0-3B_0g_1-A_1f_0=mw^{m-1}\\
&A_1wg_0'+[A_1(1-m)+wA_1']g_0+[(3B_1-3w^mA_0'+3m(m-1)w^{2m-2}]g_1+2A_2f_0+A_1f_1=0\\
&2B_0wg_0'+[(2-2m)B_0+wB_0']g_0+4C_0g_1+B_1f_0-B_0f_1=0\\
&2B_1wg_0'+[(2-2m)B_1+wB_1']g_0+(2B_0A_0+w^mB_0'-2mw^{m-1}B_0+4C_1)g_1+2B_2f_0=\\
&=* w^{m-1}+\sigma_1w^{2m-2}+\sigma_2w^{3m-3}
\end{aligned}
\end{equation}
(we recall that we are using \autoref{star}).
To investigate the formal solvability of \eqref{FS1}, we (i) evaluate the coefficients of the system \eqref{FS1} at $w=0$; (ii) substitute a formal power series $H(w)$ into the resulting system of ODEs with constant coefficients; (iii) put zero for the right hand side (that is, switching to  the respective homogeneous system). In the notation of \eqref{expandABC} this gives
\begin{equation}\Label{hk1}\label{hk1}
\begin{pmatrix}
\alpha_0(k+1-m) & 3\beta_0 & \alpha_1 & 0\\
\alpha_1(k+1-m) & 3\beta_1 & 2\alpha_2 & \alpha_1\\
2\beta_0(k+1-m) & 4\gamma_0 & \beta_1 & -\beta_0\\
2\beta_1(k+1-m) & 2\beta_0\alpha_0+4\gamma_1 & 2\beta_2 & 0
\end{pmatrix}
\cdot h_k=0.
\end{equation}
for $k\neq m-1$, and
\begin{equation}\Label{hm-1}\label{hm-1}
\begin{pmatrix}
0 & 3\beta_0 & \alpha_1 & 0\\
0 & 3\beta_1 & 2\alpha_2 & \alpha_1\\
0 & 4\gamma_0 & \beta_1 & -\beta_0\\
0 & 2\beta_0\alpha_0+4\gamma_1 & 2\beta_2 & 0
\end{pmatrix}
\cdot h_k=0
\end{equation}
for $k=m-1$.
From \eqref{hk1}, we can see that, unlike the case $m=1$, {\em for the special value $k=m-1$ the respective matrix $M_{m-1}$ is always degenerate}. That is why, for {\em any} given source ODE $\mathcal E^*$, there is no hope to guarantee the uniqueness of a formal normalizing transformation \eqref{normalmap}. We then proceed as follows.

The special form of the matrices $M_k$ arising from \eqref{hk1},\eqref{hm-1} leads to
\begin{definition}\Label{nonres1}\label{nonres1}
An ODE \eqref{ODE} with $m>1$ is called {\em nonresonant}, if both the matrix
\begin{equation}\Label{44}\label{44}
M:=\begin{pmatrix}
\alpha_0 & 3\beta_0 & \alpha_1 & 0\\
\alpha_1 & 3\beta_1 & 2\alpha_2 & \alpha_1\\
2\beta_0 & 4\gamma_0 & \beta_1 & -\beta_0\\
2\beta_1 & 2\beta_0\alpha_0+4\gamma_1 & 2\beta_2 & 0
\end{pmatrix}
\end{equation}
and its upper right $3\times 3$ minor
\begin{equation}\Label{33}\label{33}
\tilde M:=\begin{pmatrix}
 3\beta_0 & \alpha_1 & 0\\
  3\beta_1 & 2\alpha_2 & \alpha_1\\
 4\gamma_0 & \beta_1 & -\beta_0\\
\end{pmatrix}
\end{equation}
are invertible.
A real hypersurface $M$ is called {\em non-resonant}, if its associated ODE is non-resonant.
\end{definition}
Let us again check that non-resonant hypersurfaces with $m>1$ actually exist.
\begin{proposition}\Label{nontrivialm}\label{nontrivialm} Let $m>1$.
There exists a real hypersurface $M\in \mathfrak{R}_m^\pm$
which is non-resonant at $0$. Accordingly, a generic hypersurface in $\mathfrak{R}_m^\pm$
(in the sense of the jet topology in the space of defining functions $h(z,\bar z,u)$) is non-resonant at $0$.
\end{proposition}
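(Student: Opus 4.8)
The plan is to follow the strategy of the proof of \autoref{nontrivial}: reduce the non-resonance condition of \autoref{nonres1} to the non-vanishing of two explicit polynomials in the jet data $\alpha_j,\beta_j,\gamma_j$, $j=0,1,2$ (from \eqref{expandABC}), and then invoke the standard fact that the complement of the zero locus of a nonzero polynomial is open and dense. Set $P_1:=\det M$ and $P_2:=\det\tilde M$ with $M,\tilde M$ as in \eqref{44}, \eqref{33}. By \autoref{nonres1}, a hypersurface is non-resonant at $0$ precisely when $P_1P_2\neq 0$ at the corresponding jet. Since the $\alpha_j,\beta_j,\gamma_j$ are polynomial functions of the partial $(m+7)$-jet $J$ of $h$ in \eqref{7jet} (via the relations \eqref{relations}, \eqref{phitoh} below, the same ones used in the case $m=1$), it suffices to exhibit one admissible $h$ with $P_1P_2\neq 0$; the genericity assertion then follows automatically, since $\{P_1P_2=0\}$ is a proper real-analytic subset of each finite-dimensional jet space and hence nowhere dense in the jet topology. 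The argument is identical for $\epsilon=+1$ and $\epsilon=-1$, so it applies to both $\mathfrak{R}_m^+$ and $\mathfrak{R}_m^-$.

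The next step is to compute the two determinants. A direct expansion of \eqref{33} (say along its third column) gives $\det\tilde M = 4\alpha_1^2\gamma_0-6\alpha_2\beta_0^2$, which is affine in $\gamma_0$ with leading coefficient $4\alpha_1^2$ and does not involve $\gamma_1$. Expanding \eqref{44} along its fourth column yields $\det M=\alpha_1 A_{24}+\beta_0 A_{34}$, where $A_{24}$ and $A_{34}$ are the $3\times 3$ minors obtained by deleting the fourth column together with the second, respectively third, row. Each of $A_{24},A_{34}$ is affine in $\gamma_1$, and collecting the $\gamma_1$-terms one finds that the coefficient of $\gamma_1$ in $\det M$ equals $-4\alpha_0\alpha_1\beta_1+12\alpha_1^2\beta_0-8\alpha_0\alpha_2\beta_0$, which is not identically zero (for instance it reduces to $12\alpha_1^2\beta_0$ when $\alpha_0=0$). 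Thus $P_2$ is a nonzero polynomial free of $\gamma_1$, while $P_1$ is a polynomial that is affine and genuinely non-constant in $\gamma_1$.

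It then remains to choose the defining data in two successive, independent steps. First, fix all Taylor coefficients of $h$ apart from the ones controlling $\gamma_0$ and $\gamma_1$, chosen generically so that $\alpha_1$, $\beta_0$, and the coefficient of $\gamma_1$ in $P_1$ displayed above are all nonzero; by \eqref{relations}, \eqref{phitoh} this amounts to a generic choice of the coefficients $h_{22}(0),h_{23}(0),h_{33}(0)$ (and $h_{34}(0)$, which may be taken arbitrary). Second, choose $\gamma_0$ — via $h_{24}(0)$, subject to the reality constraint tying it to $\overline{\alpha_2}$ exactly as in \autoref{nontrivial} — so that $P_2\neq 0$; this excludes at most a real-analytic curve in that coefficient plane. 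Finally choose $\gamma_1$ — via the remaining coefficient, among $h_{25}(0)$ and $h_{34}(0)$, that enters $\gamma_1$ — so that $P_1\neq 0$, which is possible since $P_1$ is non-constant in $\gamma_1$. The resulting $h$ is non-resonant; and since at every step only proper real-analytic subsets were removed, a generic admissible $h$ is non-resonant as well. As in the case $m=1$, the generically spherical hypersurfaces \eqref{madmissiblereal}, for which $\Phi$ and hence all the $\alpha_j,\beta_j,\gamma_j$ are available in closed form, furnish concrete witnesses once the two determinant conditions are checked.

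The main obstacle is the bookkeeping concealed in the reference to \eqref{relations}, \eqref{phitoh}: one must verify that the passage from the coefficients $h_{kl}(0)$ in $J$ to the tuple $(\alpha_j,\beta_j,\gamma_j)$ is, in the relevant directions, surjective enough that $\gamma_0$ and $\gamma_1$ can be perturbed independently while keeping $\alpha_0,\alpha_1,\beta_0,\beta_1$ (and the $\gamma_1$-coefficient of $P_1$) at generic values — so that the image of the jet map is not contained in the hypersurface $\{P_1P_2=0\}$. This is the same point that makes the $m=1$ argument work; the only genuinely new feature is that here one must arrange two determinant conditions simultaneously rather than the single family \eqref{hk}, and the triangular dependence noted above ($P_2$ free of $\gamma_1$, $P_1$ non-constant in $\gamma_1$) is precisely what lets one do this in two independent stages.
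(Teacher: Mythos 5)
Your proposal is correct and follows essentially the same route as the paper's proof: reduce non-resonance to the non-vanishing of $\det M$ and $\det\tilde M$, exploit the triangular dependence ($\det\tilde M=4\alpha_1^2\gamma_0-6\alpha_2\beta_0^2$ free of $\gamma_1$, $\det M$ affine in $\gamma_1$ with coefficient $12\alpha_1^2\beta_0-4\alpha_0\alpha_1\beta_1-8\alpha_0\alpha_2\beta_0$, both of which agree with the paper's \eqref{coe} up to the factor $4$), and make staged generic choices of $h_{23}(0)$, $h_{33}(0)$, $h_{24}(0)$ and the coefficient controlling $\gamma_1$, respecting the reality constraints exactly as in \autoref{nontrivial}. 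Your explicit determinant expansions are consistent with the paper's computations, so no gap here.
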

\begin{proof}
Let us first choose hypersurfaces $M$ for which the coefficient $\alpha_1$ (in the notations \eqref{expandABC}) is non-zero. To achieve this condition, we apply the formulas \eqref{relations} below and see that the condition under discussion is achieved by varying $\varphi_{32}(0)$, which can be chosen arbitrary complex thanks to the formulas \eqref{phih} below.  Then, for a generic $\gamma_0$ (and fixed $\beta_0,\beta_1$) the determinant of \eqref{33} is nonzero. However, the coefficient $\gamma_0$ can be as well chosen arbitrary complex, since it is proportional to $\varphi_{24}(0)$ (see \eqref{relations}), and $\varphi_{24}(0)$ is proportional to $h_{24}(0)$ (the proof of the latter fact is identical to the proof of \eqref{phih}, and we do not provide details). This finally proves that \eqref{33} can be achieved by choosing a generic collection $h_{23}(0), h_{33}, h_{24}(0)$.

We further consider the determinant of \eqref{44} as a linear expression in $\gamma_1$. The coefficient of $\gamma_1$ then equals up to a constant nonzero factor:
\begin{equation}\Label{coe}\label{coe}
3\alpha_1^2\beta_0-\alpha_0(\alpha_1\beta_1+2\alpha_2\beta_0).
\end{equation}
We can still vary the above triple $h_{23}(0), h_{33}, h_{24}(0)$ in order to keep the  coefficient of $\alpha_0$ nonzero in \eqref{coe}. As can be seen from \eqref{relations}, possible values of $\alpha_0$ run a (real) line in complex plane, hence we can finally choose $\alpha_0$ in such a way that \eqref{coe} is nonzero, and this shows that an appropriate choice of $\gamma_1$ (which is proportional to $h_{25}$) allows to finally make the determinant of \eqref{44} nonzero simultaneously with that of \eqref{33}, as required.
\end{proof}
 In the non-resonant case, we fix the parameter
\begin{equation}\Label{tau}\label{tau}
\tau:=\frac{1}{(m-1)!}g_0^{(m-1)}(0)=\frac{1}{m!}\frac{\partial^{m} G}{\partial w^{m}}(0,0)
\end{equation}
(it corresponds to the first component of the vector $h_{m-1}$, and as well corresponds to the parameter \eqref{realparam}). This parameter is the only one we can not determine uniquely, and its choice is with one-to-one correspondence with the choice of the coefficient \eqref{m-1}.
After that, in view of the non-resonancy condition, for each fixed $\tau$ we can uniquely solve the sequence of linear systems \eqref{hk1} with $k\neq m-1$, while for $k=m-1$ we can solve it, modulo the last component of the right hand side (this means that this first component of the right hand side of the respective linear system becomes a precise value). This gives us a unique formal solution for the set of equations \eqref{Ek}. We furthermore observe that
\begin{remark}\Label{invarcoef}\label{invarcoef}
The coefficient \eqref{unknown} is in fact independent of $\tau$, and thus becomes {\em an invariant} of an ODE \eqref{ODE}. This follows from the fact that the coefficient $\tau$ for the first time appears in the $m$-th equation in the above formal procedure, as earliest.
\end{remark}
By the above argument, we have proved
\begin{theorem}\Label{nfODE1}\label{nfODE1}
If $\mathcal{E} \in \mathfrak{E}_m$, where $m>1$, is non-resonant, then
for every $\tau\in \C$
there exists a normal form $\mathcal{N}_\tau \in \mathfrak{D}_m^\sigma$ and a formal transformation
$(f_\tau,g_\tau) \in \Hom (\mathcal{N}_\tau, \mathcal{E})$.
The normalizing transformation and the normal form are  unique (up to the complex parameter $\tau$  in \eqref{tau}).
\end{theorem}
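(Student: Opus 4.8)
The plan is to run, to completion, the formal power-series recursion set up in the discussion preceding the statement, and to read off existence and uniqueness from the precise shape of the matrices $M_k$ in \eqref{hk1}--\eqref{hm-1}. By \autoref{p:preparesystem} (applied with the given ODE $\mathcal E$ in the role of $\mathcal E^*$), producing a normal form $\mathcal N\in\mathfrak D_m^\sigma$ together with a map $(f,g)\in\Hom(\mathcal N,\mathcal E)$ is equivalent to solving the singular system \eqref{merom} for $H=(g_0,g_1,f_0,f_1)$ with $H(0)=0$, whose right-hand side depends on $\mathcal E$ and on the single scalar $*=\Phi_{1,m-1,3}$ that the normal-form conditions \eqref{nspaceODEm} leave free. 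So it suffices to show that for every $\tau\in\C$ there is exactly one value of $*$ and exactly one formal solution $H=H_\tau$ of \eqref{merom} whose first component has $w^{m-1}$-coefficient equal to $\tau$, as in \eqref{tau}.

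First I would substitute $H=\sum_{k\geq1}h_kw^k$ into \eqref{merom} and match the coefficient of $w^k$, obtaining the hierarchy \eqref{Ek}, namely $M_kh_k=R_k(h_1,\dots,h_{k-1})$. One checks that $R_k$ is independent of $*$ for $k<m-1$ and that $*$ enters $R_{m-1}$ only through its fourth component, affinely and with a nonzero coefficient --- this is the term $*\,w^{m-1}$ appearing in the fourth equation of the linearized system \eqref{FS1}. Comparing the matrix $M_k$ in \eqref{hk1} with the matrix $M$ of \eqref{44}, the first column of $M_k$ is $(k+1-m)$ times the first column of $M$, while its other three columns coincide with those of $M$; hence $\det M_k=(k+1-m)\det M$. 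For $k\neq m-1$ the first clause of the non-resonance hypothesis, $\det M\neq0$, makes $M_k$ invertible, so $E_k$ determines $h_k$ uniquely from $h_1,\dots,h_{k-1}$. In particular the steps $k=1,\dots,m-2$ fix $h_1,\dots,h_{m-2}$ uniquely, with no reference to $\tau$ or $*$.

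The heart of the matter is the step $k=m-1$. The matrix $M_{m-1}$ of \eqref{hm-1} has vanishing first column, so $e_1\in\ker M_{m-1}$; the second clause of the non-resonance hypothesis, $\det\tilde M\neq0$ for the $3\times3$ minor \eqref{33}, forces the remaining three columns of $M_{m-1}$ to be linearly independent, so $\ker M_{m-1}=\C e_1$ and $\operatorname{Im}M_{m-1}$ is the kernel of a linear functional $\ell$ with $\ell(e_4)\neq0$ (if $\ell(e_4)$ vanished, then $e_4$ would lie in the span of those three columns, and restricting that combination to its first three entries would produce a nonzero element of $\ker\tilde M$). Hence $E_{m-1}$ imposes no condition on the first component of $h_{m-1}$ --- which is precisely the parameter $\tau$ of \eqref{tau} --- while its solvability for the remaining components is the single scalar equation $\ell(R_{m-1})=0$; since $\ell(R_{m-1})$ is an affine function of $*$ with nonzero slope (the product of $\ell(e_4)$ with the nonzero coefficient of $*$ in the fourth component of $R_{m-1}$), this determines $*$ uniquely, and then the second, third and fourth components of $h_{m-1}$ are uniquely determined. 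After step $m-1$ the only remaining freedom is the choice of $\tau\in\C$.

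Finally, for $k\geq m$ one has $k+1-m\neq0$, so $M_k$ is invertible again and $E_k$ determines $h_k$ uniquely from $h_1,\dots,h_{k-1}$; since $h_{m-1}$ now carries $\tau$, the tail $(h_k)_{k\geq m}$ is a well-defined formal function of $\tau$. This yields, for each $\tau\in\C$, a unique formal solution $H_\tau$ of \eqref{merom} together with the forced value of $*$; pushing $H_\tau$ back through \autoref{p:preparesystem} and \autoref{Cproblem} produces the normal form $\mathcal N_\tau\in\mathfrak D_m^\sigma$ and the transformation $(f_\tau,g_\tau)\in\Hom(\mathcal N_\tau,\mathcal E)$, and uniqueness up to $\tau$ holds because the recursion admitted no other choice; it also recovers \autoref{invarcoef}, since $\tau$ appears in $R_k$ only for $k\geq m$, so the forced value of $*=\Phi_{1,m-1,3}$ is independent of $\tau$. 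The one genuinely delicate point is the bookkeeping at step $m-1$: verifying that $\ker M_{m-1}$ is exactly the $\tau$-direction and that $*$ survives with a nonzero coefficient in the solvability condition $\ell(R_{m-1})=0$ --- both of which are consequences of the explicit forms in \eqref{hk1}, \eqref{hm-1}, \eqref{44}, \eqref{33}, and hence of the two clauses of \autoref{nonres1}.
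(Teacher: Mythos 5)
Your proposal is correct and follows essentially the same route as the paper: the power-series recursion $M_k h_k = R_k(h_1,\dots,h_{k-1})$ with the matrices \eqref{hk1} and \eqref{hm-1}, invertibility of $M_k$ for $k\neq m-1$ from the invertibility of the matrix \eqref{44}, and the degenerate step $k=m-1$ handled via the minor \eqref{33}, with $\tau$ entering as the free $e_1$-component of $h_{m-1}$. Your explicit verification that $\ker M_{m-1}=\C e_1$ and that the solvability functional $\ell$ does not annihilate $e_4$ (so that $\ell(R_{m-1})=0$ pins down $*=\Phi_{1,m-1,3}$ uniquely and independently of $\tau$) spells out linear algebra that the paper only asserts, and is consistent with its \autoref{invarcoef}.
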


\section{Solution of the equivalence problem}

In this section, we prove \autoref{theor1} and \autoref{theor2}.
In what follows, if not otherwise stated, we treat the case of a
positive real hypersurface $M \in \mathfrak{R}_m^+$
(i.e. $\epsilon=1$ in \eqref{madmissiblereal}),
as in the negative case the considerations are very analogous.

We start with transferring the normal form condition
$\mathcal{E} \in \mathfrak{D}^\sigma$
for ODE \eqref{ODE} to the associated $m$-admissible Segre families.
\begin{definition}\Label{segreinnf}\label{segreinnf}
We say that an $m$-admissible Segre family $\mathcal{S} \in \mathfrak{S}_m^+$
 {\em is in normal form},
 if its associated ODE  is in normal form, i.e. if
 $\mathcal{E} (\mathcal{S}) \in \mathfrak{D}_m^+$. We
write $\mathfrak{N}_m^{+,\sigma} = \mathcal{E}^{-1} (\mathfrak{D}_m^\sigma)$
for the space of
normal forms.
\end{definition}

We next formulate
\begin{proposition}\Label{relation}\label{relation}
The following relations hold between the coefficient functions \eqref{ABC} of an ODE \eqref{ODE} and its associated Segre family \eqref{mc}:
\begin{equation}\Label{relations}\label{relations}
\begin{aligned}
A_0(w)&=w^{m-1}\mp 2i\varphi_{22}(w), \quad A_1(w)=\mp 6i \varphi_{32} (w),\quad B_0(w)=- 2\varphi_{23}(w),\\
C_0(w)&=\pm 2i\varphi_{24}(w),\quad A_2(w)=\mp 12i\varphi_{42}(w), \\
B_1(w)&= - 6\varphi_{33}(w) \mp 2i(m-1)\varphi_{22}(w)w^{m-1}+8(\varphi_{22}(w))^2 \pm
2i\varphi'_{22}(w)w^m\\
B_2(w)&=-12\varphi_{43}(w)+36\varphi_{22}(w)\varphi_{32}(w)\pm 6i(1-m)w^{m-1}\varphi_{32}(w)\pm 6iw^m\varphi_{32}\rq{}(w)\\
C_1(w)&=\pm 6i\varphi_{34}(w)\mp 20i\varphi_{22}(w)\varphi_{23}(w)
+(4-4m)w^{m-1}\varphi_{23}(w)+2w^m\varphi_{23}\rq{}(w).
\end{aligned}
\end{equation}
\end{proposition}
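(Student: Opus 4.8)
The relations in \eqref{relations} are read off by comparing Taylor coefficients in the differential equation \eqref{assoc}, which expresses the ODE associated to an $m$-admissible Segre family through its defining function $\varphi$. First I would rewrite \eqref{assoc} using the decomposition $\Phi=\sum_{\ell\ge 2}\Phi_\ell(z,w)\zeta^\ell$ of \eqref{expandPhi}: substituting the arguments $w=\bar\eta e^{i\bar\eta^{m-1}\varphi}$ and $\zeta=i e^{i(1-m)\bar\eta^{m-1}\varphi}\varphi_z$ into each summand $\Phi_\ell(z,w)\zeta^\ell$ collapses the exponents and turns \eqref{assoc} into an identity of the form $\varphi_{zz}=\sum_{\ell\ge 2}c_\ell\, e^{i(1-\ell)(m-1)\bar\eta^{m-1}\varphi}\,\Phi_\ell\bigl(z,\bar\eta e^{i\bar\eta^{m-1}\varphi}\bigr)\,\varphi_z^{\ell}$ plus the correction term quadratic in $\varphi_z$ carrying the pure factor $\bar\eta^{m-1}$, where $c_\ell=-i^{\ell+1}$. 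Into this identity I would then plug the admissible expansion $\varphi(z,\bar\xi,\bar\eta)=\epsilon z\bar\xi+\sum_{k,\ell\ge 2}\varphi_{k\ell}(\bar\eta)z^k\bar\xi^\ell$ and compare coefficients. The computation is carried out for $\epsilon=+1$; the case $\epsilon=-1$ is word-for-word the same and produces the $\mp/\pm$ alternation in \eqref{relations}.

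The bookkeeping is organized around the fact that $\varphi=O(\bar\xi)$, $\varphi_z=\epsilon\bar\xi+O(z\bar\xi^2)$ and $\varphi_{zz}=O(\bar\xi^2)$, so the $\ell$-th summand above contributes only at order $\bar\xi^{\ell}$ and higher. Hence the coefficient of $\bar\xi^2$ in the identity involves only $\Phi_2$, that of $\bar\xi^3$ involves $\Phi_3$ plus corrections from $\Phi_2$, and that of $\bar\xi^4$ involves $\Phi_4$ plus corrections from $\Phi_2$ and $\Phi_3$; within each $\bar\xi$-degree one then matches the coefficients of $z^0,z^1,z^2$ to isolate the functions $A_j,B_j,C_j$ of \eqref{ABC}. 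The ``diagonal'' contribution on the left is $\varphi_{zz}=\sum_{k,\ell\ge2}k(k-1)\varphi_{k\ell}(\bar\eta)z^{k-2}\bar\xi^\ell$, which supplies the factors $2\varphi_{22},6\varphi_{32},12\varphi_{42}$ (at $\bar\xi^2$), $2\varphi_{23},6\varphi_{33},12\varphi_{43}$ (at $\bar\xi^3$) and $2\varphi_{24},6\varphi_{34}$ (at $\bar\xi^4$); matched against the leading right-hand term $c_\ell\,\Phi_\ell(z,\bar\eta)(\epsilon\bar\xi)^\ell$ this already gives the principal summands $\mp 2i\varphi_{22},\mp 6i\varphi_{32},\mp 12i\varphi_{42}$ (the $A$'s), $-2\varphi_{23},-6\varphi_{33},-12\varphi_{43}$ (the $B$'s), $\pm 2i\varphi_{24},\pm 6i\varphi_{34}$ (the $C$'s). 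The extra summand $w^{m-1}$ in $A_0$ originates from the quadratic-in-$\varphi_z$ term of \eqref{assoc}, whose $\bar\eta^{m-1}$ factor survives at bidegree $(z^0,\bar\xi^2)$.

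The remaining, nonlinear summands in \eqref{relations} --- the products $\varphi_{22}^2,\varphi_{22}\varphi_{32},\varphi_{22}\varphi_{23}$, the mixed terms $\varphi_{22}w^{m-1},\varphi_{32}w^{m-1},\varphi_{23}w^{m-1}$, and the terms $w^m\varphi_{22}',w^m\varphi_{32}',w^m\varphi_{23}'$ --- come from exactly three sources, each expanded through $\bar\xi^4$ and through $z^2$: (i) the Taylor expansion $e^{i(1-\ell)(m-1)\bar\eta^{m-1}\varphi}=1+i(1-\ell)(m-1)\bar\eta^{m-1}\varphi+\cdots$ with $\varphi=\epsilon z\bar\xi+\cdots$, which together with the already-known value of $A_0$ feeds the $\varphi_{22}w^{m-1}$ and $\varphi_{32}w^{m-1}$ terms; (ii) the expansion of the composition $\Phi_\ell\bigl(z,\bar\eta e^{i\bar\eta^{m-1}\varphi}\bigr)=\Phi_\ell(z,\bar\eta)+i\bar\eta^m\varphi\,\partial_w\Phi_\ell(z,\bar\eta)+\cdots$, which introduces the $w$-derivatives of the coefficient functions --- whence, after substituting the already-established relations $A_0=w^{m-1}\mp 2i\varphi_{22}$, $A_1=\mp 6i\varphi_{32}$, $B_0=-2\varphi_{23}$, the factors $w^m\varphi_{22}',w^m\varphi_{32}',w^m\varphi_{23}'$; and (iii) the higher-order part of $\varphi_z^{\ell}=(\epsilon\bar\xi)^\ell+\ell(\epsilon\bar\xi)^{\ell-1}\bigl(2\varphi_{22}z\bar\xi^2+6\varphi_{32}z^2\bar\xi^2+\cdots\bigr)+\cdots$, which, after re-expressing $\Phi_\ell(z,\bar\eta)$ via the lower relations, yields the quadratic products $\varphi_{22}^2,\varphi_{22}\varphi_{32},\varphi_{22}\varphi_{23}$. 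Summing the diagonal part together with (i)--(iii) in each bidegree $(z^j,\bar\xi^\ell)$ with $j\le 2$, $\ell\le 4$ reproduces the eight identities of \eqref{relations}.

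The only real difficulty is organizational: there is no conceptual obstacle, but in the relations for $B_1$, $B_2$ and $C_1$ all three nonlinear mechanisms (i)--(iii) superpose, and several of their terms materialize only after back-substituting the previously derived relations, so one must be disciplined about expanding each factor to exactly the required order and about keeping every cross-term. Once everything is grouped by bidegree, what remains is a finite, mechanical verification, and the negative case $\epsilon=-1$ needs no separate argument.
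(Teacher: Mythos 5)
Your proposal is correct and follows exactly the route the paper indicates for this proposition: direct substitution of the Segre-family defining equation \eqref{mc} into the ODE \eqref{ODE} (equivalently, expansion of \eqref{assoc}) followed by comparison of the coefficients of $z^j\bar\xi^\ell$ for $j\le 2$, $\ell\le 4$, and your three nonlinear mechanisms do reproduce, for instance, the full $B_1$ relation including the $8\varphi_{22}^2$, $w^{m-1}\varphi_{22}$ and $w^m\varphi_{22}'$ terms. One small caveat: the $\pm/\mp$ alternation in \eqref{relations} corresponds to writing the negative family as $w=\bar\eta e^{-i\bar\eta^{m-1}\varphi}$ with $\varphi=z\bar\xi+\cdots$ (the convention used in the proof of \autoref{dualinnf}), not to flipping only the coefficient $\epsilon$ of $z\bar\xi$ inside a fixed positive exponent --- with the latter convention the signs would alternate on the odd powers of $\varphi_z$ (e.g.\ on $B_0$) rather than on the even ones (e.g.\ on $A_0$), so the negative case is not quite ``word-for-word'' the same.
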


Since the proof of Proposition~\ref{relation} is obtained by a direct substitution of the defining equation \eqref{mc} into an ODE \eqref{ODE}, and is as well very analogous to the proof of Proposition 4.2 in \cite{nonanalytic}, we leave the details to the reader.

Now \autoref{relation} implies
\begin{corollary}\Label{nftransfer}\label{nftransfer}
A Segre family $\mathcal{S} \in \mathfrak{S}_1^\pm$,
defined by \eqref{mc}, is in normal form
($\mathcal{S} \in \mathfrak{N}_1^\pm$) if and only if it satisfies:
\begin{equation}\Label{phi1a}\label{phi1a}
\varphi_{22}(w)=\varphi_{22}(0),\quad \varphi_{23}(w)=\varphi_{23}(0), \quad \varphi_{32}(w)=\varphi_{32}(0),  \quad \varphi_{33}(w)=\varphi_{33}(0),
\end{equation}
A Segre family $\mathcal{S} \in \mathfrak{S}_m^\pm$, where $m>1$,
, is in normal form
($\mathcal{S} \in \mathfrak{N}_m^{\sigma, \pm}$) if and only if it satisfies
\begin{equation}\Label{phi2m-2}\label{phi2m-2}
\begin{aligned}
&\varphi_{23}(w)=\varphi_{23}(0), \quad \varphi_{32}(w)=\varphi_{32}(0),
 \quad \varphi_{22}(w)=\lambda\pm\frac{i}{2}(m-1) w^{m-1}, \\
&\varphi_{33}(w)=\mu+\nu w^{m-1}+\sigma_1w^{m-1}+
 \sigma_2w^{3m-3}+\\
& +\frac{1}{3}\left(
\mp i(m-1)\varphi_{22}(w)w^{m-1}+4
(\varphi_{22}(w))^2\pm i\varphi'_{22}(w)w^m  \right),
\end{aligned}
\end{equation}
where $\lambda,\mu,\nu\in\CC{},\,\,\sigma_1,\sigma_2\in\RR{}$.
\end{corollary}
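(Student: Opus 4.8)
The plan is to unravel the definition and then translate everything through Proposition~\ref{relation}. By Definition~\ref{segreinnf}, $\mathcal{S}$ is in normal form precisely when its associated ODE $\mathcal{E}(\mathcal{S})$ lies in $\mathfrak{D}_1$ (resp.\ $\mathfrak{D}_m^\sigma$), i.e.\ when its right-hand side $\Phi$, decomposed as in \eqref{expandPhi} with coefficient functions \eqref{ABC}, satisfies \eqref{nspaceODE1} (resp.\ \eqref{nspaceODEm}). Reading those membership conditions off, one sees that they constrain \emph{only} the four functions $A_0,A_1,B_0,B_1$ --- the $z^0$ and $z^1$ coefficients of $\Phi_2$ and $\Phi_3$ --- and nothing else: for $m=1$ all four must be constant; for $m>1$ one needs $A_1$ and $B_0$ constant, $A_0(w)=A_0(0)+m\,w^{m-1}$, and $B_1(w)=B_1(0)+\Phi_{1,m-1,3}\,w^{m-1}+\sigma_1 w^{2m-2}+\sigma_2 w^{3m-3}$, where $\Phi_{1,m-1,3}$ is the undetermined coefficient $*$ of Convention~\ref{star}. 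So the entire content of the corollary is to rewrite these four conditions in terms of $\varphi$.

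Next I would substitute the identities of Proposition~\ref{relation} into these conditions and solve for the $\varphi_{k\ell}(w)$, processing them in the order in which the relations become triangular. From $A_1(w)=\mp 6i\,\varphi_{32}(w)$ and $B_0(w)=-2\varphi_{23}(w)$ one gets immediately that $A_1,B_0$ are constant iff $\varphi_{32},\varphi_{23}$ are constant. From $A_0(w)=w^{m-1}\mp 2i\,\varphi_{22}(w)$ one reads, for $m=1$, that $A_0$ is constant iff $\varphi_{22}$ is, and for $m>1$ that the prescribed form of $A_0$ is equivalent to $\varphi_{22}(w)=\lambda\pm\frac{i}{2}(m-1)w^{m-1}$ after setting $\lambda:=\pm\frac{i}{2}A_0(0)$. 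Finally, the relation for $B_1$ in \eqref{relations} solves for $\varphi_{33}(w)$ as $-\frac{1}{6}B_1(w)$ plus an explicit expression in $\varphi_{22}(w)$, $\varphi_{22}'(w)$ and powers of $w$; since $\varphi_{22}$ has already been pinned down, imposing the normal form on $B_1$ is equivalent to $\varphi_{33}$ being that explicit expression plus the polynomial $\mu+\nu w^{m-1}+\sigma_1 w^{2m-2}+\sigma_2 w^{3m-3}$, with $\mu=-\frac{1}{6}B_1(0)$, $\nu=-\frac{1}{6}\Phi_{1,m-1,3}$, and $\sigma_1,\sigma_2$ the rescaled real parameters. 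For $m=1$ this collapses to ``$\varphi_{33}$ constant'', because then $\varphi_{22}'$, $(\varphi_{22})^2$ and the $w^{m-1}$-term are all constant. This yields \eqref{phi1a} in the case $m=1$ and \eqref{phi2m-2} in the case $m>1$.

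The converse direction is then immediate: assuming \eqref{phi1a} (resp.\ \eqref{phi2m-2}) and substituting back into \eqref{relations} produces $A_0,A_1,B_0,B_1$ of precisely the required shape, hence $\mathcal{E}(\mathcal{S})\in\mathfrak{D}_1$ (resp.\ $\mathfrak{D}_m^\sigma$) and $\mathcal{S}$ is in normal form. I do not expect a genuine obstacle here; the only point needing care is the bookkeeping in the $B_1$ relation --- keeping track of which of $\varphi_{22}$, $\varphi_{22}'$, $(\varphi_{22})^2$ are already known to be constant, so as to correctly isolate the residual freedom $\mu,\nu,\sigma_1,\sigma_2$ in $\varphi_{33}$ --- but with Proposition~\ref{relation} granted this is a routine substitution, and the corollary is essentially a dictionary translation of the ODE normal form through \eqref{relations}.
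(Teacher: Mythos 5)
Your proposal is correct and is exactly the argument the paper intends: the corollary is stated immediately after Proposition~\ref{relation} with no further proof beyond ``Now \autoref{relation} implies,'' and your write-up simply fleshes out that substitution of \eqref{relations} into the normal form conditions \eqref{nspaceODE1} and \eqref{nspaceODEm} on $A_0,A_1,B_0,B_1$, correctly handling the residual parameters $\lambda,\mu,\nu$ and the rescaled $\sigma_1,\sigma_2$ in the $B_1$--$\varphi_{33}$ relation.
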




We now need a development of the technique for investigating the reality condition for parameterized families of curves introduced in \cite{divergence}.

 \begin{definition} We say that an $m$-admissible Segre
 family $\mathcal{S} \in \mathfrak{S}_m^\pm$
   has
 a {\em real structure} if it is the Segre family of an
 $m$-admissible real hypersurface $M\subset \CC{2}$, i.e. if
 $\mathcal{S} = \mathcal{S} (M)$ for some $M\in \mathfrak{R}_m^\pm$.
 We also say
 that an ODE $\mathcal{E}\in \mathfrak{E}_m^\pm$ has {\em $m$-positive
 (respectively, $m$-negative) real structure},
 if the associated positive (respectively, negative) $m$-admissible Segre family $\mathcal S^\pm_m(\mathcal E)$
 has a real structure. We say that the corresponding real hypersurface $M$ is {\em  associated } with $\mathcal E$.
 \end{definition}

  Let $\rho(z,y,u)$ be a holomorphic function near the origin in $\CC{3}$ with
  $\rho(0,0,0)=0$, and $d\rho(0,0,0)=du$. For  $z,\xi\in\Delta_\delta,w,\eta\in\Delta_\varepsilon$, let
 $$
 \mathcal S=\{w=\rho(z,\bar \xi,\bar \eta)\}
 $$
be a 2-parameter antiholomorphic family of holomorphic curves near
the origin, parametrized by $(\xi,\eta)$. An {\em admissible} (re)-parametrization
of  $\mathcal{S}$
is given by a function $\tilde \rho (z, \bar{\xi'}, \bar{\eta'})$ such that
\[ \mathcal{S} = \{ w = \tilde\rho(z,\bar{\xi'}, \bar{\eta'} ) \}  \]
and there exists a  germ of a biholomorphism $(\xi,\eta)\mapsto (\xi',\eta')$
such that $\rho(z,\bar \xi,\bar \eta) = \tilde \rho \left(z,\overline{\xi'(\xi,\eta)},\overline{\eta'(\xi,\eta)} \right)$. Fixing a parametrization and considering all  admissible (re)-parametrizations
gives rise to the notion of   a \emph{general Segre family}.

 For each point
 $p=(\xi,\eta)\in\Delta_\delta\times\Delta_\varepsilon$ we call
 the corresponding holomorphic curve
 $Q_p^\rho =\{w=\rho(z,\bar \xi,\bar \eta)\}\in \mathcal S$
its \it  Segre variety. \rm Clearly, an $m$-admissible Segre
family is a particular example of a general Segre family. Note that the
Segre varieties of a general Segre family do depend on the parametrization,
but admissible parametrizations give rise to a (holomorphic) relabeling of the Segre varieties.

We say that two general Segre families $\mathcal{S} $ and
 $\tilde {\mathcal{ S}}$ are equivalent if there exists
a germ of a biholomorphism $H=(f,g)$ of $(\CC{2},0)$ such that
$ \tilde {\mathcal{S}} = H^{-1} (\mathcal{S}) $, and such that
the solution of the implicit function problem
$g(z,w) = \tilde \rho( f(z,w)), \xi,\eta)$ for $w$ is an admissible
parametrization of $\mathcal{S}$.


Further, given a (general) Segre family
$\mathcal S$, from the implicit function theorem one concludes
that the antiholomorphic family of planar holomorphic curves
$$\mathcal
 S^{*,\rho}=\{\bar\eta=\rho(\bar\xi,z,w)\}$$ is also a general Segre family for
 some, possibly, smaller polydisc $\Delta_{\tilde\delta} \times \Delta_{\tilde\varepsilon}$, which depends on the chosen parametrization $\rho$.
 We note that for every admissible parametrization of $\mathcal{S}$, we obtain
 an equivalent Segre family.

 \begin{definition}
 The Segre family $\mathcal S^{*,\rho}$ is called the
 \emph{ dual Segre family } for $\mathcal S$ with the parametrization $\rho$.
 \end{definition}

 The dual Segre family has a simple interpretation:
 in the defining equation of the family $S$ one should consider the
 parameters $\bar\xi,\bar\eta$ as new coordinates, and the variables $z,w$ as new parameters.
 If we denote the Segre variety of a point $p$ with respect to the family
  $\mathcal S^{*,\rho}$ by $Q^{*,\rho}_p$, this just means that
  $Q_p^{*,\rho} = \{ (z,w)\colon \bar p\in Q_{(\bar  z, \bar w)}^\rho  \}$.
  In the following,
  we will suppress the dependence on $\rho$ from the notation whenever
  we make claims which hold for all admissible parametrizations of a given Segre family.

It is not difficult to see that if $\mathcal S \in \mathfrak{S}_m^\pm$
is a positive
(respectively, negative) $m$-admissible Segre family, then
$\mathcal S^* \in \mathfrak{S}_m^\pm$
is a negative (respectively, positive)
$m$-admissible Segre family, if we are using its
 naturally associated defining function:
 Indeed, to obtain the defining
function $\rho^*(z,\bar\xi,\bar\eta)$ of the general Segre family
$\mathcal S^*$ we need to solve for $w$  in the equation
\begin{equation}\label{dual}
\bar\eta=w e^{\pm iw^{m-1}\left(z\bar\xi+\sum_{k,l\geq
 2}\varphi_{kl}(w)z^k\bar\xi^l\right)} .
\end{equation}
Note that \eqref{dual} implies
\begin{equation}\label{dual2}
w=\bar\eta e^{\mp iw^{m-1}(z\bar\xi+O(z^2\bar\xi^2))}.
\end{equation}
We then obtain from \eqref{dual2}
$w=\rho^*(z,\bar\xi,\bar\eta)=\bar\eta(1+O(z\bar\xi))$.
Substituting the latter representation into \eqref{dual2} gives
$w=\rho^*(z,\bar\xi,\bar\eta)=\bar\eta e^{\mp
i\bar\eta^{m-1}(z\bar\xi+O(z^2\bar\xi^2))}$, as required.

We also need the following Segre family, connected with $S$:
$$\bar{\mathcal S}=\{w=\bar\rho(z,\bar \xi,\bar \eta)\},$$
where for a power
series of the form
$$f(x)=\sum\nolimits_{\alpha \in\mathbb{Z}^d}c_\alpha x^\alpha
$$  we denote by $\bar f(x)$ the series
$\sum_{\alpha \in\mathbb{Z}^d} \bar c_\alpha x^\alpha $. Note that
$\bar{ \mathcal S}$ does not depend on the particular admissible parametrization, in
contrast to the dual family.

 \begin{definition} The Segre family $\bar{\mathcal S}$ is called  the
 \emph{conjugated family} of $\mathcal S$.
 \end{definition}

 If $\sigma: \CC{2}\longrightarrow\CC{2}$ is the antiholomorphic involution $(z,w)\longrightarrow(\bar
 z,\bar w)$, then one simply has $\sigma(Q_p) = \overline {Q_{\sigma(p)}}$.
 We  will denote the Segre variety of a point $p$ with respect to the family $\bar{\mathcal S}$ by
 $\bar Q_p^\rho $. It follows from the definition that if $\mathcal S$ is a
positive (respectively, negative) $m$-admissible Segre family,
then $\bar{\mathcal S}$ is a negative (respectively, positive)
$m$-admissible Segre family.

 In the same manner as for the case of an $m$-admissible Segre
 family, we say that a (general) Segre family $\mathcal S=\{w=\rho(z,\bar \xi,\bar
 \eta)\}$ has a \it real structure, \rm if there exists a smooth real-analytic
 hypersurface $M\subset\CC{2}$, passing through the origin, such
 that $\mathcal S$ is the Segre family of $M$.

 The use of the  dual and the conjugated Segre families is
 illuminated by the fact that

\medskip


 \it A (general) Segre family $\mathcal S$ has a real
 structure if and only if the  conjugated Segre family $\bar{\mathcal S}$ is also
 a dual family, i.e. if there exists an admissible
 parametrization $\rho$  such that $\mathcal S^{*,\rho}=\bar{\mathcal
 S}$ \rm

\medskip

This fact proved, for example, in \cite{divergence} is a corollary of the
reality condition  for a real-analytic hypersurface.

\begin{definition}
Let $\mathcal{ E}_0 \in \mathfrak{E}_m$.
The \emph{$m$-dual}  $\mathcal{ E}_0^* \in \mathfrak{E}_m$ to
$\mathcal{E}_0$ is defined by
$$
\mathcal{ E}_0^* = \mathcal{E} ((\mathcal S^+_m(\mathcal {E}_0))^*).
$$
In other words, $\mathcal{ E}_0^*$ is the $m$-dual of $\mathcal{ E}_0$
if the
negative $m$-admissible Segre family which is dual to the family $\mathcal
S_{\mathcal{ E}_0}^+$ is associated with $\mathcal{ E}_0^*$.

The  \emph{$m$-conjugated  ODE $\overline{\mathcal{ E}_0}$ to $\mathcal{ E}_0$,} is given by
$$
\bar{\mathcal{ E}}_0=  \mathcal{E}(\overline{\mathcal S}^+_m(\mathcal{ E}_0)).
$$
In other words,  $\overline{\mathcal{ E}_0}$ is $m$-conjugated   to $\mathcal{ E}_0$ if
the negative $m$-admissible Segre family which is conjugated to the family
$\mathcal S^+_m(\mathcal E)$ is in fact associated with
$\overline{\mathcal{ E}_0}$.
\end{definition}

The characterization of the existence of a real structure for
Segre families now implies that
\begin{proposition}\Label{exists}\label{exists}
An equation   $\mathcal E \in \mathfrak{E}_m$
has a real structure if and only if
\begin{equation}\Label{Estarbar}\label{Estarbar}
\mathcal E^*=\overline{\mathcal E}
\end{equation}
holds.
\end{proposition}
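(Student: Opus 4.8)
The plan is to transport the characterization of real structure for Segre families recalled above — a (general) Segre family $\mathcal{S}$ has a real structure if and only if its conjugated family $\overline{\mathcal{S}}$ coincides with a dual family $\mathcal{S}^{*,\rho}$ for some admissible parametrization $\rho$ — to the category $\mathfrak{E}_m$ via the functorial dictionary of \autoref{isomorphism}, together with the relations of \autoref{relation}.

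First I would unwind the definitions. By definition $\mathcal{E}\in\mathfrak{E}_m$ has a real structure exactly when the associated positive $m$-admissible Segre family $\mathcal{S}:=\mathcal{S}^+_m(\mathcal{E})$ does; the $m$-dual is $\mathcal{E}^*=\mathcal{E}(\mathcal{S}^*)$, where $\mathcal{S}^*$ is the negative $m$-admissible dual obtained from the natural defining function of $\mathcal{S}$ as in \eqref{dual} and \eqref{dual2}; and the $m$-conjugated ODE is $\overline{\mathcal{E}}=\mathcal{E}(\overline{\mathcal{S}})$, with $\overline{\mathcal{S}}$ the negative $m$-admissible conjugate of $\mathcal{S}$. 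Both $\mathcal{S}^*$ and $\overline{\mathcal{S}}$ lie in $\mathfrak{S}_m^-$, and by \autoref{isomorphism} the functor $\mathcal{E}\colon\mathfrak{S}_m^-\to\mathfrak{E}_m$ is an equivalence admitting $\mathcal{S}_m^-$ as strict inverse, hence is injective on objects. Therefore $\mathcal{E}^*=\overline{\mathcal{E}}$ if and only if $\mathcal{S}^*=\overline{\mathcal{S}}$, and the proposition reduces to the assertion: $\mathcal{S}$ has a real structure if and only if $\mathcal{S}^*=\overline{\mathcal{S}}$ as negative $m$-admissible families.

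This last assertion I would deduce from the recalled criterion. One direction is immediate: if $\mathcal{S}^*=\overline{\mathcal{S}}$, then taking $\rho$ to be the natural parametrization exhibits $\overline{\mathcal{S}}$ as a dual family, so $\mathcal{S}$ has a real structure. For the converse the criterion only yields \emph{some} admissible $\rho$ with $\mathcal{S}^{*,\rho}=\overline{\mathcal{S}}$, and the main point requiring care is to upgrade this to an equality of the natural negative $m$-admissible representatives: since the right-hand side $\overline{\mathcal{S}}$ is already in $m$-admissible form and the $m$-admissible form of a Segre family is rigid (a Segre family admits, up to tautological relabelling, a single admissible parametrization putting it into $m$-admissible form), the parametrization $\rho$ must agree with the natural one, so that $\mathcal{S}^{*,\rho}=\mathcal{S}^*$. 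This rigidity step is the only genuine obstacle; it is needed precisely because $\mathcal{S}^{*,\rho}$ does depend on $\rho$ in general — distinct admissible parametrizations produce biholomorphically equivalent but unequal dual families, hence conjugate but unequal associated ODEs — so that without pinning down the parametrization one would obtain only $\mathcal{E}^*\cong\overline{\mathcal{E}}$ rather than the stated equality. Everything else is a formal chase through \autoref{isomorphism} and the definitions of the $m$-dual and $m$-conjugated ODE.
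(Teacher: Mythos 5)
Your proposal is correct and follows the same route the paper intends: the paper offers no written proof, simply asserting that the proposition "now implies" from the quoted characterization of real structure for general Segre families. Your write-up supplies the details the paper omits — the reduction via the injectivity on objects of the functor $\mathcal{E}$ from \autoref{isomorphism}, and the rigidity of the $m$-admissible parametrization (pinned down by $\rho(0,\bar\xi,\bar\eta)=\bar\eta$ and $\rho_z(0,\bar\xi,\bar\eta)=i\epsilon\bar\xi\bar\eta^m$), which correctly upgrades "$\mathcal S^{*,\rho}=\overline{\mathcal S}$ for some admissible $\rho$" to the stated equality $\mathcal E^*=\overline{\mathcal E}$.
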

\begin{proposition}\Label{dualinnf}\label{dualinnf}
Let  $\mathcal S \in \mathfrak{N}_m^\sigma$ be  in normal form. Then both
 $\mathcal S^* $ and $\overline{\mathcal S}$ are in normal form.
\end{proposition}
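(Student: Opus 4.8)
The plan is to reduce everything to the explicit coefficient description of normal forms in Corollary~\ref{nftransfer} and to track how the coefficient functions $\varphi_{kl}(w)$ behave under the two constructions $\mathcal S\mapsto\overline{\mathcal S}$ and $\mathcal S\mapsto\mathcal S^*$ recalled above. The conjugation statement is essentially bookkeeping, so I would dispose of it first. By definition $\overline{\mathcal S}$ is the $m$-admissible family \emph{of the opposite sign} obtained by writing $\bar\rho(z,\bar\xi,\bar\eta)=\bar\eta\,e^{-i\bar\eta^{m-1}\overline{\varphi}(z,\bar\xi,\bar\eta)}$, i.e. its coefficient functions are $-\overline{\varphi_{kl}}(w)$ (conjugated Taylor coefficients). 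Conjugation maps constant functions to constant functions, so the constancy of $\varphi_{22},\varphi_{23},\varphi_{32}$ (the $m=1$ case, and the $\varphi_{23},\varphi_{32}$ part for $m>1$) is preserved. For $m>1$ the remaining prescriptions in Corollary~\ref{nftransfer} for $\varphi_{22}$ and $\varphi_{33}$ consist of a free complex part ($\lambda,\mu,\nu$), which is simply conjugated and stays in $\CC{}$, plus forced monomials whose coefficients are the purely imaginary $\pm\tfrac{i}{2}(m-1)$ for $\varphi_{22}$ and expressions involving $\sigma_1,\sigma_2\in\RR{}$ for $\varphi_{33}$; conjugation fixes $\sigma_1,\sigma_2$ and flips the sign of the purely imaginary forced coefficients, and this sign flip is exactly the one dictated by passing from a positive to a negative family. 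Hence $\overline{\mathcal S}$ is again in normal form. (Equivalently, on the ODE side $\overline{\mathcal S}$ corresponds to the conjugated equation whose right-hand side has conjugated coefficients, and the defining relations of $\mathfrak{D}_m^\sigma$ — vanishing of certain $\Phi_{a,b,c}$, together with $\Phi_{0,m-1,2}=m$, $\Phi_{1,2m-2,3}=\sigma_1$, $\Phi_{1,3m-3,3}=\sigma_2$ with $m,\sigma_1,\sigma_2$ real — are manifestly invariant under $\Phi\mapsto\overline{\Phi}$.)

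For $\mathcal S^*$ I would proceed by the direct computation already begun in \eqref{dual}--\eqref{dual2}: solving the implicit equation $\bar\eta=w\,e^{\pm i w^{m-1}\varphi(\bar\xi,z,w)}$ for $w$ yields $w=\rho^*(z,\bar\xi,\bar\eta)=\bar\eta\,e^{\mp i\bar\eta^{m-1}\varphi^*(z,\bar\xi,\bar\eta)}$, and expanding this solution in powers of $z,\bar\xi$ produces explicit formulas for the coefficient functions $\varphi^*_{kl}(w)$. A bidegree count shows that the low-order coefficients $\varphi^*_{kl}$ relevant to Corollary~\ref{nftransfer} depend only on the $\varphi_{k'l'}$ with $k'+l'$ small and, among the normalized ones, only on $\varphi_{22},\varphi_{23},\varphi_{32},\varphi_{33}$ (the free coefficients $\varphi_{24},\varphi_{42}$ do not enter). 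Substituting the relations of Corollary~\ref{nftransfer} into these formulas, one reads off that $\varphi^*_{22}$ is again constant-plus-$w^{m-1}$ with the sign appropriate to a family of the opposite sign, that $\varphi^*_{23},\varphi^*_{32}$ are constants, and that $\varphi^*_{33}$ has exactly the prescribed shape with $\sigma_1,\sigma_2$ unchanged. I expect the main obstacle to be precisely the bookkeeping in this expansion: the substitution $w\mapsto\bar\eta(1+O(z\bar\xi))$ forced by converting the exponent $w^{m-1}$ into $\bar\eta^{m-1}$ generates exactly the correction terms of the form $\varphi_{kl}'(w)w^m$ and $(m-1)\varphi_{kl}(w)w^{m-1}$ that appear in Corollary~\ref{nftransfer}, and one must check that these conspire with the quadratic and cubic cross-terms coming from the exponential so that $\varphi^*_{33}$ lands in the \emph{normalized} subspace rather than merely in the general $m$-admissible class; the specific $\varphi_{33}$-normalization (and, for $m>1$, the role of the forced $w^{m-1}$-term of $\varphi_{22}$) will be used essentially here. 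The $m=1$ case of this step is considerably easier, since all the relevant coefficient functions are then constants and one only needs to check that constants go to constants.

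Finally, I would note an alternative route that bypasses the explicit expansion in the second step. The statement ``$\mathcal S^*$ is in normal form'' amounts to a collection of polynomial identities among the Taylor coefficients of the $\varphi_{kl}$ of $\mathcal S$, since the dual operation is holomorphic in those coefficients. It therefore suffices to verify these identities on a subset of $\mathfrak{N}_m^\sigma$ whose complexification is all of $\mathfrak{N}_m^\sigma$; in the coefficient parametrization furnished by Corollary~\ref{nftransfer}, the normal forms carrying a real structure (those of the shape $\mathcal S(M)$ with $M$ a real hypersurface in normal form) form a totally real generating submanifold. For such $\mathcal S$ the reality criterion recalled just before Proposition~\ref{exists} gives $\mathcal S^*=\overline{\mathcal S}$, which is in normal form by the first part of the proof, so the identities hold on all of $\mathfrak{N}_m^\sigma$. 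In the write-up I would present the direct computation as the main argument and keep this density argument as a remark, since it depends on the (standard but separately needed) dictionary between the $\varphi_{kl}$ and the real Taylor coefficients $h_{kl}$.
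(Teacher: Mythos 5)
Your main argument is correct and follows essentially the same route as the paper: the paper likewise reduces the claim to the explicit coefficient formulas for $\tilde\varphi_{kl}$ and $\varphi^*_{kl}$ (which it imports from Lemma~20 of \cite{nonanalytic} rather than re-deriving via the implicit-function expansion of \eqref{dual}), and then substitutes the normal form conditions of \autoref{nftransfer}, with the only nontrivial check being that $\varphi^*_{33}$ lands in the normalized subspace when $m>1$ — exactly the point you flag. Your supplementary density argument via normal forms with real structure is a genuinely different (and plausible) shortcut not in the paper, though as you note it would require separately verifying that the reality criterion $\mathcal S^{*}=\overline{\mathcal S}$ holds for the specific admissible parametrization used to define the dual of an $m$-admissible family, and that the real-structure locus generates $\mathfrak{N}_m^\sigma$ in the required sense.
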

\begin{proof}
 Let  $$\mathcal S=\left\{w=\bar\eta
e^{i\bar\eta^{m-1}\varphi}\right\},\quad \bar{\mathcal
S}=\left\{w=\bar\eta
e^{-i\bar\eta^{m-1}\tilde\varphi}\right\},\quad \mathcal{ S}^*=\left\{w=\bar\eta e^{-i\bar\eta^{m-1}\varphi^*}\right\}.$$ 
Then, according to Lemma 20 in \cite{nonanalytic},
\begin{gather} \label{phibar}
\tilde\varphi_{kl}(w)=\bar\varphi_{kl}(w),\,k,l\geq 2\\
\label{phistar22}\varphi^*_{22}(w)=\varphi_{22}(w)-i(m-1)w^{m-1},\quad
\varphi^*_{32}(w)=\varphi_{23}(w),\quad \varphi^*_{23}(w)=\varphi_{32}(w),\\
\label{phistar33}
\varphi^*_{33}=\varphi_{33}(w)-\frac{3}{2}(m-1)^2w^{2m-2}-2i(m-1)w^{m-1}\varphi_{22}(w)-iw^m\varphi'_{22}(w). 
\end{gather}
({\bf Important:} we have hereby corrected a typo in \cite{nonanalytic} which is the sign of $\frac{3}{2}$ in the formula \eqref{phistar33}). Now the formula \eqref{phibar} combined with \autoref{nftransfer} immediately implies that the conjugated family $\overline{\mathcal S}$ is in normal form.

In the case $m=1$ the fact that $\mathcal S^*$ is proved similarly. However, for $m>1$ the assertion of the proposition for the dual family requires additional arguments. If we use the normal form conditions and denote
$$A_0(w)=\lambda+mw^{m-1}, \quad B_1(w)=\mu+\nu w^{m-1}+\sigma_1w^{2m-2}+\sigma_2w^{3m-3}$$
(in the notations of \autoref{relation}), then by using \eqref{relations},\eqref{phistar22} and \eqref{phistar33} we can obtain (after a straightforward calculation)
\begin{equation}\Label{equal}\label{equal}
\begin{aligned}
&\varphi_{22}^*=-\frac{i}{2}(m-1)w^{m-1}+\frac{i}{2}\lambda, \\
&\varphi^*_{33}= -\frac{1}{3}(\lambda-(m-1)w^{m-1})^2-\frac{\lambda}{6}(m-1)w^{m-1}-\frac{1}{6}(\mu+\nu w^{m-1}+\sigma_1w^{2m-2}+\sigma_2w^{3m-3}).
\end{aligned}
\end{equation}
Working out now the conditions \eqref{phi2m-2} for the dual family, one can see that they are precisely equivalent to \eqref{equal}, that is why $\mathcal S^*$ is in normal form, as required.
\end{proof}
Further, we  have the following

\begin{convention}
We call two ODEs \eqref{ODE} {\em $\tau$-equivalent}, if there is a (formal or holomorphic) map \eqref{normalmap} with the {\em complex} parameter \eqref{tau} being equal to $\tau$ transforming the two ODEs into each other. The same convention holds for Segre families and real hypersurfaces.
\end{convention}

The next important step is
\begin{proposition}\Label{dualequiv}\label{dualequiv}
If two ODEs $\mathcal E_1,\mathcal E_2$, as in \eqref{ODE}, are $\tau$-equivalent, then  the ODEs $\overline{\mathcal E_1}$, $\overline{\mathcal E_2}$ are $\bar\tau$-equivalent and the ODEs $\mathcal E_1^*,\mathcal E_2^*$ are $\bar\tau$-equivalent.
\end{proposition}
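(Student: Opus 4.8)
The plan is to trace through the explicit transformation laws for the dual and conjugated Segre families and show that they intertwine the $\tau$-action in the manner claimed. First I would unwind the definitions: if $\mathcal E_1, \mathcal E_2$ are $\tau$-equivalent via a map $H = (F,G)$ of the form \eqref{normalmap}, then by \autoref{isomorphism} this lifts to an equivalence $\mathcal H = (F,G,\Lambda,\Omega)$ of the associated positive $m$-admissible Segre families $\mathcal S_1 = \mathcal S^+_m(\mathcal E_1)$ and $\mathcal S_2 = \mathcal S^+_m(\mathcal E_2)$, where moreover $(\Lambda,\Omega)$ again satisfies the normalization conditions \eqref{normalmap}. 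The key numerical observation is that $\tau$ is the $m$-th Taylor coefficient (in $w$) of the $w$-component $G$, i.e. $\tau = \frac1{m!}\partial_w^m G(0,0)$ as in \eqref{realparam}; by the structure of the system in \autoref{isomorphism}, the corresponding coefficient of $\Omega$ equals $\bar\tau$ (it is determined from $G$ by \eqref{e:sys2}, which involves conjugation of the source coefficients).

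Next I would handle the conjugated family. By definition $\overline{\mathcal E_i} = \mathcal E(\overline{\mathcal S^+_m(\mathcal E_i)})$, and the conjugated Segre family $\overline{\mathcal S_i}$ has defining function obtained by conjugating all Taylor coefficients of the defining function of $\mathcal S_i$. If $\mathcal H = (F,G,\Lambda,\Omega)$ maps $\mathcal S_1 \to \mathcal S_2$, then applying the antiholomorphic involution $\sigma$ throughout shows that $\bar{\mathcal H} := (\bar F, \bar G, \bar\Lambda, \bar\Omega)$ maps $\overline{\mathcal S_1} \to \overline{\mathcal S_2}$: this is the content of $\sigma(Q_p) = \overline{Q_{\sigma(p)}}$. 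The $w$-component of $\bar{\mathcal H}$ is $\bar G$, whose $m$-th $w$-coefficient is $\bar\tau$; hence $\overline{\mathcal S_1}$ and $\overline{\mathcal S_2}$ are $\bar\tau$-equivalent, and passing back through the functor $\mathcal E$ (which by \autoref{isomorphism}(ii) preserves the morphism and the map) gives that $\overline{\mathcal E_1}$ and $\overline{\mathcal E_2}$ are $\bar\tau$-equivalent. For the dual family, I would similarly use that the dual $\mathcal S^{*,\rho}$ is obtained by interchanging the roles of the variables $(z,w)$ and the parameters $(\bar\xi,\bar\eta)$: an equivalence $\mathcal H = (F,G,\Lambda,\Omega)$ of the families $\mathcal S_1 \to \mathcal S_2$, after this interchange, becomes an equivalence of the dual families in which the $(z,w)$-component is now built from $(\Lambda,\Omega)$; since the relevant coefficient of $\Omega$ is $\bar\tau$ as noted above, $\mathcal S_1^*$ and $\mathcal S_2^*$ are $\bar\tau$-equivalent, and applying $\mathcal E$ again transfers this to $\mathcal E_1^*, \mathcal E_2^*$.

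The main obstacle, and the step requiring genuine care, is the bookkeeping of the parameter $\tau$ under the duality operation: unlike conjugation (where the $w$-component of the lifted map simply gets conjugated coefficientwise), duality swaps $(z,w)$ with $(\bar\xi,\bar\eta)$, so one must verify precisely that the parameter $\tau$ attached to the map $(F,G)$ equals the conjugate of the parameter attached to $(\Lambda,\Omega)$, i.e. that the "$\Omega$-side $\tau$" is $\bar\tau$. This is exactly governed by the system \eqref{e:sys2} in the proof of \autoref{isomorphism}: matching the $w^m\bar\eta^0$-type terms there expresses $g_0^{(m-1)}(0)$ in terms of $\tilde\Omega$ and conjugated source data, and because the source defining functions in \eqref{e:sys2} enter through $\bar{\tilde\Omega}$ and conjugated quantities, the relation is conjugate-linear in the parameter. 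Once this identity is pinned down, the rest is a direct application of the functoriality in \autoref{isomorphism} together with the elementary geometric facts about $\sigma$, $\mathcal S^*$, and $\overline{\mathcal S}$ recalled above; I would also invoke \autoref{exists} only implicitly (we are not assuming a real structure here, merely manipulating the formal objects), so no reality hypothesis is needed for this proposition.
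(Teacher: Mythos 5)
Your proposal is correct and follows essentially the same route as the paper: conjugation is handled by barring the lifted map, and duality by swapping the roles of $(z,w)$ and $(\bar\xi,\bar\eta)$ in the lifted product map, with the key identity $G^{(j)}(0,0)=\overline{\Omega^{(j)}(0,0)}$ for $j\leq m$ extracted from the transformation rule between the Segre families at $z=\bar\xi=0$ (your appeal to the first equation of \eqref{e:sys2} is exactly that computation). Your observation that only \autoref{isomorphism} (and not the reality criterion \autoref{exists}) is needed to produce the lift is a correct, and in fact slightly cleaner, reading of the argument.
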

\begin{proof}
For the conjugated ODEs the assertion of the proposition is obvious (one has to simply bar the map between  $\mathcal E_1$ and $\mathcal E_2$). For the dual ODEs, we denote the given map between  $\mathcal E_1$ and $\mathcal E_2$ by $(F(z,w),G(z,w))$. By \autoref{exists}, there exists a map $(\Lambda(\xi,\eta),\Omega(\xi,\eta))$, as in \eqref{normalmap}, such that the product map
$$\bigl(F(z,w),G(z,w),\bar\Lambda(\xi,\eta),\bar\Omega(\xi,\eta)\bigr)$$
maps the associated Segre families $\mathcal S_1,\mathcal S_2$ of respectively
$\mathcal E_1$ and $\mathcal E_2$ into each other. Hence, by definition of the dual family, the product map
$$\bigl(\Lambda(\xi,\eta),\Omega(\xi,\eta),\bar F(z,w),\bar G(z,w)\bigr)$$
maps the associated Segre families into each other. Recall that both Segre families have the form
$$w=\bar\eta+O(z\bar\xi\bar\eta^m).$$
Also recall that $G=O(w),\Omega=O(\eta)$ due to \eqref{normalmap}.
In view of that, writing the transformation rule between the Segre families $\mathcal S_1,\mathcal S_2$ and collecting terms with $z^0\bar\xi^0\bar\eta^j,\,1\leq j\leq m$ gives:
$$G^{(j)}(0,0)=\overline{\Omega^{(j)}(0,0)},$$
which implies the assertion of the proposition.
\end{proof}

We are now in the position to prove the first key proposition leading to \autoref{theor1}.

\begin{proposition}\Label{key}\label{key}

\smallskip

(i) Let $\mathcal E\in \mathfrak{E}_1$ have a real structure, that is,
assume that $\mathcal E$
is associated with a real hypersurface $M\in \mathfrak{R}_1^\pm$.
Let $\mathcal{N}$ be its normal form. Then $\mathcal{N}$ has a real structure, i.e. it is associated with a real hypersurface
$N = N(M) \in \mathcal{R}_1^{\pm}$
Furthermore, $N(M)$ is  biholomorphically  equivalent to $M$ at the origin.

\smallskip

(ii) Let $m>1$ and $\mathcal E \in \mathfrak{E}_m$ have a real structure, that is,
assume that $\mathcal E$ is associated with a real hypersurface
$M \in \mathfrak{R}_m^\pm$ and $\mathcal{N}_\tau$
be its normal form corresponding to some value $\tau\in\RR{}$
of the parameter  \eqref{tau}.
Then $\mathcal{N}_\tau$
has a real structure, i.e. it is associated with a real hypersurface
 $N_\tau (M) \in \mathfrak{R}_m^\pm$.
 Furthermore, $N_\tau (M)$ is  formally  $\tau$-equivalent to $M$
 at the origin.
\end{proposition}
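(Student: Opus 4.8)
The plan is to treat $m=1$ and $m>1$ uniformly, playing off the two operations $\mathcal E\mapsto\overline{\mathcal E}$ (conjugation) and $\mathcal E\mapsto\mathcal E^{*}$ (duality) on $\mathfrak E_m$ against the characterization of real structures in \autoref{exists} and the uniqueness of the normal form (\autoref{nfODE} for $m=1$, \autoref{nfODE1} for $m>1$).

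First I would manufacture the candidate hypersurface. Put $\mathcal E:=\mathcal E(M)$; it has a real structure, so $\mathcal E^{*}=\overline{\mathcal E}$ by \autoref{exists}, and moreover $\overline{\mathcal E}$ is again non-resonant, since by \eqref{relations} and \eqref{phibar} the operation $\mathcal E\mapsto\overline{\mathcal E}$ only conjugates the structure constants $\alpha_j,\beta_j,\gamma_j$, while non-resonancy (\autoref{nonres2}, resp. \autoref{nonres1}) is expressed by the (non)vanishing of polynomials in the $\alpha_j,\beta_j,\gamma_j$ with real coefficients. Let $\mathcal N$ (for $m>1$: $\mathcal N_\tau$, with the fixed $\tau\in\RR{}$) be the normal form of $\mathcal E$ and $H=(F,G)\in\Hom(\mathcal N,\mathcal E)$ the (essentially unique) normalizing transformation. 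By \autoref{dualinnf} both $\overline{\mathcal N}$ and $\mathcal N^{*}$ are again normal forms; using $\mathcal E^{*}=\overline{\mathcal E}$ together with the transformation laws \eqref{phibar}, \eqref{phistar22}, \eqref{phistar33} one checks they lie in the same normal-form category (for $m>1$, with the value of $\sigma$ dictated by the hypersurface normal form \eqref{nspace1}), and by \autoref{dualequiv} both are formally equivalent to $\overline{\mathcal E}=\mathcal E^{*}$ while carrying the same parameter $\bar\tau=\tau$. The uniqueness in \autoref{nfODE} (resp. \autoref{nfODE1}) then forces $\mathcal N^{*}=\overline{\mathcal N}$, whence $\mathcal N$ has a real structure by \autoref{exists}: $\mathcal N=\mathcal E(N)$ for some $N=N(M)\in\mathfrak R_m^{\pm}$, which is real-analytic when $m=1$ (there $\mathcal N$ is convergent by \autoref{nfODE}).

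Second I would show that $H$ is itself the complexification of a CR-map $N\to M$. Lift $H$ to the Segre families via \autoref{isomorphism}; as in the proof of \autoref{dualequiv} the lift has the shape $(F,G,\bar\Lambda,\bar\Omega)$, where $(\Lambda,\Omega)\in\Hom(\mathcal N^{*},\mathcal E^{*})$ is the induced morphism of dual equations. Using $\mathcal N^{*}=\overline{\mathcal N}$ and $\mathcal E^{*}=\overline{\mathcal E}$, both $(\Lambda,\Omega)$ and the coefficient-wise conjugate $(\bar F,\bar G)$ lie in $\Hom(\overline{\mathcal N},\overline{\mathcal E})$, each carrying the parameter $\bar\tau=\tau$; since $\overline{\mathcal N}$ is the normal form of the non-resonant $\overline{\mathcal E}$, uniqueness again gives $(\Lambda,\Omega)=(\bar F,\bar G)$. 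Hence the Segre-family lift of $H$ equals $(F,G,F,G)$, which is precisely the complexification of $h:=(F,G)$; as this map carries the Segre family of $N$ onto that of $M$ and commutes with the anti-holomorphic involution $(z,w)\mapsto(\bar z,\bar w)$, it follows that $h$ maps $N$ into $M$. So $h$ is the desired equivalence — biholomorphic for $m=1$ and formal for $m>1$ — and it carries the parameter $\tau$. For $m>1$ the hypothesis $\tau\in\RR{}$ enters exactly here, since it is the reality constraint \eqref{taureal} that is needed for $(F,G)$ to be realized as the complexification of a genuine (formal) CR-map.

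The step I expect to be the crux is not the final descent but the bookkeeping in the second paragraph: guaranteeing that conjugation and duality return an object in the \emph{same} normal-form category and with the \emph{same} value of the free parameter $\tau$ (and, for $m>1$, the same $\sigma$), so that the uniqueness statements apply verbatim. This is precisely what the non-resonancy hypothesis together with \autoref{dualinnf} and \autoref{dualequiv} are engineered to deliver; once they are in place, the remainder of the argument is formal.
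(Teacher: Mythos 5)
Your argument is essentially the paper's own: first force $\overline{\mathcal N}=\mathcal N^{*}$ from \autoref{exists}, \autoref{dualinnf}, \autoref{dualequiv} and the uniqueness of the normalizing transformation (with matching real $\tau$), then apply uniqueness once more to identify the parameter component of the Segre-family lift of $H$ with the appropriate conjugate of $H$, so that $H$ descends to an equivalence of the real hypersurfaces. The only quibble is a conjugation slip in the last step — the complexification of $h=(F,G)$ is $(F,G,\bar F,\bar G)$, not $(F,G,F,G)$, so the identification uniqueness must deliver is that the lift's second pair equals $(\bar F,\bar G)$ — but this is bookkeeping of conventions for the dual family and does not affect the substance of the argument.
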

\begin{proof}
Assume that $m>1$. Let $H(z,w)$ be the transformation mapping $\mathcal E$ into $\mathcal{N}_\tau$.
 Then obviously $\bar H$ is a map between
  $\overline{\mathcal E}=\mathcal E^*$
  and $\overline{\mathcal{N_\tau}}$.
   That is, $\bar H$ brings $\overline{\mathcal E}=\mathcal E^*$
   into normal form (which is equal to $\overline{\mathcal{N}_\tau}$),
   and by  \autoref{dualequiv} its parameter \eqref{tau} is
   $\bar \tau = \tau$ as well. Again, by \autoref{dualequiv} we can claim that $\mathcal E^*$ is $\tau$-equivalent to $\mathcal{N}_\tau^*$. Since both  ODEs $\overline{\mathcal{N}_\tau}$ and $\mathcal{N}_\tau^*$ are in normal form, we conclude from the uniqueness of a normalizing map that
$$\overline{\mathcal{N}_\tau}=\mathcal{N}_\tau^*,$$
so that $\mathcal{N}_\tau$ has a real structure.
Hence the Segre families  $\mathcal{S}$ and $\mathcal{S}_\tau$
of the two ODEs $\mathcal E$ and $\mathcal{N}_\tau$ respectively are the Segre families of real hypersurfaces $M$ and $N_\tau (M)$ respectively. By \autoref{exists}, there exists a product map between them. As follows from the above argument, this product map coincides with $(H(z,w),\bar H(\xi,\eta))$, and this implies that $H$ maps $M$ into $N_\tau (M)$, as required for (ii).

In the case $m=1$ the proof is analogous (actually, it is even simpler because the parameter $\tau$ from \eqref{tau} does not occur at all).
\end{proof}
\begin{remark}\Label{onlyreal}\label{onlyreal}
As follows from \eqref{taureal}, when considering normal forms with a real structure for ODEs \eqref{ODE} with a real structure, one needs to
 restrict to $\tau$ being real in \eqref{tau},
 which explains the assumption $\tau\in\RR{}$ in \autoref{key}.
\end{remark}
Our remaining task is to relate the ODE defining function $\Phi$ and the real defining function $h$ {\em in the case when an ODE $\mathcal E$  has a real structure}. For that, we relate the exponential defining function $\varphi$ and the real defining function $h$ by means of the identity
\begin{equation}\Label{phitoh}\label{phitoh}
w=\bar w e^{i\bar w^{m-1}\varphi(z,\bar z,\bar w)},\,\,\,\mbox{when} \,\,\,w=u+\frac{i}{2}u^mh(z,\bar z,u)
\end{equation}
For the terms relevant to the normal form this gives (after successively comparing terms with $z^2\bar z^2$, $z^3\bar z^2$, $z^2\bar z^3$, $z^3\bar z^3$):
\begin{equation}\Label{phih}\label{phih}
\begin{aligned}
&\varphi_{22}=i\frac{m-1}{2}w^{m-1}+h_{22},\\
&\varphi_{32}=h_{32},\quad \varphi_{23}=h_{23},\\
&\varphi_{33}=h_{33}+i(m-1)h_{22}w^{m-1}-\frac{(m-1)(3m-2)}{8}w^{2m-2}+\frac{i}{2}h_{22}'w^m-\frac{1}{12}w^{3m-3}.
\end{aligned}
\end{equation}
Applying now \eqref{relations}, we obtain by a straightforward calculation:
\begin{equation}\Label{Phih}\label{Phih}
\begin{aligned}
&A_0=mw^{m-1}-2ih_{22},\quad B_0=-2h_{23},\quad A_1=-6ih_{32},\\
&B_1=-6h_{33}+8h_{22}^2-iw^mh_{22}'+\frac{1}{4}w^{2m-2}(m-1)(m+2)+\frac{1}{2}w^{3m-3}.
\end{aligned}
\end{equation}

We are now in the position to prove \autoref{theor1}.
\begin{proof}[Proof of \autoref{theor1} and \autoref{theor2}]
In the case $m=1$, we apply \autoref{key} and conclude that $M$ can be mapped biholomorphically onto a real hypersurface \eqref{madmissiblereal} with the desired uniqueness property. The latter hypersurface satisfies \eqref{phi1a}. Now the fact that $M$ is in normal form (i.e. its defining function $h(z,\bar z,u)\in\mathcal N_1$) follows from \eqref{Phih}.

For $m>1$, inspired by \eqref{Phih}, we make the following particular choice of a normal form space $\mathcal D^\sigma_m$:
\begin{equation}\Label{detsigma}\label{detsigma}
\sigma_1:=\frac{1}{4}(m-1)(m+2),\quad \sigma_2:=\frac{1}{2}.
\end{equation}
We then use \autoref{key} and \autoref{onlyreal} to obtain a formal normal form for a hypersurface \eqref{madmissiblereal} (with the above choice of $\sigma$), which is uniquely determined by the real parameter $\tau$, as in \eqref{tau}. Now the fact that $M$ is in normal form (i.e. its defining function $h(z,\bar z,u)\in\mathcal N_m$) again follows from \eqref{Phih}.
\end{proof}
\begin{remark}\Label{manynf}\label{manynf}
The choice of the real parameters $\sigma$ in \eqref{detsigma} and  the respective  normal form space for the real defining function $h$ is of course not the only one possible. However, only the above choice makes the space of normalized power series $h(z,\bar z,u)$ a {\em linear space}, that is why we stay with it.
\end{remark}
\begin{proof}[Proof of \autoref{theor3}]. For the proof of \autoref{theor3}, we combine \autoref{theor2} with an argument similar to the one in \cite{KLS}. Namely, we first identically repeat the argument in \cite{KLS} to turn \eqref{merom} into a first order system of the kind \eqref{mer}. We then    apply Braaksma's Theorem (see Section 2.3) for the resulting system and conclude that the (unique) formal solution of it obtained above (and used to obtain the unique formal normalizing transformation, as in \autoref{theor2}) is $(k_1,...,k_s)$ multi-summable in all but a few directions $d$. As explained in \cite{KLS}, one can pick up such a directions $d^\pm$ that the resulting "large" sectors $S^\pm$ giving the sectorial realizations of the solution contain $R{\pm}$. After substituting the sectorial solutions as the Cauchy data for \eqref{system1}, we finally obtain a sectorial map of $M$ (in the sense of Section 2.3) realizing the formal normalizing map in \autoref{theor2}. It has, furthermore, the multi-summability property. Now the injectivity of the Borel map discussed in Section 2.3 implies the desired uniqueness of the smooth normalizing transformation ({\em if fixing the multi-summability levels $k_j$ and the directions $d^\pm$}). The image $N$ of $M$ under the normalizing transformation is the desired smooth normal form of $M$. The assertion of the theorem follows now from that for \autoref{theor2} and the uniqueness of the sectorial realizations.

\end{proof}

\begin{remark}\label{canonical2}
Following \autoref{canonical}, we shall comment again that   the multi-summability levels $k_j$ and the directions $d^\pm$ are uniquely determined by the collectyion $J$, as in \eqref{7jet}. To show this, one has to inspect the procedure in \cite{braaksma} and make sure that all the data required for deducing $k_j$, $d^\pm$ can be read from the principal matrix \eqref{44}, which is already determined by \eqref{7jet}. We leave the details to the reader.
\end{remark}

\section{Fuchsian type hypersurfaces}

In this section, we introduce the class of Fuchsian type hypersurfaces mentioned in the Introduction, and provide a complete convergent normal form for Fuchsian type hypersurfaces.

\subsection{The Fuchsian type class} We start with
\begin{definition}\Label{fuchsianM}\label{fuchsianM}
A hypersurface \eqref{madmissiblereal} is called {\em a hypersurface of Fuchsian type}, if its  defining function $h(z,\bar z,u)$ satisfies
\begin{equation}\Label{FM}\label{FM}
\begin{aligned}
&\ord\,h_{22}(w)\geq m-1; \,\ord\,h_{23}(w)\geq 2m-2; \,\ord\,h_{33}(w)\geq 2m-2;\\
&\ord\,h_{2l}(w)\geq 2m-l+2, \,\,4\leq l\leq 2m+1;\\
&\ord\,h_{kl}(w)\geq 2m-k-l+5,\,\,k\geq 3,\,l\geq 3,\,7\leq k+l\leq 2m+4.
\end{aligned}
\end{equation}
\end{definition}
 We point out that

 \smallskip

 \begin{itemize}

\item \noindent The Fuchsian condition requires vanishing of an appropriate part of the $(2m+4)$-jet of the defining function $h$ at $0$;

 \smallskip

\item \noindent  It is easy to see from \ref{FM} that {for $m=1$ the Fuchsian type condition holds automatically, while for $m>1$ it fails to hold in general};

 \smallskip

 \item  \noindent  As is shown in the end of this section, the Fuchsian type property is holomorphically invariant.

   \end{itemize}

   \smallskip

\begin{remark}\Label{comparefuchs}\label{comparefuchs}
The property of being Fuchsian extends earlier versions of this property given  respectively in the work  \cite{nonminimalODE} Kossovskiy-Shafikov, and the work  \cite{analytic} of Kossovskiy-Lamel. In the paper \cite{nonminimalODE}, a Fuchsian property of {\em generically spherical} hypersurfaces \eqref{madmissiblereal} was introduced. It is possible to check that for a generically spherical hypersurface the two notions of being Fuchsian coincide. In the paper \cite{analytic}, general hypersurfaces \eqref{madmissiblereal} were considered, but the notion of Fuchsian type considered there is  different from that given in the present paper; it serves to guarantee the regularity of infinitesimal CR-automorphisms, while the property \eqref{FM} guarantees regularity of {\em arbitrary} CR-maps (see next subsection). The property introduced in \cite{analytic} is more appropriately addressed as {\em weak Fuchsian type}, while the property \eqref{FM} as the (actual) Fuchsian type.
\end{remark}

\subsection{Normalization in the Fuchsian type case} Let us introduce, for each $m>1$,  the space $\mathcal N^F_m$ of power series $h(z,\bar z,u)$, as in \eqref{madmissiblereal}, that are of Fuchsian type and  satisfy, in addition,
\begin{equation}\Label{nspaceF}\label{nspaceF}
h_{22}^{(m)}(u)=h_{23}^{(2m-1)}(u)=h_{33}^{(2m-1)}(u)\equiv 0.
\end{equation}
In other words, we have
\begin{equation}\Label{constantsF}\label{constantsF}
h_{22}(u)=const\,\cdot\, u^{m-1}, \quad h_{23}(u)=const\,\cdot\, u^{2m-2}, \quad h_{33}(u)=const\,\cdot\, u^{2m-2}.
\end{equation}

Now, the normalization result for Fuchsian type hypersurfaces is as follows.
\begin{theorem}\Label{theorf}\label{theorf}
Let $M\subset\CC{2}$ be a real-analytic Levi-nonflat hypersurface, which
is of $m$-infinite type at $p\in M$ for some $m>1$ and of Fuchsian type at  $p$, and
assume  that $p$ is a non-resonant point for $M$.  Then there exists a biholomorphic transformation
\begin{equation}\Label{F3}\label{F3}
H:\,(\CC{2},p)\mapsto (\CC{2},0)
\end{equation}
bringing $M$ into the normal form \eqref{nspaceF}. Furthermore, a normalizing transformation $H$ is uniquely determined by the restriction of its differential $dH|_p$ to the complex tangent $T^{\CC{}}_p M$.
\end{theorem}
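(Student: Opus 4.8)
The plan is to follow exactly the scheme of the proof of Theorems 1.2 and 1.6, but to verify that the Fuchsian assumption \eqref{FM} upgrades the \emph{formal} normalizing transformation to a \emph{convergent} one and that the target normal form space can be taken to be \eqref{nspaceF} rather than \eqref{nspace1}. First I would pass, via \autoref{isomorphism} and the faithful functor $\mathfrak{R}_m^\pm\hookrightarrow\mathfrak{E}_m$, to the associated singular ODE $\mathcal E=\mathcal E(M)$, which carries a positive (or negative) real structure. The core of \autoref{p:preparesystem} reduces the normalization to solving the singular system \eqref{merom} for the Cauchy data $H=(g_0,g_1,f_0,f_1)$; the non-resonancy hypothesis, via \autoref{nonres1}, guarantees a unique formal solution once the parameter $\tau$ in \eqref{tau} is fixed. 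The main new point is that under the Fuchsian condition \eqref{FM} the right-hand side $\mathbf T$ of the resulting first-order system $w^{?}\mathbf H'=\mathbf T(w,\mathbf H)$ actually has \emph{order of vanishing in $w$ high enough} to make the system \emph{Fuchsian} (i.e. of the shape $w\mathbf H'=\mathbf L(w)\mathbf H+(\text{higher order})$, $\mathbf L$ holomorphic), rather than merely meromorphic of pole order $m>1$. I would therefore first establish, via the coefficient relations \eqref{relations} together with \eqref{Phih} and the vanishing orders assumed in \eqref{FM}, that each of the coefficient functions $A_j,B_j,C_j$ occurring in the system vanishes to the order dictated by the Fuchsian bounds, so that after the substitution analogous to \eqref{subst} the system becomes a Briot--Bouquet type ODE as in Section 2. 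Then, exactly as in \autoref{nfODE}, every formal solution converges, and the non-resonancy condition (equivalently, the Euler system \eqref{Euler} having only the trivial solution) yields a \emph{unique holomorphic} solution with $\mathbf H(0)=0$.

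Next I would translate the target: instead of the normal-form space $\mathcal D_m^\sigma$ from \eqref{nspaceODEm}, I would use the variant adapted to the Fuchsian scaling, in which the normalization conditions are placed on $\Phi_{0,j,2},\Phi_{1,j,2},\Phi_{0,j,3},\Phi_{1,j,3}$ according to the vanishing orders of $h_{22},h_{23},h_{33}$ prescribed in \eqref{nspaceF}. Using \eqref{Phih} one checks that $h_{22}(u)=\mathrm{const}\cdot u^{m-1}$, $h_{23}(u)=\mathrm{const}\cdot u^{2m-2}$, $h_{33}(u)=\mathrm{const}\cdot u^{2m-2}$ corresponds precisely to the prescribed low-order behavior of $A_0,B_0,B_1$, so that the Observation \ref{NFconditions}--style bookkeeping goes through with these modified conditions. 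The Fuchsian structure of the system then forces the parameter $\tau$ in \eqref{tau} to be determined (not free): since the pole orders drop to $1$, the degenerate matrix $M_{m-1}$ phenomenon of \eqref{hm-1} does not arise — the relevant obstruction matrices are the $M_k$ of \eqref{hhk}-type which are invertible for all $k\ge1$ by non-resonancy — hence the normalizing transformation is unique, up to a dilation \eqref{dilations}, and in particular determined by $dH|_p$ restricted to $T^{\mathbb C}_pM$.

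Finally I would invoke the reality argument of \autoref{key}: since $\mathcal E$ has a real structure, so does its normal form $\mathcal N$, i.e. $\overline{\mathcal N}=\mathcal N^*$ by \autoref{exists} and \autoref{dualinnf} (one must re-check \autoref{dualinnf} for the Fuchsian normal-form space, but the computation is the same substitution \eqref{phibar}--\eqref{phistar33} restricted to the subspace \eqref{nspaceF}); consequently the Segre family of $\mathcal N$ is the Segre family of a real hypersurface $N$, and by \autoref{exists} the product of the (now convergent) normalizing map $H$ with $\bar H$ carries the Segre family of $M$ onto that of $N$, whence $H$ itself is a \emph{biholomorphism} mapping $M$ onto $N$, and $N$ is in the normal form \eqref{nspaceF} by \eqref{Phih}. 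The uniqueness statement follows from the uniqueness in the ODE normalization (\autoref{nfODE}-type) combined with \autoref{specialmap}, which forces any normalizing map to be of the form \eqref{normalmap} up to a dilation.

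The step I expect to be the main obstacle is the verification that the Fuchsian bounds \eqref{FM} are exactly the right ones to bring the system \eqref{merom} down to pole order $1$ in all four equations simultaneously: one must track, through the somewhat intricate expressions \eqref{trule3}, \eqref{2jet}, \eqref{PDE-1}, and the eliminations used in \autoref{p:preparesystem}, how the vanishing orders of $h_{kl}$ (equivalently of $\varphi_{kl}$ and of the $A_j,B_j,C_j$) propagate into the orders of vanishing of $T_1,\dots,T_4$ and of the subsidiary substitutions for $f_2,g_2$. The bounds in \eqref{FM} are tailored precisely so that $w^{m+1}g_0''$, $w^{2m}g_1''$, $w^{2m}f_0''$, $w^{2m}f_1''$ each become $w^2(\cdot)''$ after the corresponding change of unknowns, and checking this carefully — in particular that the "higher $k+l$" coefficients $h_{kl}$ and the derivatives $h_{kl}'$ enter only through terms of sufficiently high order — is the computational heart of the argument. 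I would organize this as a lemma of the form: \emph{if $h$ satisfies \eqref{FM}, then the system \eqref{merom} associated to $\mathcal E(M)$ is, after the substitution $\mathbf H=(H,wH')$, a Briot--Bouquet system}, and then the convergence and uniqueness are immediate from Section 2 and \autoref{nonres1}.
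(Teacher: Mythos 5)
Your overall route is the paper's route: pass to the associated ODE, reduce to the singular system for the Cauchy data, show that the Fuchsian bounds \eqref{FM} propagate into vanishing orders of the right-hand side, deduce convergence via a Briot--Bouquet reduction, observe that the free parameter $\tau$ disappears, and finish with the reality/duality argument. However, there is a genuine gap in the step you yourself flag as the heart of the matter. The lemma you propose --- that under \eqref{FM} the system \eqref{merom} becomes a Briot--Bouquet system after the first-order substitution $\mathbf H=(H,wH')$ as in \eqref{subst} --- is false for $m>1$. The Fuchsian bounds only yield $\ord T_{\alpha,\beta}\geq 2m-1-|\alpha|-|\beta|$ (and $\ord S_{\alpha,\beta}\geq m-|\alpha|-|\beta|$), cf. \eqref{Tkle}; this is enough to make every term on the right-hand side vanish to order $2m-1$ \emph{as a function of $w$} once the unknowns are $O(w)$, but not enough to make the quotient by $w^{2m-1}$ a \emph{holomorphic function of the unknowns}. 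Concretely, a quadratic term $c\,w^{2m-3}Y_iY_j$ is permitted by the bounds, and after rewriting $w^{2m}X''=T$ as a first-order system for $(Y,wY')$ one must divide by $w^{2m-1}$, producing $c\,w^{-1}Y_iY_j$ --- a pole in $w$ that cannot be absorbed into a holomorphic right-hand side $\mathbf T(w,\mathbf H)$. So the convergence does not follow from the Briot--Bouquet theorem applied to the system in its original unknowns.

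The repair (which is what the paper does) is a two-step change of unknowns performed \emph{after} the formal solution has been constructed: write $H=P+Z$ where $P$ is the $(2m-1)$-jet (a polynomial without constant term) of the formal solution and $Z=O(w^{2m})$ as in \eqref{trick}, check that the transformed system still satisfies the order bounds on its linear coefficients (\eqref{0110a}), and then rescale $Z=w^{2m}U$ as in \eqref{ZtoU}. Only after this rescaling do the nonlinear terms acquire enough extra powers of $w$ (each factor of $U$ now contributes $w^{2m}$ rather than $w$) to land in the form \eqref{bb2}, whence the substitution $V=wU'$ gives the genuine Briot--Bouquet system \eqref{bb1} and convergence of the formal solution. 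Two smaller imprecisions: the relevant non-resonancy condition in the Fuchsian case is not \autoref{nonres1} but the nondegeneracy of the recursion matrices \eqref{mxk} for all $k\geq 1$ (\autoref{Fuchsnonres}), an analogue of the $m=1$ condition \eqref{hhk} rather than of the $m>1$ one --- your later remark about ``$M_k$ invertible for all $k\ge 1$'' is the correct version; and the target normal-form space must be the Fuchsian space $\mathcal F^\sigma_m$ built from \eqref{partnf}, with the invariant coefficients \eqref{maincoef} left untouched, which your translation via \eqref{Phih} essentially captures.
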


\begin{corollary}\Label{cor3}\label{cor3}
For a hypersurface $(M,p)$ satisfying the assumptions of
\autoref{theor3}, we have $\dim_\R \mathfrak{aut} (M,p)\leq 2$.
\end{corollary}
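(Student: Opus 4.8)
The plan is to reduce the bound on $\dim_\R\mathfrak{aut}(M,p)$ for a non-resonant Fuchsian type hypersurface to the analysis of the linearized system already carried out in Sections 5 and 6. Recall that by \autoref{specialmap}, any infinitesimal automorphism $L$ of a hypersurface of the form \eqref{madmissiblereal} has the form \eqref{PQ}; passing to the associated ODE $\mathcal E(M)$ via \autoref{isomorphism}, such an $L$ corresponds to a Lie symmetry of $\mathcal E(M)$ of the same shape. As explained at the end of Section 5, the components $g_0, g_1, f_0, f_1$ of the initial data of $L$ must then satisfy the linearized system obtained from \eqref{merom}, which for a Fuchsian type ODE is a genuine \emph{Fuchsian} (i.e. $m=1$-type, logarithmic singularity) linear system of four second-order ODEs after the substitution \eqref{subst}. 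Thus formal solutions are automatically convergent, and the dimension of the solution space with $\mathbf H(0)=0$ is governed entirely by the indicial (Euler) system, exactly as in \autoref{nonres} and the surrounding discussion.

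First I would make precise that Fuchsian type implies the relevant linear system is Fuchsian: one needs to check that the vanishing orders \eqref{FM} of the $h_{kl}$ force the coefficient functions $A_j, B_j, C_j$ in \eqref{ABC}, after dividing through by the appropriate power $w^m$ (or $w^{2m}$, $w^{3m}$) in the passage to \eqref{merom1}-\eqref{merom2}, to be holomorphic at $0$, so that \eqref{merom2} becomes $w\mathbf H' = \mathbf L(w)\mathbf H$ with $\mathbf L$ holomorphic. This is a direct order-counting computation using \eqref{relations} and \eqref{phih}, in the same spirit as the computation in Section 5 but now bookkeeping the vanishing orders rather than just the leading coefficients. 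Second, I would invoke the theory of Fuchsian linear systems: the space of holomorphic solutions of $w\mathbf H'=\mathbf L(w)\mathbf H$ vanishing at $0$ is finite-dimensional, and under the non-resonancy condition (no positive integer eigenvalue of $\mathbf M = \mathbf L(0)$, equivalently the Euler system \eqref{Euler} has only the trivial solution for every $k\geq 1$) there are no nontrivial solutions with $\mathbf H(0)=0$ beyond those coming from the kernel of $\mathbf M$ itself acting at $k=0$. Since $\mathbf H(0)=0$ is forced, the only freedom is in the finitely many components of the Cauchy data at $w=0$ that are \emph{not} pinned by the normalization — concretely, as in \autoref{theorf}, the restriction of $dH|_p$ to $T^\C_p M$, which is a $2$-real-dimensional datum.

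The main step, then, is to count: the $\R$-linear space of infinitesimal automorphisms injects into the space of admissible initial data $(f_0, f_1, g_0, g_1)$ solving the homogeneous Fuchsian system with $\mathbf H(0)=0$, and by the non-resonancy the latter is determined by the finitely many free parameters corresponding to the dilation group \eqref{dilations} — one complex parameter $\lambda$, i.e. two real parameters — so $\dim_\R\mathfrak{aut}(M,p)\leq 2$. The hard part will be the first step: verifying cleanly that the Fuchsian type conditions \eqref{FM} are exactly what is needed to make \eqref{merom} a Fuchsian (rather than irregular singular) linear system after the reduction \eqref{subst}, i.e. that the powers of $w$ appearing on the left-hand sides of \eqref{merom} are matched by the vanishing orders forced on the $T_i$ by \eqref{FM}. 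Once that is in place, everything else is a routine application of the classical Fuchsian ODE theory already cited in Section 2.3 together with the non-resonancy bookkeeping of Section 5; no new Stokes-type phenomena arise, precisely because the singularity is Fuchsian. (Note also that \autoref{theorf} gives a convergent normal form, so the argument can alternatively be phrased directly in terms of the convergent normalizing map and the residual dilation symmetry of the normal form space \eqref{nspaceF}, which is again $2$-real-dimensional.)
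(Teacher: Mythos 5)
Your proposal is correct in substance, but it takes the long way around, and the paper's own (implicit) proof is the one you relegate to your final parenthesis. Since \autoref{theorf} is available as a hypothesis, the corollary is immediate: any $\phi\in\Aut(M,p)$ composed with the convergent normalizing map $H$ is again a normalizing map for $M$, so by the uniqueness clause of \autoref{theorf} the assignment $\phi\mapsto d(H\circ\phi)|_p\big|_{T^{\C}_pM}$ is injective; the admissible values of this datum form the dilation group \eqref{dilations}, which is $2$-real-dimensional because $\mu$ is tied to $\lambda$ by $\mu^{1-m}=\epsilon|\lambda|^2$. Passing to the Lie algebra gives $\dim_\R\mathfrak{aut}(M,p)\leq 2$. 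Your main route --- identifying $\mathfrak{aut}(M,p)$ with solutions of the linearized system for the Cauchy data $(g_0,g_1,f_0,f_1)$ and invoking non-resonancy to kill everything above the constant term --- also works and is consistent with the way the paper sets up \autoref{nonres2} and \eqref{mxk}, but it re-proves the uniqueness statement of \autoref{theorf} rather than using it. One technical caveat in your sketch: the claim that the substitution \eqref{subst} alone turns the Fuchsian-type system into a genuine first-order Fuchsian system is not accurate, since the left-hand sides of \eqref{meromF} carry the factors $w^{m+1}$ and $w^{2m}$ rather than $w^2$; the paper's actual reduction requires first subtracting a polynomial part of degree $\leq 2m-1$ as in \eqref{trick}, then rescaling $Z=w^{2m}U$ as in \eqref{ZtoU}, and only then does one land in the Briot--Bouquet class \eqref{bb1}, using the order estimates of \autoref{Tkl} and \autoref{0110}. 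So the ``hard part'' you flag is real, is done in the paper in Steps I--II of Section 8, and is precisely why you are better off quoting \autoref{theorf} directly.
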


\begin{corollary}\Label{cor1}\label{cor33}
Let $H:\,(M,p)\mapsto (M^*,p^*)$ be a local biholomorphism of two hypersurfaces  satisfying the assumptions of
\autoref{theor1}. Then $H$ is uniquely determined by its $1$-jet at the point $p$.
\end{corollary}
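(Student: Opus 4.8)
The plan is to derive the statement as a short consequence of the uniqueness clause already contained in \autoref{theorf}; no fresh analysis of the associated singular ODEs is needed beyond what that theorem packages. Work in admissible coordinates at $p$ and at $p^*$, so that $p=p^*=0$, both $M$ and $M^*$ are given by equations of the form \eqref{madmissiblereal}, the infinite type locus is $\{w=0\}$ in each case, and $T^{\CC{}}_0M=T^{\CC{}}_0M^*=\CC{}\,\partial_z$. Since $H$ fixes the origin, its $1$-jet at $0$ is the same datum as its differential $dH|_0$; moreover, by \autoref{specialmap} the $z$-derivative of the second component of $H$ vanishes at $0$, so the restriction of $dH|_0$ to $T^{\CC{}}_0M$ is simply multiplication by $F_z(0,0)$. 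In particular, if two biholomorphisms have the same full differential at $0$ they have the same restriction to the complex tangent.

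The argument itself runs as follows. Suppose $H_1,H_2\colon(M,0)\to(M^*,0)$ are local biholomorphisms with the same $1$-jet at $0$, i.e. $dH_1|_0=dH_2|_0$. By \autoref{theorf} applied to $M^*$ there is a biholomorphism $\Phi^*\colon(\CC{2},0)\to(\CC{2},0)$ carrying $M^*$ into the Fuchsian normal form \eqref{nspaceF}; denote the image hypersurface by $N^*$. Then $\Phi^*\circ H_1$ and $\Phi^*\circ H_2$ are each a normalizing transformation of $M$ in the sense of \autoref{theorf}, since each is a biholomorphism carrying $M$ onto the normal-form hypersurface $N^*$. Their differentials at $0$ coincide, because $d(\Phi^*\circ H_i)|_0=d\Phi^*|_0\circ dH_i|_0$ does not depend on $i$; hence so do the restrictions of these differentials to $T^{\CC{}}_0M$. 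The uniqueness clause of \autoref{theorf} --- a normalizing transformation of $M$ is uniquely determined by the restriction of its differential at $0$ to $T^{\CC{}}_0M$ --- therefore forces $\Phi^*\circ H_1=\Phi^*\circ H_2$, and composing with $(\Phi^*)^{-1}$ gives $H_1=H_2$. This is precisely the claim that $H$ is determined by its $1$-jet at $p$.

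The only point that genuinely requires care is the correct reading of \autoref{theorf}: a ``normalizing transformation'' must be understood as \emph{any} biholomorphism taking $M$ into \emph{some} hypersurface in normal form \eqref{nspaceF}, the residual ambiguity being exactly the dilations \eqref{dilations}, which rescale the restricted differential by a nonzero scalar; prescribing the restricted differential thus pins the normalizing transformation down uniquely. Granting this, the substantive content sits entirely in \autoref{theorf} --- in particular in the Fuchsian analogue of \autoref{nfODE}, where under the non-resonancy hypothesis the prepared singular system \eqref{merom} reduces, after the substitution used in Section 8, to a Fuchsian (Briot--Bouquet) system with a unique convergent solution, forcing uniqueness of both the ODE normal form and the normalizing transformation. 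The corollary proper presents no additional obstacle, and, as with \autoref{cor11}, it supplies an explicit jet bound improving the general finite jet determination result of \cite{MR1995799} in this setting. (If one reads the statement with the literal hypotheses quoted, the same argument with \autoref{theorf} replaced by \autoref{theor1} applies verbatim and recovers \autoref{cor11}.)
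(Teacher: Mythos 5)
Your argument is correct and is exactly the intended derivation: the corollary is stated in the paper without proof as an immediate consequence of the uniqueness clause of \autoref{theorf} (resp.\ \autoref{theor1} under the literal hypotheses), and composing two candidate maps with a fixed normalizing transformation of the target and invoking that clause is the standard way to extract finite jet determination from a normal form. Your observation that the restricted differential on $T^{\CC{}}_pM$ is just multiplication by $F_z(0,0)$, via \autoref{specialmap}, correctly reconciles the $1$-jet hypothesis with the weaker data appearing in the theorem's uniqueness statement.
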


\begin{remark}\Label{resonant}\label{resonant}
As can be seen from the description of the resonances in the Fuchsian type case (either $m=1$ or $m>1$), a Fuchsian type hypersurface can admit only finitely many resonances (up to $7$ for $m=1$ and up to $8$ for $m>1$). This means that a resonant normal form (which will still provide a finite-dimensional reduction of the equivalence problem) can be produced and used to solve the equivalence problem in the Fuchsian type case without the non-resonancy assumption. The corresponding construction is, however, a bit technical, and we will not provide it  in this paper.
\end{remark}
We now proceed towards the proof of \autoref{theorf}. As some of the considerations are analogous to that in the case $m=1$, we provide only a brief outline for the respective steps.

First, we translate the Fuchsian type condition for hypersurfaces \eqref{madmissiblereal} described in the introduction onto the language of associated ODEs. For  the functions $\Phi,\Phi^*$, we make use of the expansion
$$\Phi(z,w,\zeta)=\sum_{k\geq 0,l\geq 2}\Phi_{kl}(w)z^k\zeta^l,$$
and similarly for $\Phi^*$. We now introduce
\begin{definition}\Label{fuchsian}\label{fuchsian}
 An ODE $\mathcal{E} \in \mathfrak{E}_m$, defined by
  \eqref{ODE},
  is called {\em Fuchsian} (or {\em a Fuchsian type ODE}), if $\Phi$ satisfies the conditions:
\begin{equation}\Label{FODE}\label{FODE}
\begin{aligned}
&\ord\,\Phi_{02}(w)\geq m-1; \,\ord\,\Phi_{03}(w)\geq 2m-2; \,\ord\,\Phi_{12}(w)\geq m-1;\,\ord\,\Phi_{13}(w)\geq 2m-2;\\
&\ord\,\Phi_{0l}(w)\geq 2m-l+2, \,\,4\leq l\leq 2m+1;\,\,
\ord\,\Phi_{k2}(w)\geq 2m-k, \,\,2\leq k\leq 2m+1; \\
&\ord\,\Phi_{kl}(w)\geq 2m-k-l+3,\,\,k\geq 1,\,l\geq 3,\,5\leq k+l\leq 2m+2.
\end{aligned}
\end{equation}

\end{definition}
We shall then prove
\begin{proposition}\Label{transferfuchs}\label{transferfuchs}
For a Fuchsian type hypersurface $M\subset\CC{2}$, its associated ODE $\mathcal E(M)$ is of Fuchsian type as well.
\begin{proof}
For the terms $\Phi_{02},\Phi_{03},\Phi_{12},\Phi_{13}$ the desired inequalities follow (in both directions) from \eqref{Phih}. For the other terms $\Phi_{kl}$ under consideration, the proof of the desired fact is obtained by a calculation very similar to the one leading to formulas \eqref{Phih}, and we leave the details to the reader.
\end{proof}
\end{proposition}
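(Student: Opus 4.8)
The goal is to deduce the ODE order bounds \eqref{FODE} from the hypersurface bounds \eqref{FM}. The plan is to pass through the exponential defining function $\varphi$ of \eqref{mc}: first transport \eqref{FM} to order bounds on the $\varphi_{kl}(w)$ via \eqref{phitoh}, and then transport those to bounds on the $\Phi_{kl}(w)$ via the identity \eqref{assoc}, which governs the passage from the Segre family to its associated ODE. Throughout I would freely use the reality of $h$, which gives $h_{kl}=\overline{h_{lk}}$ and in particular $\ord h_{kl}=\ord h_{lk}$; this upgrades \eqref{FM} to the symmetric statement that the generic bound $\ord h_{kl}\geq 2m-k-l+5$ holds for all $(k,l)$ in range and that $\ord h_{k2}=\ord h_{2k}\geq 2m-k+2$ for $4\leq k\leq 2m+1$, with $\ord h_{32}=\ord h_{23}\geq 2m-2$.

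Before the general argument I would dispose of the finitely many exceptional terms listed separately in \eqref{FODE}, namely $\Phi_{02},\Phi_{03},\Phi_{12},\Phi_{13}$ together with the two boundary families $\Phi_{0l}$ ($4\le l\le 2m+1$) and $\Phi_{k2}$ ($2\le k\le 2m+1$). For the first four the formulas \eqref{Phih} give everything at once: $\Phi_{02}=A_0=mw^{m-1}-2ih_{22}$ has order $\geq m-1$; $\Phi_{03}=B_0=-2h_{23}$ has order $\geq 2m-2$; $\Phi_{12}=A_1=-6ih_{32}$ has order $=\ord h_{23}\geq 2m-2\geq m-1$ by reality; and each summand of $B_1=\Phi_{13}$ in \eqref{Phih} has order $\geq 2m-2$ (the term $w^mh_{22}'$ has order $\geq m+(m-2)$, while the explicit powers $w^{2m-2},w^{3m-3}$ and the products $h_{33},h_{22}^2$ are immediate). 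The two boundary families reduce, via the correspondence $\Phi_{kl}\leftrightarrow\varphi_{k+2,l}$ described below, to $\varphi_{2l}$ and $\varphi_{k+2,2}$; the required bounds follow from the $h_{2l}$ bound in \eqref{FM} (for $\Phi_{0l}$) and from its reality-mirror $\ord h_{k+2,2}=\ord h_{2,k+2}$ (for $\Phi_{k2}$), the cases where the $h$-bound runs out being precisely those where the asserted bound $2m-k$ is already $\leq 0$ and hence vacuous.

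For the generic range $k\geq 1$, $l\geq 3$, $5\leq k+l\leq 2m+2$ I would organize the bookkeeping by the weight $\mu(w^jz^a\bar\xi^b):=j+a+b$ on monomials. The generic part of the Fuchsian condition then reads: every monomial of $\varphi$ (resp. $h$) with $(k,l)$ in the generic range has $\mu\geq 2m+5$ (the low terms $\varphi_{22},\varphi_{23},\varphi_{32},\varphi_{33},\varphi_{2l}$ carry smaller weight and were handled above), while the claim for $\Phi$ is that each generic monomial has $\mu\geq 2m+3$. The key structural fact is that all passages in the chain are \emph{filtered} for $\mu$: in \eqref{phitoh} and \eqref{assoc} the only operations occurring are multiplication by the explicit factors $w^{m-1}$ and by the exponentials $e^{\pm i\bar w^{m-1}\varphi}$, whose non-trivial Taylor terms have $\mu$-weight $\geq m+1>0$; the quadratic term $\bar\eta^{m-1}(\varphi_z)^2$; the re-expansion of $\varphi_{kl}(\bar\eta)$ in $w=\bar\eta e^{i\bar\eta^{m-1}\varphi}$, which produces derivatives $\varphi_{kl}^{(p)}$ but always multiplied by $w^{pm}$ (visible already in the $w^m\varphi_{22}',w^m\varphi_{32}',w^m\varphi_{23}'$ of \eqref{relations}), so each differentiation drops $\mu$ by $1$ while the accompanying $w^m$ raises it by $m\geq 2$; and finally the single genuinely weight-lowering operation $\varphi_{zz}$, which lowers $\mu$ by exactly $2$. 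Extracting the coefficient of $z^k\zeta^l$ from \eqref{assoc} thus yields $\Phi_{kl}=c_{kl}\,\varphi_{k+2,l}+(\text{terms of }\mu\text{-weight}\geq 2m+3)$; since $\varphi_{k+2,l}$ lies in the generic range its weight is $\geq 2m+5$, so its two $z$-derivatives land it at $\geq 2m+3$, which is exactly $\ord\Phi_{kl}\geq 2m-k-l+3$.

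The main work — and the only genuinely delicate point — is verifying that the quadratic contribution $\bar\eta^{m-1}(\varphi_z)^2$ never undercuts the weight $2m+3$ in the generic range. A monomial $z^k\bar\xi^l$ with $k\geq 1$ forces at least one factor of the product to be a true $\varphi_z$ term (not the leading $\epsilon\bar\xi$); pairing this factor $\varphi_{k+1,l-1}$ with the leading $\bar\xi$ and the prefactor $w^{m-1}$ gives weight $\ord\varphi_{k+1,l-1}+k+l+(m-1)$, which even in the worst case that $\varphi_{k+1,l-1}$ is one of the low terms $\varphi_{22},\varphi_{23},\varphi_{32},\varphi_{33},\varphi_{2l'}$ still exceeds $2m+3$, all estimates reducing to inequalities of the shape $3m+\text{const}\geq 2m+3$. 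Once this is in hand the generic bounds follow uniformly. I would finally remark that every relation used is invertible, so running the same weight computation backwards gives the converse implication as well, yielding the full equivalence of the two Fuchsian conditions and, in particular, the holomorphic invariance claimed after \eqref{FM}.
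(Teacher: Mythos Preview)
Your approach is correct and is essentially the route the paper intends: both arguments transfer the order bounds along the chain $h\to\varphi\to\Phi$ via \eqref{phitoh} and \eqref{assoc}, and both dispose of $\Phi_{02},\Phi_{03},\Phi_{12},\Phi_{13}$ directly from \eqref{Phih}. The paper's proof is only a two-line sketch (``similar calculation, details left to the reader''); your weight function $\mu(w^jz^a\bar\xi^b)=j+a+b$ is a clean device for carrying out exactly that calculation uniformly in the generic range, and your check that the quadratic term $\bar\eta^{m-1}(\varphi_z)^2$ never undercuts the bound is the genuinely nontrivial part. One organizational difference worth noting: you invoke the reality $\ord h_{kl}=\ord h_{lk}$ already here to handle the boundary family $\Phi_{k2}$ (and implicitly $\varphi_{k2}$), whereas the paper postpones that reality argument to the proof of the invariance theorem (\autoref{fuchsinv}); since \eqref{FM} gives no direct bound on $h_{k2}$ for $k\geq 3$, reality is indeed needed at some point, and folding it in here is legitimate and arguably tidier. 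Your closing remark on the converse and invariance goes beyond the statement of the proposition, but the paper's ``in both directions'' parenthetical signals the same intent.
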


We will use \autoref{fuchsian} to show the invariancy of the Fuchsian type condition in the end of this section.

For the proof of \autoref{theorf}, we in general follow the scheme in Section 3, and have now to deduce a system of singular ODEs for the Cauchy data $Y(w)$, as in\eqref{Cdata}, relevant to the Fuchsian type situation. For doing so, we have to enforce normalization conditions for the target ODE. Somewhat similarly to the general $m>1$ case, we have to fix a number $\sigma\in\RR{}$ and put
\begin{equation}\Label{partnf}\label{partnf}
\begin{aligned}
&\Phi_{0j2}=0,\,j\geq m;\,\,\Phi_{1j2}=\Phi_{0j3}=0,\,\,  j\geq 2m-1;\\
& \Phi_{1j3}=0,\,\,j\geq 2m-1,\,\,j\neq 3m-3;\,\,\Phi_{1,3m-3,3}=\sigma.
\end{aligned}
\end{equation}
(The other coefficients $\Phi_{0j2},\Phi_{0j3},\Phi_{1j2},\Phi_{1j3}$ we so far leave as free parameters and  show later that their values  are in fact predetermined.) We then collect in \eqref{trule3} terms with
\begin{equation}\Label{0123}\label{0123}
z^kw^j\zeta^l,\,k=0,1,\,l=2,3,\,j\geq 0.
\end{equation}
This gives us a system of four ODEs of the kind \eqref{merom} (with the above discussed parameters involved). For the purposes of this section, we prefer to write dows the obtained system in the form
\begin{equation}\Label{meromF}\label{meromF}
w^{m+1}g_0''=S\bigl(w,Y(w),wY'(w)\bigr),\,\, w^{2m}X''=T\bigl(w,Y(w),wY'(w)\bigr),
\end{equation}
where $X(w):=(g_1(w),f_0(w),f_1(w)),\quad Y(w):=(g_0(w),X(w)),$
and $S,T$ are holomorphic near the origin.
For the functions $T,S$ we will use the expansion
\begin{equation}\Label{expandTS}\label{expandTS}
T(w,Y,\tilde Y)=\sum_{\alpha,\beta\geq 0}T_{\alpha,\beta}(w)Y^\alpha\tilde Y^\beta,
\end{equation}
where $\alpha,\beta$ are multiindices, and similarly for $S$.   We now shall prove the following key
\begin{proposition}\Label{Tkl}\label{Tkl}
Under the Fuchsian type condition, the coefficient functions $T_{\alpha,\beta}(w),S_{\alpha,\beta}(w)$ satisfy
\begin{equation}\Label{Tkle}\label{Tkle}
\ord T_{\alpha,\beta}\geq 2m-1-|\alpha|-|\beta|,\,\,\ord S_{\alpha,\beta}\geq m-|\alpha|-|\beta|,\,\,|\alpha|+|\beta|>0.
\end{equation}
\end{proposition}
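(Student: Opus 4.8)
The plan is to track the vanishing orders in $w$ of every ingredient that goes into the system \eqref{meromF}, starting from the Fuchsian bounds \eqref{FODE} on $\Phi$ (which are available for both $\mathcal E$ and $\mathcal E^*$: for $\mathcal E$ it follows from \autoref{transferfuchs}, and the normalization conditions \eqref{partnf} are consistent with these bounds; for $\mathcal E^*$ it is the hypothesis). The key bookkeeping device is a weight: assign to the monomial $Y^\alpha \tilde Y^\beta$ the ``weight defect'' $|\alpha|+|\beta|$, and aim to show that each term contributing to $T$ (resp. $S$) with a given defect carries at least $w^{2m-1-(|\alpha|+|\beta|)}$ (resp. $w^{m-(|\alpha|+|\beta|)}$). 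Since $Y=(g_0,g_1,f_0,f_1)$ all vanish at $w=0$, and the $\tilde Y$-slots are $wg_0', w^mg_1', w^mf_0', w^mf_1'$ (from the structure recorded in \autoref{p:preparesystem}), one should really think of the combined object $w^{\kappa}\cdot(\text{component of }Y)$ or its $w$-weighted derivative as the natural graded variable; the claimed inequality is exactly the statement that the total $w$-order of any monomial appearing in $T$ is $\geq 2m-1$ once one counts the $w$'s hidden inside the arguments.

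First I would set up the weight/order calculus carefully: declare $w$ to have order $1$, each of $g_0,g_1,f_0,f_1$ to have order $\geq 1$ (they vanish at $0$), and note from \eqref{2jet}/\eqref{trule3} that in forming the system \eqref{PDE-1} every appearance of $f_w, g_w, f_{zw}, g_{zw}$ comes multiplied by $w^m$ and every appearance of $g_0'$ by $w$ — this is precisely the observation used at the end of the proof of \autoref{p:preparesystem}. Next I would trace the two relevant rows of \eqref{PDE-1} (the $\Phi_2,\Phi_3$ rows, i.e. the coefficients of $w^m I_2$ and $w^{2m}I_3$ together with the $z^0,z^1$ extraction) and substitute the coefficient functions $A_j,B_j,C_j$ expressed through $\Phi$ via \eqref{ABC}. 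Each coefficient $A_j(w),B_j(w),C_j(w)$ inherits a lower bound on its order in $w$ directly from \eqref{FODE} (e.g. $A_0=w^{-m}\Phi_2$ restricted to the $z^0$-term, so $\ord A_j \geq \ord\Phi_{j2}-m$, etc.). Then, elimination of $f_2,g_2$ via \eqref{system1} evaluated at $z=0$ expresses them as analytic functions of $Y$ and $wY'$-type quantities whose coefficients again satisfy order bounds coming from \eqref{FODE}; I would verify that this substitution respects the weight count (the extra $w^m$'s attached to derivatives exactly compensate). Finally, after solving the linear-in-second-derivatives system whose matrix at the origin is $\mathrm{diag}\{-1,-1,1,1\}$, I would read off the right-hand sides $T,S$ and collect the monomials $Y^\alpha\tilde Y^\beta$, checking the inequality \eqref{Tkle} term by term; the $w^{m+1}g_0''$ equation gets the weaker bound $\ord\geq m$ because the $g_0'$ slot only carries one power of $w$ (not $w^m$), and because $\Phi_{02},\Phi_{12}$ are only required to have order $\geq m-1$ in \eqref{FODE}.

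The main obstacle I expect is not any single computation but rather organizing the bookkeeping so that the order bounds from \eqref{FODE} genuinely propagate through the two successive nonlinear substitutions (solving \eqref{PDE-1} for $f_{zz},g_{zz}$, then solving \eqref{system1} at $z=0$ for $f_2,g_2$, then solving the $4\times4$ linear system for the second derivatives) without loss. Concretely, one must confirm that Cramer's rule / the implicit function theorem, applied to matrices whose off-diagonal entries already vanish to positive order in $w$, cannot \emph{decrease} the order of a coefficient below what the weight count predicts — equivalently, that the inverse of $\mathrm{Id}+ (\text{higher order in }w,Y)$ preserves the grading. I would handle this by an induction on $|\alpha|+|\beta|$: the base case $|\alpha|+|\beta|=1$ (the linear part) is exactly the system \eqref{FS1} specialized by the Fuchsian hypotheses, whose coefficients one checks directly against \eqref{FODE}; the inductive step uses that a product of two terms of defects $d_1,d_2$ and $w$-orders $\geq 2m-1-d_1$, $\geq 2m-1-d_2$ (or the $S$-analogue) produces, after the unavoidable division by $w^{2m}$ (resp. $w^{m+1}$) built into the left-hand side of \eqref{meromF}, a term of defect $d_1+d_2$ and $w$-order $\geq 2m-1-(d_1+d_2)$, since $(2m-1-d_1)+(2m-1-d_2)-2m = 2m-1-(d_1+d_2)-1 \geq$ the required bound once one also uses that at least one more power of $w$ is always present (the extra $w$ sitting in front of the whole left-hand side, or an extra $w$ from a nonconstant coefficient in \eqref{FODE}). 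Making that last ``$+1$'' accounting airtight — i.e. checking no term is off by one — is the delicate point, and I would isolate it in a short lemma about how the weight behaves under the division by $w^{2m}$ and $w^{m+1}$.
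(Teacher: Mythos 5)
Your overall strategy (propagate the order bounds of \eqref{FODE} through the construction of \eqref{meromF}) is the right one, but the concrete mechanism you propose for closing the estimate does not match how the system is actually built, and the place where you yourself flag trouble --- the ``$+1$'' in the inductive step --- is a real gap, not a technicality. The coefficients $T_{\alpha,\beta}$ are \emph{not} produced by multiplying lower-defect coefficients together and then dividing by $w^{2m}$: the factors $w^{2m}$ and $w^{m+1}$ sit on the \emph{left-hand side} of \eqref{meromF}, multiplying the second derivatives (this is visible in \eqref{PDE-1}, where $w^{2m}I_3=\Phi_3-\Phi_3^*+\dots$, and in the final inversion of the matrix $\mathrm{diag}\{-1,-1,1,1\}+O(w,Y)$); no division of the right-hand side by $w^{2m}$ ever occurs. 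Consequently the computation $(2m-1-d_1)+(2m-1-d_2)-2m=2m-2-(d_1+d_2)$, and the ensuing hunt for ``one more power of $w$'', is an artifact of a wrong model of the nonlinearity, and the induction on $|\alpha|+|\beta|$ built on it cannot be repaired in the form you describe.

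The idea the argument actually needs, and which your plan does not contain, is a \emph{combinatorial bound relating the index $(k,l)$ of the contributing Taylor coefficient $\Phi^*_{kl}$ to the defect $|\alpha|+|\beta|$}. When one substitutes the map into \eqref{trule3} and extracts the coefficient of $z^a\zeta^b$ with $(a,b)$ as in \eqref{0123}, every unit by which the $z$-degree drops from $k$ to $a\le 1$ costs at least one factor of $f_0,f_1,\dots$ (a component of $Y$ or of $wY'$ after eliminating $f_2,g_2$ via \eqref{system1}), and every unit by which the $\zeta$-degree drops from $l$ to $b\le 3$ costs at least one factor of $g_1$ or $w^mf_w$; moreover each $w$-derivative of $\Phi^*_{kl}$ coming from the expansion in the second argument is compensated by a factor $wg_0+w^mg$. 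Hence $\Phi^*_{kl}$ (or a derivative of no smaller order) appears as an \emph{overall factor} of $Y^\alpha(wY')^\beta$ only when $k+l\le |\alpha|+|\beta|+a+b$, which gives $k+l\le|\alpha|+|\beta|+4$ for the $T$-equations and $k+l\le|\alpha|+|\beta|+2$ for the $S$-equation. Since the bounds in \eqref{FODE} are affine and decreasing in $k+l$, the inequality \eqref{Tkle} then follows by direct inspection, with no products of coefficients, no induction, and no off-by-one to absorb. I recommend replacing your weight-calculus induction by this single counting lemma; the rest of your bookkeeping (orders of $A_j,B_j,C_j$, the $w^m$ attached to $f_w,g_w$, the $w$ attached to $g_0'$, and the harmlessness of inverting $\mathrm{Id}+O(w,Y)$) is correct and is exactly what is needed to finish.
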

\begin{proof}
For the proof, we make use of \eqref{FODE} (applied for the source defining function $\Phi^*$), and then study carefully  the contribution of terms $\Phi^*_{kl}$ into the basic identity \eqref{trule3}. Let us fix for the moment some positive value of $|\alpha|+|\beta|$. Then it is straightforward to check, by considering  \eqref{trule3} and collecting terms  \eqref{0123}, that $T_{\alpha,\beta}$ as above can arise only from $\Phi^*_{kl}$ with $k+l\leq  |\alpha|+|\beta|+4,$ while $S_{\alpha,\beta}$ as above can arise only from $\Phi^*_{kl}$ with $k+l\leq  |\alpha|+|\beta|+2$. (And in the latter cases a respective $\Phi_{kl}^*$ is a factor for $Y^\alpha(wY')^\beta$). Now it is not difficult to verify that \eqref{FODE} implies \eqref{Tkl}.
\end{proof}

\begin{corollary}\Label{lowterms}\label{lowterms}
For the $(0,0)$ coefficient functions in \eqref{meromF} we have
\begin{equation}\Label{00}\label{00}
\ord\,S_{0,0}\geq m;\quad \ord\,T_{0,0}\geq 2m-1.
\end{equation}
As a consequence, for the target ODE defining function $\Phi$ we have:
\begin{equation}\Label{Phi23a}\label{Phi23a}
\begin{aligned}
\Phi_{0j2}=0,&\,\,0\leq j\leq m-2; \\
 \Phi_{1j2}=\Phi_{0j3}=\Phi_{1j3}=0,&\,\,0\leq j\leq 2m-3;\\
\Phi_{0,m-1,2}=\Phi^*_{0,m-1,2};\,\,\Phi_{0,2m-2,3}=\Phi^*_{0,2m-2,3};&\,\,\Phi_{1,2m-2,2}=\Phi^*_{1,2m-2,2};\,\,\Phi_{1,2m-2,3}=\Phi^*_{1,2m-2,3}.
\end{aligned}
\end{equation}
\end{corollary}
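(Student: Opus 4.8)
The plan is to deduce the statement from Proposition~\ref{Tkl} together with a substitution of the formal solution of \eqref{meromF} into the transformation rule \eqref{trule3}, in close analogy with Lemma~\ref{lowdegree}. First I would establish the two order bounds \eqref{00}: they are precisely the $(\alpha,\beta)=(0,0)$ instance of the estimate \eqref{Tkle}, which Proposition~\ref{Tkl} omits. The argument is the contribution count used to prove Proposition~\ref{Tkl}, run now on the part of the right-hand side of \eqref{meromF} that does not involve the map data $(Y,wY')$. Tracing this part through \eqref{trule3} and through the elimination of $f_2,g_2$ via \eqref{system1} (exactly as in the proof of Proposition~\ref{p:preparesystem}), one sees that $S_{0,0}$ is a universal expression in the $\Phi^*_{kl}$ with $k+l\leq 2$ and $T_{0,0}$ one in the $\Phi^*_{kl}$ with $k+l\leq 4$; keeping track of the powers of $w$ with which each such coefficient enters --- the $I_2$, $I_3$ blocks in \eqref{2jet} carry $w^m$ and $w^{2m}$, and $g_0$ enters with the factor $w$ --- and inserting the Fuchsian vanishing orders \eqref{FODE} of $\Phi^*$ yields $\ord S_{0,0}\geq m$ and $\ord T_{0,0}\geq 2m-1$. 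This power-of-$w$ bookkeeping, where \eqref{FODE} must be used sharply (weaker bounds being immediate), is the step I expect to be the main obstacle.

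Second, to obtain \eqref{Phi23a} I would express the target coefficient functions in terms of source and map. Collecting the coefficient of $z^k\zeta^l$ with $k\in\{0,1\}$, $l\in\{2,3\}$ in \eqref{trule3} writes $\Phi_{kl}(w)$ as $\Phi^*_{kl}(w)+\mathrm{corr}_{kl}(w)$, where $\mathrm{corr}_{kl}$ vanishes for the trivial map; substituting $f_2,g_2$ from \eqref{system1} and the formal solution $Y=(g_0,g_1,f_0,f_1)$ of \eqref{meromF} and expanding in the map variables as in \eqref{expandTS}, one gets $\mathrm{corr}_{kl}=\sum_{\alpha,\beta}C^{kl}_{\alpha\beta}(w)\,Y^\alpha(wY')^\beta$, and the same count as in Proposition~\ref{Tkl}, completed by \eqref{00} at $(\alpha,\beta)=(0,0)$, gives $\ord C^{kl}_{\alpha\beta}\geq\kappa_{kl}-|\alpha|-|\beta|$ with $\kappa_{02}=m$ and $\kappa_{12}=\kappa_{03}=\kappa_{13}=2m-1$. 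Since $Y$ and $wY'$ are $O(w)$, every summand has order $\geq\kappa_{kl}$, so $\ord\mathrm{corr}_{kl}\geq\kappa_{kl}$, and hence $\Phi_{kjl}=\Phi^*_{kjl}$ for all $j<\kappa_{kl}$.

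Finally I would read off \eqref{Phi23a} from the Fuchsian vanishing of the low Taylor coefficients of $\Phi^*$. By Proposition~\ref{transferfuchs} and \eqref{FODE} one has $\Phi^*_{0j2}=0$ for $j\leq m-2$ and $\Phi^*_{1j2}=\Phi^*_{0j3}=\Phi^*_{1j3}=0$ for $j\leq 2m-3$; for $\Phi^*_{12}$ one uses in addition the sharper bound $\ord\Phi^*_{12}\geq 2m-2$, which holds because $\Phi^*_{12}=\mp 6i\,\varphi_{32}=\mp 6i\,h_{32}$ with $\ord h_{32}=\ord h_{23}\geq 2m-2$ by \eqref{relations}, \eqref{phih} and the hermitian symmetry of $h$. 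Combining these vanishings with $\Phi_{kjl}=\Phi^*_{kjl}$ for $j<\kappa_{kl}$ gives $\Phi_{0j2}=0$ for $0\leq j\leq m-2$, $\Phi_{1j2}=\Phi_{0j3}=\Phi_{1j3}=0$ for $0\leq j\leq 2m-3$, and at the boundary indices $\Phi_{0,m-1,2}=\Phi^*_{0,m-1,2}$, $\Phi_{1,2m-2,2}=\Phi^*_{1,2m-2,2}$, $\Phi_{0,2m-2,3}=\Phi^*_{0,2m-2,3}$ and $\Phi_{1,2m-2,3}=\Phi^*_{1,2m-2,3}$, which is exactly \eqref{Phi23a} (and reduces to Lemma~\ref{lowdegree} when $m=1$). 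A secondary point to verify is that the indices left unprescribed in \eqref{partnf} are precisely those recovered from $\Phi_{kl}=\Phi^*_{kl}+\mathrm{corr}_{kl}$ at these orders, so that the conclusion genuinely concerns the normalized ODE and not an auxiliary choice of free parameters.
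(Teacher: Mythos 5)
Your first step contains a genuine gap. You assert that $S_{0,0}$ and $T_{0,0}$ are ``universal expressions in the $\Phi^*_{kl}$'' whose orders can then be read off from the Fuchsian bounds \eqref{FODE}. They are not: by construction the $(0,0)$ parts of the right-hand sides of \eqref{meromF} are (up to sign and the elimination of $f_2,g_2$) the differences $\Phi_2-\Phi^*_2$ and $\Phi_3-\Phi^*_3$ collected at degrees $0,1$ in $z$, and under the partial normalization \eqref{partnf} these still contain the \emph{undetermined} target coefficients $\Phi_{0j2}$ ($j\leq m-1$) and $\Phi_{1j2},\Phi_{0j3},\Phi_{1j3}$ ($j\leq 2m-2$) --- precisely the ``free parameters'' that \eqref{Phi23a} is supposed to pin down. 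So a direct computation of $\ord S_{0,0}$ from $\Phi^*$ alone is circular. Worse, even if one discards the target contribution, Fuchsianity only gives $\ord \Phi^*_{02}\geq m-1$, so the best a direct count could yield is $\ord S_{0,0}\geq m-1$, one short of \eqref{00}; the extra order of vanishing is exactly the content of the boundary identity $\Phi_{0,m-1,2}=\Phi^*_{0,m-1,2}$ and cannot come from \eqref{FODE}. The correct (and very short) argument for \eqref{00} is indirect: in the identities \eqref{meromF} the left-hand sides $w^{m+1}g_0''$ and $w^{2m}X''$ have orders $\geq m+1$ and $\geq 2m$ because $Y=O(w)$, and by Proposition~\ref{Tkl} every term $S_{\alpha,\beta}Y^\alpha(wY')^\beta$ with $|\alpha|+|\beta|>0$ has order $\geq (m-|\alpha|-|\beta|)+|\alpha|+|\beta|=m$ (resp.\ $\geq 2m-1$ for $T$); hence $S_{0,0}$ and $T_{0,0}$, being the remaining terms of a valid identity, inherit these orders. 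Then \eqref{Phi23a} follows because the $(m-1)$-jet of $S_{0,0}$ and the $(2m-2)$-jet of $T_{0,0}$ consist exactly of the differences $\Phi_{kjl}-\Phi^*_{kjl}$ listed there.

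Your second step is, after rearrangement, the same argument (the relation $\Phi_{kl}=\Phi^*_{kl}+\mathrm{corr}_{kl}$ with $\ord\mathrm{corr}_{kl}\geq\kappa_{kl}$ is just the statement $\ord S_{0,0}\geq m$, $\ord T_{0,0}\geq 2m-1$ solved for the target coefficients), and it would stand on its own provided you (a) drop the appeal to \eqref{00} at $(\alpha,\beta)=(0,0)$ --- by your own definition $\mathrm{corr}_{kl}$ vanishes for the trivial map, so there is no $(0,0)$ term --- and (b) account explicitly for the second-derivative contributions $w^{m+1}g_0''$ and $w^{2m}X''$, which are not captured by an expansion in $Y^\alpha(wY')^\beta$ but have orders $\geq m+1$ and $\geq 2m$ and so cause no harm. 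Your final observation is a genuine improvement on the paper's exposition: \eqref{FODE} only records $\ord\Phi^*_{12}\geq m-1$, whereas the second line of \eqref{Phi23a} needs $\Phi^*_{1j2}=0$ for $j\leq 2m-3$, and this does require the hermitian symmetry $\ord h_{32}=\ord h_{23}\geq 2m-2$ exactly as you say.
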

\begin{proof}
As follows from the definition of $S_{\alpha,\beta},T_{\alpha,\beta}$ and the conditions \eqref{Tkl}, all terms in the first equation in \eqref{meromF} have order at least $m$ in $w$ with possibly the exception of terms arising from $S_{0,0}$, while all terms in the second equation in \eqref{meromF} have order at least $2m-1$ in $w$ with possibly the exception of terms arising from $T_{0,0}$. This proves \eqref{00}. To prove \eqref{Phi23a}, we note that the $(m-1)$-jet of $S_{0,0}$ and the $(2m-2)$-jet of $T_{0,0}$ respectively are formed from differences  between coefficients $\Phi_{kjl}$ and $\Phi^*_{kjl}$ aparent in \eqref{Phi23a}, and this proves \eqref{Phi23a}.
\end{proof}
\begin{corollary}
The coefficients
\begin{equation}\Label{maincoef}\label{maincoef}
\Phi_{0,m-1,2},\Phi_{0,2m-2,3},\Phi_{1,2m-2,2},\Phi_{1,2m-2,3}.
\end{equation}
are invariants of a Fuchsian type hypersurface under transformations \eqref{specialmap}.
\end{corollary}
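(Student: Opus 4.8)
The plan is to derive this directly from \autoref{lowterms}. First I would note that, by \autoref{transferfuchs}, the ODE $\mathcal E(M)$ associated to a Fuchsian type hypersurface $M$ is itself a Fuchsian ODE in the sense of \autoref{fuchsian}; hence \autoref{lowterms} applies with $\Phi^{*}$ taken to be the defining function of $\mathcal E(M)$. Running the normalization procedure of this section on $\mathcal E(M)$ produces an ODE $\mathcal E$ in the partial normal form \eqref{partnf} together with a transformation \eqref{normalmap} carrying $\mathcal E$ to $\mathcal E(M)$, and the last line of \eqref{Phi23a} is exactly the statement that the four coefficients \eqref{maincoef} of $\mathcal E$ equal those of $\mathcal E(M)$. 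Thus passing from $M$ to (the ODE of) its Fuchsian normal form \eqref{nspaceF} does not change the quantities \eqref{maincoef}.

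To obtain the invariance statement, I would let $H$ be a transformation as in \autoref{specialmap} between two Fuchsian hypersurfaces $M$ and $M'$. By the faithful functor $\mathfrak{R}_m^{\pm}\hookrightarrow\mathfrak{E}_m$ of \autoref{isomorphism} it induces a morphism between $\mathcal E(M)$ and $\mathcal E(M')$; normalizing both ODEs and composing, the uniqueness in \autoref{theorf} shows that the partial normal forms \eqref{partnf} of $\mathcal E(M)$ and $\mathcal E(M')$ agree up to the residual dilation \eqref{dilations}. Combined with the first paragraph, this gives that the collection \eqref{maincoef} computed from $\mathcal E(M)$ and from $\mathcal E(M')$ coincide modulo the action of \eqref{dilations}; in particular, restricted to the dilation-trivial transformations \eqref{normalmap}, these four coefficients are literal invariants. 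The remaining point — recording how \eqref{dilations} rescales each monomial $z^{a}w^{b}\zeta^{c}$ occurring in \eqref{maincoef}, which is obtained by substituting \eqref{dilations} into \eqref{ODE} — is a short direct computation; it shows, for instance, that $\Phi_{0,m-1,2}$ and $\Phi_{1,2m-2,3}$ are unaffected by \eqref{dilations} whereas $\Phi_{0,2m-2,3}$ and $\Phi_{1,2m-2,2}$ get scaled by a power of the dilation parameter $\lambda$.

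I do not expect a conceptual obstacle here: the one place that needs care is the bookkeeping, namely confirming that the normalization of this section really lands in \eqref{partnf} without disturbing \eqref{maincoef} (this is precisely \autoref{Tkl} together with \autoref{lowterms}, which may simply be quoted) and, if one wishes the statement phrased intrinsically in terms of $h$, translating \eqref{maincoef} back through \eqref{Phih} and \eqref{relations}. The main pitfall is keeping the source/target conventions of the transformation rule \eqref{trule3} and the sign $\epsilon=\pm1$ straight while carrying out the dilation computation.
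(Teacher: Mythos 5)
Your first paragraph reproduces, in effect, the first half of the paper's own argument (the case where the target is in the partial normal form \eqref{partnf}), and your dilation bookkeeping at the end is correct and more explicit than anything the paper records. The problem is the second paragraph: by routing the general case through \autoref{theorf}/\autoref{fuchsnf} you import the \emph{non-resonancy} hypothesis, which the corollary does not assume. For a resonant Fuchsian hypersurface there is no normal form to normalize to, so the ``normalize both and compose'' argument simply does not apply, and you end up proving a strictly weaker statement. It is also a forward reference: the corollary sits before \autoref{fuchsnf}, and the subsequent normal form construction itself cites the invariance of \eqref{maincoef}.

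The fix is to drop the normal form altogether, which is what the paper does. The order-counting in \autoref{Tkl} and \autoref{lowterms} depends only on the \emph{source} $\Phi^*$ being Fuchsian: the estimates \eqref{Tkle} on $S_{\alpha,\beta}$, $T_{\alpha,\beta}$ for $|\alpha|+|\beta|>0$ hold for the system produced by an arbitrary transformation \eqref{normalmap} into an arbitrary target, while the low-order Taylor coefficients of $S_{0,0}$ and $T_{0,0}$ are exactly the differences $\Phi_{kjl}-\Phi^*_{kjl}$ appearing in \eqref{Phi23a}. Balancing orders on the two sides of \eqref{meromF} then forces the relations \eqref{00}, hence the last line of \eqref{Phi23a}, i.e.\ equality of the four coefficients \eqref{maincoef}, for \emph{any} morphism of the form \eqref{normalmap} --- no normalization of the target and no non-resonancy needed. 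Your closing observation that two of the four coefficients do rescale under \eqref{dilations} is a legitimate caveat about how literally ``invariant under the transformations of \autoref{specialmap}'' can be read, but it does not repair the reliance on non-resonancy in the main step.
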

\begin{proof}
In the case when the target satisfies \eqref{partnf}, the assertion follows from \eqref{Phi23}. However, even without requiring \eqref{partnf} we similarly have the relations \eqref{00} and hence obtain the desired identities in the last line of \eqref{Phi23}.
\end{proof}

Based on \autoref{lowterms}, we can finally introduce the appropriate normal spaces $\mathcal F^\sigma_m$  for Fuchsian type ODEs: for each $\sigma\in\RR{}$, the space $\mathcal F^\sigma_m$ is the space of all power series $\Phi(z,w,\zeta)$, as in \eqref{ODE}, satisfying \eqref{partnf} and the first two lines in \eqref{Phi23}.  Our immediate goal now is to prove that under an appropriate generic condition (the {\em non-resonancy}), each Fuchsian type ODE \eqref{ODE} can be uniquely mapped, by means of a transformation \eqref{normalmap}, onto another ODE \eqref{ODE} satisfying, in addition,
\begin{equation}\Label{fuchsnfa}\label{fuchsnfa}
\Phi\in \mathcal F^\sigma_m
\end{equation}
(while the coefficients \eqref{maincoef}, again, remain unchanged).
We will in general follow the scheme in the case $m=1$ above; however, the proof of convergence will require some extra arguments.

As discussed above, the normalization condition $\Phi\in \mathcal F^\sigma_m$ supplemented by the condition in the last line of \eqref{Phi23} amount to a system of four ODEs \eqref{meromF} satisfying \eqref{Tkl} {\em with $\alpha,\beta$ being both possibly zero} (as follows from \autoref{lowterms}). Let us consider the latter system \eqref{meromF} in more detail.

\smallskip

\noindent{\bf Step I.} We investigate the existence of formal solutions for \eqref{meromF} vanishing at the origin. Let $$H(w)=\sum_{k\geq 1}H_kw^k$$ be such a formal solution. We substitute such a formal solution into \eqref{meromF} and, for each fixed $k\geq 1$, collect terms of  degree  $m+k-1$ in $w$  in the first equation in \eqref{meromF}, and degree $2m+k-2$ in $w$  in the second equation in \eqref{meromF}, respectively.   Now it follows from \eqref{Tkl} that (i) only the coefficients $h_1,...,h_k$ are present in the resulting identity; (ii) the coefficient $h_k$ comes into the latter identity {\em linearly}. This means that the $k$-th identity can be considered as a linear system in $h_k$, if $h_1,...,h_{k-1}$ are considered as known. If we are able now to prove the nondegeneracy of the latter linear system for each $k\geq 1$ (we address the latter property as the {\em non-resonancy} of a Fuchsian type ODE), then we conclude from the above that a formal solution exists and is unique.

We investigate the nonresonancy condition here very similarly  to the case  $m=1$ (see section 4). Arguments identical to the ones in Section 4 show that the nondegeneracy of an above $k$-th linear system amounts to the nondegeneracy of the matrix
\begin{equation}\Label{mxk}\label{mxk}
\begin{pmatrix}
\alpha_0(k+1-m)-\alpha_0^*+ & -3\beta_0 & -\alpha_1 & -2k\\
k(k+1) &\mbox{} &\mbox{} &\mbox{} \\
\alpha_1(k+1-m)+\alpha_1^* & 3k(k-1+2m-\alpha_0)+ & 2\alpha_2 & \alpha_1\\
\mbox{} & 3(\beta_1-\alpha_0'+m(m-1)) & \mbox{} & \mbox{}\\
2\beta_0(k+1-m) +\beta_0^* & 4\gamma_0 & k(k-1+\alpha_0)+\beta_1 & -\beta_0\\
2\beta_1(k+1-m)+\beta_1^* & 4\gamma_1 & k\alpha_1+2\beta_2 & k(k-1+\alpha_0)
\end{pmatrix}
\end{equation}
Here, in the notations \eqref{ABC} and in contrast with \eqref{expandABC}, the constants $\alpha_j,\alpha_j^*,\beta_j,\beta_j^*,\gamma_j$ are
\begin{equation}\Label{coeffs}\label{coeffs}
\begin{aligned}
&\alpha_0:=\frac{1}{(m-1)!}A_0^{(m-1)},\,\,\alpha^*_0:=\frac{1}{(m-1)!}A_0^{(m)},\\
&\alpha_j:=\frac{1}{(2m-2)!}A_j^{(2m-2)},\,\,\alpha^*_j:=\frac{1}{(2m-2)!}A_j^{(2m-1)},\,\,j=1,2\\
&\beta_j:=\frac{1}{(2m-2)!}B_j^{(2m-2)},\,\,
\beta_j^*:=\frac{1}{(2m-2)!}B_j^{(2m-1)},\,\,\gamma_j:=\frac{1}{(2m-2)!}C_j^{(2m-2)},\quad j=0,1,2.
\end{aligned}
\end{equation}
Accordingly we give the following
\begin{definition}\Label{Fuchsnonres}\label{Fuchsnonres}
A Fuchsian type ODE is called {\em nonresonant}, if the associated matrices \eqref{mxk} are nondegenerate for each integer $k\geq 1$. A Fuchsian type hypersurface \eqref{madmissiblereal} is called {\em nonresonant} at the origin, if its associated ODE \eqref{ODE} is nonresonant.
\end{definition}
\begin{remark}\Label{finiteresF}\label{finiteresF}
There can exist only finitely many resonances for a hypersurface \eqref{madmissiblereal} in the Fuchsian type case with $m>1$ (in fact, at most $8$ of them).
\end{remark}

\begin{proposition}\Label{nontrivialF}\label{nontrivialF}
There exist Fuchsian type hypersurfaces \eqref{madmissiblereal} which are non-resonant at $0$. Accordingly, a generic Fuchsian type hypersurface \eqref{madmissiblereal} (in the sense of the jet topology in the space of defining functions $h(z,\bar z,u)$) is non-resonant at $0$.
\end{proposition}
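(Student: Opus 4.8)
The plan is to follow the strategy of the proofs of \autoref{nontrivial} (the case $m=1$) and \autoref{nontrivialm} (the general case $m>1$): I will exhibit a Fuchsian type hypersurface \eqref{madmissiblereal} whose nonresonancy matrices \eqref{mxk} are nondegenerate for \emph{every} positive integer $k$, and then the genericity statement follows at once, since for each fixed $k$ the degeneracy of \eqref{mxk} is the vanishing of a single real-analytic function of finitely many Taylor coefficients of $h$ (the part of the $(2m+4)$-jet of $h$ entering the Fuchsian data, via \autoref{Fuchsnonres} and \eqref{coeffs}), so the set of resonant Fuchsian hypersurfaces is a countable union over $k$ of proper real-analytic subsets of a finite-dimensional jet space, hence of measure zero and nowhere dense.

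First I would translate the entries of \eqref{mxk} into the jet data of $h$. Combining \eqref{relations}, \eqref{Phih}, the Fuchsian constraints \eqref{FM}, and the straightforward higher-order analogues of \eqref{phih} (obtained exactly as \eqref{phih} itself, by successively comparing Taylor coefficients in \eqref{phitoh}), one records the following. First, since $C_0=\pm 2i\varphi_{24}$, $A_2=\mp 12i\varphi_{42}$, and the Fuchsian condition forces $\ord h_{24}=\ord h_{42}\geq 2m-2$, the constants $\gamma_0$ and $\alpha_2$ in \eqref{coeffs} are each a fixed nonzero multiple of the leading coefficient $c:=h_{24}^{(2m-2)}(0)$, respectively $\bar c=h_{42}^{(2m-2)}(0)$, plus a term depending only on jet coefficients of $h$ of order $<2m-2$; in particular, once all those lower-order coefficients are frozen, $\gamma_0$ and $\alpha_2$ run affinely over $\CC{}$ as $c$ varies, under the reality constraint $\alpha_2=(\mathrm{const})\,\overline{\gamma_0}$, so that $\alpha_2\gamma_0$ is a fixed nonzero multiple of $|c|^2$. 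Second, the constant $\alpha_0=m-2i\,h_{22}^{(m-1)}(0)/(m-1)!$ runs over the line $m+i\RR{}$ as the real leading coefficient of $h_{22}$ varies, and, likewise, the starred constants $\alpha_0^*$ and $\beta_1^*$ (the $m$-th, resp.\ $(2m-1)$-th order coefficients of $A_0$, $B_1$) and the constant $\beta_1$ can each be made to run over a full real affine line in $\CC{}$ by varying $h_{22}^{(m)}(0)$, $h_{33}^{(2m-1)}(0)$, $h_{33}^{(2m-2)}(0)\in\RR{}$, independently of $\alpha_0$, of each other, and of $c$.

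Next I would isolate, by cofactor expansion of $\det M_k$ (with $M_k$ the matrix in \eqref{mxk}) along the row and column carrying $\alpha_2$ and $\gamma_0$, the coefficient $P(k)$ of the monomial $\alpha_2\gamma_0$; a short computation gives, up to a nonzero numerical factor,
\[
P(k)= -k\Big(\big[\alpha_0(k+1-m)-\alpha_0^*+k(k+1)\big]\,(k-1+\alpha_0)+2\big[2\beta_1(k+1-m)+\beta_1^*\big]\Big),
\]
a polynomial in $k$ of degree $4$ with leading coefficient $-1$ for \emph{all} values of the parameters, whose remaining (cubic) factor has coefficients depending only on $\alpha_0,\alpha_0^*,\beta_1,\beta_1^*$ and $m$ --- the exact analogue of \eqref{a2c0} in the proof of \autoref{nontrivial}. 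For each fixed $k\geq 1$, $P(k)$ is a nonconstant (indeed affine in $\beta_1^*$) function of the parameters listed above, so $\{P(k)=0\}$ is a proper real-analytic subset of the parameter space, and I would choose those parameters generically --- in the complement of the meager, measure-zero set $\bigcup_{k\geq 1}\{P(k)=0\}$ --- so that $P(k)\neq 0$ for all $k\in\N$. With this choice fixed, freeze all remaining jet coefficients of $h$ arbitrarily subject to \eqref{FM}; then, as a function of the single remaining free parameter $c=h_{24}^{(2m-2)}(0)$, the determinant $D_k:=\det M_k$ has the shape $a(k)\,|c|^2+\Re\!\big(b(k)\,c\big)+e(k)$ with $a(k)$ a fixed nonzero multiple of $P(k)$, hence $a(k)\neq 0$ for all $k\geq 1$; thus each $\{c\in\CC{}:D_k=0\}$ is a proper real-algebraic subset of the $c$-plane (a circle, possibly degenerate, by the argument in the proof of \autoref{nontrivial}). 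Choosing $c$ outside the measure-zero set $\bigcup_{k\geq 1}\{D_k=0\}$ yields a Fuchsian type hypersurface all of whose matrices \eqref{mxk} are nondegenerate, i.e.\ a non-resonant one; and since every constraint imposed above is an open-dense (in fact full-measure) condition on finitely many jet coefficients, a generic Fuchsian type hypersurface is non-resonant at $0$.

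The main obstacle is the explicit computation of $P(k)$ together with the verification that it is not identically zero as a polynomial in $k$ and that the Fuchsian constraints \eqref{FM} do not covertly pin down the constants $\alpha_0,\alpha_0^*,\beta_1,\beta_1^*$, so that they really are free to be chosen generically; this is the analogue of the computation of \eqref{a2c0} in the proof of \autoref{nontrivial} and of \eqref{coe} in the proof of \autoref{nontrivialm}, and I expect it to go through by the same elementary, if slightly tedious, bookkeeping with \eqref{relations}, \eqref{Phih}, \eqref{phih} and \eqref{FM}.
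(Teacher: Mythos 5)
Your proposal is correct and follows exactly the approach the paper intends: the paper's own proof of \autoref{nontrivialF} is a one-line deferral to the arguments of \autoref{nontrivial} and \autoref{nontrivialm}, and you carry out precisely that analogy, isolating the $\alpha_2\gamma_0$-coefficient of $\det M_k$ (your $P(k)$ agrees with the Leibniz expansion of \eqref{mxk} up to the nonzero factor $8$) and then varying $h_{24}^{(2m-2)}(0)$ to avoid the countably many degeneracy loci. Your write-up in fact supplies more detail than the paper does.
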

\begin{proof}
The proof is very analogous to that of \autoref{nontrivial} and\autoref{nontrivialm}, that is why we leave its details to the reader.
\end{proof}

The above arguments prove that, {\em for any Fuchsian type ODE \eqref{ODE} there exists a unique formal transformation \eqref{specialmap} bringing it to a normal form.}

\smallskip

\noindent{\bf Step II.} It remains to deal with the convergence of the normalizing transformation. Since the formal transformation under discussion arises as a solution of the regular Cauchy problem \eqref{system1}, it is sufficient to prove the convergence of a formal solution of \eqref{meromF}.

Let $H(w)$ be such a formal solution. We decompose it as
\begin{equation}\Label{trick}\label{trick}
H(w)=P(w)+Z(w),
\end{equation}
where $P(w)$ is a  polynomial without constant term of degree $\leq 2m-1$, 
while where $Z(w)$ is a formal series
of the kind $O(w^{2m})$.
The substitution \eqref{trick} (for a fixed $(P(w)$) turns \eqref{meromF} into a similar system of ODEs for the unknown function $Z(w)$. We shall now prove
\begin{lemma}\Label{0110}\label{0110}
The transformed  system (in the same way as the initial system)  satisfies
\begin{equation}\Label{0110a}\label{0110a}
\ord\,\tilde S_{01}\geq m-1,\,\,\ord\,\tilde S_{10}\geq m-1,\,\,\ord\,\tilde T_{01}\geq 2m-2,\,\, \ord\,\tilde T_{10}\geq 2m-2
\end{equation}
(the tilde here stands for coefficients of the transformed system).
\end{lemma}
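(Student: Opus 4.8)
The plan is to track how the shift $H(w) = P(w) + Z(w)$ propagates through the structure established in \autoref{Tkl}. The key observation is that the bounds \eqref{Tkle} are the only input we need: once we substitute $Y = P + Z$ into $S(w, Y, wY')$ and $T(w, Y, wY')$, we are composing a power series in $Y, \tilde Y$ (with coefficient functions satisfying the order estimates in $w$) with a polynomial perturbation $P(w)$ of degree $\leq 2m-1$ (and its derivative $wP'(w)$, also of degree $\leq 2m-1$ since $P$ has no constant term, so $wP'$ starts at order $1$). First I would write, for each monomial $Y^\alpha \tilde Y^\beta$ appearing in $S$ with coefficient $S_{\alpha,\beta}(w)$ of order $\geq m - |\alpha| - |\beta|$, the expansion of $(P + Z)^\alpha (wP' + wZ')^\beta$ by the multinomial theorem; the coefficient of $Z^{\alpha'} (wZ')^{\beta'}$ (with $|\alpha'| + |\beta'| = 1$ for the terms $\tilde S_{10}, \tilde S_{01}$) is a sum of terms of the form $S_{\alpha,\beta}(w) \cdot (\text{product of } |\alpha| - |\alpha'| \text{ factors of } P \text{ and } |\beta| - |\beta'| \text{ factors of } wP')$.

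The core of the argument is then a counting/order bookkeeping. Each factor of $P$ contributes order $\geq 1$ in $w$, and each factor of $wP'$ contributes order $\geq 1$ in $w$ as well (since $P$ has no constant term, $wP'$ is $O(w)$). So a term contributing to $\tilde S_{\alpha',\beta'}$ coming from $S_{\alpha,\beta}$ has order at least
\[
\bigl(m - |\alpha| - |\beta|\bigr) + \bigl(|\alpha| - |\alpha'|\bigr) + \bigl(|\beta| - |\beta'|\bigr) = m - |\alpha'| - |\beta'|.
\]
For $\tilde S_{10}$ and $\tilde S_{01}$ this gives order $\geq m - 1$, exactly the first two claimed inequalities. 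The identical computation with the $T$-bound $\ord T_{\alpha,\beta} \geq 2m - 1 - |\alpha| - |\beta|$ yields, for $|\alpha'| + |\beta'| = 1$, order $\geq 2m - 2$, which is the last two claimed inequalities. One must also check that the new coefficient functions $\tilde S_{\alpha,\beta}, \tilde T_{\alpha,\beta}$ are still holomorphic near the origin: this is automatic because the substitution is a formal-but-convergent operation — $P(w)$ and $wP'(w)$ are polynomials, hence entire, and composing a convergent power series with convergent arguments (in a neighborhood of $0$ where $P, wP'$ are small) stays convergent.

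One subtlety I would be careful about: the claim is that the transformed system satisfies \eqref{0110a} \emph{in the same way as the initial system}, i.e.\ that the new system has the same structural form \eqref{meromF}–\eqref{Tkle} with possibly new (convergent) coefficients. This requires checking that the left-hand sides $w^{m+1} g_0''$ and $w^{2m} X''$ transform correctly under $H = P + Z$: since $P$ is a polynomial, $w^{m+1} P_{g_0}''$ and $w^{2m} P_X''$ are polynomials in $w$ (of orders $\geq m - 1$ and $\geq 2m - 2$ respectively, using $\deg P \leq 2m-1$), so they can be absorbed into the new $\tilde S_{0,0}$ and $\tilde T_{0,0}$ without affecting the $\tilde S_{10}, \tilde S_{01}, \tilde T_{10}, \tilde T_{01}$ entries at all — these come purely from the right-hand side. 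The main obstacle, such as it is, is purely organizational: setting up the multinomial bookkeeping cleanly enough that the order estimate above is manifestly correct for every term, including cross terms where $\alpha$ and $\beta$ are both large; but since every factor of $P$ or $wP'$ that is "spent" to go from $(\alpha,\beta)$ down to $(\alpha',\beta')$ buys exactly one power of $w$, the arithmetic closes with no slack, and no genuinely new idea beyond \autoref{Tkl} is needed.
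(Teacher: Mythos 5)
Your proof is correct and follows exactly the route the paper takes (the paper's own proof is a one-sentence appeal to the expansion \eqref{expandTS}, the bounds \eqref{Tkle}, and the vanishing of $P$ at the origin, which your multinomial bookkeeping simply makes explicit). The order count $(m-|\alpha|-|\beta|)+(|\alpha|-|\alpha'|)+(|\beta|-|\beta'|)=m-|\alpha'|-|\beta'|$, using that each spent factor of $P$ or $wP'$ contributes at least one power of $w$, is precisely the intended argument.
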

\begin{proof}
the proof of the lemma is obtained by putting together the expansion \eqref{expandTS}, the conditions \eqref{Tkl}, and the fact that $P(w)$ is vanishing at the origin.
\end{proof}
Now, based on \autoref{0110}, we perform the substitution
\begin{equation}\Label{ZtoU}\label{ZtoU}
Z:=w^{2m}U,
\end{equation}
which turns the "tilde" system into a new system of four meromorphic ODEs for the unknown function $U$, which, according to \eqref{trick}, has a formal solution $U(w)$ vanishing at the origin. It is straightforward to check then, by combining \eqref{ZtoU} and \eqref{0110a}, that the new system system can be written in the form
\begin{equation}\Label{bb2}\label{bb2}
w^2U'=R(w,U,wU'),
\end{equation}
where $R$ is a holomorphic  function defined near the origin. Performing finally in the standard fashion the substitution
$$V:=wU'$$
and introducing the extended vector function $\bold U:=(U,V)$, we obtain a first order ODE
\begin{equation}\Label{bb1}\label{bb1}
w\bold U'=Q(w,\bold U'),
\end{equation}
where $Q$ is a holomorphic near the origin function. The ODE \eqref{bb1} is a Briot-Bouquet type ODE (see Section 2), hence its formal solutions are convergent, as required.

By competing Steps I and II, we have proved
\begin{theorem}\Label{fuchsnf}\label{fuchsnf}
Fro any $\sigma\in\RR{}$, a nonresonant Fuchsian type ODE \eqref{ODE} can be brought to a normal form $\Phi\in\mathcal F^\sigma_m$  by a transformation \eqref{normalmap}. A normalizing transformation is defined uniquely.
\end{theorem}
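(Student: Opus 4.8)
The plan is to follow the scheme already set up in Sections 3--4 for the case $m=1$, now exploiting the refined order estimates available under the Fuchsian hypothesis. First I would invoke \autoref{p:preparesystem}, combined with the normalization conditions defining $\mathcal F^\sigma_m$ --- namely \eqref{partnf} together with the first two lines of \eqref{Phi23} --- to reduce the assertion to a statement about the Cauchy data $Y(w)=(g_0,g_1,f_0,f_1)$: a transformation \eqref{normalmap} carrying $\mathcal E^*$ into a normal form $\Phi\in\mathcal F^\sigma_m$ corresponds, via \autoref{Cproblem}, bijectively to a solution of the singular system \eqref{meromF} vanishing at the origin. So it suffices to show that \eqref{meromF} has a unique formal solution with $Y(0)=0$ and that this solution converges; the coefficients \eqref{maincoef} then remain unchanged by \autoref{lowterms}.

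For existence and uniqueness of the formal solution (Step I), I would make the ansatz $H(w)=\sum_{k\geq 1}H_kw^k$, substitute into \eqref{meromF}, and for each $k\geq 1$ collect the terms of degree $m+k-1$ in the first equation and of degree $2m+k-2$ in the second. Using \autoref{Tkl} one checks that only $H_1,\dots,H_k$ occur and that $H_k$ enters \emph{linearly}, with coefficient matrix exactly \eqref{mxk} in the notation \eqref{coeffs}. The non-resonancy assumption (\autoref{Fuchsnonres}) is precisely the statement that each such matrix is invertible, so $H_k$ is uniquely determined from $H_1,\dots,H_{k-1}$, and the formal solution exists and is unique. By \autoref{Cproblem} this already yields a unique \emph{formal} normalizing transformation.

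The hard part is convergence (Step II), and here the key device is the splitting \eqref{trick}: write $H=P+Z$, where $P$ is a polynomial without constant term of degree $\leq 2m-1$ matching the already-determined low-order coefficients of $H$, and $Z=O(w^{2m})$. The point is that the system satisfied by $Z$ still obeys the order bounds \eqref{0110a} (this is \autoref{0110}), so the substitution $Z=w^{2m}U$ clears the singular denominators and turns the system into one of the form $w^2U'=R(w,U,wU')$ with $R$ holomorphic at the origin; introducing $V:=wU'$ and $\mathbf U:=(U,V)$ produces a first-order equation $w\mathbf U'=Q(w,\mathbf U)$ with $Q$ holomorphic and $Q(0,0)=0$, i.e. a Briot--Bouquet equation. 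By the Briot--Bouquet theory recalled in Section 2, its formal solution $\mathbf U$ is automatically convergent, hence so are $Z$, $H$, and --- through the Cauchy-Kowalevskaya step in \autoref{Cproblem} --- the normalizing map itself. Uniqueness of the normalizing transformation follows from the uniqueness of the formal solution of \eqref{meromF} together with the bijection of \autoref{Cproblem}. I expect the main obstacle to be the bookkeeping behind \autoref{0110}: verifying that the Fuchsian order estimates of \autoref{Tkl} really are preserved under the polynomial shift \eqref{trick}, so that the substitution $Z=w^{2m}U$ genuinely lands one in the Briot--Bouquet regime and the normal form space $\mathcal F^\sigma_m$ is the correct target.
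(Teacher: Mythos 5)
Your proposal is correct and follows essentially the same route as the paper's own argument: reduction to the singular system \eqref{meromF} for the Cauchy data, formal solvability via the linear systems with matrices \eqref{mxk} under the non-resonancy hypothesis (Step I), and convergence via the splitting \eqref{trick}, the order estimates of \autoref{0110}, and the substitution $Z=w^{2m}U$ landing in the Briot--Bouquet regime (Step II). No substantive differences to report.
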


\begin{proof}[Proof of \autoref{theorf}] The way how \autoref{fuchsnf} implies \autoref{theor3} is very analogous to the argument in Section 6, and we leave the details to the reader. We just clarify that the particular choice of the parameter $\sigma$ which is suitable for achieving the normalization conditions \eqref{nspaceF} is
$$\sigma=\frac{1}{2}$$
(this choice is illuminated by the transfer formula \eqref{Phih}).
\end{proof}

In the end of the section we would like to prove the {\em invariance} of the Fuchsian type condition.
\begin{theorem}\Label{fuchsinv}\label{fuchsinv}
The property of being Fuchsian for a hypersurface \eqref{madmissiblereal} does not depend on the choice of (formal or holomophic) coordinates of the kind \eqref{madmissiblereal}.
\end{theorem}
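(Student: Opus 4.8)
The plan is to reduce the invariance of the Fuchsian type condition for hypersurfaces to the invariance of the corresponding condition for associated ODEs, and then prove the latter directly. The first step is to invoke \autoref{transferfuchs}, which shows that if a hypersurface $M$ of the form \eqref{madmissiblereal} is of Fuchsian type then its associated ODE $\mathcal E(M) \in \mathfrak{E}_m$ is of Fuchsian type in the sense of \autoref{fuchsian}; moreover, as remarked right after the statement of \autoref{transferfuchs}, the implication runs in both directions, because the relevant inequalities \eqref{Phih} between the low-order coefficient functions $\Phi_{02},\Phi_{03},\Phi_{12},\Phi_{13}$ and $h_{22},h_{23},h_{33}$ are equivalences, and the higher-order coefficients transform analogously. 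Thus the Fuchsian type property of $M$ is equivalent to the Fuchsian type property of $\mathcal E(M)$, and via \autoref{isomorphism} equivalently to the Fuchsian type property of the associated Segre family $\mathcal S^+_m(M)$. Since a change of admissible coordinates \eqref{madmissiblereal} on $M$ induces, by \autoref{specialmap} and \autoref{isomorphism}, a transformation $H = (F,G)$ of the form \eqref{normalmap} (composed with a dilation \eqref{dilations}) of the associated ODE, it suffices to prove: \emph{the class of Fuchsian type ODEs \eqref{ODE} (Definition \ref{fuchsian}) is stable under the transformations \eqref{normalmap} and under the dilations \eqref{dilations}.}

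The second step is to verify this ODE-level invariance. Stability under dilations \eqref{dilations} is immediate: the substitution $z \mapsto \lambda z$, $w\mapsto \mu w$ rescales each coefficient function $\Phi_{kl}(w)$ by a nonzero constant and replaces $w$ by $\mu w$, which does not change the order $\ord \Phi_{kl}$ at $0$; hence all inequalities in \eqref{FODE} are preserved. For the transformations \eqref{normalmap}, the cleanest route is to reuse the computational machinery already set up in the proof of \autoref{Tkl}: there it is shown that, under the Fuchsian type hypothesis on the \emph{source} defining function $\Phi^*$, the coefficient functions $T_{\alpha,\beta}$, $S_{\alpha,\beta}$ of the system \eqref{meromF} satisfy the order estimates \eqref{Tkle}, and in \autoref{lowterms} this is pushed to give precisely \eqref{Phi23a}, i.e. the target $\Phi$ again satisfies the vanishing conditions defining Fuchsian type at orders below $m-1$ (for $\Phi_{0j2}$) and below $2m-2$ (for $\Phi_{1j2},\Phi_{0j3},\Phi_{1j3}$). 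That handles the coefficient functions $\Phi_{02},\Phi_{03},\Phi_{12},\Phi_{13}$ appearing in the first line of \eqref{FODE}. For the remaining coefficient functions $\Phi_{kl}$ with $k+l \geq 4$ occurring in the second and third lines of \eqref{FODE}, one repeats the bookkeeping argument in the proof of \autoref{Tkl}: collecting the appropriate terms $z^k w^j \zeta^l$ in the basic identity \eqref{trule3}, a given $\Phi_{kl}^*$ contributes to $\Phi_{k'l'}$ only when $k'+l' \le k+l$, and each such contribution comes multiplied by positive powers of $w$ coming from the factors $w^m$ and $w$ attached to the derivatives of $f,g,g_0$ in \eqref{trule3}, \eqref{2jet} — so the weighted order bound of the source propagates to the target. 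Carrying this out for all the ranges of $(k,l)$ listed in \eqref{FODE} shows $\Phi$ is again Fuchsian; one then also notes the procedure is symmetric (the inverse transformation is again of the form \eqref{normalmap} up to a dilation), giving the equivalence.

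The main obstacle I expect is the bookkeeping in the last step: tracking, for each pair $(k,l)$ in the three lines of \eqref{FODE}, exactly which source coefficients $\Phi^*_{k'l'}$ feed into it through \eqref{trule3} and with what minimal power of $w$, and checking the resulting inequality $\ord \Phi_{kl} \ge$ (the prescribed weight) in each regime $4\le l \le 2m+1$, $2\le k\le 2m+1$, and $k\ge 1,\ l\ge 3,\ 5\le k+l\le 2m+2$. This is conceptually routine — it is the same weighted-filtration argument as in \autoref{Tkl}/\autoref{lowterms}, just applied to the transformation rule for $\Phi$ directly rather than to the derived system \eqref{meromF} — but it is calculation-heavy, so in the writeup I would state the weight assignment (weight $1$ to $w$, weight $-1$ to each of $z,\zeta$, appropriately normalized, so that the Fuchsian conditions say $\Phi$ has no terms below a certain weight threshold), observe that \eqref{trule3} together with \eqref{normalmap} preserves this filtration, and refer the reader to the analogous computation already carried out for \autoref{Tkl}, leaving the explicit case-by-case verification to the reader as is done elsewhere in this section.
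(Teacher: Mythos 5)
Your overall strategy --- passing to the associated ODE via \autoref{transferfuchs}, disposing of dilations, and then propagating the order conditions \eqref{FODE} through the transformation rule \eqref{trule3} by the same weighted bookkeeping as in \autoref{Tkl} --- is exactly the paper's strategy for most of the coefficients. However, there is a genuine gap: the filtration argument does \emph{not} cover the coefficients $\Phi_{k2}$ with $k\ge 2$, and your blanket claim that ``carrying this out for all the ranges of $(k,l)$ listed in \eqref{FODE} shows $\Phi$ is again Fuchsian'' fails precisely there. The point is that for $l=2$ the inhomogeneous contribution to $\Phi_{k2}$ coming from the $2$-jet of the map is $w^{m}I_2$, and $I_2$ contains terms such as $-(1+f_z)(2g_0'+\cdots)$ which are in general only $O(1)$ in $w$; so the automatically provided order is only $m$, whereas \eqref{FODE} demands $\ord\Phi_{k2}\ge 2m-k$, which exceeds $m$ whenever $2\le k<m$. (For $k=0,1$ and $l=2$ the required order is $m-1$, so order $m$ suffices, which is why your appeal to \autoref{lowterms} works for the first line of \eqref{FODE} but does not extend to $\Phi_{k2}$, $k\ge2$.)

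The paper closes this case by an argument you do not have and cannot get from \eqref{trule3} alone: it uses the fact that the ODEs in question carry a \emph{real structure}, so that the reality condition forces $\ord h_{kl}=\ord h_{lk}$, whence
$\ord\Phi_{k2}=\ord h_{k+2,2}=\ord h_{2,k+2}=\ord\Phi_{0,k+2}\ge 2m-k$,
the last inequality being the already-established Fuchsian bound for $\Phi_{0,k+2}$. In other words, the invariance of the troublesome coefficients is inherited from the invariance of their ``conjugate'' coefficients $\Phi_{0,k+2}$ via the hypersurface (not the ODE) structure. You should add this step; without it the proof is incomplete, and a purely ODE-level invariance claim for the full list \eqref{FODE} is not what your filtration argument establishes.
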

\begin{proof}
In view of \autoref{fuchsian}, we can switch to associated ODEs and it is enough to prove the invariance of the Fuchsianity for them. As discussed above, we can restrict to transformations \eqref{normalmap}. Let us consider then the transformation rule \eqref{trule3} (with a fixed transformation within it), when the source ODE (with the defining function $\Phi^*$) is of Fuchsian type. We then claim the following: for all the coefficient functions $\Phi_{kl},\,\,k\geq 0,\,\,l\geq 2$ involved in the Fuchsianity conditions \eqref{FODE}, {\em with the exception of the coefficients functions} $\Phi_{k2},\Phi_{k2}^*,\,\,k\geq 2$, the Fuchsian conditions \eqref{FODE} are satisfied.
 Indeed, we fix any $(k,l)$ relevant to \eqref{FODE}, and from the transformation rule \eqref{trule3} we can see that the target coefficient function $\Phi_{kl}$ is a sum of three groups of terms: (i) terms $\Phi^*_{\alpha\beta}$ with $\alpha+\beta\geq k+l$ which are multiplied by a power series in $w$ with order at $0$ at least $k+l-\alpha-\beta$; (ii) terms $\Phi^*_{\alpha\beta}$ with $\alpha+\beta< k+l$; (iii) terms arising from the expressions $I_j,\,0\leq j\leq 3$ (relevant for $l=2,3$ only). In view of the linearity of the Fuchsianity conditions in $k,l$, it is not difficult to see that terms of the first kind all have order at $0$ at least as the one required for the Fuchsianity. Terms of the second kind already all have order bigger than the one required for Fuchsianity. Finally, terms of the third kind automatically provide order at least $2m$ sufficient for the Fuchsianity, except for the case $l=2$. For $k=0,1$ and $l=2$ though even the automatically provided order $m$ suffies, and this proves the claim.

It remains to deal with terms $\Phi_{k2}$ with $k\geq 2,\,2\leq k\leq 2m+1$. We note, however, that the ODEs under consideration have a real structure, that is why (in view of the reality condition) we have
\begin{equation}\Label{heq}\label{heq}
\ord\,h_{kl}(w)=\ord\,h_{lk}(w)
\end{equation}
for all $k,l$. This, in view of the transfer relations between $\Phi$ and $h$ (similar to \eqref{phitoh}) gives, in particular:
$$\ord\,\Phi_{k2}(w)=\ord\,h_{k+2,2}(w)=\ord\,h_{2,k+2}(w)=\ord\,\Phi_{0,k+2}(w)\geq 2m-k$$
(the last inequality follows from the Fuchsianity condition for $\Phi_{0,k+2}$ being already proved). This finally proves the theorem

\end{proof}

\bibliographystyle{plain}
 \bibliography{bibfile}









\end{document}